\documentclass[12pt,twoside]{article}
\usepackage{etex}
\usepackage[dvipsnames]{xcolor} 
\usepackage{color}
\usepackage{tcolorbox}
\usepackage{booktabs}
\usepackage{amsthm}
\usepackage{amsfonts,amssymb,amsxtra,url,float} 
\allowdisplaybreaks[4] 
\usepackage[colorlinks,
  linkcolor=magenta, %
  anchorcolor=Periwinkle,
  citecolor=red,
  urlcolor=blue
  ]{hyperref} 
\usepackage{enumitem}
\usepackage{geometry} 
\usepackage{rotating} 
\usepackage{lscape} 
\usepackage{multirow}
\usepackage{graphicx} 
\usepackage{subfigure} 
\usepackage{tikz}
\usepackage{pgfplots}
\usepackage{tikz-3dplot}
\usetikzlibrary{patterns}
\usetikzlibrary{3d,calc}
\usetikzlibrary{decorations.pathreplacing,decorations.markings}
 \tikzset{
  on each segment/.style={
    decorate,
    decoration={
      show path construction,
      moveto code={},
      lineto code={
        \path [#1]
        (\tikzinputsegmentfirst) -- (\tikzinputsegmentlast);
      },
      curveto code={
        \path [#1] (\tikzinputsegmentfirst)
        .. controls
        (\tikzinputsegmentsupporta) and (\tikzinputsegmentsupportb)
        ..
        (\tikzinputsegmentlast);
      },
      closepath code={
        \path [#1]
        (\tikzinputsegmentfirst) -- (\tikzinputsegmentlast);
      },
    },
  },
  mid arrow/.style={postaction={decorate,decoration={
        markings,
        mark=at position 0.6 with {\arrow[#1]{stealth}} 
      }}},
}
\usetikzlibrary{arrows}
\usetikzlibrary{trees}

\usetikzlibrary{matrix}
\usetikzlibrary{patterns}
\usetikzlibrary{shadings} 
\usepackage{fancyhdr} 
\def\headertitle{Integral coefficient rings and homological dimensions of algebras}
\def\fstpage{1} 
\def\page{$\begin{matrix} {\color{white}0} \\ \thepage \end{matrix}$} 

\usepackage[all]{xy} 
\usepackage{dsfont} 
\usepackage{cite}
\usepackage{mathrsfs} 
\numberwithin{figure}{section}
\usepackage{marginnote} 
\usepackage{graphicx} 
\usepackage{multicol} 

\usepackage{enumitem}
\setenumerate[1]{itemsep=0pt,partopsep=0pt,parsep=\parskip,topsep=3pt}
\setitemize[1]{itemsep=0pt,partopsep=0pt,parsep=\parskip,topsep=3pt}
\setdescription{itemsep=0pt,partopsep=0pt,parsep=\parskip,topsep=3pt}
\setlist[itemize]{leftmargin=35pt}
\setlist[enumerate]{leftmargin=35pt}
\usepackage{changepage} 
\newcommand{\checks}[1]{{\color{cyan}{#1}}} 

\def\addtext{\color{black}}
\usepackage{contour} 
\usepackage{xcolor}
\contourlength{0.03em}
\contournumber{auto}

\newtheorem{theoremAlph}{Theorem}

\newtheorem{theorem}{Theorem}[section]
\newtheorem{lemma}[theorem]{Lemma}
\newtheorem{corollary}[theorem]{Corollary}
\newtheorem{main theorem}[theorem]{Main Theorem}
\newtheorem{proposition}[theorem]{Proposition}
\newtheorem{definition}[theorem]{Definition}

\newtheorem{remark}[theorem]{Remark}
\newtheorem{example}[theorem]{Example}

\newtheorem{question}[theorem]{Question}
\newtheorem{fact}[theorem]{Fact}
\newtheorem{assumption}[theorem]{Assumption}
\numberwithin{equation}{section}

\def\orcid{
\begin{tikzpicture}[baseline=-1mm]
\filldraw[Green!35] (0,0) circle (5pt);
\filldraw[white] (0,0) node{\tiny\textbf{iD}};
\end{tikzpicture}
}

\def\NN{\mathbb{N}} 
\def\ZZ{\mathbb{Z}} 
\def\QQ{\mathbb{Q}} 
\def\RR{\mathbb{R}} 
\def\CC{\mathbb{C}} 
\def\II{\mathbb{I}}
\def\FF{\mathbb{F}}

\newcommand{\Coker}{\operatorname{Coker}}
\newcommand{\Ker}{\operatorname{Ker}}
\newcommand{\Ima}{\operatorname{Im}}

\newcommand{\modcat}{\mathsf{mod}}
\newcommand{\Modcat}{\mathsf{Mod}}

\newcommand{\proj}{\mathsf{proj}}
\newcommand{\inj}{\mathsf{inj}}
\newcommand{\pandi}{\mathsf{proj}\text{-}\mathsf{inj}}
\newcommand{\pandimod}{{^P}\mkern-5.5mu\text{\rm\&}\mkern-4mu{_I}}

\newcommand{\Dcat}{\mathsf{D}}
\newcommand{\ind}{\mathsf{ind}}
\newcommand{\add}{\mathsf{add}}
\newcommand{\Q}{\mathcal{Q}} 
\def\I{\mathcal{I}}
\def\fcts{\mathfrak{s}}
\def\fctt{\mathfrak{t}}

\newcommand{\e}{\varepsilon}
\newcommand{\Hom}{\mathrm{Hom}} %
\newcommand{\End}{\mathrm{End}} %
\newcommand{\Aut}{\mathrm{Aut}} %
\newcommand{\Ext}{\mathrm{Ext}} %
\def\rad{\mathrm{rad}}
\def\soc{\mathrm{soc}}
\def\top{\mathrm{top}}
\def\rank{\mathrm{rank}}
\newcommand{\op}{\mathrm{op}}
\newcommand{\ol}[1]{\overline{#1}}
\newcommand{\w}[1]{\widehat{#1}}
\newcommand{\To}[1]{\mathop{-\!\!\!-\!\!\!\longrightarrow}\limits^{#1}}
\newcommand{\oT}[1]{\mathop{\longleftarrow\!\!\!-\!\!\!-}\limits^{#1}}

\def\pdim{\mathrm{proj.dim}}
\def\idim{\mathrm{inj.dim}}
\def\gldim{\mathrm{gl.dim}}
\def\findim{\mathrm{fin.dim}}
\def\Findim{\mathrm{Fin.dim}}

\def\bfS{\mathbf{S}}

\def\ave{\mathfrak{A}}
\def\id{\mathbf{1}}
\def\ident{\mathrm{id}}

\def\compos{\ \lower-0.2ex\hbox{\tikz\draw (0pt, 0pt) circle (.1em);} \ }
\def\ds{\le_{\oplus}}
\def\isods{{{\raisebox{0.26em}{$\lesssim$}}{\mkern-13.5mu}{\raisebox{-0.4em}{$=$}}{}_{\oplus}}}

\newcommand{\defines}[1]{{\it\color{blue!75}{#1}}}
\title{\bf Integral coefficient rings and homological dimensions of algebras
}

\vspace{5mm}

\author{
Yu-Zhe Liu$^{\ref{Author1}, \href{https://orcid.org/0009-0005-1110-386X}{\orcid}\ref{orcid1}~\ref{CorrespondingAuthor}}$
}
\date{ }
\begin{document}






\maketitle

\begin{enumerate}[label=\textbf{\color{red}\arabic*}] \footnotesize
  \item
    \begin{center}
      School of Mathematics and Statistics, Guizhou University,
      Guiyang 550025, Guizhou, China;

      E-mail: \url{liuyz@gzu.edu.cn} / \url{yzliu3@163.com} (Y.-Z Liu)
    \end{center} \label{Author1}
%
%
\end{enumerate}
\vspace{2mm}
\begin{enumerate}[label=\textbf{\color{red}$\dag$}]
  \item \footnotesize
    \begin{center}
      Corresponding author
    \end{center} \label{CorrespondingAuthor}
\end{enumerate}
\vspace{2mm}
\begin{enumerate} \footnotesize
    \item[{\orcid}] \centering  ORCID: \href{https://orcid.org/0009-0005-1110-386X}{0009-0005-1110-386X}
      \label{orcid1}
\end{enumerate}

\vspace{1mm}

\begin{adjustwidth}{1cm}{1cm}
  \noindent \footnotesize
  \textbf{Abstract}:
We define the integral profiles of all modules and introduce integral coefficient rings
${^{\mathscr{P}}\mkern-6.5mu\text{\it\&}\mkern-5.5mu{_\mathscr{I}}}(A)$
for all finite-dimensional complex algebras $A$.
The integral profile of a module is a matrix with parameters. We provide a classification theorem for modules, to be precise,
\begin{itemize}
  \item[(1)] two modules $M\cong N$ are isomorphic if and only if their integral profiles are similar,
    i.e., $M\cong N$ if and only if $\displaystyle \int M \sim \int N$.
\end{itemize}
That is, the integral profile is a complete invariant of finite-dimensional modules.
Furthermore, we show the following results in this paper:
\begin{itemize}
  \item[(2)] we introduce the central integrals of algebras and show that it is isomorphic to the center of algebras;
  \item[(3)] we provide a descriptions for some special modules;
  \item[(4)] integral coefficient ring of $A$ (with a compatible orthogonal fixed embedding system) has Morita invariance;
  \item[(5)] the global dimension of $A$ is finite if and only if the embedded integral profile of $\mathrm{top}(A)$ lies in ${^{\mathscr{P}}\mkern-6.5mu\text{\it\&}\mkern-5.5mu{_\mathscr{I}}}(A)[x]$;
  \item[(6)] the finitistic dimension of $A$ is finite if and only if each embedded integral profile of $M$ lying in ${^{\mathscr{P}}\mkern-6.5mu\text{\it\&}\mkern-5.5mu{_\mathscr{I}}}(A)[x]$ implies that its degree is less than or equal to a fixing integer $d\in\mathbb{N}^+$;
  \item[(7)] we provide two sufficient conditions, such that if a finite-dimensional complex algebra $A$ satisfies one of them, then its finitistic dimension is finite.
\end{itemize}
\vspace{1mm}


  \noindent
    \textbf{2020 Mathematics Subject Classification}:
16E10, 16G10, 16D90, 28B05
     \label{2020MSC}

\vspace{1mm}

  \noindent
    \textbf{Keywords}: categorified integration; integral profile; finite-dimensional algebra.
     \label{Keywords}
\end{adjustwidth}

\newpage

\tableofcontents


\def\la{\langle} 
\def\ra{\rangle} 
\def\lala{\langle\!\langle}
\def\rara{\rangle\!\rangle}
\def\=<{\leqslant}
\def\>={\geqslant}
\def\vecd{\pmb{d}}
\def\vecg{\pmb{g}}
\def\spa{\mathrm{span}}
\def\Gal{\mathrm{Gal}}
\def\Mat{\mathbf{Mat}}
\def\bfE{\pmb{E}}
\def\op{\mathrm{op}}
\def\norm{\mathfrak{n}}
\def\bigboxplus{\mathop{ \raisebox{-0.3em}{ \scalebox{1.75}{\text{\(\boxplus\)}} } }}

\def\scrN{\mathscr{N}\!or}
\def\scrA{\mathscr{A}}
\def\Pc{\mathds{P}}
\def\Int{\mathrm{Int}}
\newcommand{\CentIntA}[1]{\mathfrak{I}_A{(#1)}}
\newcommand{\olCentIntA}[2]{\overline{\mathfrak{I}}_{A,#1}{(#2)}}

\def\invlim{\underleftarrow{\lim}}
\def\dirlim{\underrightarrow{\lim}}

\def\dimA{d_A}
\def\dimB{d_B}
\def\homo{\varsigma}
\def\itLamb{\mathit{\Lambda}}
\def\Verb{\pmb{|}}

\def\dd{\mathrm{d}}
\def\Daniel{\mathrm{D}}
\def\Bochner{\mathrm{B}}
\def\Lebesgue{\mathrm{L}}
\def\IntProf{\mathop{\text{\contour{black}{$\displaystyle\color{white}\int$}}}\nolimits}
\def\intProf{\mathop{\text{\contour{black}{$\color{white}\int$}}}}
\def\IntProfEmb{\mathop{\text{\contour{black}{$\displaystyle\color{white}\int$}} \mkern-18mu\text{\rotatebox{90}{$\color{cyan}\dag$}}}\nolimits}
\def\intProfEmb{\mathop{\text{\contour{black}{$\color{white}\int$}} \mkern-16mu\text{\rotatebox{90}{$\color{cyan}\dag$}}}\nolimits}
\def\fpipset{\mathfrak{F}^{\mathrm{proj}}}
\def\pandiring{{^{\mathscr{P}}\mkern-6.5mu\text{\it\&}\mkern-5.5mu{_\mathscr{I}}}}
\def\reconstr{\partial^{\mathscr{R}ec}}
\def\recring{{\mathscr{R}ec}}
\def\ricring{{\mathscr{R}ic}}
\def\compos{{\begin{smallmatrix}\circ\end{smallmatrix}}}

\newcommand{\basis}[1]{\mathfrak{B}_{#1}}
\newcommand{\cent}[1]{\mathcal{Z}{(#1)}}

\newcommand{\Sol}{\mathbf{Sol}}            
\newcommand{\LODE}{\mathbf{L}}
\newcommand{\EndL}{\End_{\itLamb}}         
\newcommand{\HomL}{\Hom_{\itLamb}}         
\newcommand{\Lip}{\mathrm{Lip}}            
\newcommand{\Dop}{D}                       
\newcommand{\TV}{\w{T}_V}                  
\newcommand{\DV}{\Dop_V}                   
\newcommand{\SV}{\bfS_{\homo}(\II_{\itLamb}, V)} 
\newcommand{\wSV}{\w{\bfS_{\homo}(\II_{\itLamb}, V)}} 
\newcommand{\wS}{\w{\bfS_{\homo}(\II_{\itLamb})}} 
\def\bfF{\mathbf{Fun}}
\def\ACI{\textsc{Aci}}
\def\Nat{\mathrm{Nat}}


\section{Introduction}\label{sec:intro}


\subsection{Backgrounds}

\textsl{The Backgrounds of this paper mainly lies in two aspects: categorical descriptions of integrations and homological dimensions of algebras.}

\paragraph{Categorical descriptions of integrations.}
It is well-known that classical vector-valued integration places integrable functions with values in Banach spaces in an analytic setting, and standard references include \cite[etc]{DunfordSchwartz1958, Folland1999, Ryan2002}.
On the algebraical side, \cite{Sag1965ana, CL2018Cart-int, CL2018int, Guo2013, Lei2023FA}
and \cite{BCS2015diff, Lemay2019, GP2020, APL2021diff, IL2023ana, Lemay2023, BCS2006diff} respectively provided
some algebraical descriptions of integrations and differentials by using categories,
where Leinster gave a categorical derivation of Lebesgue integration \cite{Lei2023FA}.
In \cite{LLHZ2025, Liu2025pre}, the authors proved that Leinster's work can be adapted to finite-dimensional algebras and normed tensor rings. Although the details of the proofs differ considerably from those in \cite{Lei2023FA}, the main results remain consistent.
These works provide the analytic and categorical environment in which the integral of a module representation can be defined.

\paragraph{Homological dimensions.}
Homological dimension, which originated from Alexandroff's work \cite{Alex1932, Alex1968}, is a central concept in algebra and topology. It is essentially a numerical invariant that characterizes the complexity of a geometric object, a topological object, or an algebraic object.
Homological dimension is also deeply connected with the more intuitive notion of the \v{C}ech--Lebesgue covering dimension of Tychonoff spaces (see \cite{Vcech1933})
\footnote{Tychonoff spaces are also referred to as completely regular Hausdorff spaces, or $T_{3\frac{1}{2}}$ spaces.},
and can be used to study the behaviour of spaces under continuous maps. Specifically, the topological property of \v{C}ech--Lebesgue covering dimension $\=< n$ can be equivalently described by using the language of algebraic topology and homology, and the notion of homological dimension arose precisely from this.
Since the foundational works of Cartan and Eilenberg in \cite{CE1956}, these invariants have become indispensable in homological algebra and in the representation theory of Artin algebras.
For a finite-dimensional algebra $A$, the projective dimension $\pdim_A M$ and injective dimension $\idim_A M$ of individual left/right $A$-modules, together with the global dimension $\gldim A$ of $A$, provide information about the homological properties of $A$ and its module category.
Auslander's classical study of global dimension also shows that this invariant can be computed by some smaller collection of modules \cite{Aus1955}.
Afterwards, homological dimensions of some important classes of algebras have also been computed,
such as \cite[etc]{GR2005, Bel2011, MT2015, ChX2017Finidim, Z2017globalcohomo, ZZ2020, FOZ2025,XinZhang2026}.

\subsection{Motivations}

Initially, Leinster pointed out that the Lebesgue integral of functions of one real variable can be translated into morphisms in a certain category of Banach modules, which he denoted by $\scrA^1$. This morphism starts at the initial object in this category, namely the classical $L^1$ space, and ends at the Banach space $\RR$, which is constrained by the constant function $\RR \to 1$ and the averaging map $\RR^{\oplus 2} \to \RR, (x,y)\mapsto\frac{x+y}{2}$, see \cite[Theorem 2.1, Proposition 2.2]{Lei2023FA}.
In \cite{Liu2025pre}, the author proved that, given an algebra homomorphism $\homo:A\to B$ between two finite-dimensional algebras,
for maps defined on $A$ and taking values in $B$, as long as they can be approximated by step functions,
one can assign to them the corresponding integrals and obtain an algebraic version of $L^p$-spaces ($p\>= 1$).
This $L^p$-space is described as an initial object in a category $\scrA_{\homo}^p$ consisting of Banach modules.
Moreover, when $p=1$, $\scrA_{\homo}^p$ reduces to $\scrA_{\homo}^1$, and the morphisms from $L^1$ (as the initial object) to other objects recover many conclusions in analytic settings,
among which is abstract integration (namely Daniell integral, see \cite{Dan1918, Dan1919-I, Dan1919-II}).
Then, A natural idea is whether one can construct the corresponding abstract $L^p$-spaces for homomorphisms of modules.
Houjun Zhang once made an attempt and defined abstract integrals for modules. This definition complies with Daniell's convention on abstract integration, so it is natural to obtain an equivalent characterization of module isomorphisms via integrals of homomorphisms.
To this end, he considered constructing $\scrA^p$ and $L^p$-spaces on graded algebras.
Although this construction eventually made some progress,
the integral characterization of graded module isomorphisms did not work very well,
so he judged that this definition of integral loses much information about modules.
Fortunately, $\homo \in L^1$, so we can apply to it by using the morphisms in $\scrA^1$ that start from $L^1$.
Although, in many contexts, a left $A$-module over an $\FF$-algebra $A$ is defined as a linear space $M$ equipped with a left $A$-linear action $A\times M \to M$ compatible with the scalar multiplication of the linear space,
it is a well-known fact that every such left $A$-module can be equivalently transformed into an algebra homomorphism $\rho_M:A\to \End_{\FF}(M)$,
where $\End_{\FF}(M)$ is the set of all endomorphisms of $M$, which is also an $\FF$-algebra and is called the endomorphism algebra of $M$.
This allows us to define the integral of the module $M$ as the integral of $\rho_M$, namely
\[ \int_{\II_A} M := (\scrA^1_{\rho_M}) \int_{\II_A} \rho_M \dd\mu_{\II_A}, \]
where $\II_A\subseteq A$ is a pre-selected integration domain and $\mu_{\II_A}$ is a pre-selected measure.
This fact became the starting point of this article.

\subsection{Outline and overview}

Let $\FF$ be a field. We know that every $\FF$-algebra $A$ can be equipped with a norm so that $A$ becomes a normed algebra.
Note that the construction of such a norm depends on the norm already defined on $\FF$.
Since linear spaces can always be normed (need Axiom of Choice), this does not pose a confusing issue.
According to the categorified integration method in \cite{Lei2023FA, LLHZ2025, Liu2025pre},
the prerequisite for defining an integral on an algebra is that the algebra is a Banach algebra.
Therefore, when considering integrals over an algebra $A$, it is natural to take its completion.
For simplicity, except for some basic concepts and definitions from representation theory and homological algebra,
all computations involving integrals in this paper will be carried out under the assumption that $\FF$ is the complex field $\CC$.
This paper is divided into two parts.

\begin{itemize}
  \item Part~I recalls special Banach module categories and the categorified integral, defines integrals of arbitrary modules, and computes them for direct sums, projective modules and injective modules in both probability and parameterized measure spaces.
  \item Part~II introduces integral profiles and homological dimensions.  We establish the matrix characterizations of projective and injective modules, define the total profiles attached to minimal (co)resolutions, construct the integral coefficient ring, and derive the characterizations of global and finitistic dimensions.
\end{itemize}

The theoretical outline of this paper is as follows.

First, we recall the category $\scrA^p_{\homo}$, representation theory, and relevant preliminaries in homological theory, and introduce the integral of modules.
We introduce the average central integral $\ACI(A)$ of $A$.
It is the normalized integral of a center-valued function over a measurable subset of a given region $\II_{\cent{A}}$ of integration.
we prove that the average central integral $\ACI(A)$ of a finite-dimensional complex algebra $A$, regarded as the regular module, is isomorphic to its center $\cent{A}$.

\begin{theoremAlph}\label{thm:main 2608152359}
Let $A$ be a finite-dimensional complex algebra.
For the correspondence
\[c \mapsto
  \frac{1}{\mu_{\II_{\cent{A}}}(\II_{\cent{A}})}
  \displaystyle\int_{\II_{\cent{A}}} c\id_{\II_{\cent{A}}} \dd\mu_{\II_{\cent{A}}},\]
the following statements hold.
\begin{enumerate}[label={\rm(\arabic*)}]
  \item {\rm(Proposition~\ref{prop:AveCentInt}, Corollary \ref{coro:frakZ})}
    The correspondence as above gives a bijection between $\cent{A}$ and $\ACI(A)$.
    Furthermore, it is an isomorphism of algebras.
    \label{thm:main 2608152359 1}
  \item {\rm(Theorem~\ref{thm:AveCentInt})}
    Each element in $\ACI(A)$ induces a natural transformation from the functor $\ident_{{_A\modcat}}: {_A\modcat} \to {_A\modcat}$ to itself.
    \label{thm:main 2608152359 2}
  \item {\rm(Theorem~\ref{thm:AveCentIntDer})}
    The natural transformation given in \ref{thm:main 2608152359 2} can be lifted to a natural transformation in the bounded derived category $\Dcat^b(A)$.
    \label{thm:main 2608152359 3}
\end{enumerate}
\end{theoremAlph}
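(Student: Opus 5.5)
The plan is to reduce everything to one computational fact from the categorified integration framework of \cite{Lei2023FA, Liu2025pre}: the integral $\int_{\II} f\,\dd\mu_{\II}$ is linear in $f$, and it sends the indicator of a measurable set $S$, scaled by a constant vector $v$, to $\mu(S)\,v$. We apply this with $f = c\,\id_{\II_{\cent{A}}}$ for $c\in\cent{A}$. Since $c\,\id_{\II_{\cent{A}}}$ is a constant step function with value $c$, the integral equals $\mu_{\II_{\cent{A}}}(\II_{\cent{A}})\,c$. Its normalization is therefore exactly $c$, where we need $0<\mu_{\II_{\cent{A}}}(\II_{\cent{A}})<\infty$, as built into the definition of $\ACI(A)$. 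This computation is the first step.

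For \ref{thm:main 2608152359 1}, surjectivity onto $\ACI(A)$ holds by definition, because $\ACI(A)$ is defined as the set of such normalized integrals. Injectivity follows from the computation, since the averaged integral recovers $c$. Additivity and $\CC$-linearity come from linearity of the integral, once we check that the integral map is a morphism in $\scrA^1_{\homo}$. For multiplicativity, we define the product on $\ACI(A)$ by transport, or as the product of the resulting values in $\cent{A}$; because each averaged integral equals its integrand constant, $c c'$ maps to the product and $1$ maps to the unit. The only subtlety is how the paper's definition of the algebra structure on $\ACI(A)$ is set up. If the structure is intrinsic, it is presumably the pointwise product of constant step functions followed by integration, and the same identity handles it.

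For \ref{thm:main 2608152359 2}, we send $\alpha\in\ACI(A)$, corresponding to $c\in\cent{A}$, to the family $\eta_M: M\to M$, $m\mapsto \rho_M(c)m$. Centrality of $c$ makes $\eta_M$ $A$-linear. For $f:M\to N$ we have $f(cm)=cf(m)$, which gives naturality. If the statement requires expressing $\eta_M$ as an integral, namely $\frac{1}{\mu(\II)}\int \rho_M(c)\id\,\dd\mu$, then we again use linearity of $\rho_M$ together with commutation of bounded linear maps with the integral, via the universal property of $L^1$ as initial object.

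For \ref{thm:main 2608152359 3}, we extend $\eta$ degreewise to complexes. For $X^\bullet$ in $\Dcat^b(A)$ we set $(\eta_{X})^i=\eta_{X^i}$. This is a chain map because the differentials are $A$-linear, and it is natural with respect to chain maps and homotopies, since $\eta$ commutes with everything in ${_A\modcat}$. It therefore descends to the homotopy category. It inverts nothing, but it commutes with quasi-isomorphisms $s$, so it passes to roofs $f s^{-1}$, since $\eta_Y f s^{-1} = f s^{-1}\eta_X$ follows from $s\eta = \eta s$. It also commutes with the shift. The main obstacle I expect is this descent to the localization, where we must verify compatibility with roofs carefully. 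Equivalently, we can observe that $c$ acts as an element of the center of $\Dcat^b(A)$, via the map $\cent{A}\to$ the center of the triangulated category.
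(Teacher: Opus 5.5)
The gap is in (1), and it carries over into (2) and (3). Your arguments for (2) and (3) are otherwise the paper's: act degreewise by $\rho(c)$, get naturality from $A$-linearity, and descend through the homotopy category and the localization. Your roof computation is more explicit than the paper's. In (1), however, you say surjectivity onto $\ACI(A)$ holds ``by definition''. That is not how $\ACI(A)$ is defined. It is not the set of normalized integrals of constants $c\id_{\II_{\cent{A}}}$. It is the set of all $\olCentIntA{J}{f}=\frac{1}{\mu_{\II_{\cent{A}}}(J)}\CentIntA{\id_Jf}$, where $f$ is any function in $L^1(\II_{\cent{A}};A)$ satisfying the central condition and $J$ is any measurable set of positive measure. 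Your computation shows that the correspondence is $c\mapsto c$ inside $A$. That gives injectivity and the inclusion $\cent{A}\subseteq\ACI(A)$, which is Lemma~\ref{lemm:AveCentInt2}. Surjectivity, however, is equivalent to the reverse inclusion $\ACI(A)\subseteq\cent{A}$: the average of a center-valued integrand over any $J$ must again be central. This is the real content of the bijection (Lemmas~\ref{lemm:center-closure} and \ref{lemm:AveCentInt1}(1)), and your proposal never proves it. Part (2) needs the same fact. For a general element $\olCentIntA{J}{f}$, the map $\rho_M(\olCentIntA{J}{f})$ is $A$-linear only because $\olCentIntA{J}{f}$ is central. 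So your phrase ``$\alpha\in\ACI(A)$, corresponding to $c\in\cent{A}$'' assumes exactly the missing inclusion.

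Here is how to close the gap. The function $\id_Jf$ is center-valued almost everywhere. Choose step functions $g_i\to\id_Jf$ in $L^1$ and a linear projection $\pi:A\to\cent{A}$. Since $A$ is finite-dimensional, $\pi$ is bounded, and $\pi\circ(\id_Jf)=\id_Jf$ almost everywhere. Hence $\|\pi\circ g_i-\id_Jf\|\le\|\pi\|\,\|g_i-\id_Jf\|\to 0$, so you may replace the $g_i$ by center-valued step functions. Their integrals $\sum_j\mu(J_{ij})\pi(a_{ij})$ lie in $\cent{A}$. The integral is continuous and $\cent{A}$ is closed in $A$, so $\CentIntA{\id_Jf}\in\cent{A}$. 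Dividing by $\mu_{\II_{\cent{A}}}(J)>0$ keeps you in $\cent{A}$. Equivalently, every functional $\phi$ on $A$ that vanishes on $\cent{A}$ kills the integral, because the integral is a Bochner integral and commutes with $\phi$. Once $\ACI(A)=\cent{A}$ as subsets of $A$, the correspondence is the identity, so the algebra-isomorphism claim in (1) is immediate. The paper additionally realizes this algebra as the subalgebra $\mathfrak{Z}(A)$ of natural transformations via $c\mapsto\rho_*(c)$, which is injective by evaluating at $1_A$ in the regular module. Your arguments for (2) and (3) then go through verbatim.
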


Second, we consider the integrals of modules in probability space $(\II_A, \Sigma_{\II_A}, \mu_{\II_A})$.
Here, $\Sigma_{\II_A}$ is a $\sigma$-algebra and $\mu_{\II_A}$ is a measure such that $\mu_{\II_A}(\II_A)=1$.
We compute the integral of some special modules.

\begin{theoremAlph} \label{thm:main 2608160003}
Keep the notations from Theorem \ref{thm:main 2608152359}.
\begin{enumerate}[label={\rm(\arabic*)}]
  \item {\rm(Theorem \ref{thm:int-module iso})}
    If two modules $M$, $N \in {_A\modcat}$ are isomorphic, then the following matrix similarity holds:
    \[ \int_{\II_A} N  \sim \int_{\II_A} M. \]
    \label{thm:main 2608160003 1}
  \item {\rm(Proposition \ref{prop:int-dir sum} and Corollary \ref{coro:int-dir sum})}
    For a family left $A$-module $M_1$, $\ldots$, $M_n$, we have
    \[\int_{\II_A} \bigoplus_{i=1}^n M_i = \bigoplus_{i=1}^n \int_{\II_A} M_i.\]
    \label{thm:main 2608160003 2}
  \item {\rm(Proposition \ref{prop:int-proj mod})}
    Fixing a basis $\basis{A}=\{a_1,\ldots,a_{\dimA}\}$ of $A$ and a basis $\basis{Ae}=\{p_1,\ldots,p_r\}$ of a projective module $Ae$. Then the integral of this projective module is an $r\times r$ matrix
    \[\int_{\II_A\simeq[0,1]^{\times\dimA}} Ae =
    \frac{1}{2}
    \left(
    \begin{matrix}
      \sum\limits_{i=1}^{\dimA} \lambda_{i,j,k}
    \end{matrix}
    \right)_{k,j} \]
    where $\lambda_{i,j,k} \in \CC$ is obtained by $a_ip_j=\sum\limits_{k=1}^r\lambda_{i,j,k} p_k$.
    \label{thm:main 2608160003 3}
  \item {\rm(Proposition \ref{prop:int-inj mod})}
    With the notations of \ref{thm:main 2608160003 3}, the dual statement holds, i.e.,
    with respect to the dual basis $\{p_1^*,\ldots,p_r^*\}$ of $D(eA)$, we have
    \[ \int_{\II_A}D(eA) =
    \frac{1}{2}
    \left(
    \begin{matrix}
    \sum\limits_{i=1}^{\dimA}\mu_{i,j,k}
    \end{matrix}
    \right)_{j,k},\]
    where $\mu_{i,j,k}\in\CC$ is obtained from $p_j a_i=\sum\limits_{k=1}^r\mu_{i,j,k}p_k$.
    \label{thm:main 2608160003 4}
\end{enumerate}
\end{theoremAlph}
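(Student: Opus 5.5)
Throughout, I read $\int_{\II_A} M$ as the integral of the structure map $\rho_M:A\to\End_{\CC}(M)$ over the probability space $(\II_A,\Sigma_{\II_A},\mu_{\II_A})$. Since $\II_A\simeq[0,1]^{\times\dimA}$ via the fixed basis $\basis{A}$, this integral is computed coordinatewise. Once a basis of $M$ is chosen, $\End_{\CC}(M)$ becomes a matrix algebra, so the integral is a matrix. All four parts then reduce to two ingredients: linearity of the categorified integral, and how it interacts with bounded linear maps on the target. These come from the $\scrA^1$-morphism property recalled in Part~I. In particular, for a bounded linear map $T$ of Banach spaces, $\int T\circ f=T\int f$.

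Part \ref{thm:main 2608160003 1}. Take an isomorphism $\phi:M\to N$. Then $\rho_N(a)=\phi\rho_M(a)\phi^{-1}$ for all $a\in A$, so $\rho_N=c_\phi\circ\rho_M$, where $c_\phi(X)=\phi X\phi^{-1}$ is conjugation. The map $c_\phi$ is a bounded linear map $\End_{\CC}(M)\to\End_{\CC}(N)$, so it passes through the integral. Hence $\int N=\phi\,(\int M)\,\phi^{-1}$, which is a similarity of matrices once bases are fixed.

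Part \ref{thm:main 2608160003 2}. Choose the basis of $\bigoplus M_i$ as the union of the bases of the $M_i$. Then $\rho_{\bigoplus M_i}(a)$ is block diagonal with blocks $\rho_{M_i}(a)$, i.e.\ $\rho_{\bigoplus M_i}$ is the composite of $(\rho_{M_i})_i$ with the block-diagonal embedding $\prod_i\End(M_i)\to\End(\bigoplus M_i)$. This embedding is linear and bounded. Integration commutes with it and also acts componentwise on a finite product, which gives the block-diagonal sum formula. Corollary-level details follow by induction on $n$.

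Parts \ref{thm:main 2608160003 3} and \ref{thm:main 2608160003 4}. Write a point of $\II_A$ as $a=\sum_i t_ia_i$ with $(t_1,\ldots,t_{\dimA})\in[0,1]^{\dimA}$. By linearity of $\rho_{Ae}$,
\[\rho_{Ae}(a)=\sum_i t_i\,\rho_{Ae}(a_i), \qquad \rho_{Ae}(a_i)=(\lambda_{i,j,k})_{k,j},\]
where the matrix is read column $j$, row $k$ from $a_ip_j=\sum_k\lambda_{i,j,k}p_k$. Integrating against the product Lebesgue (probability) measure gives $\int t_i\,\dd\mu=\tfrac12$, so
\[\int Ae=\tfrac12\Bigl(\sum_i\lambda_{i,j,k}\Bigr)_{k,j}.\]
For $D(eA)$, the left action is $(a\cdot f)(x)=f(xa)$. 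On the dual basis this gives $a_i\cdot p_j^*=\sum_k\mu_{i,k,j}\,p_k^*$, since $p_k a_i=\sum_j\mu_{i,k,j}p_j$. So the matrix of $a_i$ is the transpose-type array $(\mu_{i,j,k})_{j,k}$, and the same averaging yields the stated formula.

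The main obstacle is justifying that the abstract categorified integral of Part~I agrees with this concrete computation. That is, I need that $\rho_M$ is in $L^1$, that integration is linear and commutes with bounded linear maps, and that on $[0,1]^{\dimA}$ with the normalized measure the integral of a coordinate function is $\tfrac12$. I would derive these from the universal property of $L^1$ as the initial object of $\scrA^1$: a bounded linear map on the target induces a morphism in $\scrA^1$, and uniqueness forces it to commute with $\int$. The index bookkeeping in part \ref{thm:main 2608160003 4} also needs care.
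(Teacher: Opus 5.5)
Your proposal is correct. All four formulas come out as stated, including the bookkeeping in (4): $a_i\cdot p_j^*=\sum_k\mu_{i,k,j}p_k^*$ gives the array $(\mu_{i,j,k})_{j,k}$.

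The organization differs from the paper's. The paper first proves one transfer identity, Theorem~\ref{thm:int-module}: $\int_{\II_A}M=\rho_M(c_{\II_A})$, with $c_{\II_A}=(\scrA^1_{\ident_A})\int_{\II_A}\ident_A\,\dd\mu_{\II_A}=\frac12\sum_i a_i$. It obtains this by pushing the \emph{source-side} map $\rho_M$ through the integral, using the functor $\rho_M^{\otimes}=-\otimes_A B_M$ and initiality of $L^1$. After that, every part is algebra at a single element. Part (1) is $h\rho_M(c_{\II_A})h^{-1}=\rho_N(c_{\II_A})$. Part (2) is $\rho_{M\oplus N}=\rho_M\oplus\rho_N$ evaluated at $c_{\II_A}$. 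Parts (3) and (4) are the action of $\frac12\sum_i a_i$ on the chosen bases of $Ae$ and $D(eA)$.

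You instead push a different \emph{target-side} map through the integral in each part: the conjugation $c_\phi$, the block-diagonal embedding, and the projections of a finite product. Your coordinate expansion in (3)--(4) is the paper's identity in disguise.

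Your route is natural if you identify the categorified integral with the Bochner integral, as the footnote in the proof of Lemma~\ref{lemm:center-closure} does, since bounded operators commute with Bochner integrals. Inside the $\scrA^1$ framework, however, your claim that ``a bounded linear map on the target induces a morphism in $\scrA^1$'' is too broad. Morphisms must be bimodule maps that preserve the distinguished element and commute with $\delta$. Moreover, $c_\phi$ goes from $\scrA^1_{\rho_M}$ to the different category $\scrA^1_{\rho_N}$.

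Your argument still works for the maps you actually use. Each is a unital algebra homomorphism intertwining the structure maps (for example $c_\phi\circ\rho_M=\rho_N$), so you can transport structure and then invoke initiality. But you pay this cost three times, whereas the paper pays it once. The paper also gets, as a bonus, that every module's integral is the action of one fixed element $c_{\II_A}$, which the parameterized version in Theorem~\ref{thm:param-int-module} reuses.
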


Third, we introduce integral profiles of modules, which are parameterised integrals.
Formally, the integral profile $\IntProf_{\II_A} M$ of a module $M$ is defined as a map in Part II, and it is not used in Part I.
Thus, its rigorous definition is therefore given in Part II, see Definition~\ref{def:int prof}.
For a module $M$, the integral profile of it used in this paper is denoted by $\int_{\II_A(\pmb{t})}M$,
where the parameter is a $\dimA$-dimensional vector $\pmb{t}$ that controls the integration domain $\II_A(\pmb{t})$.
One of the results of this paper is a classification of modules over finite-dimensional algebras by means of their integrals. The classification theorem is as follows.

\begin{theoremAlph}[{Theorem \ref{thm:param-int-module iso}}]
Two left $A$-modules $M$ and $N$ are isomorphic if and only if
\[\int_{\II_A(\pmb{t})}N  \sim \int_{\II_A(\pmb{t})}M\]
holds for all $\pmb{t}\in(\RR^{>0})^{\times\dimA}$. Here, the transition matrix is independent of $\pmb{t}$.
\end{theoremAlph}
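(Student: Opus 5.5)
The plan is to make the integral profile explicit as a matrix-valued function of the parameter $\pmb{t}$, and then read off the structure constants of the module from it. Fix a basis $\basis{A}=\{a_1,\ldots,a_{\dimA}\}$ of $A$ and a basis $\{m_1,\ldots,m_r\}$ of $M$. Write $\rho_M(a_i)=R_i\in\Mat_{r\times r}(\CC)$. Identify $\II_A(\pmb{t})$ with the box $\prod_{i}[0,t_i]$ in the coordinates given by $\basis{A}$, with Lebesgue measure, as the parameterised analogue of the probability space used in Proposition~\ref{prop:int-proj mod}. Since $\rho_M$ is linear, it is a bounded function and lies in $L^1$, so the integral in $\scrA^1_{\rho_M}$ is the honest Lebesgue integral. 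It is computed coordinatewise:
\[\int_{\II_A(\pmb{t})}M=\sum_{i=1}^{\dimA}\Big(\int_{\prod_j[0,t_j]}x_i\,\dd x\Big)R_i=\frac{t_1\cdots t_{\dimA}}{2}\sum_{i=1}^{\dimA} t_i R_i .\]
This is the same computation as in Proposition~\ref{prop:int-proj mod}, but with general side lengths instead of the unit cube.

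For the direction ``$\Rightarrow$'', suppose $f:M\to N$ is an isomorphism with matrix $P$. Then $\rho_N(a)=P\rho_M(a)P^{-1}$ for all $a\in A$. Integration is linear and commutes with constant matrices, because multiplication by $P$ and $P^{-1}$ is a bounded linear map and hence a morphism compatible with the $\scrA^1$-structure. Therefore $\int_{\II_A(\pmb{t})}N=P\big(\int_{\II_A(\pmb{t})}M\big)P^{-1}$ with the same $P$ for every $\pmb{t}$. This is essentially Theorem~\ref{thm:int-module iso} carried out uniformly in $\pmb{t}$.

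For the direction ``$\Leftarrow$'', assume there is a single invertible $P$ with $\int_{\II_A(\pmb{t})}N=P\big(\int_{\II_A(\pmb{t})}M\big)P^{-1}$ for all $\pmb{t}\in(\RR^{>0})^{\times\dimA}$. In particular the two modules have the same dimension $r$. Divide by the nonzero scalar $t_1\cdots t_{\dimA}/2$ to get
\[\sum_i t_i\,\rho_N(a_i)=\sum_i t_i\,P\rho_M(a_i)P^{-1}\qquad\text{for all positive } t_i.\]
Both sides are linear polynomials in $\pmb{t}$ and they agree on an open set, so their coefficients agree. This gives $\rho_N(a_i)=P\rho_M(a_i)P^{-1}$ for each $i$. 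By linearity $\rho_N(a)P=P\rho_M(a)$ for all $a\in A$, so $P$ defines an $A$-linear bijection $M\to N$.

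The main obstacle is the first step: pinning down the formula for the integral profile from Definition~\ref{def:int prof}, namely what exactly $\II_A(\pmb{t})$ and its measure are. The argument only needs the profile to have the form $\sum_i c_i(\pmb{t})R_i$, where the coefficient functions $c_1,\ldots,c_{\dimA}$ are linearly independent as functions of $\pmb{t}$. I would verify this independence directly from the definition, for instance by checking that the coordinate moments have distinct monomial leading terms. That independence is what lets us separate the $R_i$ in the converse direction. It is also exactly where the hypothesis that $P$ does not depend on $\pmb{t}$ is used: for a single $\pmb{t}$, similarity of one matrix $\sum_i t_iR_i$ is far too weak to force isomorphism of modules.
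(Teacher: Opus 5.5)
Your proposal is correct and follows essentially the same route as the paper. Both arguments start from the explicit formula $\int_{\II_A(\pmb{t})}M=\frac{1}{2}\big(\prod_{l}t_l\big)\sum_i t_i\rho_M(a_i)$ (Lemma~\ref{lemm:param-barycenter} and Theorem~\ref{thm:param-int-module}), get the forward direction by conjugating with one fixed matrix, and get the converse by coefficient extraction. The only difference is cosmetic: the paper isolates $\rho_M(a_i)$ by applying $\frac{\partial^2}{\partial t_i^2}\prod_{l\neq i}\frac{\partial}{\partial t_l}$ (Corollary~\ref{cor:param-int-module}), whereas you divide by the nonzero factor $\frac12\prod_l t_l$ and compare coefficients of linear polynomials that agree on an open set, which is equivalent.
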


\noindent
Therefore, The integral profile is a complete invariant of finite-dimensional modules. More precisely, a finite collection of its distinguished evaluations recovers not only the isomorphism class of every module but also all morphism spaces between modules.

Theorems \ref{thm:main 2608160003}, \ref{thm:main 2608160003 2}, \ref{thm:main 2608160003 3}, and \ref{thm:main 2608160003 4} can all be generalised to the setting of integral profiles; the proof only needs to use the fact that, for any fixed parameter $\pmb{t}$, the corresponding statements always hold.
See Proposition \ref{prop:param-int-dir sum}, Corollary \ref{coro:param-int-dir sum}, Proposition \ref{prop:param-int-proj mod}, and Proposition \ref{prop:param-int-inj mod}.
In particular, Propositions \ref{prop:param-int-proj mod} and \ref{prop:param-int-inj mod} provide integral characterisations of projective and injective modules, respectively. These characterisations are \textbf{necessary and sufficient}, see the following theorems.

\begin{theoremAlph}
Assume that $A=\CC\Q/\I$ is a finite-dimensional complex algebra.
\begin{enumerate}[label={\rm(\arabic*)}]
  \item {\rm(Theorem \ref{thm:mat char-proj mod})}
    Fix a basis of a given left $A$-module $M\in{_A\modcat}$. Then $M$ is projective if and only if
    \[ \IntProf_{\II_A} M
    \sim \frac{\mu_{\II_A(\pmb{t})}(\II_A(\pmb{t}))}{2} \cdot
    \bigoplus_{v\in\Q_0}
    \bigg(
      \sum_{i=1}^{\dimA}t_iL_i^{(v)}
    \bigg)^{\oplus m_v}, \]
    where all $m_v$ are non-negative integers such that $\sum\limits_{v\in\Q_0}p_vm_v=\dim_{\CC}(M)$ {\rm(}$p_v=\dim_{\CC}P(v)${\rm)}.
  \item {\rm(Theorem \ref{thm:mat char-inj mod})}
    Fix a basis of a given left $A$-module $M\in{_A\modcat}$. Then $M$ is injective if and only if
   \[
    \IntProf_{\II_A} M
    \sim \frac{\mu_{\II_A(\pmb{t})}(\II_A(\pmb{t}))}{2} \cdot
    \bigoplus_{v\in\Q_0}
    \bigg(
      \sum_{i=1}^{\dimA}t_iR_i^{(v)}
    \bigg)^{\oplus n_v},
    \]
    where all $n_v$ are non-negative integers such that $\sum\limits_{v\in\Q_0} c_vn_v = \dim_{\CC}(M)$ {\rm(}$c_v=\dim_{\CC}I(v)${\rm)}.
\end{enumerate}
\end{theoremAlph}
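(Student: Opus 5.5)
The plan is to reduce both statements to the classification theorem (Theorem~\ref{thm:param-int-module iso}) combined with the additivity of integral profiles under direct sums (Proposition~\ref{prop:param-int-dir sum}, Corollary~\ref{coro:param-int-dir sum}) and the explicit computation for indecomposable projectives and injectives (Propositions~\ref{prop:param-int-proj mod} and~\ref{prop:param-int-inj mod}). I will first record the parameterised form of Proposition~\ref{prop:int-proj mod}. Write the structure constants of $P(v)=Ae_v$ in a fixed basis as a $p_v\times p_v$ matrix $L_i^{(v)}$, with entries $(\lambda_{i,j,k})_{k,j}$, for each basis vector $a_i$ of $A$. Integrating the representation $\rho_{P(v)}$ over the box $\II_A(\pmb{t})$ then yields
\[ \IntProf_{\II_A}P(v)=\frac{\mu_{\II_A(\pmb{t})}(\II_A(\pmb{t}))}{2}\sum_{i=1}^{\dimA}t_iL_i^{(v)}. \]
This holds because $\rho$ is linear, so the integral of $\rho(\sum s_ia_i)$ over a box $\prod[0,t_i]$ is $\mu\cdot\sum \tfrac{t_i}{2}\rho(a_i)$. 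The dual computation with the right action on $D(e_vA)$ gives the matrices $R_i^{(v)}$ for $I(v)$.

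For the ``only if'' direction of (1), suppose $M$ is projective. Since $A=\CC\Q/\I$ is basic, $M\cong\bigoplus_v P(v)^{\oplus m_v}$, and comparing dimensions gives $\sum_v p_vm_v=\dim_\CC M$. Theorem~\ref{thm:int-module iso}, in its parameterised version, says isomorphic modules have similar profiles via a $\pmb{t}$-independent transition matrix; this is simply conjugation by the base-change matrix of the chosen isomorphism. Additivity then turns $\IntProf_{\II_A}\bigoplus_v P(v)^{\oplus m_v}$ into the block-diagonal sum $\bigoplus_v(\IntProf_{\II_A} P(v))^{\oplus m_v}$. Pulling out the common scalar $\mu/2$ gives the displayed form.

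For the ``if'' direction, assume the similarity holds, with a transition matrix $T$ that is independent of $\pmb{t}$. This is the convention of Theorem~\ref{thm:param-int-module iso}, and I will read ``$\sim$'' in that sense throughout. Set $P=\bigoplus_v P(v)^{\oplus m_v}$. By the computation above, the right-hand side is exactly $\IntProf_{\II_A}P$, so $\IntProf_{\II_A}M\sim\IntProf_{\II_A}P$ uniformly in $\pmb{t}$. Theorem~\ref{thm:param-int-module iso} then gives $M\cong P$, and $P$ is projective. Part (2) follows by the same argument using $I(v)=D(e_vA)$, the matrices $R_i^{(v)}$, and the dimension count $\sum c_vn_v=\dim M$.

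The main obstacle is justifying that the similarity in the hypothesis is $\pmb{t}$-uniform. If it were only pointwise in $\pmb{t}$, the argument would need extra work. In that case I would use linear independence of the $t_i$: both sides are linear in $\pmb{t}$ up to the common scalar $\mu$, which is positive and the same on both sides. Comparing coefficients of $t_i$ gives $T\rho_M(a_i)T^{-1}=\rho_P(a_i)$ for every $i$ simultaneously, and that requires a single $T$. The conjugation identity says $T$ intertwines the two $A$-actions, so it defines a module isomorphism $M\to P$ directly. I would include this coefficient comparison explicitly, since it is essentially the content of Theorem~\ref{thm:param-int-module iso} specialised to this case.
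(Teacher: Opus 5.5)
Your proposal is correct and follows essentially the same route as the paper. The forward direction uses Krull--Schmidt, Proposition~\ref{prop:param-int-proj mod} (resp.\ Proposition~\ref{prop:param-int-inj mod}) and conjugation by the matrix of the chosen isomorphism, and the converse uses Theorem~\ref{thm:param-int-module iso} via coefficient recovery. The $\pmb{t}$-uniformity you flag as the main obstacle is already part of Definition~\ref{def:similar}, which asks for a single invertible $H$ independent of $\pmb{t}$, so your fallback discussion is unnecessary (and, as you yourself note, that fallback also needs a single $T$).
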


Forth, we introduce the integral coefficient ring $\pandiring(A)$ for each finite-dimensional algebra $A$,
whose elements are obtained by the integral profile of projective and injective modules.
The following theorem shows the Morita invariance of integral coefficient ring.

\begin{theoremAlph}[{Theorem \ref{thm:Morita inv icring}}]
As a complex algebra without an identity, the isomorphism class of the integral coefficient ring of a finite-dimensional complex algebra equipped with a compatible orthogonal fixed embedding system $($see Definition \ref{def:compatible orth emb sys}$)$ is a Morita invariant.
\end{theoremAlph}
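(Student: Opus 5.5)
The plan is to reduce Morita equivalence to the standard basic-algebra situation and then compare the generators of the two rings one at a time. Any finite-dimensional complex algebra $A$ is Morita equivalent to its basic algebra $A^b=\End_A(P)^{\op}$, where $P=\bigoplus_{v\in\Q_0}P(v)$ contains each indecomposable projective exactly once. Moreover, $A\cong\End_{A^b}(\bigoplus_v P_b(v)^{\oplus m_v})^{\op}$. It therefore suffices to show that $\pandiring(A)\cong\pandiring(A^b)$ as non-unital complex algebras whenever both carry compatible orthogonal fixed embedding systems in the sense of Definition~\ref{def:compatible orth emb sys}. Given two Morita equivalent algebras $A$ and $B$, transitivity through the common basic algebra then finishes the argument.

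\textbf{Step 1: identify the generators.} By Theorem~\ref{thm:mat char-proj mod} and Theorem~\ref{thm:mat char-inj mod}, the integral profile of an indecomposable projective $P(v)$ is similar to $\frac{\mu_{\II_A(\pmb t)}(\II_A(\pmb t))}{2}\sum_i t_iL_i^{(v)}$. Likewise, the profile of $I(v)$ is similar to the analogous expression in the $R_i^{(v)}$. The ring $\pandiring(A)$ is generated by these profiles after they are embedded via the fixed embedding system.

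\textbf{Step 2: compare the data across the equivalence.} The Morita equivalence $F=\Hom_A(P,-)\colon {_A\modcat}\to{_{A^b}\modcat}$ sends $P(v)\mapsto P_b(v)$ and $I(v)\mapsto I_b(v)$. It is given on matrices by conjugation with the idempotent cut $e=\sum_v e_v$, where $e_v$ is one chosen primitive idempotent per vertex, so that $A^b\cong eAe$. The structure constants $\lambda_{i,j,k}$ for $A^b$ are obtained from those of $A$ by restricting to the basis $eAe$ of $A^b$. Concretely, this means the left-multiplication matrices $L_i^{(v)}$ of $A^b$ are exactly the compressions $eL^{(v)}e$ of those of $A$.

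\textbf{Step 3: construct the isomorphism.} Compatibility of the embedding systems ensures that the orthogonal embeddings of the blocks for different vertices $v$ are preserved by this compression. Because the embeddings are orthogonal, products of generators attached to distinct vertices vanish, so multiplication in $\pandiring$ decomposes vertex by vertex. On each vertex block, the compression map is an algebra map. I would define $\Phi\colon\pandiring(A)\to\pandiring(A^b)$ on generators by this blockwise compression, extend it linearly and multiplicatively, and check that it respects relations using the orthogonality. The inverse is the inflation induced by $-\otimes_{A^b}P$.

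\textbf{Main obstacle.} The hardest part is that the parameters $\pmb t$ live in $(\RR^{>0})^{\times d_A}$ for $A$ but in $(\RR^{>0})^{\times d_{A^b}}$ for $A^b$, and $d_A\neq d_{A^b}$ in general. The coefficient ring's elements are polynomials or functions in these parameters, so the isomorphism cannot literally match variables. My first attempt is to use the compatibility condition of Definition~\ref{def:compatible orth emb sys} to choose bases of $A$ adapted to the Peirce decomposition $\bigoplus_{u,w}e_uAe_w$. With such a basis, the sums $\sum_i t_iL_i^{(v)}$ split into contributions indexed by arrows and paths that are independent of the multiplicities $m_v$. I would then show that the resulting parameter-free coefficient ring depends only on the quiver with relations $(\Q,\I)$. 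The prefactor $\mu(\II(\pmb t))/2$ is a common scalar for all generators and should be absorbed, since the statement only claims an isomorphism of non-unital algebras.
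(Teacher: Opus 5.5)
Your plan does not close, and the ``main obstacle'' you identify is a symptom of a missing structural fact rather than a technical detail. The step that fails concretely is Step~3. The compression $\phi\mapsto e\phi e$ is multiplicative only on operators commuting with $\rho_X(e)$, and the profile values $\rho_X(c_{\II_A(\pmb t)})$ do not commute with it. Take $A=M_2(\CC)$ with basis $E_{11},E_{12},E_{21},E_{22}$ and parameters $t_{11},t_{12},t_{21},t_{22}$, and let $X=\CC^2$ and $e=E_{11}$. The $(1,1)$-entry of $\bigl(\IntProf_{\II_A}X\bigr)^2$ is $\tfrac14\bigl(\prod_{k,l}t_{kl}\bigr)^2(t_{11}^2+t_{12}t_{21})$. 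The square of the compressed profile is only $\tfrac14\bigl(\prod_{k,l}t_{kl}\bigr)^2t_{11}^2$, and specialising the extra parameters to $1$ does not repair this. So no matrix-level algebra map of the kind you describe exists. Your proposed way around the parameter mismatch also changes the object. The statement concerns the ring of $\pmb t$-dependent functions itself, not a ``parameter-free coefficient ring'' attached to $(\Q,\I)$. Moreover, the prefactor $\frac12\prod_l t_l$ is a non-constant function whose $m$-th power appears in $F^m$, so dividing it out is not multiplicative.

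The missing idea is Lemma~\ref{lemm:cyclic int coeff component} combined with Proposition~\ref{prop:orth decomposition icring}. Under a compatible orthogonal system, $\pandiring(A)=\bigoplus_{(X,r)}\langle F_{X,r}\rangle$, and distinct components multiply to zero. Here $X$ runs over the indecomposables of ${_A\pandi}$, not over vertices; an isomorphism $P(v)\cong I(w)$ may occur. Each $F_{X,r}$ satisfies no nonzero polynomial relation without constant term. Indeed, if $g(F_{X,r})=0$, then the trace of $\IntProf_{\II_A}X(\pmb t)$ is a sum of roots of $g$, hence a continuous function with finitely many values, hence constant. On the other hand, by Lemma~\ref{lemm:param-barycenter} that trace is a nonconstant polynomial in $\pmb t$, because $\operatorname{tr}\rho_X(1_A)=\dim_{\CC}X\neq0$. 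Thus $\langle F_{X,r}\rangle\cong x\CC[x]$, and the isomorphism type of $\pandiring(A)$ depends only on the index set of pairs $(X,r)$. A Morita equivalence $\mathcal E$ preserves indecomposability, projectivity and injectivity, so $F_{X,r}^m\mapsto F_{\mathcal E(X),r}^m$ is the required isomorphism. No bases, compressions, reduction to the basic algebra, or comparison of $d_A$ with $d_B$ is needed. Even your fallback of defining $\Phi$ on generators and extending multiplicatively requires exactly this freeness. Orthogonality controls only products of distinct generators. You must also know that the powers of a single generator are linearly independent, in $\pandiring(A)$ for well-definedness and in the target for injectivity.
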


We introduce the integral coefficient ring in order to study the (embedded) total integral profiles of projective resolutions and injective coresolutions of modules in the formal power series ring $\pandiring(A)[[x]]$, and thereby obtain a characterisation of homological dimensions.
Here, for each module $M$ and its minimal projective resolution $\pmb{P}^{\bullet}_M=(P^n_M, d^n_M)$,
we use $\IntProf_{\II_A} (\pmb{P}^{\bullet}_M, x)$ to represent the total projective integral profile of $M$
and use $\IntProfEmb_{\II_A} (\pmb{P}^{\bullet}_M, x)$ to represent the embedded total projective integral profile of $M$.
The following result provide a description of global dimension of algebra by using $\pandiring[[x]]$.

\begin{theoremAlph}[{Theorem \ref{thm:gldim-int}}] \label{thm:main 2608160903}
Let $A=\CC\Q/\I$ be a finite-dimensional complex algebra, and $\pandiring(A)$ be its integral coefficient ring.
The following statements are equivalent:
\begin{enumerate}[label={\rm(\arabic*)}]
  \item $\gldim A < \infty$;
  \item $\displaystyle \sup_{v\in\Q_0} \deg_x\IntProf_{\II_A}(\pmb{P}_{S_v}^\bullet;x) < \infty$;
  \item $\displaystyle \IntProf_{\II_A}(\pmb{P}_{\top(A)}^\bullet;x) \,\widetilde{\in}\, \pandiring(A)[x]$;
  \item $\displaystyle \sup_{v\in\Q_0} \deg_x\IntProf_{\II_A}(\pmb{I}_{\bullet}^{S_v};x) < \infty$;
  \item $\displaystyle \IntProf_{\II_A}(\pmb{I}_{\bullet}^{\top(A)};x) \,\widetilde{\in}\, \pandiring(A)[x]$.
\end{enumerate}
Here, $\,\widetilde{\in}\,$ presents an element lies in a set up to a canonical embedding.
\end{theoremAlph}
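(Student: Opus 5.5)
We reduce each of the five conditions to the classical description of global dimension through simple modules. The key input is Auslander's theorem \cite{Aus1955}: for a finite-dimensional algebra $A=\CC\Q/\I$,
\[ \gldim A=\max_{v\in\Q_0}\pdim_A S_v=\max_{v\in\Q_0}\idim_A S_v, \]
where $\top(A)\cong\bigoplus_{v\in\Q_0}S_v^{\oplus p_v'}$ is a finite direct sum of simple modules, each $S_v$ occurring. We also use the definition of the total projective integral profile as a formal power series,
\[ \IntProf_{\II_A}(\pmb{P}^\bullet_M;x)=\sum_{n\>=0}\Big(\IntProf_{\II_A}P^n_M\Big)x^n . \]
By Theorem~\ref{thm:mat char-proj mod}, the coefficient of $x^n$ is a zero matrix exactly when $P^n_M=0$: a nonzero projective has at least one $m_v>0$, so its profile is a nonzero block matrix. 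Since the resolution $\pmb{P}^\bullet_M$ is minimal, $P^n_M=0$ forces $P^{m}_M=0$ for all $m\>=n$. Hence $\deg_x\IntProf_{\II_A}(\pmb{P}^\bullet_M;x)=\pdim_A M$, with the convention that the degree is $\infty$ when the series is not a polynomial. Dually, Theorem~\ref{thm:mat char-inj mod} gives $\deg_x\IntProf_{\II_A}(\pmb{I}^M_\bullet;x)=\idim_A M$. This degree identity is the first step we would establish.

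Given the identity, we obtain (1)$\Leftrightarrow$(2) and (1)$\Leftrightarrow$(4) directly from Auslander's formula, since $\Q_0$ is finite and the supremum is therefore a maximum.

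For (2)$\Leftrightarrow$(3) we apply the additivity statement, Proposition~\ref{prop:param-int-dir sum} and Corollary~\ref{coro:param-int-dir sum}. The minimal projective resolution of a direct sum is the direct sum of the minimal resolutions, so
\[ \IntProf_{\II_A}(\pmb{P}^\bullet_{\top(A)};x)\sim\bigoplus_v\IntProf_{\II_A}(\pmb{P}^\bullet_{S_v};x)^{\oplus p_v'} \]
coefficientwise, with degree equal to the maximum of the degrees. Each coefficient is, by construction, the profile of a projective module, hence lies in $\pandiring(A)$ after the canonical embedding; this is the meaning of $\widetilde{\in}$. Membership in $\pandiring(A)[x]$ is therefore exactly finiteness of the degree. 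The implication (4)$\Leftrightarrow$(5) is the same argument with injective coresolutions.

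We expect the main obstacle to lie in the bookkeeping of the embedding. We must check that the block-diagonal profiles of different sizes, $\dim P^n$ varying with $n$, embed compatibly into the single ring $\pandiring(A)$ via the fixed embedding system, so that the sum over $n$ is a genuine element of $\pandiring(A)[[x]]$. We also must check that similarity within each coefficient does not alter which coefficients vanish. We would first verify that the embedding sends the zero profile to $0$ and is injective on the classes of nonzero profiles. The vanishing criterion, and hence the degree identity, then transfers to the embedded series, and the equivalences above go through unchanged.
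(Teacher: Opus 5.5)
Your proposal is correct and follows essentially the same route as the paper. Both arguments rest on the same ingredients:
\begin{itemize}
  \item the degree of the (embedded) total profile equals $\pdim_A M$ (resp.\ $\idim_A M$), as in Lemmas~\ref{lemm:pdim-int 0812} and~\ref{lemm:idim-int 0812};
  \item Auslander's reduction of the global dimension to the simple modules;
  \item the splitting of the minimal resolution of $\top(A)$ along $\top(A)\cong\bigoplus_v S_v$;
  \item the injectivity of $\mathfrak{Emb}$ (Lemma~\ref{lemm:import emb}).
\end{itemize}
One detail should be made explicit: why a nonzero projective coefficient has a nonzero profile. The paper gets this directly from $\rho_P(1_A)=\mathrm{id}_P\neq 0$. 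The block form of Theorem~\ref{thm:mat char-proj mod} alone does not show that $\sum_i t_iL_i^{(v)}$ is not identically zero.
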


\noindent
Obviously, Theorem \ref{thm:main 2608160903} shows that $\gldim A = \infty$ if and only if one of following statements hold:
\begin{itemize}
  \item $\displaystyle \IntProf_{\II_A}(\pmb{P}_{\top(A)}^\bullet;x) \,\widetilde{\in}\, \pandiring(A)[[x]]\backslash\pandiring(A)[x]$;
  \item $\displaystyle \IntProf_{\II_A}(\pmb{I}_{\bullet}^{\top(A)};x) \,\widetilde{\in}\, \pandiring(A)[[x]]\backslash\pandiring(A)[x]$.
\end{itemize}

\noindent
The following result provide a description of finitistic dimension of algebra by using $\pandiring[[x]]$.

\begin{theoremAlph}[{Theorem \ref{thm:findim-int}}]
The following statements are equivalent:
\begin{enumerate}[label={\rm(\arabic*)}]
  \item $\findim A<\infty$;
  \item there is $d\in\NN$ such that, for every $0\ne M\in{_A\modcat}$,
  \[ \IntProf_{\II_A} (\pmb{P}_M^\bullet;x) \,\widetilde{\in}\, \pandiring(A)[x]
   \text{ implies } \deg_x \IntProf_{\II_A} (\pmb{P}_M^\bullet;x) \=< d; \]
  \item there is $d\in\NN$ such that, for every $0\ne M\in{_A\modcat}$ whose embedded total projective integral profile
  is a polynomial, one has
  \[\IntProfEmb_{\II_A}P_M^n = 0 \quad \text{for every } n>d.\]
\end{enumerate}
\end{theoremAlph}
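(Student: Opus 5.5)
The plan is to reduce all three conditions to statements about lengths of minimal projective resolutions. The bridge is a dictionary between integral profiles and projective modules.

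\textbf{Step 1: the dictionary.} By the classification theorem (Theorem~\ref{thm:param-int-module iso}) and the matrix characterisation of projective modules (Theorem~\ref{thm:mat char-proj mod}), the integral profile of a projective module $P$ is determined up to similarity by the multiplicities $m_v$ in $P\cong\bigoplus_v P(v)^{\oplus m_v}$. Moreover, $\int_{\II_A(\pmb{t})}P=0$ exactly when $P=0$. The same holds for the embedded profile $\IntProfEmb_{\II_A}P$, because the fixed embedding system is injective on these summands.

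\textbf{Step 2: degree equals projective dimension.} The total projective integral profile is
\[\IntProf_{\II_A}(\pmb{P}_M^\bullet;x)=\sum_{n\ge0}\Bigl(\IntProf_{\II_A}P^n_M\Bigr)x^n .\]
By Step~1, this series is a polynomial (up to the canonical embedding, i.e.\ it lies in $\pandiring(A)[x]$) if and only if $P^n_M=0$ for $n\gg0$. By minimality of the resolution, that happens if and only if $\pdim_AM<\infty$, and in that case
\[\deg_x\IntProf_{\II_A}(\pmb{P}_M^\bullet;x)=\pdim_AM .\]
Here we use that $P^{\pdim M}_M\neq0$ and $P^n_M=0$ for larger $n$, which holds for minimal resolutions. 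The same argument, applied to the embedded profile, gives
\[\IntProfEmb_{\II_A}P^n_M=0 \iff n>\pdim_AM .\]

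\textbf{Step 3: the equivalences.} By definition, $\findim A=\sup\{\pdim_AM : M\neq0,\ \pdim_AM<\infty\}$.
\begin{itemize}
\item (1)$\Rightarrow$(2): take $d=\findim A$. If the total profile of $M$ is a polynomial, then $\pdim_AM<\infty$, so its degree is $\pdim_AM\le d$.
\item (2)$\Rightarrow$(1): every $M$ with $\pdim_AM<\infty$ has polynomial total profile by Step~2, so $\pdim_AM\le d$, hence $\findim A\le d$.
\item (2)$\Leftrightarrow$(3): Step~2 translates both the polynomial hypothesis and the degree bound into the same vanishing statement for the embedded terms with $n>d$.
\end{itemize}

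\textbf{Main obstacle.} The delicate point is Step~1 for the embedded profile. The argument must make sure that the embedding into $\pandiring(A)$ carries nonzero projective profiles to nonzero elements, and that "polynomial up to canonical embedding" does not allow cancellation among the coefficients. I would handle this by using that all coefficients are direct sums of the blocks $\sum_it_iL_i^{(v)}$ with nonnegative multiplicities, placed in orthogonal positions by the compatible orthogonal embedding system. A nonzero sum of such blocks then has a nonzero diagonal block and cannot vanish. A separate check is needed that the degree is attained, namely that the top term of a minimal resolution of finite length is nonzero.
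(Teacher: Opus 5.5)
Your proposal is correct and is essentially the paper's argument. The paper first proves Lemma~\ref{lemm:pdim-int 0812}: for a minimal resolution, the total profile is a polynomial exactly when $\pdim_A M<\infty$, and then its degree is $\pdim_A M$, since $\rho_{P}(1_A)=\ident_P\neq 0$ forces the profile of a nonzero projective to be nonzero. It transfers this to the embedded profile via injectivity of $\mathfrak{Emb}_{P_M^n}$ (Lemma~\ref{lemm:import emb}) and then runs the same equivalences as your Step~3.

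Your ``main obstacle'' is not actually one. Lemma~\ref{lemm:import emb} holds for an arbitrary fixed embedding system, and each coefficient of $x^n$ is a single embedded profile, so no cancellation can occur and you do not need a compatible orthogonal embedding system (which the theorem does not assume). Also, read ``$\int_{\II_A(\pmb{t})}P=0$ iff $P=0$'' as a statement about the profile as a function of $\pmb{t}$, since the integral vanishes whenever some $t_i=0$.
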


In Subsection \ref{subsect:Suffi condi for fdim}, we provide a theorem which gives two sufficient conditions,
such that if a finite-dimensional complex algebra $A$ satisfies one of them, then its finitistic dimension $\findim A$ is finite.
See the following result.

\begin{theoremAlph}[{Theorem \ref{thm:diff-findim}}]
Let $A$ be a nonzero finite-dimensional complex algebra.
Suppose that there are integers $k,s \>= 0$ and polynomials $p_0(z),\ldots, p_k(z)\in\CC[z]$ $(p_k\ne0)$,
such that one of the following conditions holds for every $0\ne M\in{_A\modcat}$:
\begin{enumerate}[label={\rm(\arabic*)}]
  \item $\displaystyle R_M(x) := L_{p_0,p_1,\ldots, p_k} \IntProfEmb_{\II_A}(\pmb P_M^\bullet;x) \in\pandiring(A)[x]$, and $\deg_xR_M(x) \=< s$,
  where $L_{p_0,p_1,\ldots, p_k}$ is the differential operator $\sum\limits_{j=0}^{k}x^jp_j(x\frac{\dd}{\dd x})$;
\label{eq:diff findim 1}

  \item $\displaystyle R_M^{\mathrm{lin}}(x)
 := L_{p_0,p_1,\ldots, p_k} \bigg(
 (\mathcal L) \IntProfEmb_{\II_A} (\pmb P_M^\bullet;x) \bigg)
 \in\pandiring(A)[x] \text{ and }
 \deg_xR_M^{\mathrm{lin}}(x)\=<s.$
\label{eq:diff findim 2}
\end{enumerate}
Here, $k$, $s$ and $p_0,\ldots,p_k$ are independent of $M$.
Then \[\findim A\=< \max (\{0,s-k\}\cup \{n\in\ZZ_{\>= 0}:p_k(n)=0\} ) <\infty. \]
\end{theoremAlph}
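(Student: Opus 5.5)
The plan is to reduce the statement to an elementary fact about coefficients of power series annihilated, in high degree, by an Euler-type differential operator.

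\emph{Step 1: coefficient recursion.} Write the embedded total projective integral profile as
\[\IntProfEmb_{\II_A}(\pmb P_M^\bullet;x)=\sum_{n\>=0} c_n x^n,\qquad c_n=\IntProfEmb_{\II_A}P_M^n\in\pandiring(A).\]
Since $(x\frac{\dd}{\dd x})x^n=nx^n$, we have $p(x\frac{\dd}{\dd x})x^n=p(n)x^n$ for any $p\in\CC[z]$. Hence
\[L_{p_0,\ldots,p_k}\sum_n c_nx^n=\sum_{m\>=0}\Big(\sum_{j=0}^{\min(k,m)} p_j(m-j)\,c_{m-j}\Big)x^m .\]
Hypothesis (1) says that this coefficient vanishes for every $m>s$. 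Reading the identity with $n=m-k$ gives, for all $n>s-k$,
\[p_k(n)\,c_n=-\sum_{j=0}^{k-1}p_j(n+k-j)\,c_{n+k-j}.\]
This expresses $c_n$ through the higher terms $c_{n+1},\ldots,c_{n+k}$, so the natural induction runs downward in $n$.

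\emph{Step 2: the downward induction.} Let $N_0=\max(\{0,s-k\}\cup\{n:p_k(n)=0\})$, and take $M$ with $\pdim M=d<\infty$, so that $c_n=0$ for $n>d$. Suppose $d>N_0$. Put $n=d$ in the recursion. Then $n>s-k$ and $p_k(d)\ne 0$, while the right-hand side involves only $c_{d+1},\ldots,c_{d+k}$, all of which are zero. Hence $c_d=0$.

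We must then check that $c_d=0$ forces $P_M^d=0$. This injectivity of $\IntProfEmb$ on projectives comes from Theorem \ref{thm:mat char-proj mod} together with the embedding system: the embedded profile of $\bigoplus P(v)^{m_v}$ records the multiplicities $m_v$. But $P_M^d\ne0$ by minimality of the resolution, which is a contradiction. Therefore $\pdim M\=<N_0$ for every $M$ of finite projective dimension, that is, $\findim A\=<N_0$.

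\emph{Step 3: when the profile is not a polynomial.} For $M$ of infinite projective dimension nothing needs to be shown. If the hypothesis is nonetheless used in that case, the same recursion is consistent and gives no conclusion. We therefore restrict attention to modules with polynomial profile, as in Theorem \ref{thm:findim-int}(3), and the bound on degrees then gives (1) directly via that theorem.

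\emph{Step 4: condition (2).} This is identical once we know what $(\mathcal L)$ does. I expect it to be a linearization that replaces each $c_n$ by a scalar (or vector) invariant $\ell(c_n)$. That invariant should vanish iff $P^n_M=0$, for example the total multiplicity $\sum_v m_v$ weighted positively, or the dimension $\dim P^n_M$ times a fixed nonzero constant. The recursion then runs on $\ell(c_n)$ in $\CC$, and the same downward induction applies.

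The main obstacle is exactly this faithfulness point: $c_n=0$ (respectively $\ell(c_n)=0$) must imply $P^n_M=0$. For the embedded profile I plan to use the orthogonality of the compatible fixed embedding system, which makes the blocks for different vertices independent and the block matrices $\sum_i t_iL_i^{(v)}$ nonzero. For $(\mathcal L)$ I will verify that $\mathcal L$ is injective on profiles of nonzero projectives, for instance by exhibiting positivity of its trace-type evaluation.
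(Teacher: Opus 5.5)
Your argument is correct and is essentially the paper's proof: the coefficient of $x^{d+k}$ in $R_M(x)$ (resp.\ $R_M^{\mathrm{lin}}(x)$) equals $p_k(d)c_d$, which forces either $p_k(d)=0$ or $d\le s-k$; your ``downward induction'' is exactly this single step. Two small simplifications relative to your plan. First, the nonvanishing of $c_d$ needs no multiplicity bookkeeping or orthogonality: $\rho_{P}(1_A)=\ident_P\neq0$ gives $\intProf_{\II_A}P\neq0$, and $\mathfrak{Emb}$ is injective by Lemma~\ref{lemm:import emb}. Second, in case (2) the linearization is $(\mathcal L)\intProf_{\II_A}P=\sum_i m_i(P)F_{Q_i,1}$, which is nonzero for $P\neq0$ by Lemma~\ref{lemm:linear prof add}.
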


In particular, we provide an example for finite-dimensional complex algebra $A$ with infinite global dimension such that:
$\rad^4(A)=0\ne\rad^3(A)$; $A$ is not self-injective; $A$ is syzygy-infinite; and $A$ is non-monomial.

The paper is organized as follows.

\paragraph{Part I.} We introduce the integrals of modules, and compute the integrals of special modules in this part.
Since the computation of these integrals depends on the integration domain, we first consider integrals over a fixed domain.
This gives concrete integral values for modules, which are then used to define the parameterized integral over varying domains.
There are three subsections.
\begin{itemize}
  \item In Section~\ref{sect:cent int}, we study the (average) central integral of the regular module.
  \item In Section~\ref{sect:int-mod-prob sp}, we fix the integration domain to be a probability space, so that the total measure is normalized to $1$.
  \item In Section~\ref{sect:int-mod-meas sp}, we introduce the parameterized integral of a module, which is a generalisation of the notion of indefinite integral.
\end{itemize}

\paragraph{Part II.}
In this part, we apply the parameterised integral developed in Part~I to homological dimensions of finite-dimensional algebras. We set the integration domain to be $\II_A(\pmb{t})$, where $\pmb{t}$ is a parameter, and establish intrinsic connections between homological dimensions and integrals of modules.

\begin{itemize}
  \item In Section~\ref{sect:int coef ring}, we construct the integral coefficient ring $\pandiring(A)$ for a finite-dimension complex algebra, and we introduce the embedded total integral profile of a non-negative chain complex, which lies in the formal power series ring $\pandiring(A)[[x]]$.
  \item In Section~\ref{sect:resol}, we give equivalent characterisations of projective and injective modules in terms of their integral profiles (see Theorems~\ref{thm:mat char-proj mod} and~\ref{thm:mat char-inj mod}).
      For an arbitrary module $M$, its minimal projective and injective resolutions yield the total projective and total injective integral profiles.
      We prove that the polynomiality of these series detects the finiteness of the corresponding homological dimensions, and that the degree equals the dimension (Lemmas~\ref{lemm:pdim-int 0812} and~\ref{lemm:idim-int 0812}).
  \item In Section~\ref{sect:homodim intdescrib}, we use the integral coefficient ring $\pandiring(A)$ to characterise the global dimension and the little finitistic dimension of $A$, see Theorems \ref{thm:gldim-int} and \ref{thm:findim-int}).
\end{itemize}

\paragraph{Acknowledgements}
The author would like to thank Hanpeng Gao, Dajun Liu, Xin Ma, and Houjun Zhang for helpful discussions and to express our gratitude to the Guizhou Provincial Key Laboratory of Applied Mathematics and Computing Power \& Algorithms for providing us with an academic discussion laboratory.

\section*{\centering Part I: Integrals of algebras and modules}
\addcontentsline{toc}{section}{Part I: Integrals of algebras and modules}
\label{part:I}

\textsl{In this part, we develop the integral theory for finite-dimensional modules over a complex algebra by interpreting each module through its structure homomorphism. Starting from the special Banach module categories, we define the integrals of modules, compute it explicitly for projective and injective modules, and then introduce the parameterized integral that will serve as the key tool for the homological applications in Part~II.}

\section{Preliminaries: special Banach module categories}

\textsl{The special Banach module category $\scrA^p$ $(p \>= 1)$ was first introduced by Leinster in \cite{Lei2023FA}, with the aim of providing a categorical interpretation of the Lebesgue integral by algebraic methods. Subsequent work in \cite{LLHZ2025, Liu2025pre} extended Leinster's works and characterised more general integrals. In this section, we review some fundamental concepts in algebra and representation theory \cite{ASS2006}, as well as the main results of \cite{Lei2023FA, LLHZ2025, Liu2025pre}.}

\subsection{Algebras and quiver representations}

Let $\FF$ be a field.

\begin{definition}\rm
An \defines{$\FF$-linear algebra $\itLamb$} (or an \defines{$\FF$-algebra} for short) over a field $\FF$ is an $\FF$-vector space with a multiplication
\[\cdot: \itLamb \times \itLamb \to \itLamb, (a, b) \mapsto ab\]
such that $k(ab)=(ka)b=a(kb)$ holds for all $a, b \in \itLamb$ and $k\in \FF$.
For simplicity, all $\FF$-linear algebras in this paper are called algebras.
\end{definition}
In \cite{Gab1972,Gab1973,Gab2006Repr}, Gabriel has shown that each $\FF$-algebra $\itLamb$ is Morita equivalent to a \defines{bound quiver algebra} $\FF\Q/\I$, where $\Q$ is a quiver (i.e., a digraph), $\FF\Q$ is an algebra whose basis is the set of all paths in $\Q$,
the multiplication defined on $\FF\Q$ is the composition of paths, and $\FF\Q/\I$ is the quotient space obtained by $\FF\Q$ modulo an ideal $\I$ of $\FF\Q$. We call $\Q$ is the \defines{quiver} of $\itLamb$.
In representation theory, $\Q$ can be viewed as a quadruple $(\Q_0,\Q_1,\fcts,\fctt)$
where $\Q_0$ is the set of all vertices, $\Q_1$ is the set of all arrows,
and $\fcts$ and $\fctt$ are functions $\Q_1\to \Q_0$
sending each arrow to its starting point and ending point, respectively.
Therefore, $\Q$ can be written as $(v\To{a} w)_{v,w\in\Q_0, a\in\Q_1}$.

\begin{definition}\rm
Let $\itLamb$ be a finite-dimensional $\FF$-algebra.
\begin{itemize}
  \item[(1)] A \defines{left $\itLamb$-module} $V$ is a linear space with a left $\itLamb$-action
\[ \itLamb \times V \to V, (\lambda,v) \mapsto \lambda.v \]
such that $h: \itLamb \to \End_{\FF}(V)$, $\lambda \mapsto (h_{\lambda}: v\mapsto \lambda.v)$ is a homomorphism of algebras.
  \item[(2)] Dually, a \defines{right $\itLamb$-module} $V$ is a linear space with a left $\itLamb$-action
\[ V \times \itLamb \to V, (v,\lambda) \mapsto v.\lambda \]
such that $h: \itLamb \to (\End_{\FF}(V))^{\op}$, $\lambda \mapsto (h_{\lambda}: v\mapsto v.\lambda)$ is a homomorphism of algebras.
\end{itemize}
\end{definition}

We write the left (resp., right) multiplication of $a \in \itLamb$ on $m \in M$ as $a.m$ (resp., $m.a$) to distinguish it from other products that may appear in the same formula. When no confusion is possible, it may also be written as $a\cdot m$ (resp., $m\cdot a$) or simply $am$ (resp., $ma$). The choice of notation depends on personal preference and varies with the context.
In representation theory, left $\itLamb$-modules and right $\itLamb$-modules are also called left-representations and right-representations of $\itLamb$, respectively.

Notice that $\itLamb=\FF\Q/\I$ has an identity $1 = \displaystyle\sum_{i\in\Q_0}\e_i$
($\e_i$ is the path of length zero corresponding to the vertex $i\in\Q_0$),
then, for the case of $V$ to be a left $\itLamb$-module, we have
$V = 1.V = \sum\limits_{i\in\Q_0}\e_iV = \bigoplus\limits_{i\in\Q_0}\e_iV$
which provides a quiver left-representation
\[ ( \e_iV \oT{a} \e_jV )_{i,j\in\Q_0, a\in \Q_1} \]
since each arrow $a\in\Q_1$ with $\fcts(a)=i$ and $\fctt(a)=j$
corresponds to a linear map $a: \e_jV\to \e_iV$,
$\e_jx\mapsto a\cdot \e_jx = ax = \e_i\cdot ax$ ($ax\in V$).
Conversely, for each quiver left-representation
\[ ( V_i \oT{\varphi_a} V_j )_{i,j\in\Q_0, a\in \Q_1} \]
($V_i$, $V_j$ are linear spaces and $\varphi_a$ is a linear map),
we get a left $\itLamb$-module $M:=\bigoplus\limits_{i\in\Q_0} V_i$ whose left $\itLamb$-action is
the map $\itLamb \times M \to M$ naturally induced by
$a.v_i := \varphi_a(v_i)$ for all $a\in\Q_1$ ($\fcts(a)=i$) and $v_i\in V_i$.
Therefore, we often treat quiver left-representations and left modules as the same in representation theory.
Dually, in the case for $V$ to be a right $\itLamb$-module, we obtain a quiver right-representation
\[ ( \e_iV \To{a} \e_jV )_{i,j\in\Q_0, a\in \Q_1} \]
in a similar way.

\begin{definition}\rm
Let $V$ and $W$ be two left (resp., right) $\itLamb$-modules.
Then a \defines{left {\rm(}resp., right{\rm)} $\itLamb$-homomorphism} $h:V\to W$ from $V$ to $W$ is a linear map such that
\[ h(\lambda.v)=\lambda.h(v)  \quad (\text{resp.}, ~ h(v.\lambda)=h(v).\lambda ) \]
holds for all $\lambda\in\itLamb$ and $v\in V$.
\end{definition}

Assume $A=\FF\Q/\I$ is a bound quiver algebra. Considering the quiver left-representation
\[(\e_iV \oT{a} \e_jV )_{i,j\in\Q_0, a\in \Q_1}\]
given by a left $\itLamb$-module $V$
and the quiver left-representation
\[(\e_iW \oT{a} \e_jW )_{i,j\in\Q_0, a\in \Q_1}\]
given by a left $\itLamb$-module $W$,
we get that each element $v\in V$ has a decomposition $v=\sum\limits_{i\in\Q_0}v_i=(v_i)_{i\in\Q_0}$,
each element $w\in W$ has a decomposition $w=\sum\limits_{i\in\Q_0}w_i=(w_i)_{i\in\Q_0}$,
and $h$ can be restricted on each $\e_iV$ which induces a family of linear maps $(h_i:=h|_{\e_iV}: \e_iV\to \e_iW)_{i\in\Q_0}$.
Then $h(v) = \sum\limits_{i\in\Q_0}h_i(v_i)$, and for each arrow $a\in \Q_1$ with $\fcts(a)=i$ and $\fctt(a)=j$,
we have $0\ne av_j \in \e_iV$ holds for all $v_j\in \e_jV$ and have
\[ h_i(a.v_j) = h(a.v_j) = a.h(v_j) = a.h_j(v_j), \]
i.e., the following diagram
\[\xymatrix{
  \e_iV \ar@{<-}[r]^{a} \ar[d]_{h_i} & \e_jV \ar[d]^{h_j} \\
  \e_iW \ar@{<-}[r]^{a} & \e_jW
}\]
commutes.
Dually, in the case of $V$ and $W$ to be two right $\itLamb$-module,
each right $\itLamb$-homomorphism $h:V\to W$ provides the following diagram
\[\xymatrix{
  \e_iV \ar@{->}[r]^{a} \ar[d]_{h_i} & \e_jV \ar[d]^{h_j} \\
  \e_iW \ar@{->}[r]^{a} & \e_jW
}\]
commutes.

\begin{definition}\rm
An \defines{$(A,B)$-bimodule} $M$ is both a left $A$-module and a right $B$-module such that $amb=(am)b=a(mb)$,
and an \defines{$(A,B)$-homomorphism} $h: M \to N$ of $(A,B)$-bimodules is both a left $A$-homomorphism and a right $B$-homomorphism.
Moreover, we call a left $A$- (resp., right $B$-, $(A,B)$-) homomorphism $h$ is a \defines{monomorphism} if it is an injection,
and we call $h$ is an \defines{epimorphism} if it is a surjection.
An left $A$- (resp., right $B$-, $(A,B)$-) isomorphism of modules is both a monomorphism and an epimorphism.
\end{definition}

\subsection{Normed algebras and normed modules}

Let $\FF$ be a field equipped with an absolute value $|\cdot|:=|\cdot|_{\FF}: \FF\to\RR^{\>=0}$.
Here, for any $x,y\in\FF$, $\|x\|=0$ if and only if $x=0$; $|xy|=|x||y|$; and $|x+y|\=<|x|+|y|$.
A \defines{seminorm} defined on an $\FF$-linear space $A$ is a map $\|\cdot\|_A:A\to \RR^{\>=0}$ such that
\begin{enumerate}[label=(SN\arabic*)]
  \item $\| a\|_A \>=0$ \label{seminrom1};
  \item $\| \lambda a\|_A = |\lambda|\; \| a\|_A$ ($\forall \lambda \in\FF, a\in A$) \label{seminrom2};
  \item $\| a_1+a_2\|_A \=< \| a_1\|_A + \| a_2 \|_A$ ($\forall a_1,a_2\in A$) \label{seminrom3}.
\end{enumerate}
Furthermore, we call $\|\cdot\|_A:A\to \RR^{\>=0}$ is a \defines{norm} if:
\begin{enumerate}[label=(N\arabic*)]
  \item it satisfies \ref{seminrom1}--\ref{seminrom3};
  \item and, for an element $a\in A$, $\|a\|_A=0$ and $a=0$ coincide.
\end{enumerate}
In this case, we call $A$ a \defines{normed space}.
An important situation is that $A$ is complete, i.e., each Cauchy sequence in $A$ is convergent.
In this case, we call  $A$ a \defines{Banach space}.

\begin{definition} \rm
If a normed (resp., Banach) space $A$ as above is also an $\FF$-algebra
such that $\|a_1a_2\|_A \=< \|a_1\|_A \|a_2\|_A$,
then we call it a \defines{normed $($resp., Banach$)$ $\FF$-algebra}.
\end{definition}

Now, take two finite-dimensional normed $\FF$-algebras $A$ and $B$.

\begin{definition} \rm
If an $\FF$-linear space $V$ is both a left $A$-module and a right $B$-module,
and for each $v\in V$, $a\in A$ and $b\in B$, the equation $(a.v).b = a.(v.b)$ holds,
then we call that $V$ is an \defines{$(A,B)$-bimodule}.
\end{definition}

For simplicity, \textbf{the fields considered in this article are all complex fields},
i.e., we take $\FF=\CC$ in this paper,
so we can omit the step of defining topological structures on general algebraic closed fields.
Then all $\FF$-algebras are $\CC$-algebras (or say complex algebras) in this assumption.

\begin{lemma} \label{lemm:C algs are Banach}
Let $A$ be a finite-dimensional normed complex algebra.
Then the finite dimensional property of it implies its completeness,
i.e., $A$ is a Banach algebra.
\end{lemma}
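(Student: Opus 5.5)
The plan is to reduce the statement to the classical fact that all norms on a finite-dimensional complex vector space are equivalent, and then transfer completeness of $\CC^{n}$ to $A$. The multiplicative inequality $\|a_1a_2\|_A\=<\|a_1\|_A\|a_2\|_A$ plays no role in completeness: being a Banach algebra means being a normed algebra that is complete as a normed space. So only the linear structure of $A$ and the completeness of $\CC$ will be used.

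\textbf{Step 1: set up the coordinate norm.} Fix a basis $\{a_1,\ldots,a_n\}$ of $A$, where $n=\dim_{\CC}A$. Define
\[
\|x\|_1 := \sum_{i=1}^n |\lambda_i| \quad \text{for } x=\sum_{i=1}^n\lambda_ia_i .
\]
By \ref{seminrom3} and \ref{seminrom2},
\[
\|x\|_A \=< \sum_i |\lambda_i|\,\|a_i\|_A \=< C\|x\|_1, \qquad C=\max_i\|a_i\|_A .
\]

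\textbf{Step 2: the reverse inequality, which is the main step.} I need a constant $c>0$ with $\|x\|_A\>= c\|x\|_1$.
\begin{itemize}
  \item Consider the unit sphere $S=\{\lambda\in\CC^n:\sum_i|\lambda_i|=1\}$. It is closed and bounded in $\CC^n\cong\RR^{2n}$, hence compact by Heine--Borel.
  \item The map $\lambda\mapsto\|\sum_i\lambda_ia_i\|_A$ is continuous on $S$. This follows from Step 1 and the reverse triangle inequality, which make it Lipschitz with constant $C$.
  \item So it attains a minimum $c$ on $S$. Since $\{a_i\}$ is linearly independent and $\|\cdot\|_A$ is a norm (axiom (N2)), the value is positive at every point of $S$, so $c>0$.
  \item Homogeneity \ref{seminrom2} then gives $\|x\|_A\>= c\|x\|_1$ for all $x\in A$.
\end{itemize}

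\textbf{Step 3: completeness.} Let $(x_m)$ be Cauchy in $\|\cdot\|_A$ and write $x_m=\sum_i\lambda_i^{(m)}a_i$.
\begin{itemize}
  \item By Step 2, each coordinate sequence $(\lambda_i^{(m)})_m$ satisfies $|\lambda_i^{(m)}-\lambda_i^{(m')}|\=<\|x_m-x_{m'}\|_1\=<c^{-1}\|x_m-x_{m'}\|_A$, so it is Cauchy in $\CC$.
  \item Since $\CC$ is complete, $\lambda_i^{(m)}\to\lambda_i$ for each $i$.
  \item Put $x=\sum_i\lambda_ia_i$. Step 1 gives $\|x_m-x\|_A\=< C\|x_m-x\|_1\to0$.
\end{itemize}
Hence $A$ is complete, and together with the submultiplicativity assumed in the hypothesis, $A$ is a Banach algebra.

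I expect Step 2 to be the only nontrivial point, and the tool for it is compactness. The one thing to check carefully is the continuity of the norm with respect to the coordinate topology, and this is exactly what Step 1 provides.
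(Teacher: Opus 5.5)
Your proposal is correct and follows essentially the same route as the paper: you prove equivalence of $\|\cdot\|_A$ with the coordinate $\ell^1$-norm, using the upper bound $C=\max_i\|a_i\|_A$ and compactness of the $\ell^1$ unit sphere for the lower bound, and then transfer Cauchy sequences. The differences are cosmetic. You argue via coordinate-wise completeness of $\CC$ rather than completeness of $(\CC^{n},\|\cdot\|_1)$, and you rightly omit the paper's extra paragraph on continuity of multiplication, since submultiplicativity is already part of the hypothesis.
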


\begin{proof}
Let $\dimA = \dim_{\CC} A$ and choose a linear basis $\{e_1,\dots,e_n\}$ of $A$.
We first prove that the norm $\|\cdot\|_A$ defined on $A$
is equivalent to the standard $\ell^1$-norm $\|\cdot\|_1: A\to\RR^{\>=0}$,
$x=\sum\limits_{i=1}^n x_i e_i \mapsto \sum\limits_{i=1}^n|x_i|$ on the coordinate space
\footnote{Fix the norms on the underlying finite-dimensional vector spaces.
Since all norms in finite dimension are equivalent,
the particular choice affects numerical constants but not completeness or density.}.
Clearly $(A,\|\cdot\|_1)$ is isometrically isomorphic to $(\CC^{\dimA},\|\cdot\|_1)$, hence complete.
For any $x=\sum\limits_{i=1}^n x_i e_i \in A$, we have
\[ \|x\|_A
  \=< \sum_{i=1}^n |x_i|\,\|e_i\|_A
  \=< \|x\|_1\cdot \max_{1\=< i\=< n}\|e_i\|_A =: \|x\|_1 C_1 . \]
So $\|\cdot\|_A\=< C_1\|\cdot\|_1$ holds for some $C_1\in\RR^{\>=0}$.
For the reverse inequality, consider the unit sphere $\mathbb{S}^1:=\{x\in A:\|x\|_1=1\}$.
Since $(A,\|\cdot\|_1)\cong \mathbb C^n$, $\mathbb{S}^1$ is compact.
The function $f(x):=\|x\|_A$ is continuous on $\mathbb{S}^1$
because $\|x-y\|_A \=< C_1\|x-y\|_1$. Hence $f$ attains a minimum,
say $m:=\min\limits_{x\in \mathbb{S}^1}\|x\|_A$.
If $m=0$, then some $x_0\in \mathbb{S}^1$ satisfies $\|x_0\|_A=0$, which forces $x_0=0$ by positive definiteness,
contradicting $\|x_0\|_1=1$. Thus $m>0$. Now for any nonzero $x\in A$,
the vector $\frac{x}{\|x\|_1}$ lies in $\mathbb{S}^1$,
so $\left\|\frac{x}{\|x\|_1}\right\|_A\>= m$ implies $\|x\|_A\>= m\|x\|_1$.
Let $C_2=\frac{1}{m}$, then $\|x\|_1 \>= C_2\|x\|_A$. Hence $\|\cdot\|_A$ and $\|\cdot\|_1$ are equivalent.

Next, we show that $A$, as a normed algebra $(A,\|\cdot\|_A)$, is a complete algebra.
Let $\{x_k\}_{k\in\NN^+}$ be a Cauchy sequence in $A$ up to the norm $\|\cdot\|_A$.
From $\|x-y\|_1 \=< C_2\|x-y\|_A$, it follows that $\{x_k\}_{k\in\NN^+}$ is also a Cauchy sequence up to the norm $\|\cdot\|_1$.
Since $\|\cdot\|_1$ is complete, there exists $x\in A$ such that
$\lim\limits_{k\to+\infty}\|x_k-x\|_1 = 0$.
Then using $\|x_k-x\| \=< C_1\|x_k-x\|_1$,
we get $\lim\limits_{k\to+\infty}\|x_k-x\|= 0$.
Thus every Cauchy sequence converges, so $A$, as a normed algebra $(A,\|\cdot\|_A)$, is complete as required.

Finally, we show that the submultiplicativity of the norm already implies the continuity of the multiplication.
Indeed, if $x_k\to x$ and $y_k\to y$ in $\|\cdot\|_A$, then
\[ \|x_k y_k - xy\|_A
 = \|(x_k-x)y_k + x(y_k-y)\|_A
\=< \|x_k-x\|_A\|y_k\|_A + \|x\|_A\|y_k-y\|_A. \]
Since $\{y_k\}$ is bounded (convergent sequences are bounded),
the right-hand side tends to $0$. Hence multiplication is continuous.
Combining completeness and continuity (with the given submultiplicative norm), $A$ is a Banach algebra. This completes the proof.
\end{proof}

Put $A$ and $B$ are two finite-dimensional complex algebras.
Then their finite dimensional properties imply their completeness,
i.e., $A$ and $B$ are Banach algebras by Lemma \ref{lemm:C algs are Banach}.
Assume that there is homomorphism $\homo:A\to B$ of algebras.
Then each norm $\|\cdot\|_B$ defined on $B$ implies a seminorm $\Verb\cdot\Verb$ defined on $A$
which sends each $a\in A$ to $\Verb a\Verb_A := \|\homo(a)\|_B\in\RR^{\>=0}$.

\begin{remark} \rm \
\begin{enumerate}[label=(\arabic*)]
  \item Sometimes we need that $\homo:A\to B$ satisfies \textbf{Central-Image Condition} ($=$ CIC for short), i.e., $\Ima(\homo)\subseteq\cent{B}$. When CIC is needed, we will provide a specific instruction.
    CIC ensures that every right $B$-module admits a natural $(A,B)$-bimodule structure via $a\cdot v:=v\cdot\homo(a)$.
    If an $(A,B)$-bimodule structure is already available by other means, then CIC is not necessary.
  \item Let $A$ be a finite-dimensional $\FF$-algebra. For any left $A$-module $M$, it corresponds to a homomorphism $\rho_M: A\to B:=\End_{\FF}(M), a\mapsto (m\mapsto am)$.
    In this case, $B$ is a $(A,B)$-bimodule whose left $A$-action and right $B$-action is:
    \[ A\times B\times B \to B, ~ (a,x,b) \mapsto \rho_M(a)xb. \]
\end{enumerate}
\end{remark}

\begin{definition}\rm
Assume that $A$ is a finite-dimensional complex algebra with a seminorm $\Verb\cdot\Verb_A$
and $B$ is a finite-dimensional Banach complex algebra.
An $(A,B)$-bimodule $V$ equipped with a norm $\|\cdot\|_V$ is called a \defines{normed module}
if the two module actions are continuous and, after an equivalent renorming if necessary,
satisfy $\|avb\|\=< \Verb a\Verb_A\|v\|_V\|b\|_B$. Furthermore,
\begin{itemize}
  \item[(1)] we call that it is a \defines{$\homo$-normed module} if
    $\Verb\cdot\Verb_A$ is induced by the norm $\|\cdot\|_B$ defined on $B$;
  \item[(2)] and we call that it is a \defines{Banach module} if it is complete.
\end{itemize}
\end{definition}

Let $(\Q_B,\I_B)$ be the bound quiver of $B$, then $B$ has a $\CC$-linear representation
\[ B = \FF\Q_B/\I_B = \bigoplus_{l\in\NN}\bigoplus_{\wp\in(\Q_B)_l} (\CC\wp+\I_B) \]
where $(\Q_B)_l$ is the set of all paths in $(\Q_B,\I_B)$ of length $l$.
It follows that $B$ has a basis $\basis{B}=\{\wp+\I: \wp\in (\Q_B)_l\}$.
Since $B$ is finite-dimensional, we assume $\dim_{\CC} B=\dimB$ and $\basis{B}=\{b_i: 1\=< i\=< \dimB\}$.
Then any $b_i$ can be equipped with a weight $\norm_{B,i}$ given by
\begin{center}
$\displaystyle \norm_B: \bigcup\limits_{l\in\NN}(\Q_B)_{l} \to \RR^{\>=0}, b_i\mapsto\norm_{B,i}:=\norm_B(b_i)$,
\end{center}
and in this case, the following map
\begin{align}\label{eq:B norm}
\|\cdot\|_B: B \to \RR^{\>=0}, ~ b=\sum_{i=1}^{\dimB} c_ib_i \mapsto
 \left( \sum_{i=1}^{\dimB}
   |c_i|^p\norm_{B,i}^p
 \right)^{\frac{1}{p}}
\end{align}
is a norm which implies that $B$ is a Banach complex algebra, see \cite[Section 3.1]{Liu2025pre}.

\begin{remark}\rm
Note that a left $A$-module $M$ can be seen as a homomorphism $\rho_M: A\to \End_{\FF}(M)$ of algebras,
but CIC does not hold in general, except the case of $M$ is a direct sum of some one-dimensional left $A$-modules.
But $M$ as a left $\End_{\FF}(M)$-module may be not a direct sum of some one-dimensional left $\End_{\FF}(M)$-modules.
\end{remark}

\begin{assumption}\rm
In this paper, the norm defined on $B$ we considered is always of the form given in \eqref{eq:B norm},
and, in the existence of $\homo:A\to B$, the seminorm $\Verb\cdot\Verb_A$ defined on $A$ we considered is always induced by the norm $\|\cdot\|_B$ as above.
\end{assumption}

\begin{proposition} \label{prop:normed bimod 260809}
If CIC holds, then, under the above assumptions, each right $B$-module $V$ is also an $(A,B)$-bimodule,
and it equipped with a norm $\|\cdot\|_V$ is a $\homo$-normed $(A,B)$-bimodule.
\end{proposition}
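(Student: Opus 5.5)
The plan has two parts: build the bimodule structure from CIC, then check the norm inequality.

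First I define a left $A$-action on a right $B$-module $V$ by $a\cdot v := v\cdot\homo(a)$, as suggested in the Remark on CIC. This is a left action. Linearity in $a$ and $v$ is clear. For associativity,
\[
a_1\cdot(a_2\cdot v) = (v\,\homo(a_2))\,\homo(a_1) = v\,\homo(a_2a_1).
\]
Since $\homo(a_1)$ lies in $\cent{B}$, we have $\homo(a_2)\homo(a_1)=\homo(a_1)\homo(a_2)=\homo(a_1a_2)$, so this equals $(a_1a_2)\cdot v$. Unitality follows if $\homo$ is unital; otherwise I note that $\homo(1_A)$ is a central idempotent and restrict to the unital part. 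Compatibility of the two actions is
\[
(a\cdot v)\cdot b = v\,\homo(a)\,b = v\,b\,\homo(a) = a\cdot(v\cdot b),
\]
which again uses centrality of $\homo(a)$. So $V$ is an $(A,B)$-bimodule.

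Second, I handle the norm. Fix a norm $\|\cdot\|_V$ on $V$; I assume $V$ is finite-dimensional, as it is throughout the setting. The right action map $V\times B\to V$ is bilinear on a finite-dimensional space, hence bounded. So there is a constant $C$ with $\|vb\|_V\le C\|v\|_V\|b\|_B$. I then renorm by
\[
\|v\|' := \sup_{\|b\|_B\le 1,\ b\ne0}\ \|vb\|_V \;+\; \|v\|_V ,
\]
or, more cleanly, use the operator-type norm $\|v\|':=\max\bigl(\|v\|_V,\ \sup_{\|b\|_B\le1}\|vb\|_V\bigr)$. This is equivalent to $\|\cdot\|_V$ because it lies between $\|v\|_V$ and $\max(1,C)\|v\|_V$. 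Submultiplicativity of $\|\cdot\|_B$ gives $\|vb\|'\le\|v\|'\|b\|_B$; the $\max$ term needs a little care when $B$ lacks a unit of norm at most $1$, and equivalent renorming is exactly what the definition of normed module permits. For the left action,
\[
\|avb\|' = \|v\,\homo(a)\,b\|' \le \|v\|'\,\|\homo(a)\|_B\,\|b\|_B = \Verb a\Verb_A\,\|v\|'\,\|b\|_B ,
\]
where the last equality is the induced seminorm of the Assumption. Continuity of both actions follows from these bounds, so $V$ is a $\homo$-normed module.

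The main obstacle is the renorming step, namely getting the inequality with constant exactly $1$ rather than merely some $C$. The operator-norm trick above is what I would try first. If $V$ were not finite-dimensional, I would instead require continuity of the right action as a hypothesis, and the same renorming would go through.
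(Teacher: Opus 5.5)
Your proposal is correct and follows essentially the same route as the paper: the left action $a\cdot v=v\,\homo(a)$ with centrality giving $(a\cdot v)\cdot b=a\cdot(v\cdot b)$, boundedness of the right action by finite-dimensionality, renorming by the operator-type quantity $\sup_{\|b\|_B\le 1}\|vb\|_V$, contractivity from submultiplicativity of $\|\cdot\|_B$, and finally $\|v\,\homo(a)\,b\|'\le \Verb a\Verb_A\|v\|'\|b\|_B$. Drop the additive renorming, which need not be contractive; your $\max$ variant is slightly cleaner than the paper's, because it dominates $\|\cdot\|_V$ without the paper's appeal to $v=v\cdot 1_B$ and $\|1_B\|_B$, and it needs no extra care since both terms of the $\max$ for $vb$ are bounded by $\|b\|_B\sup_{\|c\|_B\le 1}\|vc\|_V$ (you also check associativity of the left action, which the paper leaves implicit).
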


\begin{proof}
First of all, we show that $V$ is an $(A,B)$-bimodule.
The left $A$-action $A\times V \to V$, $(a,v)\mapsto a.v := v\homo(a)$ is well-defined via $\homo: A \to B$.
We write the right $B$-action as $V\times B\to V$, $(v,b)\mapsto v\cdot b$.
To verify the bimodule associativity, we compute
\[ (a.v)\cdot b = (v\homo(a))\cdot b = v\cdot (\homo(a)b) = v\cdot (b\homo(a)) = (v\cdot b)\cdot \homo(a) = a.(v\cdot b) \]
by $\homo(A) \subseteq \cent{B}$, Thus, $V$ is an $(A,B)$-bimodule.

Next, since $B$ and $V$ are finite-dimensional, the right $B$-action $V\times B\to V$, $(v,b)\mapsto v\cdot b$ is continuous and hence bounded.
Thus, there exists a constant $C\in\RR^{>0}$ such that $\|v\cdot b\|_V \=< C\|v\|_V\|b\|_B$ for all $v\in V$ and $b\in B$.
Define a new norm $\|\cdot\|_V'$ on $V$ by
\[ \|v\|_V' := \sup_{\|b\|_B\=< 1}\|v\cdot b\|_V.\]
The boundedness of the right $B$-action implies $\|v\|_V'\=< C\|v\|_V$.
On the other hand, since $V$ is a unital right $B$-module, $v=v\cdot1_B$.
Hence
\[ \|v\|_V' \>= \left\|v\cdot\frac{1_B}{\|1_B\|_B}\right\|_V = \frac{1}{\|1_B\|_B}\|v\|_V.\]
Therefore, $\|\cdot\|_V'$ is a norm equivalent to
$\|\cdot\|_V$.

We next show that the right $B$-action is contractive with respect
to $\|\cdot\|_V'$. Let $b\in B$. If $b=0$, the assertion is clear.
If $b\neq0$, then
\begin{align*}
   \|v\cdot b\|_V'
& = \sup_{\|c\|_B\=< 1} \|(v\cdot b)\cdot c\|_V\\
& = \sup_{\|c\|_B\=< 1} \|v\cdot(bc)\|_V \\
& \=< \|b\|_B \sup_{\|c\|_B\=< 1} \left\| v\cdot\frac{bc}{\|b\|_B} \right\|_V\\
& \mathop{\=<}\limits^{\spadesuit} \|b\|_B\|v\|_V',
\end{align*}
where $\spadesuit$ holds since $\left\|\frac{bc}{\|b\|_B}\right\|_B \=< \|c\|_B \=< 1.$

Finally, for any $a\in A$, $v\in V$, and $b\in B$, we have
\begin{align*}
     \|a. v\cdot b\|_V'
& =  \|(v\cdot\homo(a))\cdot b\|_V'\\
&\=< \|b\|_B\|v\cdot\homo(a)\|_V'\\
&\=< \|b\|_B\|\homo(a)\|_B\|v\|_V'\\
& =  \Verb a\Verb_A\|v\|_V'\|b\|_B.
\end{align*}
This proves the required inequality, and continuity follows immediately.
Since $\|\cdot\|_V'$ is equivalent to $\|\cdot\|_V$, the module $V$ is a $\homo$-normed $(A,B)$-bimodule after an equivalent renorming.
\end{proof}

\subsection{Measure space, juxtapositions} \label{subsect:measure space}

Assume $A=\FF\Q_A/\I_A = \bigoplus\limits_{\wp}(\CC\wp+\I_A)$. Then $A$ has a natural basis $\basis{A}$ whose elements are path in the bound quiver $(\Q_A,\I_A)$ of $A$. Since the dimension $\dimA$ of $A$ is finite, then we can write $\basis{A}$ as $\{ a_i : 1\=< i\=< \dimA \}$.
We define
\[\II_A := \prod_{i=1}^{\dimA} \{\lambda_i a_i : \lambda_i\in \II:=[r,s]\}
  \mathop{\longleftrightarrow}\limits^{\text{1-1}} [r,s]^{\times \dimA}
  \ \  (r \=< s) \]
then each function $f: \II_A\to B$ defined on $\II_A$ can be seen as a function $\tilde{f}: A\to B$,
where $\tilde{f}(x)=f(x)$ if $x\in\II_A$ ($\subseteq A$) and $\tilde{f}(x)=0$ if $x\notin\II_A$.

Now we equip $\II_A$ with a measure. Let $(\CC, \Sigma_{\CC}, \mu_{\CC})$ be an arbitrary measure space,
where $\Sigma_{\CC}$ is a $\sigma$-algebra of subsets of $\CC$ and
$\mu_{\CC}: \Sigma_{\CC} \to \RR^{\>= 0}$ is a countably additive map
(For instance, one may take $\mu_{\CC}$ to be the Lebesgue measure on $\CC$,
but our construction does not depend on this choice).
We can define a \defines{relative measure} $\mu_{\II_A}$ on $\II_A$ by pulling back the product of $\mu_{\CC}$ such that
\[ \mu_{\II_A}\bigg(\prod_{i=1}^{\dimA}[r_i,s_i]\bigg) = \prod_{i=1}^{\dimA} \mu_{\CC}([r_i,s_i]) \]
holds for all choices of intervals $[r_i, s_i] \subseteq [r,s]$.
This definition extends uniquely to a countably additive measure on the $\sigma$-algebra
generated by all cubes $\prod\limits_{i=1}^{\dimA} [r_i,s_i]$,
making $(\II_A, \Sigma_{\II_A}, \mu_{\II_A})$ a measure space.
If the ambient measure $\mu_{\CC}$ is chosen to be the Lebesgue measure,
then $\mu_{\II_A}$ is the corresponding product Hausdorff measure.
Here, for a comprehensive treatment of Hausdorff measures, we refer to \cite{Rogers1999}.
However, we emphasise that the subsequent arguments do not rely on this particular choice.
Any measure $\mu_{\CC}$ with countable additivity suffices.

Fix an integer $2^{\dimA}\>= 2$ and a nested sequence of finite measurable partitions
\[ \mathcal{P}_0 := \{\II_A\} \subset \mathcal{P}_1 \subset \mathcal{P}_2 \subset \cdots \]
such that every member of $\mathcal{P}_u$ is the disjoint union of $2^{\dimA}$ members of $\mathcal{P}_{u+1}$,
and for simplicity, we assume that the $2^{\dimA}$ cells have equal measure.
For any $1\=< p \in \RR$, the Bochner integral
\[ \| f \| := \bigg( (\Bochner)\int_{\II_A} \|f(x)\|_{B}^p \dd\mu_{\II_A} \bigg)^{\frac{1}{p}} \]
defines the norm for each function $f:\II_A\to B$ if $(\Bochner)\displaystyle \int_{\II_A} \|f(x)\|_{B}^p \dd\mu_{\II_A}$ is convergent. We call the following space
\[ L^p(\II_A;B) := \bigg\{ f : \|f\|^p = (\Bochner)\int_{\II_A} \|f(x)\|_{B}^p \dd\mu_{\II_A} < +\infty \bigg\} \]
is an \defines{$L^p$-space} in analysis.
Here, \textbf{we identify functions that are equal almost everywhere}.

Assume $\mathcal{P}_u = \{C_{u,1},\ldots, C_{u,N_u}\}$, and define
\[ E_u = \{ f: \II_A \to B : f \text{ is constant on every } C_{u,t} \in \mathcal{P}_u \}. \]
Then every $f\in E_u$, call it a \defines{step function}, has a unique expression
\[ f = \sum_{i=1}^{N_u} b_i \id_{C_{u,i}}, \qquad b_1, b_2,\ldots, b_{N_u}\in B, \]
after null cells are removed. Here, for each subset $S\subseteq \II_A$,
$\id_S$ is the function $\II_A$ sends each $x\in S$ to the identity $1_B$ in $B$
and sends each $x\in S\backslash \II_A$ to the zero element $0_B$ in $B$.
Then we have:
\begin{itemize}
  \item[(1)] each $E_u$ is a $\homo$-normed $(A,B)$-bimodule whose norm can be induced by using the norm defined on $E_{u-1}$,
    and in this situation, unlike Proposition \ref{prop:normed bimod 260809}, CIC is not necessary;
  \item[(2)] the formulas $N_u = 2^{u\dimA}$, $E_0 \cong B$, and $E_{u+1} \cong E_u^{\oplus 2^{\dimA}} \cong B^{\oplus N_{u+1}} = B^{\oplus 2^{(u+1)\dimA}}$,
    where the isomorphism $\gamma_u: E_u^{\oplus 2^{\dimA}} \mathop{\to}\limits^{\cong} E_{u+1}$
    is called a \defines{juxtaposition} in \cite{Lei2023FA, LLHZ2025, Liu2025pre};
  \item[(3)] and $B\cong E_0 \To{\subseteq} E_1 \To{\subseteq} E_2 \To{\subseteq} \cdots \To{\subseteq} E_u \To{\subseteq} \cdots$.
\end{itemize}
See \cite[Lemmas 4.11 and 4.13]{Liu2025pre}.
For $u\leq v$, the inclusion maps $E_u\=< E_v$ form a direct system of normed $(A,B)$-bimodules.
Its direct limit taken in the category ${_A\mathsf{Ban}_B}$ of Banach $(A,B)$-bimodules satisfies
\begin{align}\label{eq:Lp}
  \varinjlim_{{}_A\mathsf{Ban}_B} E_u \cong \widehat{\mathbf S_{\homo}(\II_A)},
\end{align}

where $\bfS_{\homo}(\II_A)$ is the set of all step functions on $\II_A$, and $\w{\bfS_{\homo}(\II_A)}$ denotes its completion.
To some extent, $\bfS_{\homo}(\II_A)$ may be regarded as a categorical analogue of an $L^p$-space in the sense of $\scrA^p_{\homo}$.
We write $\w{\bfS_{\homo}(\II_A)}$ as $L^p(\II_A;B)$ in the paper
\footnote{The space $L^p(\II_A;B)$ is not, in general, a classical $L^p$-space; rather, it is an $L^p$-type construction for finite-dimensional algebras.
This is the reason why we retain the notation $L^p(\II_A;B)$ throughout this paper.
In the special case $A=B=\RR^{\times n}$, which are basic and semisimple, $\homo=\ident_{\RR^{\times n}}$,
and $\II_A=[0,1]^{\times n}$ equipped with the Lebesgue measure, $L^p(\II_A;B)$ reduces to the classical $L^p$-space.
In particular, when $p=1$, $A=B=\RR$ and $\homo=\ident_A$,
$L^1(\II_A;A)$ $\big(= L^1([0,1];\RR) := \w{\bfS_{\ident_{\RR}}([0,1])}\big)$ coincides with $L^1([0,1])$.}.
Define the composition
\[ \tilde{\gamma}_u: E_u^{\oplus 2^{\dimA}}
  \To{\cong} E_{u+1}
  \To{\subseteq} \bigcup_{u\in\NN} E_u
  \To{\hookrightarrow} L^p(\II_A;B). \]
Then we obtain a limit
\[ \gamma = \dirlim \tilde{\gamma}_u: L^p(\II_A; B)^{\oplus 2^{\dimA}} \to L^p(\II_A; B), \]
which is an extension of juxtaposition $\gamma_u$, and we call it a \defines{complete juxtaposition}.

\subsection{Special Banach module categories}

\begin{definition}\rm
A \defines{special Banach module category} $\scrA^p_{\homo}$ is a category whose object is a triple $(N,\eta_N,\delta_N)$
given by a Banach $(A,B)$-bimodule $N$, $(A,B)$-homomorphism $\eta_N: B\to N$,
and a linear map $\delta_N: N^{\oplus 2^{\dimA}} \to N$
such that the following conditions \ref{Aobj1} and \ref{Aobj2} hold;
and whose morphism $\theta: (N,\eta_N,\delta_N) \to (M,\eta_M,\delta_M)$ is an $(A,B)$-homomorphism
such that the following conditions \ref{Amorp1} and \ref{Amorp2} hold.
\begin{enumerate}[label=($\mathscr{A}$\arabic*)]
  \item There is an index set $I_N$ which implies an $(A,B)$-homomorphism
    $\mathds{P}_N:B^{\times I_N}\to N$ sends $(1_B)_{I_N}$ to an element
    $v_N:=\mathds{P}_N((1_B)_{I_N})\in N$ with $\|v\|_V < \mu_{\II_A}(\II_A)$.
    Here, $(1_B)_{I_N} := (1_B)_{1\times I_N}$ is an element in the Cartesian product
    $B^{\times I_N} = \{ (b_i)_{1\times I_N} := (b_i)_{i\in I_N} \mid b_i \in B\}$.
    \label{Aobj1}
  \item The linear map $\delta_N$ is both a bounded $\CC$-linear map
    and an $(A,B)$-homomorphism sending $(v)_{1\times 2^{\dimA}}=(v,v,\ldots,v)$ to $v$.
    By the boundedness, it is clear that for any Cauchy sequence
    $\{x_i\}_{i\in \NN}$ in $N^{\oplus_p 2^{\dimA}}$, we have $\delta(\invlim x_i) = \invlim \delta(x_i)$.
    \label{Aobj2}
  \item $\theta(v_N)=v_M$.
    \label{Amorp1}
  \item the following diagram
\[
\xymatrix@R=1.5cm@C=1.5cm{
 N^{\oplus 2^{\dimA}}
\ar[r]^{\delta_N}
\ar[d]_{\theta^{\oplus 2^{\dimA}}
}
 &
 N
\ar[d]^{\theta}
\\
M^{\oplus 2^{\dimA}}
\ar[r]_{\delta_M}
 &
M
}
\]
holds.
    \label{Amorp2}
\end{enumerate}
\end{definition}

Special Banach module category $\scrA^p_{\homo}$ can be seen as a subcategory of the $(A,B)$-bimodule category ${_A\Modcat_B}$ defined on $\homo:A \to B$, whose objects are Banach $(A,B)$-bimodules satisfying the conditions \ref{Aobj1} and \ref{Aobj2}; and whose morphisms are $(A,B)$-homomorphisms satisfying the conditions \ref{Amorp1} and \ref{Amorp2}.
The following theorem shows that any function in $L^1(\II_A;B)$ has an abstract integration, and this abstract integration can be described by using a unique morphism starting with the initial object in $\scrA^1_{\homo}$.

\begin{theorem}[{\!\!\cite[Theorem 5.1]{Liu2025pre}}] \label{thm:int}
Take $\II=[0,1]$ {\rm(}then we have
$\II_A \mathop{\longleftrightarrow}\limits^{\text{1-1}} [0,1]^{\times \dimA}$
in this case{\rm)}.
The special Banach module category $\scrA^1_{\homo}$ \textbf{always} has an initial object $(L^1(\II_A;B),\id_{\II_A},\gamma)$
and an object $(B,\mu_{\II_A}(\II_A), \ave)$ such that the unique morphism
\[ \Int_{\homo}\in \Hom_{\scrA^1_{\homo}}
 (
   (L^1(\II_A;B),\id_{\II_A},\gamma),
   (B,\mu_{\II_A}(\II_A), \ave)
 ) \]
sends each $f\in L^1(\II_A;B)$ to its abstract integral, i.e.,
\[ f \mapsto (\scrA^1_{\homo})\int_{\II_A} f\dd\mu_{\II_A} := \Int_{\homo}(f). \]
Here, $\ave: B^{\oplus 2^{\dimA}} \to B$ sends each $(y_1,y_2,\ldots, y_{2^{\dimA}})$ to the average
$\sum\limits_{i=1}^{2^{\dimA}} \frac{1}{2^{\dimA}} y_i$.
\end{theorem}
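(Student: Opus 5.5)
We follow Leinster's strategy for $\scrA^1$, adapted to $(A,B)$-bimodules. There are two parts: first show that $(L^1(\II_A;B),\id_{\II_A},\gamma)$ is an object of $\scrA^1_{\homo}$ and is initial; then check that $(B,\mu_{\II_A}(\II_A),\ave)$ is an object. The unique morphism $\Int_{\homo}$ then exists by initiality, and we identify it with the abstract integral.

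For the object axioms we proceed as follows. $L^1(\II_A;B)=\w{\bfS_{\homo}(\II_A)}$ is a Banach $(A,B)$-bimodule by \eqref{eq:Lp}. For \ref{Aobj1}, take $\eta=\id_{\II_A}$, i.e.\ $b\mapsto b\id_{\II_A}$; its distinguished element is the constant function $\id_{\II_A}$. The complete juxtaposition $\gamma$ is bounded, being the extension of isometric juxtapositions $\gamma_u$ up to the factor coming from the equal-measure cells. It is an $(A,B)$-homomorphism, and it sends $(v,\dots,v)$ to $v$ when $v=\id_{\II_A}$, since rescaling $\id_{\II_A}$ onto each of the $2^{\dimA}$ subcells and juxtaposing returns $\id_{\II_A}$; this gives \ref{Aobj2}. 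For $B$, the map $\ave$ is visibly bounded, $B$-bilinear and idempotent on diagonals, and the distinguished element is $\mu_{\II_A}(\II_A)1_B$. Note that for $\II=[0,1]$ with a probability-type normalisation the norm condition in \ref{Aobj1} is exactly the normalisation we will use.

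\textbf{Initiality.} Let $(N,\eta_N,\delta_N)$ be any object. We define $\theta$ on step functions by induction on $u$. On $E_0\cong B$, set $\theta_0(b)=\eta_N(b)$, which is forced by \ref{Amorp1} and $(A,B)$-linearity. On $E_{u+1}\cong E_u^{\oplus 2^{\dimA}}$ via $\gamma_u$, set $\theta_{u+1}:=\delta_N\circ\theta_u^{\oplus 2^{\dimA}}\circ\gamma_u^{-1}$, which is forced by \ref{Amorp2}. This gives uniqueness on $\bigcup_u E_u=\bfS_{\homo}(\II_A)$, and by density and continuity, uniqueness on the completion.

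For existence, we must check three things:
\begin{enumerate}[label=(\roman*)]
  \item compatibility with the inclusions $E_u\subseteq E_{u+1}$, which reduces to $\delta_N(v,\dots,v)=v$ applied cellwise;
  \item that each $\theta_u$ is an $(A,B)$-homomorphism, which holds since $\eta_N$ and $\delta_N$ are;
  \item a uniform bound $\|\theta_u(f)\|_N\=< C\|f\|_{1}$ independent of $u$.
\end{enumerate}
Item (iii) is the main obstacle. Following \cite{Lei2023FA,Liu2025pre}, we first try to prove it using the boundedness of $\delta_N$ together with the $\ell^1$-type norm on $N^{\oplus 2^{\dimA}}$, so that $\|\delta_N\|\=<1$ after the renorming permitted in the definition. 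Combined with the fact that the cells of $\mathcal P_{u+1}$ carry measure $2^{-\dimA}$ of their parents, this should give an inductive estimate $\|\theta_u(f)\|\=< \|\eta_N\|\cdot\|f\|_1/\mu_{\II_A}(\II_A)$. If that fails, we fall back on the argument of \cite[Theorem 5.1]{Liu2025pre}: show that the set of $f$ on which $\theta$ extends continuously is a closed subspace containing the steps. Once the bound holds, $\theta$ extends uniquely to $L^1(\II_A;B)$, and the extension satisfies \ref{Amorp1} and \ref{Amorp2} by continuity, using $\delta(\invlim x_i)=\invlim\delta(x_i)$ from \ref{Aobj2}.

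Finally, we apply initiality to $(B,\mu_{\II_A}(\II_A),\ave)$. On a step function $\sum_i b_i\id_{C_{u,i}}$ the recursion yields $\sum_i \mu_{\II_A}(C_{u,i})\,b_i$, because $\ave$ divides by $2^{\dimA}$ exactly as the measures of the subcells do. Hence $\Int_{\homo}$ agrees with the Bochner integral on steps, and therefore on all of $L^1$ by continuity. This justifies writing $\Int_{\homo}(f)=(\scrA^1_{\homo})\int_{\II_A}f\dd\mu_{\II_A}$.
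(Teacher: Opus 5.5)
The paper gives no proof here; it only cites \cite[Theorem 5.1]{Liu2025pre}. Your route is the Leinster-style argument behind that result: force $\theta$ on $E_0\cong B$ by $\eta_N$, force it on $E_{u+1}$ by $\delta_N\circ\theta_u^{\oplus 2^{\dimA}}\circ\gamma_u^{-1}$, then extend by density. Your steps (i) and (ii), and the final identification with $\sum_i\mu_{\II_A}(C_{u,i})b_i$, are fine.

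The genuine gap is (iii), which is the real content of the theorem. The renorming clause in the definition of a normed module concerns only the action inequality $\|avb\|_V\le\Verb a\Verb_A\|v\|_V\|b\|_B$. It says nothing about $\delta_N$, and bare boundedness of $\delta_N$ (all that \ref{Aobj2} states) cannot give your estimate.

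Here is a counterexample. Take $A=B=\CC$ and $\homo=\ident_{\CC}$, so $\dimA=1$ and $L^1(\II_A;B)=L^1[0,1]$ with Lebesgue measure. Let $N=\CC$, $\eta_N(b)=b/2$ and $\delta_N(x,y)=2x-y$.
\begin{itemize}
\item This $\delta_N$ is bounded and linear, $\delta_N(\frac12,\frac12)=\frac12$, and $\|v_N\|=\frac12<1$. So \ref{Aobj1} and \ref{Aobj2} hold as written.
\item Your recursion forces $\theta(\id_{[0,2^{-n}]})=2\,\theta(\id_{[0,2^{1-n}]})=2^{n-1}$, while $\|\id_{[0,2^{-n}]}\|_1=2^{-n}$.
\item So $\theta$ is unbounded on step functions, and no bounded morphism out of $L^1$ exists. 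No equivalent renorming of $N$ changes this, so with the axioms as reproduced in the paper $L^1$ is not initial.
\end{itemize}
Your fallback is circular. A linear map on the dense subspace $\bfS_{\homo}(\II_A)$ extends continuously if and only if it is bounded there, which is (iii) again.

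What the argument needs is Leinster's contractivity axiom. It must be taken as part of the definition of $\scrA^1_{\homo}$, as for Leinster's $\scrA^1$ in \cite{Lei2023FA}, not derived:
\[
\|\delta_N(x_1,\ldots,x_{2^{\dimA}})\|_N\le 2^{-\dimA}\sum_i\|x_i\|_N,
\]
that is, $\|\delta_N\|\le 1$ for the \emph{averaged} $\ell^1$-norm on $N^{\oplus 2^{\dimA}}$. Contractivity for the plain sum norm would not suffice. Since $\|\gamma_u(f_1,\ldots,f_{2^{\dimA}})\|_1=2^{-\dimA}\sum_i\|f_i\|_1$, you would lose a factor $2^{\dimA}$ at every level.

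With the averaged axiom, the induction closes with $C=\|\eta_N\|/\mu_{\II_A}(\II_A)$. The base case is $\|\eta_N(b)\|_N\le C\|b\id_{\II_A}\|_1$, and the inductive step is
\[
\|\theta_{u+1}(\gamma_u(f_1,\ldots,f_{2^{\dimA}}))\|_N \le 2^{-\dimA}\sum_i\|\theta_u(f_i)\|_N \le C\,2^{-\dimA}\sum_i\|f_i\|_1 = C\,\|\gamma_u(f_1,\ldots,f_{2^{\dimA}})\|_1 .
\]
The two objects you use satisfy the axiom: $\gamma$ is an isometry for the averaged norm, and $\ave$ is a contraction.

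Separately, the strict inequality in \ref{Aobj1} fails for both $\id_{\II_A}$ and $\mu_{\II_A}(\II_A)1_B$ once $\|1_B\|_B=1$. It has to be read as a non-strict bound, and your remark that it is ``exactly the normalisation we will use'' passes over this.
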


\section{Central integrals of regular modules} \label{sect:cent int}

\textsl{With the notion of integration in place, it is natural to study integrals of certain algebra homomorphisms. In this section, we investigate the first type of integral considered in this paper, namely the \textbf{central integral}, which is obtained by integrating over the canonical inclusion from the center $\cent{A}$ of a finite-dimensional algebra $A$ into $A$. We shall see that the central integral is exactly $\cent{A}$.}

\subsection{Central integrals}

Consider the canonical inclusion $\iota_A: \cent{A} \to A$ from the center $\cent{A}$ of $A$ to a finite-dimensional complex algebra $A$. Obviously, $\iota_A$ satisfies CIC, and $\cent{A}$ is a $\CC$-linear subspace of $A$.
We choose a cube $\II_{\cent{A}}$ in $\cent{A}$ and equip it with a
natural relative Hausdorff measure $\mu_{\II_{\cent{A}}}$ of $\mu_{\II_A}$.
For $f\in L^1(\II_{\cent{A}};A)$, its integral $\Int_{\iota_A}(f) = (\scrA^1_{\iota_A}) \int_{\II_{\cent{A}}} f\dd\mu_{\II_{\cent{A}}}$ exists by Theorem \ref{thm:int}.
To be simplicity, we use $\CentIntA{f}$ to represent this integral.

\begin{definition} \rm
We call $\CentIntA{f}$ a \defines{central categorical integral} of $A$ if $f$ satisfies \defines{central condition},
i.e., $f(x)\in \cent{A}$ for almost every $x\in\II_{\cent{A}}$.
\end{definition}

The adjective ``central'' refers to the structure homomorphism and not to an arbitrary $A$-valued integrand. The integral is guaranteed to be central when the function itself is center-valued.

\begin{lemma} \label{lemm:center-closure}
If a function $f\in L^1(\II_{\cent{A}};A)$ satisfies central condition, then $\CentIntA{f} \in \cent{A}$.
\end{lemma}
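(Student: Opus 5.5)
The plan is to show that the integral morphism $\Int_{\iota_A}$ maps center-valued functions into $\cent{A}$. The argument runs through step functions, an approximation, and the closedness of $\cent{A}$.

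\textbf{Step functions.} Let $f=\sum_{i=1}^{N_u} b_i\id_{C_{u,i}}\in E_u$ with every $b_i\in\cent{A}$. I will use that $\Int_{\iota_A}$ is a morphism in $\scrA^1_{\iota_A}$, so it is compatible with the juxtapositions $\gamma$ and $\ave$ via \ref{Amorp2}. It also sends $\id_{\II_{\cent{A}}}$ to $\mu_{\II_{\cent{A}}}(\II_{\cent{A}})\cdot 1_A$ via \ref{Amorp1}. Finally, it is an $(\cent{A},A)$-homomorphism, hence right $A$-linear.

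Iterating the juxtaposition $u$ times gives
\[\Int_{\iota_A}(b\id_{C_{u,i}})=\Int_{\iota_A}(\id_{C_{u,i}})\,b=\frac{\mu_{\II_{\cent{A}}}(\II_{\cent{A}})}{2^{u\dimA}}\,b,\]
since the cells have equal measure. Therefore
\[\CentIntA{f}=\frac{\mu_{\II_{\cent{A}}}(\II_{\cent{A}})}{2^{u\dimA}}\sum_i b_i,\]
which is a complex linear combination of central elements and so lies in $\cent{A}$.

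\textbf{Approximation.} Step functions are dense in $L^1(\II_{\cent{A}};A)=\w{\bfS_{\iota_A}(\II_{\cent{A}})}$. I need approximants that are themselves center-valued. Fix the linear projection $\pi:A\to\cent{A}$ along a complementary subspace; it is bounded since $A$ is finite-dimensional, by Lemma \ref{lemm:C algs are Banach}.

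Given step functions $f_k\to f$ in $L^1$, put $g_k:=\pi\circ f_k$. These are step functions with central values, and
\[\|g_k-f\|=\|\pi\circ(f_k-f)\|\leq\|\pi\|\,\|f_k-f\|\to0,\]
because $\pi\circ f=f$ almost everywhere by the central condition. The main point to check carefully is exactly this: that composing with $\pi$ stays inside the completion and that the norm estimate holds. This is clear for step functions and passes to limits by boundedness of $\pi$.

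\textbf{Continuity and closedness.} $\Int_{\iota_A}$ is a bounded linear map, being a morphism between Banach modules. Hence $\CentIntA{g_k}\to\CentIntA{f}$ in $A$. The subspace $\cent{A}$ is finite-dimensional, hence closed in $A$. Since each $\CentIntA{g_k}\in\cent{A}$ by the step-function case, the limit $\CentIntA{f}$ lies in $\cent{A}$, which proves Lemma \ref{lemm:center-closure}.
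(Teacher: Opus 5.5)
Your proof is correct and follows the same route as the paper's. Both arguments use density of step functions in $L^1(\II_{\cent{A}};A)$, the fact that integrals of center-valued step functions are central, and then closedness of $\cent{A}$ together with continuity of $\Int_{\iota_A}$. The only real difference is that your bounded projection $\pi: A\to\cent{A}$ actually justifies choosing center-valued approximants, which the paper simply asserts ``without loss of generality''; you also derive the step-function formula from the morphism axioms instead of quoting it.
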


\begin{proof}
By \eqref{eq:Lp}, the set of all step functions are dense in $L^1(\II_{\cent{A}};A)$,
so there is a Cauchy sequence of functions $\big\{f_i=\sum\limits_{j=1}^r a_{ij}\id_{J_{ij}}\big\}_{i\in\NN}$,
where every $f_i$ is a step function on pairwise disjoint measurable cells $J_{ij}$,
such that $f = \invlim f_i$ and $\bigcup_j J_{ij} = \II_{\cent{A}}$.
We have
\[\CentIntA{f_i} = \sum_{j=1}^r a_{ij}\mu_{\II_{\cent{A}}}(J_{ij}).\]
Without loss of generality, we assume that all $a_{ij}$ lie in $\cent{A}$,
then the sum $\sum\limits_{j=1}^r a_{ij}\mu_{\II_{\cent{A}}}(J_{ij})$ lies in $\cent{A}$,
i.e., each $f_i(x)\in\cent{A}$ for almost every $x$.
Moreover, $\cent{A}$ is a finite-dimensional, hence closed, subspace of $A$,
then passage to limits $f=\invlim f_i$ proves this proposition
since the morphism $\Int_{\iota_A}$ is continuous \footnote{Indeed, $\Int_{\iota_A}$ is a Bochner integration.}.
\end{proof}

\subsection{Average central integrals}

For each $f\in L^1(\II_{\cent{A}}; A)$ and a positive measurable subset $J\subseteq \II_{\cent{A}}$,
we have $\frac{1}{\mu_{\II_{\cent{A}}}(J)}\id_{J}f \in L^1(\II_{\cent{A}}; A)$ because
$\II_{\cent{A}}\backslash J$ is measurable, and its integral is well-defined and equals to
\[  \frac{1}{\mu_{\II_{\cent{A}}}(J)}
  \bigg(
  (\scrA^1_{\iota_A}) \int_{\II_{\cent{A}}} f  \dd\mu_{\II_{\cent{A}}} -
  (\scrA^1_{\iota_A}) \int_{\II_{\cent{A}}\backslash J} f  \dd\mu_{\II_{\cent{A}}}
  \bigg). \]

\begin{definition} \rm
The following central integral of $A$
\[ \olCentIntA{J}{f}
:= \CentIntA{\frac{1}{\mu_{\II_{\cent{A}}}(J)}\id_Jf}
 = \frac{1}{\mu_{\II_{\cent{A}}}(J)}\CentIntA{\id_Jf}\]
is an \defines{average central integral} of $A$ when $f\in L^1(\II_{\cent{A}};A)$ satisfies central condition.
We use $\ACI(A)$ to represent the set of all average central integrals of $A$.
\end{definition}

\begin{lemma} \label{lemm:AveCentInt1}
Let $M$ be a left $A$-module whose left $A$-action $A\times M\to M$ is induced by the homomorphism $\rho_M: A\to \End_{\CC}(M)$,
and $\jmath_A: \II_{\cent{A}} \to A$ be the restriction of the canonical inclusion $\iota_A$.
Then for every positive measurable subset $J\subseteq \II_{\cent{A}}$, we have:
\begin{enumerate}[label={\rm(\arabic*)}]
  \item $\olCentIntA{J}{f} \in \cent{A}$ for each $f\in L^1(\II_{\cent{A}};A)$ satisfies central condition;
  \item and every finite-dimensional left $A$-module $M$ receives an operator $\rho_M(\olCentIntA{J}{f})$
    which is a left $A$-homomorphism in $\End_A(M)$.
\end{enumerate}
\end{lemma}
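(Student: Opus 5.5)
For part (1), I will reduce to Lemma~\ref{lemm:center-closure}. Fix a positive measurable subset $J\subseteq\II_{\cent{A}}$ and $f\in L^1(\II_{\cent{A}};A)$ satisfying the central condition. Put $g:=\frac{1}{\mu_{\II_{\cent{A}}}(J)}\id_J f$.

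First I check that $g$ lies in $L^1(\II_{\cent{A}};A)$. Since $\id_J$ takes only the values $1_A$ and $0_A$, we have $\|g(x)\|_A\=< \frac{1}{\mu_{\II_{\cent{A}}}(J)}\|f(x)\|_A$ pointwise, and $J$ is measurable. Next I check that $g$ satisfies the central condition. For almost every $x$, either $g(x)=0$ or $g(x)$ is a real scalar multiple of $f(x)\in\cent{A}$. In both cases $g(x)\in\cent{A}$, because $\cent{A}$ is a $\CC$-linear subspace of $A$.

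Lemma~\ref{lemm:center-closure} then gives $\CentIntA{g}\in\cent{A}$. By definition $\CentIntA{g}=\olCentIntA{J}{f}$, so part (1) follows. The only subtlety is that the approximating step functions used in Lemma~\ref{lemm:center-closure} can be chosen $\cent{A}$-valued. I will take that lemma as given. If extra care is wanted, one can compose the approximating step functions with a linear projection $A\to\cent{A}$; this preserves convergence in $L^1$ because $g$ is already center-valued.

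For part (2), write $z:=\olCentIntA{J}{f}\in\cent{A}$. Since $\rho_M:A\to\End_{\CC}(M)$ is an algebra homomorphism, $\rho_M(z)$ is a $\CC$-linear endomorphism of $M$. To show it is a left $A$-homomorphism, take $a\in A$ and $m\in M$ and compute
\[ \rho_M(z)(a.m)=(za).m=(az).m=a.(\rho_M(z)(m)). \]
The first equality is the module axiom, and the second uses $z\in\cent{A}$ from part (1). Hence $\rho_M(z)\in\End_A(M)$.

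I expect no real obstacle here. The substantive input is the closure statement in part (1), and it is inherited from Lemma~\ref{lemm:center-closure} together with the fact that $\cent{A}$ is closed, being finite-dimensional. The rest is bookkeeping: checking that multiplying by $\id_J$ and by the scalar $1/\mu_{\II_{\cent{A}}}(J)$ keeps $g$ integrable and center-valued.
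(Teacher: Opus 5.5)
Your proposal is correct and follows the paper's proof. Part (1) applies Lemma~\ref{lemm:center-closure} to the integrand $\frac{1}{\mu_{\II_{\cent{A}}}(J)}\id_J f$, and part (2) is the same commutation computation $\rho_M(z)(a.m)=a.\rho_M(z)(m)$, using $z\in\cent{A}$. Your explicit check that this integrand stays in $L^1$ and is center-valued, and your projection remark justifying the ``without loss of generality'' step in Lemma~\ref{lemm:center-closure}, make explicit what the paper leaves implicit.
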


\begin{proof}
The average central integral $\olCentIntA{J}{f}$ is a central integral of $A$
whose integrand function is $\frac{1}{\mu_{\II_{\cent{A}}}(J)}\id_{J}f$.
Thus, (1) holds by Lemma \ref{lemm:center-closure}. For (2), we write $c_J = \olCentIntA{J}{f}$ in this proof.
By (1), we have $c_J \in\cent{A}$, i.e., for all $a \in A$, we have $ac_J=c_Ja$,
and so, $\rho_M(a) \rho_M(c_J) = \rho_M(c_J) \rho_M(a)$.
For any $a\in A$ and $m \in M$, we have
\[
  \rho_M(c_J)(a.m)
= \rho_M(c_J) \rho_M(a)(m)
= \rho_M(a) \rho_M(c_J)(m)
= a.\rho_M(c_J)(m).
\]
Therefore, $\rho_M(c_J) : M \to M$ commutes with the left $A$-action on $M$,
and then we obtain $\rho_M(c_J) \in \End_A(M)$. This proves (2).
\end{proof}

\begin{lemma} \label{lemm:AveCentInt2}
Each $c\in \cent{A}$ is an average central integral of $A$.
\end{lemma}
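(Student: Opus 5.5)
The plan is to exhibit $c$ directly as $\olCentIntA{J}{f}$ for a constant center-valued integrand. Given $c\in\cent{A}$, I would take $J:=\II_{\cent{A}}$, assuming $\mu_{\II_{\cent{A}}}(\II_{\cent{A}})>0$; otherwise I would replace $J$ by any cell of positive measure in one of the partitions $\mathcal{P}_u$. As integrand I would take the constant function
\[f := c\,\id_{J}\colon \II_{\cent{A}}\to A, \qquad x\mapsto c\cdot 1_A .\]
This $f$ is a step function, lying in $E_0\subseteq L^1(\II_{\cent{A}};A)$. It satisfies the central condition, since $f(x)\in\{0,c\}\subseteq\cent{A}$ for every $x$. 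Hence $\olCentIntA{J}{f}$ is a genuine average central integral in the sense of the definition.

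Next I would compute the integral. By Theorem~\ref{thm:int}, $\Int_{\iota_A}$ is the unique morphism in $\scrA^1_{\iota_A}$ from the initial object $(L^1,\id,\gamma)$ to $(A,\mu_{\II_{\cent{A}}}(\II_{\cent{A}}),\ave)$. Condition \ref{Amorp1} gives
\[\Int_{\iota_A}(\id_{\II_{\cent{A}}})=\mu_{\II_{\cent{A}}}(\II_{\cent{A}})\,1_A .\]
The morphism $\Int_{\iota_A}$ is an $(\cent{A},A)$-bimodule map, where the left action is through $\iota_A$, which is CIC. So for central $c$,
\[\Int_{\iota_A}(c\,\id_{J}) = c\,\Int_{\iota_A}(\id_J)=\mu_{\II_{\cent{A}}}(J)\,c .\]
This agrees with the step-function formula $\CentIntA{\sum_j a_j\id_{J_j}}=\sum_j a_j\mu(J_j)$ already used in the proof of Lemma~\ref{lemm:center-closure}. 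If $J$ is a proper cell, I would instead use that formula, or equivalently the averaging compatibility \ref{Amorp2} through the juxtapositions, to get $\CentIntA{c\,\id_J}=\mu(J)c$.

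Finally, dividing by $\mu_{\II_{\cent{A}}}(J)$ gives
\[\olCentIntA{J}{f}=\frac{1}{\mu_{\II_{\cent{A}}}(J)}\,\mu_{\II_{\cent{A}}}(J)\,c=c,\]
so $c\in\ACI(A)$.

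I expect the main obstacle to be justifying the value of the integral of $c\,\id_J$ rigorously from the categorical characterization rather than simply asserting it. Linearity and the action of $c$ on $\id_J$ come from bimodule linearity. The value on indicators of cells comes from combining \ref{Amorp1} with the averaging condition \ref{Amorp2}. The only genuinely delicate point is ensuring that a set $J$ of positive measure exists, which I would handle by the nondegeneracy of the chosen cube.
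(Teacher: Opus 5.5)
Your proposal is correct and is essentially the paper's proof: take $J=\II_{\cent{A}}$ and the constant integrand $f\colon x\mapsto c$, so that $\olCentIntA{\II_{\cent{A}}}{f}=c$. Your derivation of $\CentIntA{c\,\id_{\II_{\cent{A}}}}=\mu_{\II_{\cent{A}}}(\II_{\cent{A}})\,c$ from \ref{Amorp1} and bimodule linearity just spells out what the paper calls ``the computation above.''

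The only slip is your fallback for the case $\mu_{\II_{\cent{A}}}(\II_{\cent{A}})=0$. It is vacuous: every cell would then also have measure zero, so $\ACI(A)$ would be empty. Nondegeneracy of the cube is therefore a genuine standing assumption, which the paper uses tacitly and which you note yourself at the end.
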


\begin{proof}
For any $c\in\cent{A}$, take $f$ is the function $\II_{\cent{A}} \to A$, $x\mapsto c$.
The computation above gives $\olCentIntA{\II_{\cent{A}}}{f}=c$.
\end{proof}

By the measure $\mu_{\II_{\cent{A}}}$ is a relative measure of $\mu_{\II_A}$,
$\II_{\cent{A}}$ can be written as a measure space $(\II_{\cent{A}}, \Sigma_{\cent{A}}, \mu_{\II_{\cent{A}}})$.
According to the above notations, the set of all average central integrals of $A$ is
\begin{align*}
  \ACI(A) =
\{ \olCentIntA{J}{f}: &
  f\in L^1(\II_{\cent{A}}; A) \text{ satisfies central condition}, \\
  & J \in \Sigma_{\cent{A}},~\text{~and~} \mu_{\II_{\cent{A}}}(J)>0
\}.
\end{align*}

\begin{proposition} \label{prop:AveCentInt}
We have a bijection
\begin{align*}
  \cent{A} & \mathop{\longrightarrow}\limits^{\text{1-1}} \ACI(A), \\
c & \mapsto
  \frac{ \displaystyle
    (\scrA^1_{\iota_A})\int_{\II_{\cent{A}}} c\id_{\II_{\cent{A}}} \dd\mu_{\II_{\cent{A}}}
  }{\mu_{\II_{\cent{A}}}(\II_{\cent{A}})}.
\end{align*}
\end{proposition}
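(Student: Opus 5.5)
The plan is to show that the displayed map, call it $\Phi$, is well defined with values in $\ACI(A)$, that it is in fact the identity on $\cent{A}$ (so injective), and that every element of $\ACI(A)$ already lies in $\cent{A}$ (so surjective).

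\emph{Well-definedness and computation.} For $c\in\cent{A}$, the constant function $c\id_{\II_{\cent{A}}}$ is a step function in $E_0\subseteq L^1(\II_{\cent{A}};A)$. It satisfies the central condition trivially. Since $\Int_{\iota_A}$ is $(A,A)$-linear and is normalised by $\id_{\II_{\cent{A}}}\mapsto \mu_{\II_{\cent{A}}}(\II_{\cent{A}})1_A$ (Theorem \ref{thm:int}, via the morphism condition \ref{Amorp1}), we get
\[(\scrA^1_{\iota_A})\int_{\II_{\cent{A}}} c\id_{\II_{\cent{A}}}\dd\mu_{\II_{\cent{A}}} = c\,\mu_{\II_{\cent{A}}}(\II_{\cent{A}}).\]
Hence $\Phi(c)=c$, provided $\mu_{\II_{\cent{A}}}(\II_{\cent{A}})>0$; I will assume the chosen cube has positive measure, as is implicit in the statement. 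Moreover $\Phi(c)=\olCentIntA{\II_{\cent{A}}}{c\id_{\II_{\cent{A}}}}\in\ACI(A)$ by definition. This is precisely Lemma \ref{lemm:AveCentInt2}. Because $\Phi$ is the identity on $\cent{A}$, it is injective.

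\emph{Surjectivity.} Take an arbitrary element $\olCentIntA{J}{f}\in\ACI(A)$, where $f$ satisfies the central condition and $\mu_{\II_{\cent{A}}}(J)>0$. By Lemma \ref{lemm:AveCentInt1}(1), which in turn rests on Lemma \ref{lemm:center-closure}, we have $c:=\olCentIntA{J}{f}\in\cent{A}$. By the computation above, $\Phi(c)=c$, so every element of $\ACI(A)$ lies in the image of $\Phi$. Therefore $\Phi$ is a bijection, and in fact $\ACI(A)=\cent{A}$ as subsets of $A$.

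\emph{Main obstacle.} The only delicate point is justifying that the abstract integral of a constant step function equals the constant times the measure. I would derive this from the construction in Theorem \ref{thm:int}: $\Int_{\iota_A}$ is a morphism into $(A,\mu_{\II_{\cent{A}}}(\II_{\cent{A}}),\ave)$ and is an $(A,A)$-homomorphism, so
\[\Int_{\iota_A}(c\id_{\II_{\cent{A}}}) = c\cdot\Int_{\iota_A}(\id_{\II_{\cent{A}}}).\]
The normalisation condition \ref{Amorp1} identifies $\Int_{\iota_A}(\id_{\II_{\cent{A}}})$ with $\mu_{\II_{\cent{A}}}(\II_{\cent{A}})1_A$. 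Alternatively, I can use the step-function formula already used in the proof of Lemma \ref{lemm:center-closure}, which gives this directly.
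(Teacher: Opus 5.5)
Your proposal is correct and follows the paper's proof. The paper proves this proposition simply by combining Lemma~\ref{lemm:AveCentInt2} (each $c\in\cent{A}$ equals $\olCentIntA{\II_{\cent{A}}}{c\id_{\II_{\cent{A}}}}$, giving well-definedness and, since the map is the identity, injectivity) with Lemma~\ref{lemm:AveCentInt1}(1) (every element of $\ACI(A)$ lies in $\cent{A}$, giving surjectivity), which is exactly your argument.

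Your justification of $\Int_{\iota_A}(c\id_{\II_{\cent{A}}})=c\,\mu_{\II_{\cent{A}}}(\II_{\cent{A}})$ via right $A$-linearity and the normalisation \ref{Amorp1} usefully fills in what the paper calls ``the computation above''. One small correction: $\Int_{\iota_A}$ is a $(\cent{A},A)$-homomorphism, not an $(A,A)$-homomorphism, but right $A$-linearity is all you actually use.
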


\begin{proof}
Lemmas \ref{lemm:AveCentInt1} and \ref{lemm:AveCentInt2} admits this statement.
\end{proof}

\subsection{The naturality of average central integrals}

The following result shows the naturality of central integral actions.

\begin{theorem} \label{thm:AveCentInt} \
Each $\olCentIntA{J}{f}\in \ACI(A)$ provides a family
\[(\rho_M(\olCentIntA{J}{f}))_{M \in {_A\modcat}}=
\bigg(
  \rho_M
    \bigg(
      \frac{1}{\mu_{\II_{\cent{A}}}(J)}
        \cdot (\scrA^1_{\iota_A}) \int_{\II_{\cent{A}}} \id_Jf\dd\mu_{\II_{\cent{A}}}
    \bigg)
\bigg)_{M \in {_A\modcat}} \]
of endomorphisms which induces a natural transformation
\[ \rho_{*}(\olCentIntA{J}{f}) : \ident_{{_A\modcat}} \longrightarrow \ident_{{_A\modcat}}. \]
\end{theorem}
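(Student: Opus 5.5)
Write $c_J := \olCentIntA{J}{f}$. By Lemma \ref{lemm:AveCentInt1}(1), $c_J\in\cent{A}$. By Lemma \ref{lemm:AveCentInt1}(2), for every $M\in{_A\modcat}$ the operator $\rho_M(c_J)$ lies in $\End_A(M)$. So the family $(\rho_M(c_J))_{M}$ is at least a family of morphisms $\ident_{{_A\modcat}}(M)=M\to M=\ident_{{_A\modcat}}(M)$ in ${_A\modcat}$. The plan is to define $\rho_*(c_J)$ componentwise by $\rho_*(c_J)_M:=\rho_M(c_J)$. The only remaining task is to check the naturality square for every left $A$-homomorphism $h:M\to N$, namely
\[ h\circ\rho_M(c_J)=\rho_N(c_J)\circ h . \]

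For naturality we evaluate both sides on an arbitrary $m\in M$. By definition, $\rho_M(c_J)(m)=c_J.m$ and $\rho_N(c_J)(n)=c_J.n$. Since $h$ is $A$-linear and $c_J\in A$, we get $h(c_J.m)=c_J.h(m)$, and this is exactly the required equality. Centrality is not even needed at this step: it was used in Lemma \ref{lemm:AveCentInt1}(2) only to make each component an $A$-endomorphism. Naturality itself follows from $A$-linearity of $h$ alone.

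We should also check that the assignment is well defined, i.e.\ independent of any choice. The component $\rho_M(c_J)$ depends only on the element $c_J\in A$. The integral $c_J$ is uniquely determined, because Theorem \ref{thm:int} gives the unique morphism $\Int_{\iota_A}$ out of the initial object, and averaging by $\mu_{\II_{\cent{A}}}(J)>0$ is unambiguous. Also, the formula is compatible with isomorphic modules, since conjugating $\rho_M$ by an $A$-isomorphism is a special case of the naturality square. The one point needing care is that $J$ must have positive measure, so that the average is defined; this is built into the definition of $\ACI(A)$. I expect no real obstacle here. The substantive input, namely the centrality of the integral, was already handled in Lemma \ref{lemm:center-closure} via density of step functions and closedness of $\cent{A}$.
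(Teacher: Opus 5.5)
Your proof is correct and matches the paper's: the paper also checks the naturality square $\varphi\circ\rho_M(c)=\rho_N(c)\circ\varphi$ using only $A$-linearity of $\varphi$. For centrality of the components it invokes $\ACI(A)=\cent{A}$ (Proposition~\ref{prop:AveCentInt}), which is exactly the inclusion you take from Lemma~\ref{lemm:AveCentInt1}. Your remark that centrality is needed only to make each $\rho_M(c_J)$ an $A$-endomorphism, and not for naturality, is accurate and makes the logic slightly more explicit than the paper's version.
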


\begin{proof}
For every $c\in\cent{A}$ and $m\in M$, we have $\varphi(c.m) = c.\varphi(m)$,
it follows that
\begin{align*}
  (\varphi \compos \rho_M(c))(m)
& = \varphi(\rho_M(c)(m)) \\
& = \varphi(c.m) \\
& = c.\varphi(m) \\
& = \rho_N(c) \compos \varphi (m),
\end{align*}
and so $\varphi \compos \rho_M(c)
= \rho_N(c) \compos \varphi$ holds.
Proposition \ref{prop:AveCentInt} shows $\cent{A} = \ACI(A)$,
then this theorem holds.
\end{proof}

Given an average central integral $\olCentIntA{J}{f}$ of $A$.
For every bounded complex $X^{\bullet} = (X^m, d_X^m)_{m\in\ZZ}$ in $\Dcat^b(A)$,
define $\rho_{X^{\bullet}}(\olCentIntA{J}{f})$ by
\[ \rho_{X^{\bullet}}(\olCentIntA{J}{f}) : X^\bullet \longrightarrow X^\bullet \]
where $\rho_{X^{\bullet}}(\olCentIntA{J}{f})(x^m) = \olCentIntA{J}{f}. x^m  \in X^m$ ($\forall~ x^m\in X^m$).
Since $\olCentIntA{J}{f}$ is central, each $\rho_{X^m}(\olCentIntA{J}{f})$ commutes with every homomorphism of $A$-modules,
in particular with the differentials $d_X^m$.
Hence $\rho_{X^\bullet}(\olCentIntA{J}{f})$ is indeed a chain map.
Moreover, it is compatible with chain homotopies and quasi-isomorphisms,
so it descends to a well-defined endomorphism in $\Dcat^b(A)$.

\begin{theorem}\label{thm:AveCentIntDer}
For each $\olCentIntA{J}{f}\in \ACI(A)$,
the family $(\rho_{X^{\bullet}}(\olCentIntA{J}{f}))_{X^\bullet \in \Dcat^b(A)}$ induces a natural transformation
\[\rho_{*}(\olCentIntA{J}{f}) : \ident_{\Dcat^b(A)} \longrightarrow \ident_{\Dcat^b(A)}.\]
\end{theorem}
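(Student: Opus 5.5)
By Lemma \ref{lemm:AveCentInt1} and Proposition \ref{prop:AveCentInt}, the average central integral $c:=\olCentIntA{J}{f}$ is an element of the center $\cent{A}$. The statement therefore reduces to a purely algebraic fact: multiplication by a central element $c$ defines a natural endomorphism of the identity functor on $\Dcat^b(A)$. The plan is to build this in three stages: first on the category of bounded complexes $\mathsf{C}^b(A)$, then on the homotopy category $\mathsf{K}^b(A)$, and finally on the Verdier localization $\Dcat^b(A)$.

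\textbf{Complexes and the homotopy category.} On $\mathsf{C}^b(A)$, the components $\rho_{X^m}(c)$ are $A$-linear by Lemma \ref{lemm:AveCentInt1}(2). They commute with the differentials, since $d_X^m(c.x)=c.d_X^m(x)$, so $\rho_{X^\bullet}(c)$ is a chain map. The argument in the proof of Theorem \ref{thm:AveCentInt}, applied degreewise, gives $\varphi^\bullet\compos\rho_{X^\bullet}(c)=\rho_{Y^\bullet}(c)\compos\varphi^\bullet$ for every chain map $\varphi^\bullet:X^\bullet\to Y^\bullet$. This is naturality on $\mathsf{C}^b(A)$. If $\varphi^\bullet$ is null-homotopic via $h^m:X^m\to Y^{m-1}$, then $c.h$ is a null-homotopy of $c\varphi^\bullet$, because $c$ commutes with all $A$-linear maps. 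Hence the family is well defined on homotopy classes and natural on $\mathsf{K}^b(A)$.

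\textbf{The derived category; this is the main step needing care.} A morphism in $\Dcat^b(A)$ is a roof $X^\bullet\xleftarrow{s}Z^\bullet\xrightarrow{g}Y^\bullet$, where $s$ is a quasi-isomorphism. The endomorphism of $X^\bullet$ in $\Dcat^b(A)$ is the image $Q(\rho_{X^\bullet}(c))$ under the localization functor $Q$. To prove naturality for the morphism $g s^{-1}$, I will use naturality in $\mathsf{K}^b(A)$ for both $s$ and $g$:
\[ \rho_X\, s = s\, \rho_Z, \qquad \rho_Y\, g = g\, \rho_Z. \]
Inverting $s$ in the first identity gives $s^{-1}\rho_X=\rho_Z s^{-1}$. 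Then
\[ \rho_Y\, g s^{-1} = g\, \rho_Z\, s^{-1} = g s^{-1}\rho_X, \]
which is the required naturality. The point to check is that $\rho$ is well defined on objects of $\Dcat^b(A)$. This holds because objects of $\Dcat^b(A)$ are the same as objects of $\mathsf{K}^b(A)$, and the components $Q(\rho_{X^\bullet}(c))$ are given objectwise. The uniqueness of localization is not needed beyond this computation. If isomorphic objects are identified, the same computation shows that isomorphisms intertwine the components, so nothing further has to be verified.

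Since $\olCentIntA{J}{f}$ enters only through its membership in $\cent{A}$, the construction gives the natural transformation $\rho_*(\olCentIntA{J}{f}):\ident_{\Dcat^b(A)}\to\ident_{\Dcat^b(A)}$.
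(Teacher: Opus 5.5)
Your proposal is correct and follows the same route as the paper. You reduce to $c\in\cent{A}$, prove naturality degreewise on $\mathsf{C}^b(A)$, pass to $\mathsf{K}^b(A)$ via compatibility with homotopies, and then descend to the Verdier quotient. Your roof computation $\rho_Y\, g s^{-1} = g\,\rho_Z\, s^{-1} = g s^{-1}\rho_X$ spells out the localization step that the paper only asserts, namely that the family "commutes with quasi-isomorphisms."
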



\begin{proof}
Lte $c=\olCentIntA{J}{f}\in\cent{A}$, and let $\varphi:X^\bullet\to Y^\bullet$ be a morphism in the category
$\mathsf{C}^b(A)$ of bounded complexes of left $A$-modules.
For each degree $m$, the map
$\varphi^m:X^m\to Y^m$ is a homomorphism of left $A$-modules,
so it commutes with left multiplication by $c$, i.e.,
$ \varphi^m(c.x^m)=c.\varphi^m(x^m)$, for all $x^m\in X^m$.
Thus, $\varphi^m\compos\rho_{X^m}(c) = \rho_{Y^m}(c)\compos\varphi^m$ for every $m$. It follows that
\[ \varphi\compos\rho_{X^\bullet}(c) = \rho_{Y^\bullet}(c)\compos\varphi.\]
Therefore, the family $(\rho_{X^\bullet}(c))_{X^\bullet\in\mathsf{C}^b(A)}$ is natural on the category of bounded complexes.
This family is compatible with chain homotopies, and hence induces a natural transformation on the bounded homotopy category.
Since it also commutes with quasi-isomorphisms, it descends through the localization, i.e., Verdier quotient, to a natural transformation
\[ \rho_*(c): \ident_{\Dcat^b(A)} \to  \ident_{\Dcat^b(A)}.\]
Finally, Proposition \ref{prop:AveCentInt} gives $\ACI(A)=\cent{A}$, then this theorem holds.
\end{proof}

\subsection{The algebraic structures of average central integrals}

Define
\[\mathfrak{Z}(A) := \{\rho_{*}(\olCentIntA{J}{f}): \olCentIntA{J}{f} \in \ACI(A)\}\]
and
\[\Nat(\ident_{\Dcat^b(A)}, \ident_{\Dcat^b(A)}) := \{\text{natural transformation from~} \ident_{\Dcat^b(A)} \text{~to~} \ident_{\Dcat^b(A)}\}.\]
The following result is a direct corollary of Theorem \ref{thm:AveCentIntDer}.

\begin{corollary} \label{coro:AveCentIntDer}
There is a canonical embedding $\mathfrak{Z}(A) \mathop{\longrightarrow}\limits^{\subseteq} \Nat(\ident_{\Dcat^b(A)}, \ident_{\Dcat^b(A)})$.
\end{corollary}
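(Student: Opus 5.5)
The canonical embedding will be the inclusion map itself: each element of $\mathfrak{Z}(A)$ is by definition some $\rho_{*}(\olCentIntA{J}{f})$, and Theorem~\ref{thm:AveCentIntDer} says this is a natural transformation $\ident_{\Dcat^b(A)}\to\ident_{\Dcat^b(A)}$. So $\mathfrak{Z}(A)$ is a subset of $\Nat(\ident_{\Dcat^b(A)},\ident_{\Dcat^b(A)})$, and the embedding is $\eta\mapsto\eta$. What needs checking is that this assignment is well defined and that it is an embedding in the structural sense the paper intends, namely compatible with the algebra operations coming from $\cent{A}$. Injectivity on a subset is automatic, but I will also verify that $c\mapsto\rho_*(c)$ is injective from $\cent{A}$, so that $\mathfrak{Z}(A)$ is a faithful copy of $\cent{A}\cong\ACI(A)$.

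First, by Proposition~\ref{prop:AveCentInt} and Lemma~\ref{lemm:AveCentInt1}, every $\olCentIntA{J}{f}$ equals a unique $c\in\cent{A}$. Hence $\rho_*(\olCentIntA{J}{f})=\rho_*(c)$ depends only on $c$, not on the pair $(J,f)$, and the composite map $\cent{A}\to\mathfrak{Z}(A)\subseteq\Nat(\ident_{\Dcat^b(A)},\ident_{\Dcat^b(A)})$ is well defined.

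Second, I will show that the map $c\mapsto\rho_*(c)$ respects the algebra structure. At the level of complexes, $\rho_{X^\bullet}(c+c')=\rho_{X^\bullet}(c)+\rho_{X^\bullet}(c')$, $\rho_{X^\bullet}(cc')=\rho_{X^\bullet}(c)\compos\rho_{X^\bullet}(c')$, and $\rho_{X^\bullet}(1)=\ident$. Each of these holds degreewise because the action on each $X^m$ is a module action. All three identities then pass through the localization to $\Dcat^b(A)$, so the map is a unital algebra homomorphism into the endomorphism algebra of the identity functor.

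Third, for injectivity, I will evaluate on the stalk complex $A$ concentrated in degree $0$. The functor from $A$-modules to $\Dcat^b(A)$ is fully faithful on stalk complexes, so $\End_{\Dcat^b(A)}(A)\cong\End_A(A)\cong A^{\op}$. Under this identification, $\rho_*(c)_A$ is left multiplication by $c$, which sends $1$ to $c$. Therefore $\rho_*(c)=0$ forces $c=0$.

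The only genuinely delicate point is the descent step: the morphism in $\Dcat^b(A)$ must be the image of the chain map, and naturality must hold for roofs $X^\bullet\leftarrow Z^\bullet\to Y^\bullet$, not only for chain maps. Theorem~\ref{thm:AveCentIntDer} already provides this. If one wants it explicitly, naturality with respect to quasi-isomorphisms $s$ follows from the identity $s\compos\rho(c)=\rho(c)\compos s$ after inverting $s$. With that in hand, the corollary follows.
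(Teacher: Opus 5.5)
Your proposal is correct and follows the paper's route. The paper treats this as an immediate consequence of Theorem~\ref{thm:AveCentIntDer}, with the embedding being the inclusion of $\mathfrak{Z}(A)$ as a subset of $\Nat(\ident_{\Dcat^b(A)},\ident_{\Dcat^b(A)})$; your extra checks (that $c\mapsto\rho_*(c)$ is a unital algebra homomorphism, and that it is injective, shown by evaluating on the stalk complex $A$ at $1_A$) are exactly what the paper carries out separately in Corollary~\ref{coro:frakZ}.
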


We can check that $\Nat(\ident_{\Dcat^b(A)}, \ident_{\Dcat^b(A)})$ is a $\CC$-algebra
by the following three simple calculations:
\begin{itemize}
\item
First of all, for \(\eta,\theta \in \Nat(\ident_{\Dcat^b(A)}, \ident_{\Dcat^b(A)})\), we have a natural addition
\[ (\eta + \theta)_{X^{\bullet}} = \eta_{X^{\bullet}} + \theta_{X^{\bullet}}
\quad\text{for every } X^{\bullet} \in \Dcat^b(A). \]
Since $\eta_{X^{\bullet}}, \theta_{X^{\bullet}} \in \End_{\Dcat^b(A)}$,
the sum $\eta_{X^{\bullet}} + \theta_{X^{\bullet}}$ lines in $\End_{\Dcat^b(A)}$.
To verify naturality, let $\varphi: X^{\bullet} \to Y^{\bullet}$ be a morphism in $\Dcat^b(A)$.
Then
$(\eta + \theta)_{Y^{\bullet}} \compos \varphi
= \eta_{Y^{\bullet}}\compos\varphi + \theta_{Y^{\bullet}}\compos\varphi
= \varphi\compos\eta_{X^{\bullet}} + \varphi\compos\theta_{X^{\bullet}}
= \varphi\compos(\eta + \theta)_{X^{\bullet}}$.
Thus $\eta+\theta$ is natural. The zero natural transformation $0$ is given by $0_{X^{\bullet}}$ for every $X^{\bullet}$, and serves as the additive identity. The additive inverse of $\eta$ is $-\eta$ defined pointwise by $(-\eta)_{X^{\bullet}} := -\eta_{X^{\bullet}}$. Hence $\Nat(\ident_{\Dcat^b(A)},\ident_{\Dcat^b(A)})$ is an Abelian group.

\item
Second, for any $k\in\CC$ and $\eta\in\Nat(\ident_{\Dcat^b(A)}, \ident_{\Dcat^b(A)})$, we have a natural scalar multiplication
\[ (k \eta)_{X^{\bullet}} = k\eta_{X^{\bullet}}
\quad\text{for every } X^{\bullet} \in \Dcat^b(A).\]
The distributivity, associativity, and unitality of scalar multiplication are inherited pointwise from the $\CC$-linear space structure of each $\End_{\Dcat^b(A)}(X^{\bullet})$.
Then $\Nat(\ident_{\Dcat^b(A)}, \ident_{\Dcat^b(A)})$ is a $\CC$-linear space.

\item
Finally, the multiplication in $\Nat(\ident_{\Dcat^b(A)}, \ident_{\Dcat^b(A)})$ is defined by composition, i.e.,
for each $\eta,\theta \in \Nat(\ident_{\Dcat^b(A)}, \ident_{\Dcat^b(A)})$, its product is given by
\[ (\eta \compos \theta)_{X^{\bullet}} := \eta_{X^{\bullet}} \compos \theta_{X^{\bullet}}
\quad\text{for every } X^{\bullet} \in \Dcat^b(A). \]
Then for any morphism $\varphi: X^{\bullet} \to Y^{\bullet}$ in $\Dcat^b(A)$, we have
$(\eta\compos\theta)_{Y^{\bullet}} \compos \varphi
= \eta_{Y^{\bullet}} \compos \theta_{Y^{\bullet}} \compos \varphi
= \eta_{Y^{\bullet}} \compos \varphi \compos \theta_{X^{\bullet}}
= \varphi \compos \eta_{X^{\bullet}} \compos \theta_{X^{\bullet}}
= \varphi \compos (\eta\compos\theta)_{X^{\bullet}}$.
Thus $\eta\compos\theta$ is also a natural transformation.
The identity natural transformation $\mathds{1} := (\ident_{X^{\bullet}})_{X^{\bullet} \in \Dcat^b(A)}$
serves as the multiplicative unit, since $\mathds{1} \compos \eta = \eta \compos \mathds{1} = \eta$ pointwise.
Moreover, for $\eta,\theta,\zeta\in\Nat(\ident_{\Dcat^b(A)}, \ident_{\Dcat^b(A)})$, the distributivity laws
$(\eta+\theta)\compos\zeta = \eta\compos\zeta + \theta\compos\zeta$
and $\eta\compos(\theta+\zeta) = \eta\compos\theta + \eta\compos\zeta$
follow pointwise from the distributivity in each endomorphism space.
\end{itemize}

Then Corollary \ref{coro:AveCentIntDer} implies that $\mathfrak{Z}(A)$ is a $\CC$-algebra.
The following results further prove that $\mathfrak{Z}(A)$ is a commutative algebra.

\begin{corollary}\label{coro:frakZ}
The set $\mathfrak{Z}(A)$ is a commutative $\CC$-subalgebra of $\Nat(\ident_{\Dcat^b(A)}, \ident_{\Dcat^b(A)})$.
Furthermore, the map
\begin{center}
$\cent{A} \longrightarrow \mathfrak{Z}(A),\quad
c \longmapsto \rho_*(c)$
\end{center}
is an isomorphism of $\CC$-algebras, which admits
\[\cent{A} \cong \mathfrak{Z}(A) \cong \ACI(A).\]
\end{corollary}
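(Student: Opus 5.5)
The plan is to show that the map $\Phi:\cent{A}\to\mathfrak{Z}(A)$, $c\mapsto\rho_*(c)$, is a surjective, injective homomorphism of unital $\CC$-algebras. Commutativity of $\mathfrak{Z}(A)$ and the identification with $\ACI(A)$ should then follow formally.

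First, surjectivity and well-definedness. By definition $\mathfrak{Z}(A)$ consists of the transformations $\rho_*(\olCentIntA{J}{f})$. Proposition~\ref{prop:AveCentInt}, together with Lemma~\ref{lemm:AveCentInt1}(1) and Lemma~\ref{lemm:AveCentInt2}, gives $\ACI(A)=\cent{A}$ as subsets of $A$. Hence every element of $\mathfrak{Z}(A)$ is $\rho_*(c)$ for some $c\in\cent{A}$, and conversely each $\rho_*(c)$ lies in $\mathfrak{Z}(A)$ by Theorem~\ref{thm:AveCentIntDer}. So $\Phi$ is well defined and onto.

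Second, the homomorphism property. The action $\rho_{X^m}$ is $\CC$-linear and multiplicative in $c$ on each component $X^m$, because $X^m$ is an $A$-module. So on chain maps,
\[
\rho_{X^\bullet}(c+c')=\rho_{X^\bullet}(c)+\rho_{X^\bullet}(c'),\qquad
\rho_{X^\bullet}(kc)=k\rho_{X^\bullet}(c),\qquad
\rho_{X^\bullet}(cc')=\rho_{X^\bullet}(c)\compos\rho_{X^\bullet}(c'),
\]
and $\rho_{X^\bullet}(1_A)=\ident$. These identities survive passage to $\Dcat^b(A)$, since the localization functor is $\CC$-linear and preserves composition. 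Pointwise they are exactly the operations on $\Nat(\ident_{\Dcat^b(A)},\ident_{\Dcat^b(A)})$ described before the corollary. Therefore $\Phi$ is a unital algebra homomorphism, its image $\mathfrak{Z}(A)$ is a subalgebra, and $\mathfrak{Z}(A)$ is commutative because it is a homomorphic image of the commutative algebra $\cent{A}$.

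Third, injectivity, which I expect to be the only point needing care. Suppose $\rho_*(c)=0$. Evaluate at the stalk complex $A$, the regular module concentrated in degree $0$. The full embedding ${_A\modcat}\to\Dcat^b(A)$ gives $\End_{\Dcat^b(A)}(A)\cong\End_A(A)\cong A^{\op}$. Under this identification $\rho_A(c)$ is left multiplication by $c$, and since $c$ is central this equals right multiplication by $c$. Its vanishing forces $c=c\cdot 1_A=0$. The point to check is that the module endomorphism $\rho_A(c)$ is not killed in the derived category; this holds because the embedding of modules as stalk complexes is fully faithful.

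Finally, combining $\Phi$ with the bijection of Proposition~\ref{prop:AveCentInt} gives $\cent{A}\cong\mathfrak{Z}(A)\cong\ACI(A)$. Here $\ACI(A)=\cent{A}$ carries the algebra structure inherited from $A$, and the map $c\mapsto\frac{1}{\mu}\int c\,\id$ is the identity on $\cent{A}$ by the computation in Lemma~\ref{lemm:AveCentInt2}, so it is an algebra isomorphism.
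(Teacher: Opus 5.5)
Your proposal is correct and follows essentially the same route as the paper: you show $\mathfrak{Z}(A)$ is closed under the pointwise operations via $\rho_*(c_1+c_2)=\rho_*(c_1)+\rho_*(c_2)$, $\rho_*(\lambda c)=\lambda\rho_*(c)$ and $\rho_*(c_1)\compos\rho_*(c_2)=\rho_*(c_1c_2)$, get surjectivity by definition and injectivity by evaluating at the stalk complex $A$ and applying to $1_A$, and then combine with Proposition~\ref{prop:AveCentInt}. The only differences are cosmetic: you deduce commutativity from that of $\cent{A}$ rather than by the paper's direct computation $c_1.(c_2.x)=c_2.(c_1.x)$, and you make explicit the full faithfulness of ${_A\modcat}\to\Dcat^b(A)$, which the paper uses tacitly when passing from equality in $\Dcat^b(A)$ to equality of module endomorphisms.
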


\begin{proof}
First of all, we have known that $\Nat(\ident_{\Dcat^b(A)}, \ident_{\Dcat^b(A)})$ is a $\CC$-algebra.
It therefore suffices to check that $\mathfrak{Z}(A)$ is closed under addition, scalar multiplication, and composition.

Let $c_1, c_2 \in \cent{A}$ and $\lambda \in \CC$.
For every bounded complex $X^\bullet \in \Dcat^b(A)$, we have
$\rho_{X^\bullet}(c_1 + c_2)
= \rho_{X^\bullet}(c_1) + \rho_{X^\bullet}(c_2)$
since $\rho_{X^\bullet}$ is $\CC$-linear.
Thus, as natural transformations, we have $\rho_*(c_1 + c_2) = \rho_*(c_1) + \rho_*(c_2)$.
Similarly, $\rho_{X^\bullet}(\lambda c_1) = \lambda \rho_{X^\bullet}(c_1)$,
hence $\rho_*(\lambda c_1) = \lambda \rho_*(c_1)$.
For multiplication, since $c_1, c_2 \in \cent{A}$ commute with every element of $A$,
for each degree $m$ and each $x \in X^m$ we have
$\rho_{X^m}(c_1) \compos \rho_{X^m}(c_2)(x)
= c_1.(c_2.x) = (c_1 c_2).x = \rho_{X^m}(c_1 c_2)(x)$.
Thus, we get $\rho_{X^\bullet}(c_1) \compos \rho_{X^\bullet}(c_2)
= \rho_{X^\bullet}(c_1 c_2)$, and consequently,
$\rho_*(c_1)\compos\rho_*(c_2) = \rho_*(c_1 c_2)$.
Therefore $\mathfrak{Z}(A)$ is closed under the addition, scalar multiplication and multiplication,
so it is a $\CC$-subalgebra of $\Nat(\ident_{\Dcat^b(A)}, \ident_{\Dcat^b(A)})$.

Moreover, for every bounded complex $X^\bullet \in \Dcat^b(A)$ and every degree $m$,
let $x \in X^m$ be arbitrary. Then
$( \rho_{X^\bullet}(c_1) \compos \rho_{X^\bullet}(c_2) )(x)
= \rho_{X^m}(c_1)( \rho_{X^m}(c_2)(x) )
= c_1. (c_2. x)$.
Notice that $c_1, c_2 \in \cent{A}$, then it equals to
$c_2.(c_1.x) = (\rho_{X^\bullet}(c_2) \compos \rho_{X^\bullet}(c_1) )(x)$.
Therefore, $\rho_{X^\bullet}(c_1) \compos \rho_{X^\bullet}(c_2)
= \rho_{X^\bullet}(c_2) \compos \rho_{X^\bullet}(c_1)$
for every $X^\bullet \in \Dcat^b(A)$.
Taking the family over all \(X^\bullet\), we obtain
$\rho_*(c_1) \compos \rho_*(c_2)
= \rho_*(c_2) \compos \rho_*(c_1)$, and then $\mathfrak{Z}(A)$ is a commutative $\CC$-subalgebra of $\Nat(\ident_{\Dcat^b(A)}, \ident_{\Dcat^b(A)})$.

Next, we write the correspondence given in this statement as
$\Phi: \cent{A} \to \mathfrak{Z}(A)$, $c \mapsto \rho_*(c)$.
The surjectivity follows immediately from the definition of $\mathfrak{Z}(A)$ as the image of $\Phi$.
For injectivity, suppose $\Phi(c_1) = \Phi(c_2)$ in $\mathfrak{Z}(A)$.
Then, for every $X^\bullet \in \Dcat^b(A)$, we have $\rho_{X^\bullet}(c_1) = \rho_{X^\bullet}(c_2)$.
In particular, take $X^\bullet$ to be the stalk complex concentrated in degree $0$,
then $\rho_A(c_1) = \rho_A(c_2)$ as endomorphisms of the left $A$-module $A$.
Applying both sides to the identity $1_A$ in $A$, we obtain $c_11_A = c_2 1_A$,
hence $c_1 = c_2$. Thus $\Phi$ is injective, and therefore an isomorphism of $\CC$-algebras.
Furthermore, the isomorphism $\cent{A} \cong \mathfrak{Z}(A) \cong \ACI(A)$ holds by using $\Phi$ and Proposition \ref{prop:AveCentInt}.
\end{proof}

\section{Integrals of modules (in probability space)} \label{sect:int-mod-prob sp}

\textsl{Let $A$ be a finite-dimensional complex algebra.
In this subsection we introduce the integrals of $A$-modules and, fix the integration domain to be a probability space, compute the integrals of modules.}

\subsection{Modules as functions in $L^1$}
\label{subsec:module-as-L1}

For any left $A$-module $M\in {_A\modcat}$, we have a homomorphism of algebras
\[ \rho_M: A \to B_M:=\End_{\CC}(M), \quad a \mapsto (\rho_M(a): m\mapsto a.m). \]
In this subsection, we regard each module $M$ via its structure homomorphism $\rho_M$ as a function.
Moreover, in this subsection, we take the norm $\|\cdot\|_{B_M}$ defined on $B_M$
is given by the $1$-norms of matrixes, and take $\II = [0,1]$. Then we have
\[ \|\ident_B\|_{B_M} = 1, \quad
\II_A = \prod\limits_{a\in\basis{A}} \II a
  ~\mathop{\longleftrightarrow}\limits^{\text{1-1}}~ [0,1]^{\times \dimA}, \quad
\text{and} \quad \mu_{\II_A}(\II_A)=1\]
in this assumption.
Thus, $(\II_A, \Sigma_{\II_A}, \mu_{\II_A})$ forms a probability space
\footnote{In probability theory, a \defines{probability space} is a measure space $(\Omega,\Sigma,\mu)$ with $\mu(\Omega)=1$.}.

\begin{lemma} \label{lemm:module-as-L1}
We have $\rho_M\in L^1(\II_A;B_M)$ for any $M\in{_A\modcat}$.
\end{lemma}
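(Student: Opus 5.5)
The plan is to view $\rho_M$, restricted to the integration domain $\II_A$, as a function $\II_A\to B_M$, and to check two things: it is measurable (indeed a limit of step functions in $\bfS_{\rho_M}(\II_A)$), and its Bochner norm $\int_{\II_A}\|\rho_M(x)\|_{B_M}\dd\mu_{\II_A}$ is finite. Via the identification \eqref{eq:Lp}, $L^1(\II_A;B_M)=\w{\bfS_{\rho_M}(\II_A)}$, so it suffices to exhibit $\rho_M|_{\II_A}$ as the $L^1$-limit of a Cauchy sequence of step functions $f_u\in E_u$.

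First I record the explicit form of $\rho_M$ on $\II_A$. A point of $\II_A$ is $x=\sum_{i=1}^{\dimA}\lambda_ia_i$ with $(\lambda_1,\ldots,\lambda_{\dimA})\in[0,1]^{\times\dimA}$. By linearity, $\rho_M(x)=\sum_i\lambda_i\rho_M(a_i)$. Hence $\rho_M|_{\II_A}$ is an affine (indeed linear) function of the coordinates, valued in the finite-dimensional space $B_M$, and so it is continuous. It is also bounded:
\[
\|\rho_M(x)\|_{B_M}\=<\sum_{i=1}^{\dimA}|\lambda_i|\,\|\rho_M(a_i)\|_{B_M}\=< C:=\sum_{i=1}^{\dimA}\|\rho_M(a_i)\|_{B_M}.
\]
Because $\mu_{\II_A}(\II_A)=1$, this gives $\int_{\II_A}\|\rho_M\|_{B_M}\dd\mu_{\II_A}\=< C<\infty$.

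Next I approximate by step functions. On each cell $C_{u,t}\in\mathcal{P}_u$ I take $f_u$ to be the constant value $\rho_M(x_{u,t})$ at a chosen point $x_{u,t}\in C_{u,t}$, so $f_u\in E_u$. Then $\|f_u-\rho_M\|_{L^1}\=<\sup_t\sup_{x\in C_{u,t}}\|\rho_M(x)-\rho_M(x_{u,t})\|_{B_M}$. Since $\rho_M$ is Lipschitz in the coordinates with constant at most $\max_i\|\rho_M(a_i)\|_{B_M}$, this bound tends to $0$ provided the cell diameters go to $0$. Consequently $(f_u)$ is Cauchy and its limit in the completion is represented by $\rho_M$ almost everywhere, which gives $\rho_M\in L^1(\II_A;B_M)$.

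The main obstacle is that the abstract nested partitions $\mathcal{P}_u$ are only assumed to consist of $2^{\dimA}$ equal-measure pieces; nothing forces their diameters to shrink. I would handle this by fixing $\mathcal{P}_u$ to be the standard dyadic partition of $[0,1]^{\times\dimA}$, which is compatible with the construction since each cube splits into $2^{\dimA}$ subcubes of equal measure. Alternatively, one can bypass partitions altogether: $\rho_M$ is a bounded, continuous (hence Borel measurable) function on a finite measure space, so it is Bochner integrable in the classical sense and therefore lies in $L^1$.
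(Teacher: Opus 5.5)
Your proof is correct. Your closing ``alternatively'' is exactly the paper's argument: $\rho_M|_{\II_A}$ is continuous, hence strongly measurable, and bounded by a constant $C$, so on the probability space $\II_A$ its norm integral is at most $C$ and $\rho_M$ is Bochner integrable.

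Your main route is genuinely different. Instead of appealing to the classical characterisation of Bochner-integrable functions, you work directly with the description $L^1(\II_A;B_M)=\w{\bfS_{\rho_M}(\II_A)}$ from \eqref{eq:Lp}. Once $\mathcal{P}_u$ is dyadic, you exhibit explicit approximants $f_u\in E_u$ with $\|f_u-\rho_M\|_{L^1}\le \dimA 2^{-u}\max_i\|\rho_M(a_i)\|_{B_M}$.

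This buys something real. The paper's proof only places $\rho_M$ in the classical Bochner space. Identifying that space with the completion of step functions requires the partitions $\mathcal{P}_u$ to generate the Borel $\sigma$-algebra up to null sets. The stated hypotheses (nested, each cell split into $2^{\dimA}$ pieces of equal measure) do not ensure this, so your worry is justified. For instance, if every cell is only ever cut in the $\lambda_1$-direction, the completion contains only functions of $\lambda_1$. Then $\rho_A$ for the regular module is not in it when $\dimA\ge 2$, since the $L_{a_i}$ are linearly independent.

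Fixing dyadic cubes closes this gap. It is compatible with the equal-measure requirement because the measure here is Lebesgue measure on $[0,1]^{\times\dimA}$, as the later value $c_{\II_A}=\frac12\sum_i a_i$ presupposes.

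The paper's version is shorter and independent of the partition, but it relies on that unstated identification. Yours also produces the concrete step-function sequence on which the juxtaposition and averaging description of the categorical integral actually operates.
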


\begin{proof}
The integral in \eqref{eq:int-mod} is defined over the probability space $(\II_A, \Sigma_{\II_A}, \mu_{\II_A})$,
where the integrand is the function $\tilde{\rho}_M:=\rho_{M}|_{\II_A}: \II_A \to B_M$.
Since $B_M$ is finite-dimensional, by Lemma \ref{lemm:C algs are Banach} it is a Banach space,
and $\rho_M$ is continuous, and hence its restriction to $\II_A$ is bounded.
Thus, there exists a constant $C>0$ such that
\[ \|\rho_M(a)\|_{B_M} \=< C
\quad\text{for all } a\in \II_A. \]

The function $\tilde{\rho}_M$ is strongly measurable
(indeed, it is continuous) and
\[
  (\scrA^1_{\rho_M})\int_{\II_A} \|\rho_M\|_{B_M} \, d\mu_{\II_A}
\=< C \cdot (\scrA^1_{\rho_M}) \int_{\II_A} \id_{\II_A} d\mu_{\II_A}
 = C < +\infty,
\]
Hence $\rho_M \in L^1(\II_A; B_M)$ as required.
\end{proof}

\begin{lemma}\label{lemm:rho-functor}
For each left $A$-module $M$, $\rho_M$ induces a functor
\[ \rho_M^{\otimes}: \scrA^p_{\ident_A} \to \scrA^p_{\rho_M}. \]
\end{lemma}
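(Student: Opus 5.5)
The functor will be base change along $\rho_M$, that is, the tensor product $-\otimes_A B_M$. The category $\scrA^p_{\ident_A}$ consists of Banach $(A,A)$-bimodules $N$ with structure $(\eta_N,\delta_N)$. For such an $N$ I set
\[
\rho_M^{\otimes}(N):=N\otimes_A B_M ,
\]
where $B_M=\End_{\CC}(M)$ is a left $A$-module via $\rho_M$. This object is an $(A,B_M)$-bimodule: $A$ acts on the left through $N$, and $B_M$ acts on the right by multiplication. Because $N$ and $B_M$ are finite-dimensional in the cases of interest, every norm on the tensor product is equivalent to every other. I will take the projective tensor norm. By Lemma \ref{lemm:C algs are Banach} the result is a Banach space. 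The submultiplicativity $\|a\,x\,b\|\=<\Verb a\Verb\,\|x\|\,\|b\|$ is then arranged by the same renorming trick $\|x\|':=\sup_{\|b\|\=<1}\|x\cdot b\|$ used in the proof of Proposition \ref{prop:normed bimod 260809}; that trick does not use CIC.

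Next comes the transport of structure. Using $A\otimes_A B_M\cong B_M$, I set
\[
\eta_{\rho_M^{\otimes}N}:=\eta_N\otimes\ident_{B_M},\qquad
\delta_{\rho_M^{\otimes}N}:=\delta_N\otimes\ident_{B_M},
\]
the second together with the canonical isomorphism $(N\otimes_A B_M)^{\oplus 2^{\dimA}}\cong N^{\oplus 2^{\dimA}}\otimes_A B_M$. I then check the axioms.
\begin{itemize}
\item For \ref{Aobj1}, the map $\Pc_N\otimes\ident$ sends $(1)_{I_N}$ to $v_N\otimes 1_{B_M}$. After rescaling the norm if needed, its norm is at most $\|v_N\|\,\|1_{B_M}\|=\|v_N\|<\mu_{\II_A}(\II_A)$, using $\|\ident\|_{B_M}=1$.
\item For \ref{Aobj2}, $\delta\otimes\ident$ is bounded, since it is a tensor of bounded maps. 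It is bilinear for the two actions, and it sends $(v\otimes b,\ldots,v\otimes b)$ to $v\otimes b$.
\end{itemize}
On morphisms, $\theta\mapsto\theta\otimes\ident_{B_M}$. Conditions \ref{Amorp1} and \ref{Amorp2} follow by tensoring the defining equalities with $\ident_{B_M}$. Functoriality, namely preservation of identities and composition, is immediate from $(\theta'\theta)\otimes\ident=(\theta'\otimes\ident)(\theta\otimes\ident)$.

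I expect the main obstacle to be the analytic part rather than the algebra. In general $N$ may be infinite-dimensional, for example $L^p(\II_A;A)$. In that case the algebraic tensor product must be completed, and I must check that $\delta\otimes\ident$ extends continuously and that the inequality in \ref{Aobj1} survives. My plan is to use the fact that $B_M$ is finite-dimensional and hence a finitely generated left $A$-module. Then $N\otimes_A B_M$ is a quotient of $N^{\oplus \dim B_M}$ by a subspace, and I would take its closure; the quotient of a Banach space by a closed subspace is Banach. All maps are induced from bounded maps on $N^{\oplus\dim B_M}$, so they remain bounded. Finally, the norm estimate for $v_N\otimes 1$ is preserved because the quotient norm is dominated by the norm of the representative $v_N\otimes 1_{B_M}$.
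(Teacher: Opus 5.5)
Your construction is the paper's: $\rho_M^{\otimes}=-\otimes_AB_M$ with the projective tensor norm, $\eta_N\otimes\ident_{B_M}$ (i.e.\ $b\mapsto\eta_N(1_A)\otimes b$), $\delta_N\otimes\ident_{B_M}$, and $\theta\mapsto\theta\otimes\ident_{B_M}$, with \ref{Amorp1}, \ref{Amorp2} and functoriality obtained by tensoring. Your treatment of the completion when $N$ is infinite-dimensional is a sensible addition that the paper omits.

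One step, however, fails as stated. You claim that the renorming $\|x\|'=\sup_{\|b\|\le 1}\|x\cdot b\|$ yields $\|axb\|\le\Verb a\Verb_A\|x\|\|b\|$ without CIC. In $\scrA^p_{\rho_M}$ the seminorm is $\Verb a\Verb_A=\|\rho_M(a)\|_{B_M}$, and that renorming only controls the right $B_M$-action. In Proposition~\ref{prop:normed bimod 260809} the left estimate came precisely from CIC, because there $a.v=v\homo(a)$. On $N\otimes_AB_M$, by contrast, the left action $a(n\otimes b)=(an)\otimes b$ is inherited from $N$, and all you get is $\|ax\|\le\|a\|_A\|x\|$.

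This is not cosmetic. Taking $b=1_{B_M}$, the required inequality forces $\Ker\rho_M$ to annihilate $\rho_M^{\otimes}N$, and no renorming can arrange that. Suppose $M\neq0$ and $\rho_M$ is not injective. Take $N=A\otimes_{\CC}A$ with the outer bimodule structure, $\delta_N$ the averaging map, and $\eta_N$, $\Pc_N$ zero. This is an object of $\scrA^p_{\ident_A}$. But $N\otimes_AB_M\cong A\otimes_{\CC}B_M$ is a faithful left $A$-module, so it is not an object of $\scrA^p_{\rho_M}$.

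The paper's proof does not check this bimodule-norm condition at all, so the gap is shared with it. To close it you have two options:
\begin{enumerate}
\item Restrict to objects on which the induced left action is governed by $\rho_M$. This covers $A$ and $L^1(\II_A;A)$, the only objects used in Theorem~\ref{thm:int-module}: there $\rho_M^{\otimes}N$ becomes $B_M$ resp.\ $L^1(\II_A;B_M)$, and $a$ acts by left multiplication with $\rho_M(a)$.
\item Assume $\rho_M$ is faithful. Then $\|a\|_A\le C\|\rho_M(a)\|_{B_M}$, and the two-sided renorming $\|x\|'':=\sup\{\|axb\|:\Verb a\Verb_A\le1,\ \|b\|_{B_M}\le1\}$ gives the required inequality.
\end{enumerate}
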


\begin{proof}
On the one hand, $\rho_M$ induces a correspondence
\[ \rho_M^{\otimes} := -\otimes_A B_M:
  (X,\eta_X, \delta_X)
    \mapsto
      (X\otimes_A B_M, \eta_{X\otimes_A B_M}, \delta_{X\otimes_A B_M})\]
such that the following conditions hold (note: $X\in{_A\modcat_A}$).
\begin{enumerate}[label={\rm(\arabic*)}]
  \item The norm defined on $X\otimes_AB_M$ is a projective tensor norm, i.e., for each $z\in X\otimes_AB_M$
    \[ \|z\|_{X\otimes_AB_M}
    = \inf \bigg\{ \sum_{i} \|x_i\|_X \|b_i\|_{B_M} : z = \sum_{i} x_i\otimes b_i \bigg\}. \]
  \item The homomorphism $\eta_{X\otimes_A B_M}: B_M \to X\otimes_A B_M$ sends each $b\in B_M$ to $\eta_X(1_A)\otimes b$,
    where $\|\eta(1_A)\|_X \=< \mu_{\II_A}(\II_A)=1$.
    Then
    \begin{align*}
      \|\eta_{X\otimes_A B_M}(1_B)\|_{X\otimes_AB_M} & = \| \eta_M(1_A) \otimes \ident_M \|_{X\otimes_AB_M} \\
    & \=< \|\eta_M(1_A)\|_X \|\ident_M\|_{B_M} \\
    & \=< \|\ident_M\|_{B_M}=1
    \end{align*}
    (here, $\ident_M$ is the identity $1_{B_M}$ in $B_M$).
  \item $\delta_{X\otimes_A B_M}: (X\otimes_A B_M)^{\oplus 2^{\dimA}} \cong X^{\oplus 2^{\dimA}} \otimes_A B_M \to X\otimes_A B_M$ sends each
    $(x_1\otimes b, x_2\otimes b, \ldots, x_{2^{\dimA}}\otimes b)$
    to $\delta_X(x_1,x_2,\ldots,x_{2^{\dimA}}) \otimes b$.
    Thus, we have
    \begin{align*}
        &\delta_{X\otimes_A B_M}((\eta_M(1_A) \otimes \ident_M)_{1\times 2^{\dimA}}) \\
      =~&\delta_X(\eta_M(1_A), \eta_M(1_A), \ldots, \eta_M(1_A))\otimes \ident_M \\
      =~&\eta_M(1_A) \otimes \ident_M.
    \end{align*}
\end{enumerate}
Then $(X\otimes_A B_M, \eta_{X\otimes_A B_M}, \delta_{X\otimes_A B_M})$ satisfies \ref{Aobj1} and \ref{Aobj2},
i.e., it is an object in $\scrA^1_{\rho_M}$.

On the other hand, $\rho_M^{\otimes}$ acts on morphism $\theta$ satisfies
\[ (\theta\otimes B_M) (\eta_X(1_A)\otimes \ident_M)
 = \theta(\eta_X(1_A))\otimes \ident_M
 = \eta_Y(1_A)\otimes \ident_M\]
and such that the diagram
\[\xymatrix@C=2cm{
  X \ar[d]_{\theta} \ar[r]^{\rho_M^{\otimes}}
& X\otimes_A B_M \ar[d]^{\theta\otimes B_M}\\
  Y \ar[r]_{\rho_M^{\otimes}}
& Y\otimes_A B_M
}\]
commutes. Then $\theta\otimes B_M$ satisfies \ref{Amorp1}.

It remains to verify that \(\theta \otimes B_M\) satisfies condition \ref{Amorp2}, i.e., the diagram
\[
\xymatrix@C=2.5cm{
  X^{\oplus 2^{\dimA}} \ar[r]^{\delta_X} \ar[d]_{\theta^{\oplus 2^{\dimA}}}
  & X \ar[d]^{\theta} \\
  Y^{\oplus 2^{\dimA}} \ar[r]^{\delta_Y}
  & Y
}
\]
commutes after applying the tensor functor. Since the diagram commutes in $\scrA^1_{\ident_A}$,
we have $\theta \compos \delta_X = \delta_Y \compos \theta^{\oplus 2^{\dimA}}$.
Tensoring both sides with \(B_M\) gives
\[
  (\theta \otimes B_M) \compos (\delta_X \otimes B_M)
= (\delta_Y \otimes B_M) \compos (\theta^{\oplus 2^{\dimA}} \otimes B_M).
\]
Using the natural identifications $(X^{\oplus 2^{\dimA}}) \otimes_A B_M \cong (X \otimes_A B_M)^{\oplus 2^{\dimA}}$
and $(Y^{\oplus 2^{\dimA}}) \otimes_A B_M \cong (Y \otimes_A B_M)^{\oplus 2^{\dimA}}$,
we obtain
\[ (\theta \otimes B_M) \compos \delta_{X \otimes_A B_M}
= \delta_{Y \otimes_A B_M} \compos (\theta \otimes B_M)^{\oplus 2^{\dimA}}.\]
Thus \ref{Amorp2} holds. Together with the verification of \ref{Amorp1} already given,
$\theta \otimes B_M$ is a morphism in $\scrA^1_{\rho_M}$.

Finally, $\rho_M^{\otimes}$ preserves identities and composition. Indeed,
$\ident_X \otimes B_M = \ident_{X \otimes_A B_M}$,
and for composable morphisms $\theta_1: X \to Y$, $\theta_2: Y \to Z$, we have
$(\theta_2 \compos \theta_1) \otimes B_M
= (\theta_2 \otimes B_M) \compos (\theta_1 \otimes B_M)$.
Therefore $\rho_M^{\otimes}$ is a functor.
\end{proof}

\begin{lemma}\label{lemm:Lp-tensor}
There is an isomorphism
\[ L^p(\II_A;A) \otimes_A B_M \cong L^p(\II_A; A \otimes_A B_M).\]
In particular, if $p=1$, then the above isomorphism is an isometric isomorphism.
\end{lemma}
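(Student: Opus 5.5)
The isomorphism will come from the canonical identification $A\otimes_A B_M\cong B_M$, $a\otimes b\mapsto \rho_M(a)b$. With it, the claim becomes $L^p(\II_A;A)\otimes_A B_M\cong L^p(\II_A;B_M)$. I will build the map first on step functions and then extend by continuity, using the description \eqref{eq:Lp} of $L^p(\II_A;-)$ as the completion of $\bigcup_u E_u$.

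\textbf{Step 1: finite level.} Recall from Subsection~\ref{subsect:measure space} that $E_u(A)\cong A^{\oplus N_u}$ as $A$-modules, with $N_u=2^{u\dimA}$, via $f=\sum_i a_i\id_{C_{u,i}}$. Tensor products commute with finite direct sums, so
\[E_u(A)\otimes_A B_M\cong (A\otimes_A B_M)^{\oplus N_u}\cong B_M^{\oplus N_u}\cong E_u(B_M).\]
Concretely, define
\[\Phi_u\colon \Big(\sum_i a_i\id_{C_{u,i}}\Big)\otimes b\longmapsto \sum_i \rho_M(a_i)b\,\id_{C_{u,i}}.\]
This map is compatible with the inclusions $E_u\subseteq E_{u+1}$ and with the juxtapositions $\gamma_u$. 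Hence the $\Phi_u$ glue to a linear bijection
\[\Phi\colon \bfS_{\ident_A}(\II_A)\otimes_A B_M\longrightarrow \bfS_{\rho_M}(\II_A).\]

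\textbf{Step 2: norm comparison.} Put the projective tensor norm of Lemma~\ref{lemm:rho-functor} on the left and the $L^p$ norm on the right.

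For a simple tensor $f\otimes b$ with $f$ a step function, submultiplicativity of $\|\cdot\|_{B_M}$ gives
\[\|\Phi(f\otimes b)\|_p\le C\,\|f\|_p\,\|b\|_{B_M},\]
where $C$ bounds $\|\rho_M(a)\|_{B_M}/\|a\|_A$; this constant exists because $A$ is finite-dimensional (Lemma~\ref{lemm:C algs are Banach}). Taking infima over representations $z=\sum_i f_i\otimes b_i$ shows that $\Phi$ is bounded.

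For the reverse bound, write an element of $E_u(B_M)$ as $g=\sum_i \beta_i\id_{C_{u,i}}$. Its preimage is $\sum_i \id_{C_{u,i}}\otimes\beta_i$, where $\id_{C_{u,i}}$ is the indicator with value $1_A$; this uses $1_A\otimes\beta=\beta$. So the tensor norm of the preimage is at most
\[\sum_i \mu(C_{u,i})^{1/p}\,\|\beta_i\|_{B_M},\]
and we need to compare this with $\|g\|_p$. Since $B_M$ is finite-dimensional, all norms on it are equivalent, and the natural map $L^p(\II_A)\otimes B_M\to L^p(\II_A;B_M)$ is a topological isomorphism. Using this, I will bound the preimage norm by a constant times $\|g\|_p$, with the constant independent of $u$.

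\textbf{Step 3: completion.} A bounded bijection between dense subspaces with bounded inverse extends uniquely to an isomorphism of the completions. The left-hand side is the completion of $L^p(\II_A;A)\otimes_A B_M$. As $B_M$ is finite-dimensional, algebraic and completed tensor products agree up to equivalence of norms, so the left completion is $L^p(\II_A;A)\otimes_A B_M$ itself, and the right one is $L^p(\II_A;B_M)$. I will also check $(A,B_M)$-linearity of $\Phi$, which is immediate from the formula.

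\textbf{Case $p=1$: isometry.} Here I expect the main obstacle. Projective tensor norms with $L^1$ are isometric, i.e.\ $L^1(\mu)\hat\otimes_\pi X\cong L^1(\mu;X)$ (Grothendieck). I will adapt the classical proof: for a step function $g=\sum_i\beta_i\id_{C_{u,i}}$, the representation $\sum_i\id_{C_{u,i}}\otimes\beta_i$ has cost exactly
\[\sum_i\mu(C_{u,i})\,\|\beta_i\|_{B_M}=\|g\|_1,\]
which gives $\|\Phi^{-1}(g)\|\le\|g\|_1$. In the other direction, the triangle inequality gives $\|g\|_1\le\sum_j\|f_j\|_1\|b_j\|$ for any representation, and this requires $\|\rho_M(a)b\|\le\|a\|_A\|b\|$. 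That inequality holds for the induced seminorm $\Verb\cdot\Verb_A$ of our standing Assumption, but not for an arbitrary norm on $A$. I will therefore carry out the $p=1$ estimate with $A$ carrying the seminorm induced by $\rho_M$, after quotienting out $\ker\rho_M$ if needed.
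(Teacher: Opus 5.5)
Your Steps 1--3 follow the paper's argument, with the estimates written out. The paper also identifies $E_u\otimes_A B_M\cong(A\otimes_A B_M)^{\oplus N_u}$ cell by cell and then passes to completions. For the inverse bound you use a basis of $B_M$ rather than the cell-wise representation, whose cost is not uniformly comparable to $\|g\|_p$ when $p>1$; that is the right way to get a constant independent of $u$. So the isomorphism part is fine.

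The $p=1$ part has a genuine gap. You identify $A\otimes_A B_M$ with $B_M$ under $\|\cdot\|_{B_M}$ before proving isometry, and the resulting isometry claim is false in general.

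Take $A=B_M=\CC$ with $\|z\|_A=2|z|$, which is a weighted norm as in \eqref{eq:B norm}. Then every $z\in L^1(\II_A;A)\otimes_A B_M$ has projective norm exactly $2\|g\|_1$, where $g$ is its image. Your cost computation hides this, because the representation $\sum_i\id_{C_{u,i}}\otimes\beta_i$ actually costs $\|1_A\|_A\sum_i\mu(C_{u,i})\|\beta_i\|$.

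Replacing $\|\cdot\|_A$ by the seminorm $a\mapsto\|\rho_M(a)\|_{B_M}$ and quotienting by $\ker\rho_M$ does not fix this. It changes $L^1(\II_A;A)$ into a different space, essentially $L^1(\II_A;A/\ker\rho_M)$ with another norm. That space is not the one built from the norm on $A$ used in $\scrA^1_{\ident_A}$, so the lemma as stated is not proved.

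The missing point is that the target is $L^1(\II_A;A\otimes_A B_M)$, with $A\otimes_A B_M$ carrying the balanced projective norm $\|\cdot\|_\pi$ of Lemma~\ref{lemm:rho-functor}. With that norm, no compatibility between $\|\cdot\|_A$ and $\rho_M$ is needed.

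\emph{Upper bound.} Let $g=\sum_i w_i\id_{C_{u,i}}$ be a step function with $w_i\in A\otimes_A B_M$. For each $i$, choose $w_i=\sum_k a_{ik}\otimes b_{ik}$ with $\sum_k\|a_{ik}\|_A\|b_{ik}\|\le\|w_i\|_\pi+\varepsilon$. The preimage $\sum_{i,k}(a_{ik}\id_{C_{u,i}})\otimes b_{ik}$ then costs at most $\|g\|_1+\varepsilon\,\mu_{\II_A}(\II_A)$.

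\emph{Lower bound.} For any representation $\sum_j f_j\otimes b_j$ of a preimage, one has $\|g(x)\|_\pi\le\sum_j\|f_j(x)\|_A\|b_j\|$ pointwise. Integrating gives $\|g\|_1\le\sum_j\|f_j\|_1\|b_j\|$.

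This is exactly the isometric compatibility of the projective tensor product with weighted $\ell^1$-sums that the paper invokes at each level $E_u$, passed to the $\otimes_A$-quotient summand by summand. Density of step functions then gives the isometry.

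The further identification $A\otimes_A B_M\cong B_M$ is only a topological isomorphism in general. It becomes isometric only under an extra compatibility such as your induced seminorm. In that case $\|\rho_M(1_A)\|_{B_M}=\|\ident_M\|_{B_M}=1$ together with submultiplicativity forces $\|\cdot\|_\pi=\|\cdot\|_{B_M}$.
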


\begin{proof}
We only proof the case of $p=1$ and show that this isomorphism is isometric.
The case of $p>1$ is similar (and in this case this isomorphism is not isometric in general).
Let $E_u$ be the space of $A$-valued step functions constant on the cells of $\mathcal{P}_u$, so that
\[ L^1(\II_A;A) \cong \w{\bigcup_{u\>= 0} E_u}.\]
Since $B_M$ is finite-dimensional, the projective tensor product commutes with direct sums and completions
(The projective tensor product is isometrically compatible with the $L^1$-direct sum,
but not with the general $L^p$-direct sum, cf. \cite[Example 2.6, pp. 19--21 and Example 2.19, pp.~29--30]{Ryan2002}).
Therefore, \[ L^1(\II_A;A) \otimes_A B_M \cong \w{U}, \]
where $\w{U}$ is the completion of $U=\bigcup\limits_{u\>= 0} (E_u \otimes_A B_M )$.
For each $u\in\NN$, $E_u$ is isomorphic to $A^{\oplus N_u}$ (where \(N_u = 2^{u\dim A}\)), then
\[ E_u \otimes_A B_M
\cong (A^{\oplus N_u}) \otimes_A B_M
\cong (A \otimes_A B_M)^{\oplus N_u}
\cong B_M^{\oplus N_u}.\]
Thus, $E_u \otimes_A B_M$ is precisely the space of $(A \otimes_A B_M)$-valued step functions constant on the same cells.
Hence
\[ \w{U} \cong L^1(\II_A; A \otimes_A B_M). \]
Combining these identifications gives the desired isometry.
\end{proof}

\subsection{Integrals of arbitrary modules (in probability space)} \label{subsect:int-arbitr module}

By Lemma \ref{lemm:module-as-L1}, we obtain a integral (in probability space $(\II_A, \Sigma_{\II_A}, \mu_{\II_A})$)
\begin{align}\label{eq:int-mod}
   (\scrA^1_{\rho_M}) \int_{\II_A} \rho_M \dd\mu_{\II_A} \in B_M
\end{align}
for each $M\in{_A\modcat}$ by using the special Banach module category $\scrA^1_{\rho_M}$.

\begin{definition} \rm
We define its integral is \eqref{eq:int-mod} and write it as
$\displaystyle (\scrA^1_{\rho_M}) \int_{\II_A} M \dd\mu_{\II_A}$ or, for simplicity, as $\displaystyle \int_{\II_A} M$ in this paper.
\end{definition}

Let \[c_{\II_A} :=
(\scrA^1_{\ident_A}) \int_{\II_A} (\ident_A|_{\II_A}: \II_A\to A) \dd\mu_{\II_A} ~ (\in A). \]
Then we have the following formula.

\begin{theorem}\label{thm:int-module}
$\displaystyle \int_{\II_A} M = \rho_M(c_{\II_A})$.
\end{theorem}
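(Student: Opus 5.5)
The plan is to show that the abstract integral commutes with the linear map $\rho_M$. In other words, I would prove
\[(\scrA^1_{\rho_M})\int_{\II_A}\rho_M\,\dd\mu_{\II_A}=\rho_M\Big((\scrA^1_{\ident_A})\int_{\II_A}\ident_A|_{\II_A}\,\dd\mu_{\II_A}\Big).\]
I will use the characterization of Theorem~\ref{thm:int}: the integral $\Int_{\homo}$ is the unique morphism in $\scrA^1_{\homo}$ from the initial object $(L^1(\II_A;B),\id_{\II_A},\gamma)$ to $(B,\mu_{\II_A}(\II_A),\ave)$. On step functions it is computed by $\sum b_i\id_{C_{u,i}}\mapsto\sum b_i\,\mu_{\II_A}(C_{u,i})$; this follows from \ref{Amorp1}, \ref{Amorp2} and the equal-measure dyadic refinement. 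It then extends by continuity, since it is a bounded (Bochner-type) map.

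\textbf{Step 1.} Consider the post-composition map $\rho_{M*}:L^1(\II_A;A)\to L^1(\II_A;B_M)$, $f\mapsto\rho_M\compos f$. It is well defined and bounded because $\rho_M$ is a linear map between finite-dimensional normed spaces, hence bounded, so $\|\rho_M\compos f\|\le\|\rho_M\|\,\|f\|$. It sends step functions to step functions, sending $\sum a_i\id_{C_{u,i}}$ to $\sum\rho_M(a_i)\id_{C_{u,i}}$. Equivalently, by Lemma~\ref{lemm:Lp-tensor}, one can view it as $-\otimes_A B_M$ applied to the initial object, in the spirit of the functor $\rho_M^{\otimes}$ of Lemma~\ref{lemm:rho-functor}. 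Under this map, the function $\ident_A|_{\II_A}$ goes to $\rho_M|_{\II_A}$, which is exactly the integrand in \eqref{eq:int-mod}. The function lies in $L^1$ by Lemma~\ref{lemm:module-as-L1}.

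\textbf{Step 2.} I will show that $\Int_{\rho_M}\compos\rho_{M*}=\rho_M\compos\Int_{\ident_A}$ as maps $L^1(\II_A;A)\to B_M$. On a step function $f=\sum a_i\id_{C_{u,i}}$, both sides equal $\sum\rho_M(a_i)\mu_{\II_A}(C_{u,i})$, by $\CC$-linearity of $\rho_M$. Both sides are continuous, and step functions are dense by \eqref{eq:Lp}, so the identity holds on all of $L^1(\II_A;A)$.

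An alternative, more categorical route uses the uniqueness in Theorem~\ref{thm:int}. The composite $\rho_M\compos\Int_{\ident_A}$, precomposed with the inverse identification $L^1(\II_A;B_M)\cong L^1(\II_A;A)\otimes_A B_M$, is a morphism in $\scrA^1_{\rho_M}$. It respects the units because $\rho_M(1_A)=1_{B_M}$, and it respects the averaging maps because $\rho_M$ commutes with $\ave$. By initiality it must then coincide with $\Int_{\rho_M}$.

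Applying Step 2 to $f=\ident_A|_{\II_A}$ gives $\int_{\II_A}M=\rho_M(c_{\II_A})$.

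\textbf{Main obstacle.} The delicate point is the density/continuity argument in Step 2. I need $\rho_M\compos f$ to be approximated in $L^1(\II_A;B_M)$ by the images of approximating step functions of $f$, and $\ident_A|_{\II_A}$ must genuinely be an $L^1$-limit of step functions subordinate to the nested partitions $\mathcal P_u$. Since $\ident_A|_{\II_A}$ is continuous on the compact cube and the mesh of $\mathcal P_u$ goes to zero, the cellwise values (for example, values at cell corners) converge uniformly, hence in $L^1$. Boundedness of $\rho_M$ then carries this convergence over to $\rho_M\compos f$. I would state this explicitly rather than appeal to the abstract direct limit.
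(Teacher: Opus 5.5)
Your proposal is correct, but your main line (Step~2) is not the paper's argument. The paper proves $\Int_{\rho_M}(\rho_M\circ f)=\rho_M(\Int_{\ident_A}(f))$ categorically, in four moves:
\begin{enumerate}
  \item it applies the functor $\rho_M^{\otimes}=-\otimes_A B_M$ of Lemma~\ref{lemm:rho-functor};
  \item it uses Lemma~\ref{lemm:Lp-tensor} to identify $L^1(\II_A;A)\otimes_A B_M$ with $L^1(\II_A;B_M)$;
  \item it notes that $\rho_M^{\otimes}(\Int_{\ident_A})$ and $\Int_{\rho_M}$ both send the constant function to $\ident_M$;
  \item it concludes by uniqueness of morphisms out of the initial object.
\end{enumerate}
This is essentially your ``alternative route''.

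Your Step~2 instead evaluates both sides explicitly on step functions and then extends by continuity and density. In other words, it is the classical fact that bounded linear operators commute with Bochner integrals. This buys you three things:
\begin{itemize}
  \item you use only that $\Int_{\homo}$ is a bounded morphism (to derive its values on step functions), never its uniqueness;
  \item you bypass the projective-tensor identification, which the paper only sketches;
  \item the commuting identity uses only linearity and boundedness of $\rho_M$, not $\rho_M(1_A)=1_{B_M}$ or the bimodule compatibilities that the initiality argument has to check.
\end{itemize}
The paper's route, conversely, stays inside the universal property and needs no explicit formula for the integral.

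Three small points to tidy:
\begin{itemize}
  \item Deriving $\sum b_i\id_{C_{u,i}}\mapsto\sum b_i\,\mu_{\II_A}(C_{u,i})$ needs more than \ref{Amorp1} and \ref{Amorp2}. You also need right $B$-linearity of $\Int_{\homo}$ to pull out the coefficients $b_i$.
  \item The shrinking of the cells of $\mathcal P_u$ is not among the paper's stated hypotheses on the partitions. State it explicitly as a dyadic-subdivision assumption; the paper tacitly needs it too, for \eqref{eq:Lp} and Lemma~\ref{lemm:module-as-L1}.
  \item In your alternative route, the map to test against initiality is $\Int_{\ident_A}\otimes_A B_M\colon f\otimes b\mapsto\rho_M(\Int_{\ident_A}(f))\,b$. ``Precomposing'' $\rho_M\circ\Int_{\ident_A}$ with an identification does not work, since the domains do not match.
\end{itemize}
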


\begin{proof}
Write $\rho_M$ as $\rho_M: A \to B_M$, $x\mapsto \rho_M(x)$. Next we show
\begin{align} \label{eq:int-module linearity}
  (\scrA^1_{\rho_M}) \int_{\II_A} \rho_M(x)\dd\mu_{\II_A} = \rho_M\bigg((\scrA^1_{\ident_A}) \int_{\II_A} x\dd\mu_{\II_A} \bigg).
\end{align}
Let $\id_{\II_A}^{(A)}: \II_A \to A$,  be the constant function $x \mapsto 1_A$,
and let $\id_{\II_A}^{(B_M)}: \II_A \to B_M$ be the constant function $x \mapsto \ident_M$.
Consider the morphism
\[\Int_{\ident_A} \in \Hom_{\scrA^1_{\ident_A}}
  (
    (L^1(\II_A; A), \id_{\II_A}, \gamma),
    (A, 1_A, \ave)
  )\]
describing the abstract integral in the category $\scrA^1_{\ident_A}$.
Using the uniqueness of the initial object and the probability space $(\II_A, \Sigma_{\II_A}, \mu_{\II_A})$,
we get that $\Int_{\ident_A}$ is the unique morphism in $\scrA^1_{\ident_A}$ satisfying
\[ \Int_{\ident_A}(\mathbf{1}_{\II_A}^{(A)}) = \mu_{\II_A}(\II_A) 1_A = 1_A. \]
By Lemma \ref{lemm:rho-functor}, $\rho_M$ induces a functor
$\rho_M^{\otimes}: \scrA^p_{\ident_A} \to \scrA^p_{\rho_M}$.
Furthermore, we have that the following diagram
\[
\xymatrix@C=1.75cm{
  L^1(\II_A;A) \ar[r]^{\Int_{\ident_A}} \ar[d]_{\rho_M^{\otimes}}
& A \ar[d]^{\rho_M^{\otimes}} \\
  L^1(\II_A;B_M) \ar[r]_{\rho_M^{\otimes}(\Int_{\ident_A})}
& B_M
}
\]
commutes since $\rho_M^{\otimes}(L^1(\II_A;A)) = L^1(\II_A;A)\otimes_A B_M \cong L^1(\II_A;B_M)$
(see Lemma \ref{lemm:Lp-tensor}) and $\rho_M^{\otimes}(A) = A\otimes_A B_M \cong B_M$.
Notice that for every $f \in L^1(\II_A; A)$,
$\rho_M^{\otimes}: L^1(\II_A;A) \to L^1(\II_A;B_M)$ sends $f$ to the function $\rho_M\compos f$ in $L^1(\II_A;B_M)$,
hence
\begin{align}\label{eq:int-module 1}
   \rho_M^{\otimes}(\Int_{\ident_A})(\rho_M \compos f)
 = \rho_M^{\otimes}(\Int_{\ident_A})(\rho_M^{\otimes}(f))
 = \rho_M(\Int_{\ident_A}(f)).
\end{align}
In particular, for the function $\id_{\II_A}^{(B_M)} = \rho_M \compos \id_{\II_A}^{(A)}$, we obtain
\[ \rho_M^{\otimes}(\Int_{\ident_A})(\id_{\II_A}^{(B_M)})
= \rho_M(\Int_{\ident_A}(\id_{\II_A}^{(A)}))
= \rho_M(\mu_{\II_A}(\II_A)\cdot 1_A)
= \rho_M(1_A) = \ident_M.
\]

On the other hand, for the morphism $\Int_{\rho_M}: L^1(\II_A; B_M) \to B_M$
in the special Banach module category $\scrA^1_{\rho_M}$, we have
\[ \Int_{\rho_M}(\id_{\II_A}^{(B_M)}) = \mu_{\II_A}(\II_A) \ident_M = \ident_M. \]
Thus $\rho_M^{\otimes}(\Int_{\ident_A})$ and $\Int_{\rho_M}$ agree on the same constant function.
By the uniqueness of the initial object, we get
\begin{align}\label{eq:int-module 2}
  \rho_M^{\otimes}(\Int_{\ident_A}) = \Int_{\rho_M}
\end{align}

Now let $f = \ident_{\II_A}: \II_A \to A$, $x \mapsto x$.
It lies in $L^1(\II_A; A)$. Then $\rho_M \compos f = \rho_M(\cdot)$, that is, the function $x \mapsto \rho_M(x)$.
On the right of \eqref{eq:int-module 2}, we have
\[ \Int_{\rho_M}(\rho_M \compos f)
= (\scrA^1_{\rho_M}) \int_{\II_A} \rho_M\dd\mu_{\II_A}
= \int_{\II_A} M. \]
On the left of \eqref{eq:int-module 2}, we have
\[ \rho_M^{\otimes}(\Int_{\ident_A})(\rho_M \compos f)
\mathop{=}\limits^{\eqref{eq:int-module 1}}
  \rho_M(\Int_{\ident_A}(\ident_{\II_A}))
= \rho_M
  \left(
    (\scrA^1_{\ident_A})
    \int_{\II_A} \ident_{\II_A}\dd\mu_{\II_A}
  \right) = \rho_M(c_{\II_A}).
\]
\eqref{eq:int-module 2} implies $\displaystyle \int_{\II_A} M = \rho_M(c_{\II_A})$, we are done.
\end{proof}

If $\mu_{\II_A}(\II_A)>0$ and $\ne 1$, the average should be written as
\[\bar{c}_{\II_A} := \frac{1}{\mu_{\II_A}(\II_A)} (\scrA^1_{\ident_A}) \int_{\II_A} \ident_A \dd \mu_{\II_A}.\]
Correspondingly, we have
\[\frac{1}{\mu_{\II_A}(\II_A)} \int_{\II_A} M
= \frac{1}{\mu_{\II_A}(\II_A)} (\scrA^1_{\rho_M}) \int_{\II_A} \rho_M \dd\mu_{\II_A} = \rho_M (\bar{c}_{\II_A}).\]
This indicates that choosing probability space to study integrals of modules does not lose generality

\begin{theorem}\label{thm:int-module iso}
For two left $A$-modules $M$ and $N$ and a homomorphism $h:M\to N$,
if $h$ is an isomorphism, i.e., $M\mathop{\to}\limits^{h}_{\cong} N$, then
\[ \bigg( \int_{\II_A} N \bigg) \compos h = h \compos \left(\int_{\II_A} M\right).\]
\end{theorem}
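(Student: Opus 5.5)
The plan is to reduce everything to Theorem~\ref{thm:int-module}, which identifies the integral of any module with the action of a single fixed element of $A$. Namely, $\int_{\II_A} M = \rho_M(c_{\II_A})$ and $\int_{\II_A} N = \rho_N(c_{\II_A})$, where
\[ c_{\II_A} = (\scrA^1_{\ident_A}) \int_{\II_A} \ident_A|_{\II_A} \dd\mu_{\II_A} \in A \]
depends only on $A$, the chosen basis $\basis{A}$ and the measure $\mu_{\II_A}$, and not on the module. Once this is in place, the claimed identity becomes a statement about a single algebra element acting on two modules.

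The key step is then a pointwise computation. For every $m\in M$,
\[ \Big(\Big(\int_{\II_A} N\Big)\compos h\Big)(m) = \rho_N(c_{\II_A})(h(m)) = c_{\II_A}.h(m) = h(c_{\II_A}.m) = \Big(h\compos \int_{\II_A} M\Big)(m). \]
The third equality is just the left $A$-linearity of $h$, applied to the element $c_{\II_A}\in A$. Note that this uses only that $h$ is a homomorphism; bijectivity is not needed for the intertwining identity itself.

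Invertibility of $h$ matters only for the reformulation as a matrix similarity, as in Theorem~\ref{thm:main 2608160003}\ref{thm:main 2608160003 1}. Choose bases of $M$ and $N$ and let $H$ be the (invertible) matrix of $h$. The identity then reads
\[ \int_{\II_A} N = H\Big(\int_{\II_A} M\Big)H^{-1}, \]
so the two integral matrices are similar.

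The only point requiring care is that Theorem~\ref{thm:int-module} applies to $M$ and $N$ with the \emph{same} element $c_{\II_A}$, even though the integrals live in different categories $\scrA^1_{\rho_M}$ and $\scrA^1_{\rho_N}$. This holds because the proof of Theorem~\ref{thm:int-module} transports the single morphism $\Int_{\ident_A}$ through the functors $\rho_M^{\otimes}$ and $\rho_N^{\otimes}$ of Lemma~\ref{lemm:rho-functor}. I do not expect any further obstacle.
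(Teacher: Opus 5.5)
Your proposal is correct and follows essentially the same route as the paper: both invoke Theorem~\ref{thm:int-module} to write the two integrals as $\rho_M(c_{\II_A})$ and $\rho_N(c_{\II_A})$, and then use the $A$-linearity of $h$ to get $h\compos\rho_M(c_{\II_A})=\rho_N(c_{\II_A})\compos h$. Your remark that bijectivity of $h$ is needed only to rewrite this as the conjugation $\int_{\II_A}N = h\compos\big(\int_{\II_A}M\big)\compos h^{-1}$ is also accurate.
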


\begin{proof}
By Theorem \ref{thm:int-module} we have
\[\int_{\II_A} M = \rho_M(c_{\II_A}) \quad \text{and} \quad \int_{\II_A} N = \rho_N(c_{\II_A}). \]
Notice that for any $a\in A$ and $m\in M$, the homomorphism $h:M\to N$ admits $h(am)=ah(m)$.
Then we have $h\compos \rho_M(a) = \rho_N(a)\compos h$, i.e., the following diagram
\[
\xymatrix{
 M \ar[r]^{\rho_M(a)} \ar[d]_{h} & M \ar[d]^{h} \\
 N \ar[r]_{\rho_N(a)} & N
}
\]
commutes. Here, if $h$ is an isomorphism, then we have
\begin{align*}
  h \compos \left(\int_{\II_A} M\right) \compos h^{-1} =\rho_N(c_{\II_A}) = \int_{\II_A}N
\end{align*}
as required.
\end{proof}

\subsection{Integrals of direct sums (in probability space)}
\label{subsect:int-dir sum}

Put two left $A$-modules $M$ and $N$. Now we compute $\displaystyle\int_{\II_A}M\oplus N$.

\begin{proposition}\label{prop:int-dir sum}
The integral of a  sum of two left $A$-modules is the direct sum of integrals of those two left $A$-modules.
\end{proposition}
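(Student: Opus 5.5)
The plan is to reduce everything to Theorem~\ref{thm:int-module}, which says $\int_{\II_A} M = \rho_M(c_{\II_A})$ for every module, with $c_{\II_A}\in A$ independent of the module. Applied to $M\oplus N$, it gives $\int_{\II_A} M\oplus N = \rho_{M\oplus N}(c_{\II_A})$. The statement then amounts to identifying this endomorphism of $M\oplus N$ with the block diagonal operator $\rho_M(c_{\II_A})\oplus\rho_N(c_{\II_A})$.

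The key steps, in order, are as follows.
\begin{enumerate}[label={\rm(\arabic*)}]
  \item Fix bases $\basis{M}$ and $\basis{N}$, and take the concatenated basis $\basis{M}\cup\basis{N}$ of $M\oplus N$. With respect to it, $B_{M\oplus N}=\End_{\CC}(M\oplus N)$ is a matrix algebra, and it contains $B_M\times B_N$ as the block diagonal subalgebra via $(\phi,\psi)\mapsto\phi\oplus\psi$.
  \item Since the $A$-action on $M\oplus N$ is componentwise, $a.(m,n)=(a.m,a.n)$, we have
  \[\rho_{M\oplus N}(a)=\rho_M(a)\oplus\rho_N(a)\quad\text{for every } a\in A.\]
  Thus $\rho_{M\oplus N}$ factors through this block diagonal embedding.
  \item Evaluating at $a=c_{\II_A}$ gives
  \[\int_{\II_A} M\oplus N=\rho_M(c_{\II_A})\oplus\rho_N(c_{\II_A})=\int_{\II_A} M\oplus\int_{\II_A} N,\]
  using Theorem~\ref{thm:int-module} once more for $M$ and for $N$ separately.
\end{enumerate}

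The main obstacle is conceptual rather than computational. The three integrals are a priori defined in three different special Banach module categories, namely $\scrA^1_{\rho_{M\oplus N}}$, $\scrA^1_{\rho_M}$ and $\scrA^1_{\rho_N}$, with possibly different norms on the targets. One might worry that the ``direct sum of integrals'' is not canonically an element of $B_{M\oplus N}$.

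Theorem~\ref{thm:int-module} sidesteps this, because it expresses each integral through the single element $c_{\II_A}$ computed in $\scrA^1_{\ident_A}$. All norm dependence is therefore absorbed into $c_{\II_A}$; and since the norms are $\ell^1$-type matrix norms with $\|\ident\|=1$ and $\mu_{\II_A}(\II_A)=1$, the normalisation of $\eta$ is the same in all three categories.

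As a sanity check, I would also note a direct route that avoids Theorem~\ref{thm:int-module}. Compose $\rho_{M\oplus N}$ with the two block projections $B_{M\oplus N}\to B_M$ and $B_{M\oplus N}\to B_N$; these are bounded and compatible with step functions, hence commute with the Bochner-type integral $\Int$. This recovers the diagonal blocks. The off-diagonal blocks vanish because $\rho_{M\oplus N}$ takes values in the closed, finite-dimensional, block diagonal subspace, and the same closure argument as in Lemma~\ref{lemm:center-closure} applies.

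The extension to finitely many summands $M_1,\ldots,M_n$ (presumably Corollary~\ref{coro:int-dir sum}) then follows by induction on $n$.
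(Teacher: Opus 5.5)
Your proof is correct and is essentially the paper's own: apply Theorem~\ref{thm:int-module} to write each integral as $\rho_{(-)}(c_{\II_A})$, then evaluate the componentwise identity $\rho_{M\oplus N}(a)=\rho_M(a)\oplus\rho_N(a)$ at $a=c_{\II_A}$. The discussion of norms and the alternative Bochner-integral check are not needed for the argument, but they are harmless.
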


\begin{proof}
By Theorem \ref{thm:int-module}, we have
\[ \int_{\II_A} (M \oplus N)
= \rho_{M \oplus N}(c_{\II_A}). \]
For every $a \in A$, the representation of the direct sum is the direct sum of the representations, i.e.,
$\rho_{M \oplus N}(a) = \rho_M(a) \oplus \rho_N(a)$. Therefore,
\[
\rho_{M \oplus N}(c_{\II_A})
= \rho_M(c_{\II_A}) \oplus \rho_N(c_{\II_A})
= \left(\int_{\II_A} M\right) \oplus \left(\int_{\II_A} N\right).
\]
This proves the proposition.
\end{proof}

\begin{remark}\rm
In the proof of Proposition \ref{prop:int-dir sum}, $\rho_M \oplus \rho_N$ denotes the block diagonal endomorphism of $M \oplus N$ given by $\operatorname{diag}(\rho_M, \rho_N)$.
To be more precise, let $(m,n)\in M\oplus N$ and $a\in A$.
By the componentwise definition of the left $A$-action on a direct sum, $\rho_{M\oplus N}(a)(m,n)=(am,an)$.
On the other hand, $(\rho_M\oplus\rho_N)(a)(m,n) =(\rho_M(a)m,\rho_N(a)n) =(am,an)$.
Thus, $\rho_{M\oplus N}(a)=(\rho_M\oplus\rho_N)(a)$ for every $a\in A$.
It follows that $\rho_{M\oplus N}=\rho_M\oplus\rho_N$.
\end{remark}

We immediately obtain the following corollary.

\begin{corollary}\label{coro:int-dir sum}
For a family left $A$-module $M_1$, $\ldots$, $M_n$, we have
\[\int_{\II_A} \bigoplus_{i=1}^n M_i = \bigoplus_{i=1}^n \int_{\II_A} M_i.\]
\end{corollary}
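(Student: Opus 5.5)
The plan is induction on $n$, using Proposition~\ref{prop:int-dir sum} as the inductive step. Alternatively, one can argue directly from Theorem~\ref{thm:int-module}, which gives
\[\int_{\II_A}\bigoplus_{i=1}^n M_i=\rho_{\bigoplus_{i=1}^n M_i}(c_{\II_A}).\]
This makes clear that the identity is a statement about the single element $c_{\II_A}\in A$ acting on a direct sum, and no further analysis is needed.

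For the direct route, I would first record the block-diagonal identity $\rho_{\bigoplus_i M_i}(a)=\operatorname{diag}(\rho_{M_1}(a),\ldots,\rho_{M_n}(a))$ for every $a\in A$. This holds because the left $A$-action on $\bigoplus_i M_i$ is componentwise: $a.(m_1,\ldots,m_n)=(am_1,\ldots,am_n)$. The Remark after Proposition~\ref{prop:int-dir sum} proves exactly this for $n=2$, and the same argument works verbatim for $n$ summands. Evaluating at $a=c_{\II_A}$ and applying Theorem~\ref{thm:int-module} to each $M_i$ then gives $\operatorname{diag}\big(\int_{\II_A}M_1,\ldots,\int_{\II_A}M_n\big)=\bigoplus_i\int_{\II_A}M_i$.

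For the inductive route, the case $n=1$ is trivial. For the step, write $\bigoplus_{i=1}^{n}M_i=\big(\bigoplus_{i=1}^{n-1}M_i\big)\oplus M_n$, apply Proposition~\ref{prop:int-dir sum} to these two modules, and then use the induction hypothesis on the first summand.

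The only point needing care is bookkeeping: the identification $\big(\bigoplus_{i<n}M_i\big)\oplus M_n=\bigoplus_{i\le n}M_i$ must be made with respect to the concatenated bases. Only then do the matrices agree on the nose, and not merely up to similarity. I do not expect any step to be a real obstacle.
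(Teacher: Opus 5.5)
Your proposal is correct and takes the paper's approach: the paper states the corollary as an immediate consequence of Proposition~\ref{prop:int-dir sum}, and for the parameterized analogue, Corollary~\ref{coro:param-int-dir sum}, it says explicitly that it follows ``by induction'', which is your inductive route. Your direct route, via Theorem~\ref{thm:int-module} and the block-diagonal identity for $\rho_{\bigoplus_i M_i}(c_{\II_A})$, is just the $n$-summand version of that proposition's proof, so either argument works.
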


\subsection{Integrals of projective modules (in probability space)} \label{subsect:int-proj module}

An indecomposable projective left module $P$ over a finite-dimensional complex algebra $A$ is isomorphic to $Ae$ where $e$ is an \defines{idempotent}, i.e., $P \cong Ae$ for some $e^2=e$.
Theorem \ref{thm:int-module iso} shows that
\[ \int_{\II_A} Ae = h \compos \left(\int_{\II_A} P\right) \compos h^{-1} \]
where $h$ is the isomorphism between $Ae$ and $P$.
Now, we calculate the integral of $Ae$ in the probability space $(\II_A, \Sigma_{\II_A},\mu_{\II_A})$.

\begin{proposition}\label{prop:int-proj mod}
Assume $\basis{A}=\{a_1,\ldots,a_{\dimA}\}$ and $\II=[0,1]$.
Let $e \in A$ be a nonzero idempotent. Then for any basis $\{p_1,\ldots,p_r\}$ of $Ae$, we have
\[\int_{\II_A} Ae =
\frac{1}{2}
\left(
\begin{matrix}
  \sum\limits_{i=1}^{\dimA} \lambda_{i,1,1} & \sum\limits_{i=1}^{\dimA} \lambda_{i,2,1} & \cdots & \sum\limits_{i=1}^{\dimA} \lambda_{i,r,1} \\
  \sum\limits_{i=1}^{\dimA} \lambda_{i,1,2} & \sum\limits_{i=1}^{\dimA} \lambda_{i,2,2} & \cdots & \sum\limits_{i=1}^{\dimA} \lambda_{i,r,2} \\
  \vdots & \vdots & & \vdots \\
  \sum\limits_{i=1}^{\dimA} \lambda_{i,1,r} & \sum\limits_{i=1}^{\dimA} \lambda_{i,2,r} & \cdots & \sum\limits_{i=1}^{\dimA} \lambda_{i,r,r}
\end{matrix}
\right)_{r\times r} \]
where $\lambda_{i,j,k} \in \CC$ is obtained by $a_ip_j=\sum\limits_{k=1}^r\lambda_{i,j,k} p_k$.
\end{proposition}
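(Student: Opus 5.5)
The plan is to reduce everything to Theorem~\ref{thm:int-module}, which gives $\int_{\II_A} Ae = \rho_{Ae}(c_{\II_A})$. It then suffices to compute the single element $c_{\II_A}\in A$ and to write $\rho_{Ae}(c_{\II_A})$ as a matrix in the basis $\{p_1,\ldots,p_r\}$.

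The first step is to compute $c_{\II_A}$. Under the identification $\II_A \leftrightarrow [0,1]^{\times\dimA}$, the identity function $\ident_A|_{\II_A}$ is $x=\sum_i x_i a_i \mapsto \sum_i x_i a_i$, with $x_i\in[0,1]$. This is a finite linear combination $\sum_i a_i\,\pi_i$ of the scalar coordinate functions $\pi_i$, each multiplied by a constant vector $a_i\in A$. The morphism $\Int_{\ident_A}$ of Theorem~\ref{thm:int} is an $(A,A)$-homomorphism, and it agrees with the Bochner integral, as noted in the proof of Lemma~\ref{lemm:center-closure}. Hence
\[ c_{\II_A}=\sum_{i=1}^{\dimA} a_i\int_{[0,1]^{\times\dimA}} x_i\,\dd\mu_{\II_A}. \]

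To evaluate $\int x_i\,\dd\mu_{\II_A}$, I approximate the coordinate function $x_i$ by the step functions in $E_u$ built on the dyadic partitions $\mathcal{P}_u$ into $2^{u\dimA}$ cubes of equal measure $2^{-u\dimA}$. On each cube I take the value of $x_i$ at the left endpoint. The averaging map $\ave$ then gives Riemann sums equal to $\frac{1}{2}(1-2^{-u})$. These sums converge to $\tfrac12$, because the step functions converge to $x_i$ in $L^1$ and $\Int_{\ident_A}$ is bounded, hence continuous. Fubini, or simply the product structure of $\mu_{\II_A}$ with total mass $1$, removes the other coordinates. The conclusion is $c_{\II_A}=\frac12\sum_{i=1}^{\dimA} a_i$.

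This step is the main obstacle. It needs the measure $\mu_{\II_A}$ to be the Lebesgue (product) measure on $[0,1]^{\times\dimA}$, so that each coordinate has mean $\tfrac12$. The general setup permits arbitrary $\mu_{\CC}$, so I will state this normalization explicitly, as the probability-space convention of this section already implicitly assumes. I will then justify the continuity of $\Int_{\ident_A}$ under $L^1$-limits through condition~\ref{Aobj2} and the initiality in Theorem~\ref{thm:int}.

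Finally, I apply $\rho_{Ae}$. By linearity, $\rho_{Ae}(c_{\II_A})(p_j)=\frac12\sum_i a_ip_j=\frac12\sum_k\big(\sum_i\lambda_{i,j,k}\big)p_k$. With the column convention, the $j$-th column holds the coordinates of the image of $p_j$, so the $(k,j)$-entry is $\frac12\sum_{i}\lambda_{i,j,k}$. This is exactly the displayed $r\times r$ matrix. Independence of the choice of representative $P\cong Ae$, up to similarity, is already covered by Theorem~\ref{thm:int-module iso}.
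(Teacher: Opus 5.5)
Your proposal is correct and matches the paper's proof: you reduce via Theorem~\ref{thm:int-module} to $\rho_{Ae}(c_{\II_A})$ with $c_{\II_A}=\frac12\sum_{i=1}^{\dimA}a_i$, then read off the $(k,j)$-entry $\frac12\sum_{i}\lambda_{i,j,k}$ from $a_ip_j=\sum_k\lambda_{i,j,k}p_k$. The paper only asserts the value of $c_{\II_A}$, so your Riemann-sum justification and your explicit requirement that $\mu_{\II_A}$ be the Lebesgue product measure supply a step the paper leaves implicit.
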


\begin{proof}
First of all, we have
\[c_{\II_A}=(\scrA^1_{\ident_A})\int_{\II_A}\ident_A \dd\mu_{\II_A} = \frac{1}{2}\sum_{i=1}^{\dimA}a_i\]
and have
\[\rho_{Ae}:A \to B_{Ae}=\End_{\CC}(Ae), \quad \rho_{Ae}(a)(xe)=axe ~ (a,\, x\in A).\]
Then $\rho_{Ae}(c_{\II_A})(xe)
= c_{\II_A}xe
= \frac{1}{2}\sum\limits_{i=1}^{\dimA}a_i xe,$
i.e., $\rho_{Ae}(c_{\II_A}) \in B_{Ae}$ is a restriction $L_{c_{\II_A}}|_{Ae}$ of the left multiplication
$L_{c_{\II_A}}: A\to A$, $a \mapsto c_{\II_A}a$.

Assume that $\{p_1,\dots,p_r\}$ is a basis of $Ae$ where $r=\dim Ae$.
Then for each $a_i$ and $r_j$, write
\[ a_ip_j=\sum\limits_{k=1}^r\lambda_{i,j,k} p_k, \]
we have
\begin{align*}
  \rho_{Ae}(c_{\II_A})(p_j)
= L_{c_{\II_A}}(p_j)
= \bigg(\frac{1}{2}\sum_{i=1}^{\dimA}a_i\bigg)\cdot p_j\
= \frac{1}{2}\sum_{i=1}^{\dimA}\sum_{k=1}^r\lambda_{i,j,k}p_k\
= \sum_{k=1}^r
  \bigg(
    \frac{1}{2}\sum_{i=1}^{\dimA}\lambda_{i,j,k}
  \bigg)p_k.
\end{align*}
It follows the integral $\displaystyle \int_{\II_A} Ae$ by Theorem \ref{thm:int-module}.
\end{proof}

\subsection{Integrals of injective modules (in probability space)} \label{subsect:int-inj-mod}

Recall that for a finite-dimensional complex algebra $A$, the standard duality $D := \Hom_{\CC}(-, \CC)$ sends left modules to right modules and vice versa.
A left $A$-module $I$ is injective if and only if $D(I)$ is projective as a right $A$-module. Equivalently, every finite-dimensional indecomposable injective left $A$-module is isomorphic to $D(eA)$ for some primitive idempotent $e \in A$, where $D(eA)$ is viewed as a left $A$-module via
\[ (a \cdot \varphi)(x) := \varphi(xa) \qquad (a\in A,\ x\in eA,\ \varphi\in D(eA)). \]
More generally, every finite-dimensional injective left $A$-module is
isomorphic to a finite direct sum of modules of the form $D(eA)$.
Theorem \ref{thm:int-module iso} shows that
\[
\int_{\II_A}D(eA)
=
h\compos\left(\int_{\II_A}I\right)\compos h^{-1},
\]
where $h:I\to D(eA)$ is an $A$-module isomorphism.
Now, we calculate the integral of $D(eA)$ in the probability space
$(\II_A,\Sigma_{\II_A},\mu_{\II_A})$.

\begin{proposition}\label{prop:int-inj mod}
Assume $\basis{A}=\{a_1,\ldots,a_{\dimA}\}$ and $\II=[0,1]$. Let $e\in A$ be a nonzero idempotent, and let $\{p_1,\ldots,p_r\}$ be a basis of the right $A$-module $eA$. Then, with respect to the dual basis
$\{p_1^*,\ldots,p_r^*\}$ of $D(eA)$, we have
\[
\int_{\II_A}D(eA)
=
\frac{1}{2}
\left(
\begin{matrix}
  \sum\limits_{i=1}^{\dimA}\mu_{i,1,1}
  &
  \sum\limits_{i=1}^{\dimA}\mu_{i,1,2}
  &
  \cdots
  &
  \sum\limits_{i=1}^{\dimA}\mu_{i,1,r}
  \\
  \sum\limits_{i=1}^{\dimA}\mu_{i,2,1}
  &
  \sum\limits_{i=1}^{\dimA}\mu_{i,2,2}
  &
  \cdots
  &
  \sum\limits_{i=1}^{\dimA}\mu_{i,2,r}
  \\
  \vdots & \vdots & & \vdots
  \\
  \sum\limits_{i=1}^{\dimA}\mu_{i,r,1}
  &
  \sum\limits_{i=1}^{\dimA}\mu_{i,r,2}
  &
  \cdots
  &
  \sum\limits_{i=1}^{\dimA}\mu_{i,r,r}
\end{matrix}
\right)_{r\times r},
\]
where $\mu_{i,j,k}\in\CC$ is obtained from $p_j a_i=\sum\limits_{k=1}^r\mu_{i,j,k}p_k$.
\end{proposition}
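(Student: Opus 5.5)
The plan mirrors the proof of Proposition \ref{prop:int-proj mod}. By Theorem \ref{thm:int-module} we have $\int_{\II_A}D(eA)=\rho_{D(eA)}(c_{\II_A})$. The computation in the proof of Proposition \ref{prop:int-proj mod} gives $c_{\II_A}=\frac12\sum_{i=1}^{\dimA}a_i$, since each coordinate of $\II_A\leftrightarrow[0,1]^{\times\dimA}$ has mean $\frac12$ in the probability space. This step is independent of the module. So it suffices to compute the matrix of the operator $\varphi\mapsto c_{\II_A}\cdot\varphi$ on $D(eA)$ in the dual basis $\{p_1^*,\ldots,p_r^*\}$.

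By linearity it is enough to handle each $a_i$. Using the left action $(a\cdot\varphi)(x)=\varphi(xa)$, we evaluate $(a_i\cdot p_j^*)(p_k)=p_j^*(p_ka_i)$. Writing $p_ka_i=\sum_l\mu_{i,k,l}p_l$, this equals $\mu_{i,k,j}$. Hence
\[a_i\cdot p_j^*=\sum_{k=1}^r \mu_{i,k,j}\,p_k^* .\]
Thus the $(k,j)$-entry of the matrix of $\rho_{D(eA)}(a_i)$, with columns indexed by the input basis vector $p_j^*$, is $\mu_{i,k,j}$. In other words, this matrix is the transpose of the matrix of right multiplication by $a_i$ on $eA$. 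Summing over $i$ and multiplying by $\frac12$ gives the matrix whose $(k,j)$-entry is $\frac12\sum_i\mu_{i,k,j}$. This is exactly the displayed matrix with row index $k$ and column index $j$, matching the pattern $(\sum_i\mu_{i,j,k})_{j,k}$ in the statement.

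The only real obstacle is bookkeeping of indices and transposes. The dual action turns right multiplication into its transpose, and this transpose must be reconciled with the column convention used in Proposition \ref{prop:int-proj mod}, where the $(k,j)$-entry was $\sum_i\lambda_{i,j,k}$. I will fix the convention that column $j$ records the coordinates of the image of the $j$-th basis vector and check carefully that the transpose produces the stated layout. The remaining identifications are routine: $D(eA)$ is a left $A$-module, the dual basis is well defined because $eA$ is finite-dimensional, and $\rho_{D(eA)}$ is $\CC$-linear so that $\rho(c_{\II_A})=\frac12\sum_i\rho(a_i)$.
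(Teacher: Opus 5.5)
Your proposal is correct and follows essentially the same route as the paper. Both reduce via Theorem \ref{thm:int-module} to $\rho_{D(eA)}(c_{\II_A})$ with $c_{\II_A}=\frac12\sum_i a_i$, then evaluate on the dual basis using $(a\cdot\varphi)(x)=\varphi(xa)$ to obtain $\rho_{D(eA)}(c_{\II_A})(p_j^*)=\frac12\sum_{k}\big(\sum_i\mu_{i,k,j}\big)p_k^*$. With the column-$j$ convention, the resulting matrix is exactly the displayed one, and your transpose bookkeeping matches the paper's.
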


\begin{proof}
Let $\{p_1^*,\ldots,p_r^*\}$ be the dual basis of $D(eA) =\End_{\CC}(D(eA))$ defined by $p_j^*(p_k)=\delta_{jk}$,
where $\delta_{jk}$ is the Kronecker symbol. First of all, we have
\[ c_{\II_A} = \frac{1}{2}\sum_{i=1}^{\dimA}a_i,\]
and, by Theorem \ref{thm:int-module}, have
\[\rho_{D(eA)}: A\to B_{D(eA)},
(\rho_{D(eA)}(a)(\varphi))(ex)=\varphi(exa) ~ (a,\, x\in A, \varphi\in D(eA)).\]
For any $\varphi\in D(eA)$ and $x\in eA$, we obtain
\begin{align*}
  (\rho_{D(eA)}(c_{\II_A})(\varphi))(x)
= \varphi(xc_{\II_A})
= \varphi\bigg( \frac{1}{2}\sum_{i=1}^{\dimA}xa_i \bigg)
= \frac{1}{2}\sum_{i=1}^{\dimA}\varphi(xa_i).
\end{align*}
Now let $\{p_1,\ldots,p_r\}$ be a basis of $eA$, and write
\[p_j a_i=\sum\limits_{k=1}^r\mu_{i,j,k}p_k,\]
then, for each $j$ and $\ell$, we have
\begin{align*}
   (\rho_{D(eA)}(c_{\II_A})(p_j^*))(p_\ell)
&= \frac{1}{2} \sum_{i=1}^{\dimA}p_j^*(p_\ell a_i)
 = \frac{1}{2} \sum_{i=1}^{\dimA}p_j^*
   \left(
     \sum_{k=1}^r\mu_{i,\ell,k}p_k
   \right) \\
&= \frac{1}{2}
   \sum_{i=1}^{\dimA}
   \sum_{k=1}^r
   \mu_{i,\ell,k}p_j^*(p_k)
 = \frac{1}{2}
   \sum_{i=1}^{\dimA}\mu_{i,\ell,j}.
\end{align*}
Therefore,
\[
\rho_{D(eA)}(c_{\II_A})(p_j^*)
=
\frac{1}{2}
\sum_{\ell=1}^r
\bigg(
  \sum_{i=1}^{\dimA}\mu_{i,\ell,j}
\bigg)p_{\ell}^*,
\]
which gives the required matrix by Theorem \ref{thm:int-module}.
\end{proof}

\begin{remark} \rm
Let $L_{c_{\II_A}}: A\to A$, $x \mapsto c_{\II_A}x$ be the left multiplication by $c_{\II_A}$,
and let $R_{c_{\II_A}}:A\to A$, $x\longmapsto xc_{\II_A}$ be the right multiplication by $c_{\II_A}$.
Comparing Proposition~\ref{prop:int-proj mod} and Proposition~\ref{prop:int-inj mod}, we have
\[ \int_{\II_A}Ae = L_{c_{\II_A}}|_{Ae} \quad \text{and} \quad
\int_{\II_A}D(eA) = D(R_{c_{\II_A}}|_{eA}). \]
\end{remark}

\section{Integrals of modules (in parameterized measure space)} \label{sect:int-mod-meas sp}

\textsl{In this subsection, we consider parameterized integrals, i.e., integrals over varying cubes
\[ \II_A(\pmb{t}) \mathop{ \longleftrightarrow }\limits^{\text{1-1}} [0,t_i]\times \cdots\times[0,t_{\dimA}], \]
where $t_1, \ldots, t_{\dimA} \>= 0$. The reason we consider parameterized integrals is that the inverse of Theorem \ref{thm:int-module iso} usually does not hold. This means that the integral of the module will lose some information about this module.}

\subsection{Integrals of arbitrary modules (in parameterized measure space)}

Assume that $\basis{A}=\{a_1,\ldots,a_{\dimA}\}$. For each
\[ \pmb{t}=(t_1,\ldots,t_{\dimA})\in(\RR^{\>=0})^{\times \dimA}, \]
we define the parameterized cube
\begin{equation}\label{eq:param-cube}
  \II_A(\pmb{t})
  :=\prod_{i=1}^{\dimA}\{\lambda_i a_i:0\=< \lambda_i\=< t_i\}
  \mathop{\longleftrightarrow}\limits^{\text{1-1}}
  \prod_{i=1}^{\dimA}[0,t_i].
\end{equation}
Notice that $\prod\limits_{i=1}^{\dimA}[0,t_i]$ is contained in
$[0,t]^{\times \dimA}$ ($t$ is a constant $> \max\{t_i:1\=< i\=< \dimA\}$),
but it is not equal to $\II_A \mathop{\longleftrightarrow}\limits^{\text{1-1}} [0,t]^{\times \dimA}$ in general.
We equip $\II_A(\pmb{t})$ with the $\sigma$-algebra $\Sigma_{\II_A(\pmb{t})}$ and the measure $\mu_{\II_A(\pmb{t})}$ transported through the bijection in \eqref{eq:param-cube}. Thus,
\begin{equation}\label{eq:param-measure}
  \mu_{\II_A(\pmb{t})}(\II_A(\pmb{t})) = \prod_{i=1}^{\dimA}t_i.
\end{equation}
If $\II_A(\pmb{t})\subseteq\II_A$, then this measure agrees with the restriction of the ambient measure
$\mu_{\II_A(\pmb{t})} =\mu_{\II_A}|_{\Sigma_{\II_A(\pmb{t})}}$.
In particular, if $\II_A$ is induced by $[0,1]^{\times \dimA}$,
the restriction description requires $0\=< t_i\=< 1$ for every $i$.

Suppose that $\II_A(\pmb{t})\subseteq\II_A$.  For every
$f\in L^1(\II_A(\pmb{t});B)$, let
\[
  \widetilde f(x)=
  \begin{cases}
    f(x),&x\in\II_A(\pmb{t});\\
    0,&x\in\II_A\backslash\II_A(\pmb{t}).
  \end{cases}
\]
Then
\begin{equation}\label{eq:param-zero-extension}
  (\scrA^1_{\homo})
  \int_{\II_A(\pmb{t})}f\dd\mu_{\II_A(\pmb{t})}
  =
  (\scrA^1_{\homo})
  \int_{\II_A}\widetilde f\dd\mu_{\II_A}.
\end{equation}

\begin{definition}\label{def:param-int-module} \rm
For every finite-dimensional left $A$-module $M$, its
\defines{parameterized integral} over $\II_A(\pmb{t})$ is
\[
  \int_{\II_A(\pmb{t})}M
  :=(\scrA^1_{\rho_M})
    \int_{\II_A(\pmb{t})}\rho_M
    \dd\mu_{\II_A(\pmb{t})}
  \in B_M=\End_{\CC}(M).
\]
\end{definition}

Put
\begin{equation*}
  c_{\II_A(\pmb{t})}
  :=(\scrA^1_{\ident_A})
    \int_{\II_A(\pmb{t})}\ident_A
    \dd\mu_{\II_A(\pmb{t})}
  \in A.
\end{equation*}
Now we provide the parameterized integral of any left $A$-module.

\begin{lemma}\label{lemm:param-barycenter}
For every $\pmb{t}=(t_1,\ldots,t_{\dimA})\in(\RR^{\>=0})^{\times\dimA}$, we have
\begin{align*}
  c_{\II_A(\pmb{t})}
= \frac{1}{2}\sum_{i=1}^{\dimA}
    \bigg(t_i^2\prod_{\substack{1\=< l\=< n\\l\neq i}}t_l\bigg)a_i
= \frac{\mu_{\II_A(\pmb{t})}(\II_A(\pmb{t}))}{2}\bigg(\sum_{i=1}^{\dimA}t_i a_i\bigg).
\end{align*}
\end{lemma}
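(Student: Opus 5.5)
The plan is to compute $c_{\II_A(\pmb{t})}$ coordinatewise, reducing the abstract integral to an ordinary iterated Lebesgue integral over the box $\prod_{i=1}^{\dimA}[0,t_i]$.

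First, note that $\ident_A|_{\II_A(\pmb{t})}$ sends the point with coordinates $(\lambda_1,\ldots,\lambda_{\dimA})$ to $\sum_{i}\lambda_i a_i$. It is therefore a finite sum of functions $\lambda\mapsto \lambda_i a_i$, each being a scalar function times a fixed vector $a_i$.

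Next, use the fact that $\Int_{\ident_A}$ is a morphism in $\scrA^1_{\ident_A}$, hence $\CC$-linear and continuous (it is the Bochner integral, as noted in the proof of Lemma~\ref{lemm:center-closure}). This gives
\[ c_{\II_A(\pmb{t})}=\sum_{i=1}^{\dimA}\bigg(\int_{\prod_l[0,t_l]}\lambda_i\,\dd\mu\bigg)a_i . \]
To justify pulling $a_i$ out, I approximate $\lambda_i$ by the step functions $E_u$ attached to the dyadic partitions $\mathcal{P}_u$. For a step function $\sum_j \beta_j\id_{C_j}$ times $a_i$, the integral is $\sum_j\beta_j\mu(C_j)a_i$; this follows from the normalisation $\Int(\id)=\mu(\II)\cdot 1$ together with the averaging compatibility $\ave$. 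Passing to the limit uses continuity. The measure here is the product measure \eqref{eq:param-measure}, taken as Lebesgue on each factor, which is what the paper does implicitly in Proposition~\ref{prop:int-proj mod} (there $c_{\II_A}=\frac12\sum a_i$).

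Then apply Fubini to the scalar integral:
\[ \int\lambda_i\,\dd\mu=\int_0^{t_i}\lambda_i\,\dd\lambda_i\prod_{l\ne i}t_l=\frac{t_i^2}{2}\prod_{l\ne i}t_l . \]
Factoring out $\prod_l t_l=\mu_{\II_A(\pmb{t})}(\II_A(\pmb{t}))$ yields the second expression $\frac{\mu}{2}\sum_i t_ia_i$. The degenerate case where some $t_i=0$ is consistent, since both sides then vanish.

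The main obstacle is the step identifying the abstract categorical integral with the classical product-measure integral on the parameterized cube. Rescaling the domain to a different total measure is not directly covered by Theorem~\ref{thm:int}, which is stated for $[0,1]^{\dimA}$. I would first try to handle it via \eqref{eq:param-zero-extension}, embedding $\II_A(\pmb{t})$ into a large cube and extending by zero. If that fails, I would invoke uniqueness of the morphism out of the initial object: the Bochner integral satisfies \ref{Amorp1} and \ref{Amorp2} with $v=\mu(\II_A(\pmb{t}))$, so it must coincide with $\Int_{\ident_A}$.
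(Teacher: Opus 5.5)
Your proposal is correct and follows essentially the same route as the paper: write $x=\sum_i\lambda_i a_i$ on $\prod_i[0,t_i]$, evaluate each coordinate integral by Fubini to get $\frac{1}{2}t_i^2\prod_{l\neq i}t_l$, and factor out $\prod_l t_l=\mu_{\II_A(\pmb{t})}(\II_A(\pmb{t}))$. The differences are minor. You pull the vectors $a_i$ out by linearity and apply scalar Fubini, whereas the paper cites the Bochner-integral version of Fubini directly. You also explicitly address the identification of $\Int_{\ident_A}$ with the classical product-measure integral on the rescaled cube, which the paper takes for granted.
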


\begin{proof}
Write every $x\in\II_A(\pmb{t})$ as $x=\sum\limits_{i=1}^{\dimA}\lambda_i a_i$, $0\=<\lambda_i\=< t_i$.
Then Fubini's theorem (see \cite[Theorem~2.37]{Folland1999}) \footnote{The vector-valued version follows from the Fubini theorem for Bochner integrals; see \cite[Theorem~9, p.~190]{DunfordSchwartz1958}.} gives
\begin{align*}
c_{\II_A(\pmb{t})}
  &=\int_{0}^{t_1}\cdots\int_{0}^{t_{\dimA}}
      \bigg(\sum_{i=1}^{\dimA}\lambda_i a_i\bigg)
      \dd\lambda_{\dimA}\cdots\dd\lambda_1\\
  &=\sum_{i=1}^{\dimA}
      \bigg(
        \int_{0}^{t_i}\lambda_i\dd\lambda_i
      \bigg)
      \bigg(
        \prod_{\substack{1\=< l\=< \dimA\\l \ne i}}
        \int_{0}^{t_l}\dd\lambda_l
      \bigg)a_i\\
  &=\frac12\sum_{i=1}^{\dimA}
      \bigg(t_i^2\prod_{\substack{1\=< l\=< \dimA\\l\neq i}}t_l\bigg)a_i \\
  &= \frac{1}{2}\bigg(\prod_{l=1}^{\dimA}t_l\bigg)
    \bigg(\sum_{i=1}^{\dimA}t_i a_i\bigg)
\end{align*}
as required. Here, $\mu_{\II_A(\pmb{t})}(\II_A(\pmb{t})) = \prod\limits_{l=1}^{\dimA}t_l$.
\end{proof}

\begin{theorem}\label{thm:param-int-module}
For every $\pmb{t}=(t_1,\ldots,t_{\dimA})\in(\RR^{\>=0})^{\times\dimA}$
and every finite-dimensional left $A$-module $M$, we have
\[\int_{\II_A(\pmb{t})}M = \rho_M(c_{\II_A(\pmb{t})})\]
In particular, if $t_i=0$ for some $i$, then $\mu_{\II_A(\pmb{t})}(\II_A(\pmb{t}))=0$ and $\displaystyle \int_{\II_A(\pmb{t})}M=0$.
\end{theorem}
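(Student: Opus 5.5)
The plan is to run the argument of Theorem~\ref{thm:int-module} again, now over the measure space $(\II_A(\pmb{t}),\Sigma_{\II_A(\pmb{t})},\mu_{\II_A(\pmb{t})})$ instead of the probability space. The only place where $\mu_{\II_A}(\II_A)=1$ entered that proof was in normalising the distinguished elements, which were $\mu_{\II_A}(\II_A)1_A$ and $\mu_{\II_A}(\II_A)\ident_M$. So I will keep the scalar $\mu:=\mu_{\II_A(\pmb{t})}(\II_A(\pmb{t}))=\prod_l t_l$ throughout.

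\emph{Step 1: reduction to the ambient cube.} If $\II_A(\pmb{t})\subseteq\II_A$, equation~\eqref{eq:param-zero-extension} replaces every integral over $\II_A(\pmb{t})$ by the integral of the zero-extension over $\II_A$. So it suffices to prove the linearity identity
\[
(\scrA^1_{\rho_M})\int \rho_M\compos f\,\dd\mu=\rho_M\Big((\scrA^1_{\ident_A})\int f\,\dd\mu\Big)
\]
for every $f\in L^1(\cdot\,;A)$. In general one can always choose $t>\max_i t_i$ and take the ambient cube $[0,t]^{\times\dimA}$.

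\emph{Step 2: the linearity identity.} Lemma~\ref{lemm:rho-functor} gives the functor $\rho_M^{\otimes}=-\otimes_AB_M$, and Lemma~\ref{lemm:Lp-tensor} identifies $L^1(\cdot\,;A)\otimes_AB_M\cong L^1(\cdot\,;B_M)$; under this identification $f$ corresponds to $\rho_M\compos f$. The two maps $\rho_M^{\otimes}(\Int_{\ident_A})$ and $\Int_{\rho_M}$ both send the constant function with value $\ident_M$ to $\mu\,\ident_M$. Uniqueness of morphisms out of the initial object in Theorem~\ref{thm:int} then forces them to be equal. Evaluating both at $f=\ident_A|_{\II_A(\pmb{t})}$ yields $\int_{\II_A(\pmb{t})}M=\rho_M(c_{\II_A(\pmb{t})})$.

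As a fallback that avoids the categorical machinery, I would note that both sides are Bochner integrals and $\rho_M$ is a bounded linear map between finite-dimensional spaces. Bounded linear maps commute with Bochner integrals: check this on step functions, then pass to limits using density from \eqref{eq:Lp} and continuity of the integral. This gives the same identity directly.

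\emph{Step 3: degenerate case.} If some $t_i=0$, then $\mu=\prod_l t_l=0$, so Lemma~\ref{lemm:param-barycenter} gives $c_{\II_A(\pmb{t})}=0$. Linearity of $\rho_M$ then gives $\int_{\II_A(\pmb{t})}M=\rho_M(0)=0$. Independently of the lemma, any integral over a null set vanishes, which gives the same conclusion.

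\emph{Main obstacle.} The delicate point is Step~2 when $\mu\neq1$, and especially when $\mu=0$. Condition~\ref{Aobj1} requires $\|v_N\|<\mu_{\II_A}(\II_A)$, so the target objects $(A,\mu 1_A,\ave)$ and $(B_M,\mu\ident_M,\ave)$ may not be admissible in the category. I would therefore handle Step~2 by the Bochner-integral fallback, which needs no normalisation. Should the categorical route be required, I would rescale, replacing $\mu$ by $1$ through the averaged integrals $\frac1\mu\int$ as in the remark after Theorem~\ref{thm:int-module}, for $\mu>0$. The case $\mu=0$ is then covered separately by Step~3.
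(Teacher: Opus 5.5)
Your proposal is correct and follows the paper's route. The paper also gets the identity by invoking the linearity relation \eqref{eq:int-module linearity} from Theorem~\ref{thm:int-module} on each fixed domain $\II_A(\pmb{t})$, justified only by the linearity and continuity of $\rho_M$ (i.e.\ your Bochner-integral fallback), and reads off the degenerate case from $c_{\II_A(\pmb{t})}=0$ when some $t_i=0$; your attention to the normalisation in condition \ref{Aobj1} and your separate treatment of $\mu=0$ just make explicit what the paper's one-line proof leaves implicit.
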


\begin{proof}
Since $\rho_M$ is linear and continuous,
\begin{align*}
  \int_{\II_A(\pmb{t})}M
  &=(\scrA^1_{\rho_M})
    \int_{\II_A(\pmb{t})}\rho_M
    \dd\mu_{\II_A(\pmb{t})}\\
  &\mathop{=}\limits^{\spadesuit}
    \rho_M\left(
      (\scrA^1_{\ident_A})
      \int_{\II_A(\pmb{t})}\ident_A
      \dd\mu_{\II_A(\pmb{t})}
    \right)\\
  &=\rho_M(c_{\II_A(\pmb{t})}),
\end{align*}
and the remaining formulas follow immediately. Here, $\spadesuit$ is true since \eqref{eq:int-module linearity} holds for each fixing $\II_A(\pmb{t})$.
\end{proof}


\begin{corollary}\label{cor:param-int-module}
Keep the notations from Theorem \ref{thm:param-int-module}. For every $1\=< i\=< \dimA$, we have
\begin{equation}\label{eq:param-coefficient-recovery}
  \rho_M(a_i)
= \bigg(
    \frac{\partial^2}{\partial t_i^2}
    \prod_{\substack{1\=< l\=< \dimA\\l\neq i}}
      \frac{\partial}{\partial t_l}
  \bigg)
  \bigg(\int_{\II_A(\pmb{t})}M\bigg)
  \bigg|_{\pmb{t}\to \pmb{0}^+}.
\end{equation}
\end{corollary}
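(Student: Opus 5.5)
The plan is to reduce everything to an explicit polynomial in $\pmb{t}$ and differentiate term by term. By Theorem~\ref{thm:param-int-module} together with Lemma~\ref{lemm:param-barycenter}, and since $\rho_M$ is $\CC$-linear, for every $\pmb{t}\in(\RR^{\>=0})^{\times\dimA}$ we have
\[
  \int_{\II_A(\pmb{t})}M
  =\rho_M\big(c_{\II_A(\pmb{t})}\big)
  =\frac12\sum_{j=1}^{\dimA}
    \bigg(t_j^2\prod_{\substack{1\=< l\=< \dimA\\ l\neq j}}t_l\bigg)\rho_M(a_j).
\]
So $\pmb{t}\mapsto\int_{\II_A(\pmb{t})}M$ is an $\End_{\CC}(M)$-valued polynomial map in $t_1,\ldots,t_{\dimA}$. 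In particular it is smooth on the open orthant $(\RR^{>0})^{\times\dimA}$, and all of its partial derivatives extend continuously to the closed orthant. This justifies applying the differential operator in \eqref{eq:param-coefficient-recovery} and then taking the limit $\pmb{t}\to\pmb{0}^+$.

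Next I apply $\frac{\partial^2}{\partial t_i^2}\prod_{l\neq i}\frac{\partial}{\partial t_l}$ to each of the $\dimA$ summands separately; the coefficients $\rho_M(a_j)$ are constants.

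For $j=i$, the monomial is $t_i^2\prod_{l\neq i}t_l$. Each $\partial/\partial t_l$ ($l\neq i$) removes one factor $t_l$, and $\partial^2/\partial t_i^2$ turns $t_i^2$ into $2$. The summand therefore becomes $\frac12\cdot 2\cdot\rho_M(a_i)=\rho_M(a_i)$.

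For $j\neq i$, the monomial $t_j^2\prod_{l\neq j}t_l$ has degree exactly $1$ in $t_i$. Hence $\partial^2/\partial t_i^2$ annihilates it, whatever the order of differentiation, since the derivatives commute for polynomials.

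Summing, the operator sends $\int_{\II_A(\pmb{t})}M$ to the constant $\rho_M(a_i)$, so the limit $\pmb{t}\to\pmb{0}^+$ is trivially $\rho_M(a_i)$. The limit is stated only because the polynomial formula is used on the open orthant, or because one prefers to read the derivatives as one-sided at the boundary.

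There is no real obstacle here. The only point needing care is the bookkeeping of the exponent of $t_i$ in each monomial, namely $2$ when $j=i$ and $1$ otherwise. One should also note that the index bound written as $n$ in Lemma~\ref{lemm:param-barycenter} is $\dimA$, and that the identity of Theorem~\ref{thm:param-int-module} holds for all $\pmb{t}$ in the orthant, not merely for $\II_A(\pmb{t})\subseteq\II_A$. This is what lets us treat the integral profile as a genuine polynomial function near $\pmb{0}$.
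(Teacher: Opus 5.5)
Your proposal is correct and follows essentially the same route as the paper. You expand $\int_{\II_A(\pmb{t})}M$ via Theorem~\ref{thm:param-int-module} and Lemma~\ref{lemm:param-barycenter} as $\frac12\sum_{j}\big(t_j^2\prod_{l\neq j}t_l\big)\rho_M(a_j)$, and note that the operator sends the $j=i$ term to $\rho_M(a_i)$ and kills every $j\neq i$ term, which is linear in $t_i$; the limit $\pmb{t}\to\pmb{0}^+$ is then trivial because the result does not depend on $\pmb{t}$, exactly as the paper observes.
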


\begin{proof}
By Theorem \ref{thm:param-int-module},
\[
\int_{\II_A(\pmb{t})}M = \rho_M(c_{\II_A(\pmb{t})})
= \frac{1}{2} \bigg(\prod_{l=1}^{\dimA} t_l\bigg)
   \rho_M\bigg(\sum_{i=1}^{\dimA} t_i a_i\bigg)
= \frac12 \sum_{i=1}^{\dimA}
  \bigg(
    t_i \prod_{l=1}^{\dimA} t_l
  \bigg) \rho_M(a_i).
\]
Now apply the differential operator
$\frac{\partial^2}{\partial t_i^2}
  \prod\limits_{\substack{1\=< l\=< \dimA\\ l\neq i}}
    \frac{\partial}{\partial t_l}$
to the above formula, we have, for a fixed $i$, the monomial corresponding to $a_i$ is
$t_i \prod\limits_{l=1}^{n} t_l = t_i^2 \prod\limits_{l\neq i} t_l$.
Note that:
\begin{itemize}
  \item differentiating $\prod\limits_{l\neq i} t_l$ once with respect to each $t_l$ ($l\neq i$) gives $1$;
  \item differentiating $t_i^2$ twice with respect to $t_i$ gives $2$;
  \item the factor $\frac{1}{2}$ outside cancels with the $2$ from the second derivative.
\end{itemize}
Thus the contribution from the $i$-th term is exactly $\rho_M(a_i)$.

For any $j \neq i$, we have
\[ t_j \prod_{l=1}^{\dimA} t_l = t_i t_j \prod_{l\ne i,j} t_l \]
contains $t_i$ only to the first power, so its second derivative with respect to $t_i$ vanishes.
Hence all $j\neq i$ terms vanish. Therefore, we have
\[
\bigg(
  \frac{\partial^2}{\partial t_i^2}
  \prod_{\substack{1\=< l\=< \dimA\\ l\neq i}}
  \frac{\partial}{\partial t_l}
\bigg)
\bigg(\int_{\II_A(\pmb{t})}M\bigg)
=
\rho_M(a_i).
\]
Evaluating at $\pmb{t}\to\pmb{0}^+$ gives the same result, since the expression is independent of $\pmb{t}$.
\end{proof}

The following result show that it is essential that one and the same map simultaneously conjugates the complete parameterized integral. It also completes the inverse of Theorem \ref{thm:int-module iso}.

\begin{theorem} \label{thm:param-int-module iso}
For two left $A$-modules $M$ and $N$ and an invertible $\CC$-linear map $h:M\to N$,
$h$ is an isomorphism if and only if
\begin{align}\label{eq:param-int-simultaneous-conjugacy}
  \bigg( \int_{\II_A(\pmb{t})}N \bigg) \compos h= h \compos \bigg(\int_{\II_A(\pmb{t})}M\bigg)
\end{align}
holds for all $\pmb{t}\in(\RR^{>0})^{\times\dimA}$.
\end{theorem}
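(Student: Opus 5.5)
The plan is to reduce both directions to the explicit formula of Theorem~\ref{thm:param-int-module} combined with Lemma~\ref{lemm:param-barycenter}. For every $\pmb{t}$ we get
\[
\int_{\II_A(\pmb{t})}M=\rho_M(c_{\II_A(\pmb{t})})
=\frac12\sum_{i=1}^{\dimA}\bigg(t_i\prod_{l=1}^{\dimA}t_l\bigg)\rho_M(a_i),
\]
and the same formula holds for $N$. So both sides of \eqref{eq:param-int-simultaneous-conjugacy} are polynomials in $\pmb{t}$ with coefficients in $\Hom_{\CC}(M,N)$.

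For the necessity direction, assume $h$ is an $A$-isomorphism. Exactly as in the proof of Theorem~\ref{thm:int-module iso}, $h\compos\rho_M(a)=\rho_N(a)\compos h$ holds for all $a\in A$. Applying this with $a=c_{\II_A(\pmb{t})}$ gives \eqref{eq:param-int-simultaneous-conjugacy} for every $\pmb{t}$; nothing more is needed.

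For the sufficiency direction, assume \eqref{eq:param-int-simultaneous-conjugacy} holds for all $\pmb{t}\in(\RR^{>0})^{\times\dimA}$. Substituting the formula above, we get the polynomial identity
\[
\sum_{i=1}^{\dimA}\bigg(t_i^2\prod_{l\ne i}t_l\bigg)\big(\rho_N(a_i)\compos h-h\compos\rho_M(a_i)\big)=0
\]
on the open orthant. The monomials $t_i^2\prod_{l\neq i}t_l$ are pairwise distinct. A polynomial vanishing on a nonempty open set is zero, so each coefficient vanishes.

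Alternatively, we can apply the differential operator of Corollary~\ref{cor:param-int-module} to both sides, which commutes with composition by the fixed linear map $h$. This yields directly $\rho_N(a_i)\compos h=h\compos\rho_M(a_i)$ for each $i$.

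Since $\{a_1,\ldots,a_{\dimA}\}$ is a basis of $A$ and $\rho_M,\rho_N$ are linear, we get $h(am)=ah(m)$ for all $a\in A$. So $h$ is an $A$-homomorphism, and being invertible, an $A$-isomorphism.

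The only delicate point is the extraction of coefficients, namely justifying that an identity valid only for $\pmb{t}$ with positive entries determines each coefficient separately. I would handle this by the derivative argument of Corollary~\ref{cor:param-int-module}: the expression is polynomial, hence smooth on the orthant, the operator annihilates all terms except the $i$-th, and the limit $\pmb{t}\to\pmb{0}^+$ is harmless because the result is constant. Here it is essential that the same $h$, independent of $\pmb{t}$, is used for all $\pmb{t}$.
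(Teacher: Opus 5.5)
Your proposal is correct and follows essentially the same route as the paper. Necessity comes from the intertwining relation $h\compos\rho_M(a)=\rho_N(a)\compos h$ evaluated at $a=c_{\II_A(\pmb{t})}$, and sufficiency comes from extracting each coefficient $\rho_N(a_i)\compos h=h\compos\rho_M(a_i)$ with the differential operator of Corollary~\ref{cor:param-int-module}. Your alternative justification, that a polynomial vanishing on the open orthant has all its coefficients zero, is a valid and slightly more elementary way to carry out that same coefficient extraction.
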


\begin{proof}
If $h$ is an isomorphism, then $\rho_N(a)=h\compos\rho_M(a)\compos h^{-1}$ holds for every $a\in A$ by Theorem \ref{thm:int-module iso} (Fixing $\pmb{t}$).
Conversely, assume that \eqref{eq:param-int-simultaneous-conjugacy} holds for every $\pmb{t}$,
then, by Corollary \ref{cor:param-int-module}, applying the differential operator $\frac{\partial^2}{\partial t_i^2}
  \prod\limits_{\substack{1\=< l\=< \dimA\\ l\neq i}}
    \frac{\partial}{\partial t_l}$ to both sides, we obtain
$\rho_N(a_i)\compos h = h\compos\rho_M(a_i)$ for every $1\=< i\=< \dimA$
(where $\basis{A} = \{a_1,\ldots,a_{\dimA}\}$). Then the same equality holds for every $a\in A$.
Hence $h$ is an $A$-module isomorphism.
\end{proof}

\subsection{Integrals of direct sums (in parameterized measure spaces)}
\label{subsect:param-int-dir sums}

We provide a proposition about parameterized integrals of the direct sums of modules here.
This proposition is parallel to Proposition \ref{prop:int-dir sum}.

\begin{proposition}\label{prop:param-int-dir sum}
For any two finite-dimensional left $A$-modules $M$ and $N$, we have
\[
  \int_{\II_A(\pmb{t})}(M\oplus N)
  =\bigg(\int_{\II_A(\pmb{t})}M\bigg)
   \oplus
   \bigg(\int_{\II_A(\pmb{t})}N\bigg).
\]
\end{proposition}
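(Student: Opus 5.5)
The plan is to reduce the statement to the block-diagonal decomposition of the structure homomorphism, exactly as in Proposition~\ref{prop:int-dir sum}, but using the parameterized version of the integral formula. First, by Theorem~\ref{thm:param-int-module}, for every fixed $\pmb{t}\in(\RR^{\>=0})^{\times\dimA}$ we have
\[
  \int_{\II_A(\pmb{t})}(M\oplus N)=\rho_{M\oplus N}(c_{\II_A(\pmb{t})}),\qquad
  \int_{\II_A(\pmb{t})}M=\rho_{M}(c_{\II_A(\pmb{t})}),\qquad
  \int_{\II_A(\pmb{t})}N=\rho_{N}(c_{\II_A(\pmb{t})}),
\]
where $c_{\II_A(\pmb{t})}\in A$ is the element computed in Lemma~\ref{lemm:param-barycenter}. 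The crucial observation is that $c_{\II_A(\pmb{t})}$ depends only on $A$, its basis and $\pmb{t}$, not on the module, so the same element is fed into all three structure homomorphisms.

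Second, I will use the identity $\rho_{M\oplus N}=\rho_M\oplus\rho_N$ established in the remark following Proposition~\ref{prop:int-dir sum}: for every $a\in A$ and $(m,n)\in M\oplus N$, one has $\rho_{M\oplus N}(a)(m,n)=(am,an)=(\rho_M(a)m,\rho_N(a)n)$. Applying this with $a=c_{\II_A(\pmb{t})}$ gives
\[
  \rho_{M\oplus N}(c_{\II_A(\pmb{t})})=\rho_M(c_{\II_A(\pmb{t})})\oplus\rho_N(c_{\II_A(\pmb{t})}),
\]
which is the claimed equality, with $\oplus$ read as the block-diagonal endomorphism of $M\oplus N$ with respect to the concatenated bases of $M$ and $N$.

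There is essentially no obstacle. The only point to check is that the degenerate case where some $t_i=0$ is consistent: then both sides vanish by the last clause of Theorem~\ref{thm:param-int-module}. Alternatively, one can simply note that the argument is uniform in $\pmb{t}$, since nothing in it uses positivity of $\pmb{t}$.
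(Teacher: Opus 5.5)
Your proposal is correct and matches the paper's proof: both evaluate each side at the common element $c_{\II_A(\pmb{t})}$ via Theorem~\ref{thm:param-int-module} and then apply the block-diagonal identity $\rho_{M\oplus N}(a)=\rho_M(a)\oplus\rho_N(a)$. Your remark on the degenerate case $t_i=0$ is a harmless extra check, since the argument does not depend on $\pmb{t}$ being positive.
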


\begin{proof}
For every $a\in A$, we have $\rho_{M\oplus N}(a)=\rho_M(a)\oplus\rho_N(a)$.
It follows from Theorem \ref{thm:param-int-module} that
\begin{align*}
   \int_{\II_A(\pmb{t})}(M\oplus N)
&= \rho_{M\oplus N}(c_{\II_A(\pmb{t})}) \\
&= \rho_M(c_{\II_A(\pmb{t})}) \oplus \rho_N(c_{\II_A(\pmb{t})}) \\
&= \bigg(\int_{\II_A(\pmb{t})}M\bigg) \oplus \bigg(\int_{\II_A(\pmb{t})}N\bigg).
\end{align*}
\end{proof}

More generally, we have the following direct corollary of Proposition \ref{prop:param-int-dir sum} by induction.
This corollary is parallel to Corollary \ref{coro:int-dir sum}.

\begin{corollary}\label{coro:param-int-dir sum}
for finite-dimensional left $A$-modules
$M_1,\ldots,M_s$,
\[
  \int_{\II_A(\pmb{t})}
    \bigg(\bigoplus_{j=1}^{s}M_j\bigg)
  =\bigoplus_{j=1}^{s}
    \bigg(\int_{\II_A(\pmb{t})}M_j\bigg).
\]
\end{corollary}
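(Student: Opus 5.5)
The plan is induction on $s$, with Proposition~\ref{prop:param-int-dir sum} as the inductive step, exactly as was done for Corollary~\ref{coro:int-dir sum} in the probability-space setting. Fix $\pmb{t}\in(\RR^{\>=0})^{\times\dimA}$ throughout. The statement is pointwise in $\pmb{t}$, so nothing about varying the domain enters the argument.

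For $s=1$ there is nothing to prove. For $s=2$ the claim is Proposition~\ref{prop:param-int-dir sum} itself.

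For the inductive step, assume the formula for $s-1$ modules. Write
\[
\bigoplus_{j=1}^{s}M_j=\bigg(\bigoplus_{j=1}^{s-1}M_j\bigg)\oplus M_s,
\]
using the canonical identification of iterated direct sums. Apply Proposition~\ref{prop:param-int-dir sum} to the pair $\big(\bigoplus_{j=1}^{s-1}M_j,\,M_s\big)$, and then the induction hypothesis to the first summand. The result is
\[
\bigg(\bigoplus_{j=1}^{s-1}\int_{\II_A(\pmb{t})}M_j\bigg)\oplus\int_{\II_A(\pmb{t})}M_s
=\bigoplus_{j=1}^{s}\int_{\II_A(\pmb{t})}M_j .
\]

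There is also a direct route that avoids induction. By Theorem~\ref{thm:param-int-module},
\[
\int_{\II_A(\pmb{t})}\bigoplus_j M_j=\rho_{\oplus_j M_j}(c_{\II_A(\pmb{t})}).
\]
Moreover $\rho_{\oplus_j M_j}(a)=\operatorname{diag}(\rho_{M_1}(a),\ldots,\rho_{M_s}(a))$ for every $a\in A$, by the componentwise action, as in the remark after Proposition~\ref{prop:int-dir sum}. Evaluating at $a=c_{\II_A(\pmb{t})}$ gives the block-diagonal matrix directly.

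The only point needing care is bookkeeping rather than mathematics. The identification $(M\oplus N)\oplus L=M\oplus(N\oplus L)$ must be compatible with the block-diagonal meaning of $\oplus$ on endomorphisms. This holds because the reassociation isomorphism is $A$-linear, and matrix direct sum is associative once an ordered basis is fixed. So I expect no genuine obstacle.
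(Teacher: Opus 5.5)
Your proposal is correct and follows the paper's route: the paper also obtains this corollary from Proposition~\ref{prop:param-int-dir sum} by induction on $s$. Your direct alternative is the proof of Proposition~\ref{prop:param-int-dir sum} itself (Theorem~\ref{thm:param-int-module} together with the block-diagonal form of $\rho_{\oplus_j M_j}$), carried out for all $s$ summands at once, so it is equally valid.
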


\subsection{Integrals of projective modules (in parameterized measure spaces)}
\label{subsect:param-int-proj mod}

We provide a proposition which is parallel to Proposition \ref{prop:int-proj mod}.

\begin{proposition}\label{prop:param-int-proj mod}
Assume $\basis{A}=\{a_1,\ldots,a_{\dimA}\}$. Let $e \in A$ be a nonzero idempotent.
Then for any basis $\{p_1,\ldots,p_r\}$ of $Ae$, we have
\begin{equation*}
  \int_{\II_A(\pmb{t})}Ae
=\frac{\mu_{\II_A(\pmb{t})}(\II_A(\pmb{t}))}{2}
\left(
\begin{matrix}
  \sum\limits_{i=1}^{\dimA} t_i\lambda_{i,1,1} & \sum\limits_{i=1}^{\dimA} t_i\lambda_{i,2,1} & \cdots & \sum\limits_{i=1}^{\dimA} t_i\lambda_{i,r,1} \\
  \sum\limits_{i=1}^{\dimA} t_i\lambda_{i,1,2} & \sum\limits_{i=1}^{\dimA} t_i\lambda_{i,2,2} & \cdots & \sum\limits_{i=1}^{\dimA} t_i\lambda_{i,r,2} \\
  \vdots & \vdots & & \vdots \\
  \sum\limits_{i=1}^{\dimA} t_i\lambda_{i,1,r} & \sum\limits_{i=1}^{\dimA} t_i\lambda_{i,2,r} & \cdots & \sum\limits_{i=1}^{\dimA} t_i\lambda_{i,r,r}
\end{matrix}
\right)_{r\times r}
\end{equation*}
where $a_ip_j=\sum\limits_{k=1}^{r}\lambda_{i,j,k}p_k$, $\lambda_{i,j,k}\in\CC$.
\end{proposition}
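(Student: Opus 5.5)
The plan is to copy the proof of Proposition~\ref{prop:int-proj mod}, replacing the constant barycenter $c_{\II_A}=\frac12\sum_i a_i$ with the parameterized one. First, Theorem~\ref{thm:param-int-module} gives
\[
\int_{\II_A(\pmb{t})}Ae=\rho_{Ae}(c_{\II_A(\pmb{t})}).
\]
Lemma~\ref{lemm:param-barycenter} then gives
\[
c_{\II_A(\pmb{t})}=\frac{\mu_{\II_A(\pmb{t})}(\II_A(\pmb{t}))}{2}\sum_{i=1}^{\dimA}t_ia_i .
\]
So the parameterized integral of $Ae$ is the endomorphism of $Ae$ given by left multiplication by this element, i.e.\ the restriction $L_{c_{\II_A(\pmb{t})}}|_{Ae}$. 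This makes sense because $Ae$ is a left ideal, so left multiplication preserves it.

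Next I would compute the matrix of this endomorphism in the basis $\{p_1,\ldots,p_r\}$. Using $a_ip_j=\sum_k\lambda_{i,j,k}p_k$ and linearity of $\rho_{Ae}$ in the coefficients, I get
\[
\rho_{Ae}(c_{\II_A(\pmb{t})})(p_j)=\sum_{k=1}^r\Big(\frac{\mu_{\II_A(\pmb{t})}(\II_A(\pmb{t}))}{2}\sum_{i=1}^{\dimA}t_i\lambda_{i,j,k}\Big)p_k .
\]
Under the column convention used in Proposition~\ref{prop:int-proj mod}, the image of $p_j$ forms the $j$-th column. Hence the $(k,j)$ entry is $\frac{\mu}{2}\sum_i t_i\lambda_{i,j,k}$, which is exactly the displayed matrix. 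The scalar $\mu_{\II_A(\pmb{t})}(\II_A(\pmb{t}))=\prod_l t_l$ factors out because $\rho_{Ae}$ is $\CC$-linear.

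As a sanity check, setting $\pmb{t}=(1,\ldots,1)$ should recover Proposition~\ref{prop:int-proj mod}, and it does. For $\pmb{t}$ with some $t_i=0$, both sides vanish, in agreement with the last clause of Theorem~\ref{thm:param-int-module}.

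I do not expect a genuine obstacle. The only point needing care is index bookkeeping: the coefficient $\lambda_{i,j,k}$ must sit in row $k$, column $j$. The transposition is consistent with the matrix convention of Proposition~\ref{prop:int-proj mod}, and I would state that convention explicitly so the two results match.
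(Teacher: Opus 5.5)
Your proposal is correct and follows essentially the same route as the paper. Both arguments use Theorem~\ref{thm:param-int-module} and Lemma~\ref{lemm:param-barycenter} to write the integral as $\rho_{Ae}(c_{\II_A(\pmb{t})})$, expand $c_{\II_A(\pmb{t})}p_j$ in the basis $\{p_k\}$, and read off the $(k,j)$ entry; your explicit column convention and your sanity checks at $\pmb{t}=(1,\ldots,1)$ and at degenerate $\pmb{t}$ are harmless additions that the paper omits.
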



\begin{proof}
The structure homomorphism of $Ae$ is $\rho_{Ae}:A\to B_{Ae}=\End_{\CC}(Ae)$, $\rho_{Ae}(a)(xe)=axe$.
By Lemma \ref{lemm:param-barycenter} and Theorem \ref{thm:param-int-module}, the formular
\begin{align*}
  \rho_{Ae}(c_{\II_A(\pmb{t})})(p_j)
  &=\frac12\bigg(\prod_{l=1}^{\dimA}t_l\bigg)
    \sum_{i=1}^{\dimA}t_i a_ip_j\\
  &=\frac12\bigg(\prod_{l=1}^{\dimA}t_l\bigg)
    \sum_{i=1}^{\dimA}\sum_{k=1}^{r}
      t_i\lambda_{i,j,k}p_k\\
  &=\sum_{k=1}^{r}
    \bigg(
      \frac12\bigg(\prod_{l=1}^{\dimA}t_l\bigg)
      \bigg(\sum_{i=1}^{\dimA}t_i\lambda_{i,j,k}\bigg)
    \bigg)p_k.
\end{align*}
shows that this proposition holds.
\end{proof}

Let $P\cong\bigoplus\limits_{v=1}^{m}Ae_v$
be a finite-dimensional projective left $A$-module, where the idempotents $e_v$ may be repeated. Then for an isomorphism
$h:\bigoplus\limits_{v=1}^{m}Ae_v\to P$, the corresponding matrices satisfy
\[ \int_{\II_A(\pmb{t})}P
 = h\compos
    \bigg(
      \bigoplus_{v=1}^{m}
      \int_{\II_A(\pmb{t})}Ae_v
    \bigg)
    \compos h^{-1}.
\]
This follows from Proposition \ref{prop:param-int-dir sum} and the
invariance of parameterized integrals under module isomorphisms.

\begin{remark} \label{rmk:int-char-proj mod} \rm
A description of projective module be found now.
Let $M$ be a finite-dimensional left $A$-module. Then the following statements are equivalent.
\begin{enumerate}[label={\rm(\arabic*)}]
  \item $M$ is projective.

  \item There is a positive integer $r\in\NN^+$ and $\CC$-linear maps
  $\iota:M\to A^{\oplus r}$, and $\pi:A^{\oplus r}\to M$
  such that:
  \begin{itemize}
    \item $\pi\compos\iota=\ident_M$;
    \item for every $\pmb{t}\in(\RR^{>0})^{\times\dim A}$, \
  \[
    \left(
      \int_{\II_A(\pmb{t})}A
    \right)^{\oplus r} \compos\iota
  = \iota\compos
    \left(
      \int_{\II_A(\pmb{t})}M
    \right)
   \quad\text{and}\quad
    \left(
      \int_{\II_A(\pmb{t})}M
    \right)\compos\pi
  = \pi\compos
    \left(
      \int_{\II_A(\pmb{t})}A
    \right)^{\oplus r}
  \] hold.
  \end{itemize}
\end{enumerate}
Indeed, $M$ is projective implies that $M$ is a direct summand of a free module $A^{\oplus r}$ $(r\in\NN)$.
Then we have a section $\iota: M\to A^{\oplus r}$ and a retraction $\pi:A^{\oplus r}\to M$ such that $\pi\compos\iota=\ident_M$.
Here, $\iota$ and $\pi$ are homomorphisms of left $A$-modules. Parameterized integrals can translate homomorphism $\iota$ and $\pi$ into the formula
\[ \left(
    \int_{\II_A(\pmb{t})}A
  \right)^{\oplus r}\compos\iota
\mathop{=}\limits^{\spadesuit}
\left(
    \int_{\II_A(\pmb{t})}A^{\oplus r}
  \right)\compos\iota
= \iota\compos
  \left(
    \int_{\II_A(\pmb{t})}M
  \right) \]
and
\[ \left(
    \int_{\II_A(\pmb{t})}M
  \right)\compos\pi
= \pi\compos
  \left(
    \int_{\II_A(\pmb{t})}A^{\oplus r}
  \right)
\mathop{=}\limits^{\spadesuit} \pi\compos
  \left(
    \int_{\II_A(\pmb{t})}A
  \right)^{\oplus r}.\]
Here, two equations marked ``$\spadesuit$'' are given by Corollary \ref{coro:param-int-dir sum}.
Conversely, suppose that the linear maps $\iota$ and $\pi$ satisfy the conditions in (2).
Then Theorem \ref{thm:param-int-module iso} and its homomorphism version imply that both $\iota$ and $\pi$ are $A$-module homomorphisms. Therefore $M$ is isomorphic to a direct summand of the free module $A^{\oplus r}$. Hence $M$ is projective.
\end{remark}

Another description of projective module is shown in Subsection \ref{subsect:proj resol} in Part II, see Theorem \ref{thm:mat char-proj mod}.

\subsection{Integrals of injective modules (in parameterized measure spaces)}
\label{subsect:param-int-inj mod}

Recall that the left $A$-action on $D(eA)$ is $(a\cdot f)(x)=f(xa) ~ (a\in A)$, $f\in D(eA) ~ (x\in eA).$
The following proposition is parallel to Proposition \ref{prop:int-inj mod}.

\begin{proposition}\label{prop:param-int-inj mod}
Assume $\basis{A}=\{a_1,\ldots,a_{\dimA}\}$.
Let $e\in A$ be a nonzero idempotent, and let $\{q_1,\ldots,q_r\}$ be a basis of the right $A$-module $eA$.
Then, with respect to the dual basis $\{q_1^*,\ldots,q_r^*\}$ of $D(eA)$, we have
\begin{equation*}
  \int_{\II_A(\pmb{t})}D(eA)
= \frac{\mu_{\II_A(\pmb{t})}(\II_A(\pmb{t}))}{2}
\left(
\begin{matrix}
\sum\limits_{i=1}^{\dimA}t_i\eta_{i,1,1} & \sum\limits_{i=1}^{\dimA}t_i\eta_{i,1,2} & \cdots & \sum\limits_{i=1}^{\dimA}t_i\eta_{i,1,r} \\
\sum\limits_{i=1}^{\dimA}t_i\eta_{i,2,1} & \sum\limits_{i=1}^{\dimA}t_i\eta_{i,2,2} & \cdots & \sum\limits_{i=1}^{\dimA}t_i\eta_{i,2,r} \\
\vdots & \vdots & & \vdots \\
\sum\limits_{i=1}^{\dimA}t_i\eta_{i,r,1} & \sum\limits_{i=1}^{\dimA}t_i\eta_{i,r,2} & \cdots & \sum\limits_{i=1}^{\dimA}t_i\eta_{i,r,r}
\end{matrix}
\right)_{r\times r}
\end{equation*}
where $\eta_{i,j,k}\in\CC$ is obtained from
$q_ja_i=\sum\limits_{k=1}^{r}\eta_{i,j,k}q_k$.
\end{proposition}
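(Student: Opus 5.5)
We will compute the integral through the structure homomorphism, exactly as in the proofs of Proposition~\ref{prop:int-inj mod} and Proposition~\ref{prop:param-int-proj mod}. By Theorem~\ref{thm:param-int-module}, $\int_{\II_A(\pmb{t})}D(eA)=\rho_{D(eA)}(c_{\II_A(\pmb{t})})$. By Lemma~\ref{lemm:param-barycenter},
\[ c_{\II_A(\pmb{t})}=\frac{\mu_{\II_A(\pmb{t})}(\II_A(\pmb{t}))}{2}\sum_{i=1}^{\dimA}t_ia_i . \]
Since $\rho_{D(eA)}$ is $\CC$-linear, we obtain
\[ \int_{\II_A(\pmb{t})}D(eA)=\frac{\mu_{\II_A(\pmb{t})}(\II_A(\pmb{t}))}{2}\sum_{i=1}^{\dimA}t_i\,\rho_{D(eA)}(a_i). \]
So it suffices to determine the matrix of each $\rho_{D(eA)}(a_i)$ in the dual basis $\{q_1^*,\ldots,q_r^*\}$.

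For this, we use the left action $(a\cdot\varphi)(x)=\varphi(xa)$. We evaluate $(\rho_{D(eA)}(a_i)(q_j^*))(q_\ell)=q_j^*(q_\ell a_i)$. Expanding $q_\ell a_i=\sum_k\eta_{i,\ell,k}q_k$ gives the value $\eta_{i,\ell,j}$. Hence $\rho_{D(eA)}(a_i)(q_j^*)=\sum_{\ell}\eta_{i,\ell,j}q_\ell^*$, and the $(\ell,j)$ entry of the matrix of $\rho_{D(eA)}(a_i)$ is $\eta_{i,\ell,j}$. Multiplying by $t_i$, summing over $i$, and multiplying by the scalar $\frac{1}{2}\mu_{\II_A(\pmb{t})}(\II_A(\pmb{t}))=\frac12\prod_l t_l$ yields a matrix whose $(\ell,j)$ entry is $\sum_i t_i\eta_{i,\ell,j}$. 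This is the displayed matrix.

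The computation is the same as that of Proposition~\ref{prop:int-inj mod}, with the weights $t_i$ inserted. Equivalently, one may just substitute $c_{\II_A(\pmb{t})}$ for $c_{\II_A}$ there; for $\pmb t=(1,\ldots,1)$ the formula collapses to Proposition~\ref{prop:int-inj mod}, which serves as a check.

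The only delicate point is the index convention, namely whether row or column indices correspond to $j$ versus $\ell$. Matrices act on coordinate columns, so the $\ell$-th row and $j$-th column entry is the $q_\ell^*$-coefficient of the image of $q_j^*$. This is $\eta_{i,\ell,j}$, which matches the statement, where row $j$ and column $k$ carry $\eta_{i,j,k}$. I will double check this transpose against the non-parameterized case before concluding. No analytic issue arises, since the case where some $t_i=0$ is covered by Theorem~\ref{thm:param-int-module}: there both sides vanish.
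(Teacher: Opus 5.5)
Your proposal is correct and follows essentially the same route as the paper. Both arguments reduce, via Theorem~\ref{thm:param-int-module} and Lemma~\ref{lemm:param-barycenter}, to computing $a_i\cdot q_j^*$ in the dual basis using $(a\cdot\varphi)(x)=\varphi(xa)$, and your index bookkeeping (the $(\ell,j)$ entry of the matrix of $\rho_{D(eA)}(a_i)$ is $\eta_{i,\ell,j}$) agrees exactly with the paper's identity $a_i\cdot q_k^*=\sum_{j}\eta_{i,j,k}q_j^*$, so the transpose check you planned comes out right.
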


\begin{proof}
For every $1\=< j,k\=< r$, the sum $q_ja_i=\sum\limits_{k=1}^{r}\eta_{i,j,k}q_k$ gives
\[
  (a_i\cdot q_k^*)(q_j)
  =q_k^*(q_ja_i)
  =q_k^*\bigg(\sum_{s=1}^{r}\eta_{i,j,s}q_s\bigg)
  =\eta_{i,j,k}.
\]
Hence
\[ a_i\cdot q_k^* =\sum_{j=1}^{r}\eta_{i,j,k}q_j^*.\]
It follows from Theorem \ref{thm:param-int-module} that
\begin{align*}
  \rho_{D(eA)}(c_{\II_A(\pmb{t})})(q_k^*)
& =\frac12\bigg(\prod_{l=1}^{\dimA}t_l\bigg)
    \sum_{i=1}^{\dimA}t_i(a_i\cdot q_k^*)
  =\sum_{j=1}^{r}
    \bigg(
      \frac12\bigg(\prod_{l=1}^{\dimA}t_l\bigg)
      \bigg(\sum_{i=1}^{\dimA}t_i\eta_{i,j,k}\bigg)
    \bigg)q_j^*
\end{align*}
as required, where $\prod\limits_{l=1}^{\dimA}t_l = \mu_{\II_A(\pmb{t})}(\II_A(\pmb{t}))$.
\end{proof}

Let $I\cong\bigoplus\limits_{v=1}^{m}D(e_vA)$ be a finite-dimensional injective left $A$-module, where the idempotents $e_v$ may be repeated.
Then, for a chosen isomorphism $h:\bigoplus\limits_{v=1}^{m}D(e_vA)\to I$, we have
\[ \int_{\II_A(\pmb{t})}I = h\compos
    \bigg(
      \bigoplus_{v=1}^{m}
      \int_{\II_A(\pmb{t})}D(e_vA)
    \bigg) \compos h^{-1}.\]

\section*{\centering Part II: Homological dimensions}
\addcontentsline{toc}{section}{Part II: Homological dimensions}
\label{part:II}

\textsl{In this part we consider homological dimensions of algebras. Moreover We still assume $\basis{A} = \{a_1,a_2,\ldots,a_{\dimA}\}$,
$\II_A = \prod\limits_{i=1}^{\dimA}[0,t]a_i \mathop{\longleftrightarrow}\limits^{\text{1-1}} [0,t]^{\times \dimA}$,
and $\II_A(\pmb{t}) = \prod\limits_{i=1}^{\dimA}[0,t_i]a_i \mathop{\longleftrightarrow}\limits^{\text{1-1}} \prod\limits_{i=1}^{\dimA}[0,t_i]$ ($\pmb{t}=(t_1,\ldots,t_{\dimA})$) in this part.}

\section{Preliminaries: homological dimensions and indefinite integrals} \label{sect:hdim and indef int}

\textsl{In this section we recall the definitions of the standard homological dimensions and introduce the notion of an integral profile as an indefinite integral.}

\subsection{Homological dimensions}

In this subsection we recall several standard homological dimensions of finite-dimensional algebras.
These dimensions will later be characterised in terms of the integral profiles developed in Part I.

\subsubsection{Global dimensions}

Let $M$ be a finite-dimensional left $A$-module. Its \defines{left projective resolution} is an exact sequence
\[ 0 \To{} P^d \To{} \cdots \To{} P^1 \To{} P^0 \To{} M \To{} 0,\]
such that all $P^i$ are left $A$-projective modules;
and its \defines{left injective coresolution} is an exact sequence is an exact sequence
\[ 0 \To{} M \To{} I_0 \To{} I_1 \To{} \cdots \To{} I_d \To{} 0, \]
such that all $I_j$ are left $A$-injective modules.
Dually, we can define \defines{right projective resolution} and \defines{right injective resolution} of any right $A$-module.

\begin{definition}\rm
Let $M$ be a finite-dimensional left $A$-module.
\begin{enumerate}[label={\rm(\arabic*)}]
  \item The \defines{left {\rm(resp.,} right{\rm)} projective dimension} $\pdim_AM$ (resp., $\pdim M_A$) of $M$ is the minimal length $d$ of a left (resp., right) projective resolution
    if such a finite resolution exists; otherwise we say $\pdim_A M$ (resp., $\pdim M_A$) $= \infty$.
  \item Dually, the \defines{left {\rm(resp.,} right{\rm)} injective dimension} $\idim_A M$ (resp., $\idim M_A$) of $M$ is the minimal length $d$ of an injective coresolution if such a finite coresolution exists; otherwise $\idim_A M$ (resp., $\idim M_A$) $= \infty$.
\end{enumerate}
\end{definition}

These dimensions measure how far a module is from being projective or injective, respectively.
A left (resp., right) hereditary algebra is such that the projective dimensions of all left (resp., right) modules over this algebra are less than or equal to $1$. Thus, the supremum of $\pdim_A M$ measure how far a ring is from being hereditary.
This supremum is called the global dimension of algebra.

\begin{definition} \rm \
\begin{enumerate}[label={\rm(\arabic*)}]
  \item The \defines{left} \defines{global dimension $\gldim_A A$} of $A$ is defined as
    $\gldim_A A = \sup\{ \pdim_A M : M \in {_A\modcat} \}$.
  \item The \defines{right} \defines{global dimension $\gldim A_A$} of $A$ is defined as
    $\gldim A_A = \sup\{ \idim M_A : M \in {\modcat_A} \}$.
\end{enumerate}
\end{definition}

\begin{remark}\rm
Notice that the left and right global dimensions may differ; see e.g. \cite{Jen1966} for a concrete example
$A= \left(
\begin{matrix}
  \ZZ & \QQ \\ 0 & \QQ
\end{matrix}
\right)$. Here, $A$ is a $\ZZ$-algebra, and it satisfies
\[\gldim_A A = 2 > 1 = \gldim A_A.\]
However, for the finite-dimensional algebras considered in this paper, this asymmetry disappears. Since every finite-dimensional algebra is \defines{semi-primary}, i.e., whose top is semi-simple, its left and right global dimensions are coincide \cite[Corollary 9]{Aus1955}.
Consequently, \defines{for a finite-dimensional algebra $A$, we may speak of its global dimension $\gldim A$}, defined equivalently via left or right $A$-modules.
\end{remark}

For finite-dimensional algebras $A=\FF\Q/\I$, global dimension can be computed by testing only simple modules, i.e.,
\begin{align}\label{eq:gldim-simp mod}
  \gldim A = \max_{v\in\Q_0} \pdim_A S_v,
\end{align}
where $S_v$ runs over a complete set of pairwise non-isomorphic simple left $A$-modules. This good description is provided by Auslander in \cite[Proposition 7]{Aus1955}.
In general, for a finite-dimensional algebra with a large linear space dimension, the global dimension does not have a very intuitive description. Indeed, the global dimensions of some complicated algebras are not easy to compute.
One of the current strategies is to compute the global dimensions of specific algebras,
see for example \cite[etc]{Bel2011, Z2017globalcohomo, ZZ2020, FOZ2025, XinZhang2026},
thereby seeking a good description of the global dimension for a large class of algebras.

\subsubsection{Finitistic dimensions}

Another important homological dimension is the finitistic dimension. Likewise, its definition may vary depending on which module category one considers.

\begin{definition} \rm Let $A$ be a ring.
\begin{enumerate}[label={\rm(\arabic*)}]
  \item The \defines{little left finitistic dimension} of $A$ is defined as \\
      $\findim(_AA) := \sup\{\pdim_A M : M \in {_A\modcat}, \pdim _AM<\infty\}$.
  \item The \defines{little right finitistic dimension} of $A$ is defined as \\
      $\findim(A_A) := \sup\{\pdim M_A : M \in {\modcat_A}, \pdim_A M_A<\infty\}$.
  \item The \defines{big left finitistic dimension} of $A$ is defined as \\
      $\Findim(_AA) := \sup\{\pdim_A M : M \in {_A\Modcat}, \pdim _AM<\infty\}$.
  \item The \defines{big right finitistic dimension} of $A$ is defined as \\
      $\findim(A_A) := \sup\{\pdim M_A : M \in {\Modcat_A}, \pdim_A M_A<\infty\}$.
\end{enumerate}
\end{definition}

In the study of homological dimensions of algebras, an interesting problem has attracted attention, namely the following question.

\begin{question}
Are the finitistic dimensions $\findim_AA$, $\findim A_A$, $\Findim _AA$ and $\Findim A_A$ always finite?
\end{question}

The conjecture that this number is always finite for every Artin algebra goes back to Bass \cite{Bass1960} and remains one of the central open problems in the homological theory of finite-dimensional algebras.
Its history and its links with other homological conjectures are surveyed in \cite{Huis1995}.
Important approaches to study this conjecture include the Igusa--Todorov method \cite{IT2005, Wei2009} and reduction techniques based on derived recollements \cite{ChX2017Finidim}.
Of course, another natural approach to this open problem is to verify the finitistic dimension conjecture in various special settings, thereby seeking a possible breakthrough,
see \cite[etc]{ZH2020findim, BP2020findim, Gel2022depth, MS2022findim, MN2024findim, GI2025findim}.

\subsection{Indefinite integrals of modules}

Let $\mathfrak{S}:=\{\mathfrak{S}_t\}_{t\in T}$, where $T=(T,\preceq)$ is an index set, be a sequence of in the $\sigma$-algebra $\Sigma_{\II_A}$ of a given measure space $(\II_A, \Sigma_{\II_A}, \mu_{\II_A})$ such that such that $\mathfrak{S}_t \subseteq \mathfrak{S}_{t'}$ exactly when $t\preceq t'$.
Then for each $f\in L^1(\II_A; B)$, we obtain a sequence of integrals
\begin{align}\label{eq:seq of int}
\bigg\{ \bigg(\mathfrak{S}_t, (\scrA^p_{\homo:A\to B})\int_{\mathfrak{S}_t} f \dd\mu_{\II_A}\bigg) :
     t \in T
   \bigg\} \subseteq \Sigma_{\II_A} \times B.
\end{align}
Here,
\[ (\scrA^p_{\homo})\int_{\mathfrak{S}_t} f \dd\mu_{\II_A}
:= (\scrA^p_{\homo})\int_{\II_A} \id_{\mathfrak{S}_t}f \dd\mu_{\II_A}, \]
and the partially ordered set ($=$ poset, for short) $\mathfrak{S} = (\mathfrak{S},\subseteq)$ is called an \defines{integral partially ordered sets} ($=$ i.poset for short), cf. \cite[Section 3]{LLL2025}.
By introducing the i.poset, we can characterise variable-limit integrals in a natural way.

\begin{example} \rm
In classical calculus, every variable-limit integral of the form
\[ (\Lebesgue) \int_{y}^{x} f \dd\mu = (\scrA^1_{\ident_{\RR}}) \int_{y}^{x} f\dd\mu\]
($c\=< y\=< x\=< d$, and $\mu$ is a Lebesgue measure or a Borel measure) can be expressed as a sequence of integrals
\[ \mathfrak{S}=\bigg\{ \bigg([y,x],\, \int_{y}^{x} f \, d\mu\bigg) \in \RR \times \RR : c\=< y\=< x\=< d \bigg\}. \]
The i.poset corresponding to this sequence of integrals is $\{[y,x] \subseteq \RR : c\=< y\=< x\=< d\}$,
where the partial order on this i.poset is given by the inclusion ``$\subseteq$'' of sets.
In particular, if the parameter $y$ in the above i.poset $\mathfrak{S}$ is fixed,
then the corresponding sequence of integrals describe an indefinite integral of $f$.
\end{example}

Next, we define that $\pmb{t} = (t_1,\ldots,t_{\dimA}) \preceq \pmb{t}' = (t_1',\ldots,t_{\dimA}')$ if and only if $t_i\=< t_i'$ holds for all $1\=< i\=< \dimA$.
Take $\mathfrak{S} = \{ \II_A(\pmb{t}) : \pmb{t}\in (\RR^{\>=0})^{\dimA} \}$.
Concerning sequences of integrals, there is an important perspective, namely that they can be naturally regarded as a map from a poset to the values of the integral. For example, \eqref{eq:seq of int} can be viewed as
\begin{align}\label{eq:int seq-map}
  T \to B, \quad t \mapsto (\scrA^p_{\homo}) \int_{\mathfrak{S}_t} f \dd\mu_{\II_A}.
\end{align}
There is a benefit to viewing it as a map. We no longer need to rely on the index set $T$ or the partial order structure, and this can provide some convenience for the subsequent exposition.
For example, let $T=\{\pmb{t}\in(\RR^{\>=0})^{\times\dimA}: 0\=< t_i \=< t\}$, which is a set without partially ordered,
and let $\mathfrak{S} = \{\II_A(\pmb{t}) := \prod\limits_{i=1}^{\dimA}[0,t_i]a_i : 0\=< t_i \=< t\}_{\pmb{t}\in T}$.
Then \eqref{eq:int seq-map} is
\begin{align}\label{eq:int seq-map:0811}
 T \to B, \quad t \mapsto (\scrA^p_{\homo}) \int_{\II_A(\pmb{t})} f \dd\mu_{\II_A(\pmb{t})}.
\end{align}

We know that the domain of integration $\II_A(\pmb{t})$ is in bijection with the product $\prod\limits_{i=1}^{\dimA} [0,t_i]$. Each factor in this product is an interval $[0,t_i]$ with a fixed lower endpoint and a varying upper endpoint. From this perspective, we may naturally refer to \eqref{eq:int seq-map:0811}, say parameterized integral in Part I, as an \defines{indefinite integral}.
Alternatively, we may regard each factor of $[0,t]^{\dimA}$ as a set indexed by $\basis{A}$, namely the $i$-th interval $[0,t]$ is viewed as a set attached to $a_i\in \basis{A}$. In this way, $[0,t]^{\dimA}$ can be seen as a bundle over $\basis{A}$. Consequently, for each $\pmb{t}=(t_1,t_2,\ldots,t_{\dimA})$, the product $\prod\limits_{i=1}^{\dimA} [0,t_i]$ is a section of this bundle. When the values of $\pmb{t}$ are fixed, the value of \eqref{eq:int seq-map:0811} is precisely the integral taken over the corresponding section. From this viewpoint, we also refer to \eqref{eq:int seq-map:0811} as an \defines{integral profile};
and the values of the integral under different parameters are called \defines{integral sections}, since they are a choice from the parameter bundle to the algebra.

\begin{definition}\rm \label{def:int prof}
An \defines{integral profile} (or an \defines{indefinite integrals}) of a left $A$-module $M$ is defined as the map
\[ \IntProf_{\II_A} M : (\RR^{\>=0})^{\times\dimA} \to B_M=\End_{\CC}(M), \quad
  \pmb{t} \mapsto \int_{\II_A(\pmb{t})} M
= (\scrA^1_{\rho_M}) \int_{\II_A(\pmb{t})}\rho_M\dd\mu_{\II_A(\pmb{t})}. \]
\end{definition}

Theorem \ref{thm:param-int-module iso} demonstrates an important result, namely that the integral profile of a module can recover all the information of that module, implying that it is feasible to study properties of modules by using the integral profile.
Theorem \ref{thm:param-int-module iso} can be translated in the following form:
\begin{itemize}
  \item \it Two left $A$-modules $M$ and $N$ are isomorphic if and only if there is a $\CC$-linear isomorphism $h:M\to N$ such that
    $\displaystyle \bigg( \IntProf_{\II_A}N \bigg) \compos h= h \compos \bigg(\IntProf_{\II_A}M\bigg)$.
\end{itemize}

Each left $A$-module $M$ is a $\CC$-linear space. Thus we can always find a large number of $\CC$-linear isomorphisms $h: M \To{\cong} \CC^{\oplus m}$ ($m=\dim_{\CC} M$).
If $\CC^{\oplus m}$ has a decomposition $\bigoplus\limits_{i=1}^{n}\CC^{\oplus m_i}$ such that each $\CC^{\oplus m_i}$ is a left $A$-module and
\[ \bigoplus_{i=1}^n
   \bigg(
     h^{-1} \compos \bigg(\IntProf_{\II_A} \CC^{\oplus m_i} \bigg) \compos h
   \bigg)
 = \bigg(\IntProf_{\II_A}M\bigg),\]
then Theorem \ref{thm:param-int-module iso} implies that $M$, as a left $A$-module, has a decomposition $\bigoplus\limits_{i=1}^n\CC^{\oplus m_i}$.

\section{Integral coefficient ring and its Morita invariance} \label{sect:int coef ring}

\textsl{For two modules $X$ and $Y$, we use $X \le Y$ to represent that $X$ is a submodule of $Y$,
and use $X \underline{\lesssim} Y$ to represent that $X$ is isomorphic to a submodule of $Y$.
Then, naturally, $X \ds Y$ represents that $X$ is a direct summand of $Y$,
and $X \isods Y$ represents that $X$ is isomorphic to direct summand of $Y$.}

\subsection{Integral coefficient rings}\label{subsect:int coef ring}

Let ${_A\proj}$ be the full subcategory of ${_A\modcat}$ containing all projective modules and let ${_A\inj}$ be the full subcategory of ${_A\modcat}$ containing all injective modules. Define
\[{_A}\pandi := \add({_A\proj} \cup {_A\inj}) \]
the full subcategory of ${_A\modcat}$ containing all projective and injective modules such that it is closed under finite direct sum, direct summand and isomorphism. Then
\[ \pandimod := \bigoplus\limits_{v\in\mathcal Q_0}(P(v)\oplus I(v))\in {_A}\pandi, \]
and any $M\in {_A}\pandi$ is isomorphic to a direct summand of $\pandimod^{\oplus\NN} \in {_A\Modcat}$,
i.e., $M\isods \pandimod^{\oplus\NN}$.
Each $f \in \operatorname{Map}( (\RR^{\>=0})^{\times\dimA}, \End_{\CC}(M))$ can be embedding in
\[\widetilde{\pandiring}(A) := \operatorname{Map}( (\RR^{\>=0})^{\times\dimA}, \End_{\CC}(\pandimod^{\oplus\NN}) )\]
by the correspondence
\[ f
   \mapsto (\mathfrak{Emb}_M(f)(\pmb{t}))_{\pmb{t}\in(\RR^{\>=0})^{\times\dimA}} :=
   \big(\sigma_M^{-1} \compos
      \left(
        \begin{smallmatrix}
          \ident_M \\ 0
        \end{smallmatrix}
      \right)
     \compos f(\pmb{t})\compos (\ident_M\ 0) \compos\sigma_M\big)_{\pmb{t}\in(\RR^{\>=0})^{\times\dimA}}, \]
where $\mathfrak{Emb}_M(f(\pmb{t}))$ is defined as
\[ \pandimod^{\oplus\NN}
  \To{\sigma_M,\; \cong}  M\oplus M'
  \To{(\ident_M\ 0)}  M
  \To{f(\pmb{t})} M
  \To{\left(
        \begin{smallmatrix}
          \ident_M \\ 0
        \end{smallmatrix}
      \right)} M\oplus M' \To{\sigma_M^{-1},\; \cong} \pandimod^{\oplus\NN} \]
for each $\pmb{t}\in (\RR^{\>=0})^{\times\dimA}$.
Note that $\widetilde{\pandiring}(A)$ is a ring, whose \textbf{multiplication is given by the pointwise multiplication of maps}.
Now we put an assumption for every $M\in{_A\pandi}$:

\begin{assumption} \label{assu:fix emb sys} \rm
Throughout this paper, for every $M\in{_A\pandi}$, we fix once and
for all a left $A$-module $M'$ and a left $A$-module isomorphism
\[ \sigma_M: \pandimod^{\oplus\NN} \To{\cong} M\oplus M'.\]
The family
\[\pmb{\sigma}_M := \{\sigma_M:M\in{_A\pandi}\}\]
is called the \defines{fixed embedding system} used in this paper.
All embeddings $\mathfrak{Emb}_M$ and the integral coefficient ring appearing below (see Definition \ref{def:icring}) are understood to be constructed with respect to this fixed embedding system. Consequently, integral coefficient ring is uniquely determined after the above embedding system has been fixed. We \textbf{do not claim that $\pandiring(A)$ is independent of this choice}.
\end{assumption}

The following lemma shows that this correspondence is an injection under Assumption \ref{assu:fix emb sys}.

\begin{lemma} \label{lemm:import emb}
For each left $A$-module $M\in{_A\pandi}$, the map
\[ \mathfrak{Emb}_M : \operatorname{Map}( (\RR^{\>=0})^{\times\dimA}, \End_{\CC}(M)) \to \widetilde{\pandiring}(A) \]
\checks{is} an injective, not necessarily unital, homomorphism of $\CC$-algebras
\end{lemma}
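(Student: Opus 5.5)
We will check the three defining properties of $\mathfrak{Emb}_M$ pointwise in $\pmb{t}$, since both the source $\operatorname{Map}((\RR^{\>=0})^{\times\dimA},\End_{\CC}(M))$ and the target $\widetilde{\pandiring}(A)$ carry pointwise operations. For brevity write $\iota_M:=\sigma_M^{-1}\compos\left(\begin{smallmatrix}\ident_M\\0\end{smallmatrix}\right):M\to\pandimod^{\oplus\NN}$ and $\pi_M:=(\ident_M\ 0)\compos\sigma_M:\pandimod^{\oplus\NN}\to M$. Then $\mathfrak{Emb}_M(f)(\pmb{t})=\iota_M\compos f(\pmb{t})\compos\pi_M$ for every $\pmb{t}$. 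The key identity is
\[ \pi_M\compos\iota_M=(\ident_M\ 0)\compos\sigma_M\compos\sigma_M^{-1}\compos\left(\begin{smallmatrix}\ident_M\\0\end{smallmatrix}\right)=\ident_M , \]
which uses only that $\sigma_M$ is an isomorphism, as fixed once and for all in Assumption \ref{assu:fix emb sys}.

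The first step is $\CC$-linearity. For $f,g$ and $\lambda\in\CC$, composition with the fixed linear maps $\iota_M$ and $\pi_M$ is bilinear, so
\[ \iota_M\compos(f+\lambda g)(\pmb{t})\compos\pi_M=\iota_M f(\pmb{t})\pi_M+\lambda\,\iota_M g(\pmb{t})\pi_M \]
for each $\pmb{t}$.

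The second step is multiplicativity:
\[ \mathfrak{Emb}_M(f)(\pmb{t})\compos\mathfrak{Emb}_M(g)(\pmb{t})=\iota_M f(\pmb{t})(\pi_M\iota_M)g(\pmb{t})\pi_M=\iota_M (fg)(\pmb{t})\pi_M=\mathfrak{Emb}_M(fg)(\pmb{t}). \]

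For unitality, the unit of the source is the constant map $\ident_M$, which goes to the constant map with value the idempotent $e_M:=\iota_M\pi_M$. This is generally not the identity of $\pandimod^{\oplus\NN}$, since $M'\neq 0$ in general. So $\mathfrak{Emb}_M$ is only a non-unital homomorphism, exactly as the statement asserts. If desired, one can add that it is a unital isomorphism onto the corner $e_M\widetilde{\pandiring}(A)e_M$.

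The third step is injectivity. Suppose $\mathfrak{Emb}_M(f)=\mathfrak{Emb}_M(g)$. Pre-compose with $\iota_M$ and post-compose with $\pi_M$ at each $\pmb{t}$; by the key identity this gives
\[ f(\pmb{t})=\pi_M\iota_M f(\pmb{t})\pi_M\iota_M=\pi_M\iota_M g(\pmb{t})\pi_M\iota_M=g(\pmb{t}), \]
hence $f=g$.

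There is no real obstacle. The only point needing care is that $\pandimod^{\oplus\NN}$ is infinite-dimensional, so one must regard $\End_{\CC}(\pandimod^{\oplus\NN})$ as the full ring of linear endomorphisms under composition; all the identities above are purely algebraic and need no topology. The one thing to state carefully is that the identity of the source is not preserved, which is why the lemma says \emph{not necessarily unital}.
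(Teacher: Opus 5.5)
Your proof is correct and follows the paper's argument. Both check the algebra-homomorphism property pointwise in $\pmb{t}$ and get injectivity by cancelling the outer maps: the paper notes that the projection $(\ident_M\ 0)$ is surjective and the inclusion of $M$ into $M\oplus M'$ is injective, while you use the equivalent identity $\pi_M\compos\iota_M=\ident_M$. Your version also spells out the multiplicativity and the failure of unitality, which the paper only calls ``easy to see''.
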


\begin{proof}
First, it is easy to see that $\mathfrak{Emb}_M$ is a homomorphism of $\CC$-algebras. Next we show that it is an injection.
If $\mathfrak{Emb}_M(f(\pmb{t}))=0$, then
$ \sigma_M^{-1} \compos
      \left(
        \begin{smallmatrix}
          \ident_M \\ 0
        \end{smallmatrix}
      \right)
    \compos f(\pmb{t})\compos (\ident_M\ 0) \compos\sigma_M =0$ holds for all $\pmb{t}\in(\RR^{\>=0})^{\times\dimA}$.
Since $\sigma_M$ is an isomorphism, then
$    \left(
        \begin{smallmatrix}
          \ident_M \\ 0
        \end{smallmatrix}
      \right)\compos f(\pmb{t}) \compos (\ident_M\ 0) = 0$ for all $\pmb{t}$.
Furthermore, $(\ident_M\ 0)$ is surjective, and
$\left(
        \begin{smallmatrix}
          \ident_M \\ 0
        \end{smallmatrix}
      \right)$ is injective. Thus $f(\pmb{t})=0$ for all $\pmb{t}$. It follows $f=0$ as required.
\end{proof}

All embeddings below are understood with respect to these fixed choices made in Assumption \ref{assu:fix emb sys}.
Now we can introduces integral coefficient rings for every finite-dimensional complex algebra By Lemma \ref{lemm:import emb}.

\begin{definition}\label{def:icring}\rm
The \defines{integral coefficient ring} of the pair $(A,\pmb{\sigma}_M)$ is the subalgebra
\[ \pandiring(A,\pmb{\sigma}_M)
:= \bigg\langle\bigg\{
     \mathfrak{Emb}_M\bigg( \IntProf_{\II_A}M \bigg): M\in{_A\pandi}
   \bigg\}\bigg\rangle
\subseteq \widetilde{\pandiring}(A) \]
of $\widetilde{\pandiring}(A)$ generated by all $\mathfrak{Emb}_M(\intProf_{\II_A}M )$ with $M\in{_A\pandi}$.
\end{definition}

Since we fixing $\pmb{\sigma}_M$ By Assumption \ref{assu:fix emb sys},
we can ignore $\pmb{\sigma}_M$ in $\pandiring(A,\pmb{\sigma}_M)$ if it won't cause confusion,
i.e., we use $\pandiring(A,\pmb{\sigma}_M)$ to represent $\pandiring(A)$ for simplicity
and call it is the integral coefficient ring of $A$ in this paper.

Recall that a \defines{complex} $\pmb{X}_{\bullet} = (X_n, d_n)$ defined on ${_A\modcat}$ is a sequence of modules and left $A$-homomorphisms
\[ \cdots \To{} X_{-2} \To{d_{-2}} X_{-1} \To{d_{-1}} X_0 \To{d_0} X_1 \To{d_1} X_2 \To{} \cdots \]
such that $d_{i+1}d_{i} = 0$ holds for all $i\in \ZZ$. Each module $X_i$ has a integral profile $\IntProf_{\II_A} X_i$,
then, naturally, $\pmb{X}_{\bullet}$ has a sequence of integral profiles
\[ \bigg(\IntProf_{\II_A} X_i\bigg)_{i\in\ZZ} \in
   \prod_{i\in\ZZ} \mathrm{Map}((\RR^{\>=0})^{\times\dimA}, \End_{\CC}(X_i)). \]

\begin{definition} \rm
Assume that $\pmb{X}_{\bullet} = (X_n, d_n)$ is a \defines{non-negative chain complex}, i.e., for each $n<0$, we have $X_n=0$ and $d_n=0$.
The \defines{total integral profile} and \defines{embedded total integral profile} of a complex $\pmb{X}_{\bullet}$ are defined as
\[\IntProf_{\II_A} (X_{\bullet};x) := \sum_{i\>= 0}\bigg(\IntProf_{\II_A} X_i\bigg)x^i \]
and
\[\IntProfEmb_{\II_A} (X_{\bullet};x) := \sum_{i\>= 0}\mathfrak{Emb}_{X_i}\bigg(\IntProf_{\II_A} X_i\bigg)x^i, \]
respectively.
\end{definition}

In particular, if $\pmb{X}_{\bullet} = (X_n, d_n)$ is a projective (resp. injective) complex,
i.e., all $X_i$ in $\pmb{X}_{\bullet}$ are projective (resp., injective),
then $\IntProf_{\II_A} X_n \in \pandiring(A)$.
Two facts are trivial:
\begin{itemize}
  \item if $\pmb{X}_{\bullet}=\pmb{P}_{\bullet}$ is projective, then
    \[ \IntProfEmb_{\II_A} (P_{\bullet};x) \in \pandiring(A)[[x]];  \]
  \item if $\pmb{X}_{\bullet}=\pmb{I}_{\bullet}$ is injective, then
    \[ \IntProfEmb_{\II_A} (I_{\bullet};x) \in \pandiring(A)[[x]].  \]
\end{itemize}

Since Lemma \ref{lemm:import emb} implies that all $\mathfrak{Emb}$ are embedding, for simplicity, we call
\begin{itemize}
  \item $\displaystyle \IntProf_{\II_A}(\pmb{X}_\bullet;x) \; \widetilde{\in} \; \pandiring(A)[[x]]$
    if $\displaystyle \IntProfEmb_{\II_A}(\pmb{X}_\bullet;x) \in \pandiring(A)[[x]]$ holds;
  \item and $\displaystyle \IntProf_{\II_A}(\pmb{X}_\bullet;x) \; \widetilde{\in} \; \pandiring(A)[x]$
    if $\displaystyle \IntProfEmb_{\II_A}(\pmb{X}_\bullet;x) \in \pandiring(A)[x]$ holds.
\end{itemize}

\subsection{Morita invariance of integral coefficient rings}
\label{subsect:Morita inv icring}

The integral coefficient ring in Definition~\ref{def:icring} depends, in general, on the relative positions of the copies of modules inside $\pandimod^{\oplus\NN}$.
In this subsection, we restrict the fixed embedding system in Assumption \ref{assu:fix emb sys} to a compatible orthogonal one.
Under this restriction, we prove that the isomorphism class of the integral coefficient ring is Morita invariant.
We first use the non-unital convention for the algebra generated by the embedded integral profiles.
The unital convention is treated at the end of the subsection.

\begin{definition}\label{def:compatible orth emb sys}\rm
For every isomorphism class of indecomposable modules occurring in ${_A\pandi}$, choose a representative $X$.
A fixed embedding system $\pmb{\sigma}_M$ is called a \defines{compatible orthogonal fixed embedding system}
if there are pairwise orthogonal fixed copies $X^{(1)},X^{(2)},\ldots$ in $\pandimod^{\oplus\NN}$ such that
\[  \pandimod^{\oplus\NN} \cong \bigoplus_{X} \bigoplus_{r\>=1} X^{(r)},\]
and, whenever $M\cong\bigoplus\limits_X X^{\oplus m_X}$, the isomorphism $\sigma_M$ identifies the embedded copy of $M$ with
$\bigoplus\limits_{X}\bigoplus\limits_{1\=< r\=< m_X} X^{(r)}$.
Here, the multiplicities $m_X$ are uniquely determined by the Krull--Schmidt theorem,
see, for example, \cite[Chapter II]{ARS1995}, \cite[Section 12]{AF1992}, and \cite{Krause2015}.
\end{definition}

For every $X$ as in Definition \ref{def:compatible orth emb sys} and every $r \>= 1$,
let \[F_{X,r} : (\RR^{\>=0})^{\times\dimA}\to \End_{\CC}(\pandimod^{\oplus\NN}) \in \widetilde{\pandiring}(A)\]
be the function whose restriction to $X^{(r)}$
is $\intProf_{\II_A}X$ and whose restriction to all the other fixed copies is zero, i.e.,
\begin{align}\label{def formula: F-int}
  F_{X,r}(\pmb t)|_{Y^{(l)}} =
  \begin{cases}
    \IntProf_{\II_A}X = (\scrA^1_{\homo}){\displaystyle \int_{\II_A(\pmb{t})}X} , & (Y,l)=(X,r);\\
    0, & (Y,l)\ne(X,r).
  \end{cases}
\end{align}
The orthogonality of the fixed copies gives
\begin{equation}\label{eq:orth int coeff product}
  F_{X,r}F_{Y,l}=0 \text{~whenever~} (X,r)\ne(Y,l).
\end{equation}

\begin{lemma}\label{lemm:cyclic int coeff component}
For $0\ne X\in{_A\pandi}$ and $r\>= 1$, the element $F_{X,r}$ satisfies no nonzero polynomial relation over $\CC$ without a
constant term. Consequently,
\begin{align*}
  \langle F_{X,r}\rangle = \bigoplus_{m\>= 1}\CC F_{X,r}^{m} \cong x\CC[x]
\end{align*}
as non-unital complex algebras.
\end{lemma}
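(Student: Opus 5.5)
The plan is to reduce the polynomial identity in $\widetilde{\pandiring}(A)$, which is an identity of functions of $\pmb t$, to an identity of matrices at chosen parameters. Then I would use an eigenvalue argument to show that a nonzero polynomial would need infinitely many roots.

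\emph{Step 1 (reduction to $X^{(r)}$).} Since multiplication in $\widetilde{\pandiring}(A)$ is pointwise and $F_{X,r}(\pmb t)$ vanishes on every fixed copy other than $X^{(r)}$, we have $F_{X,r}^m(\pmb t)|_{X^{(r)}}=\big(\int_{\II_A(\pmb t)}X\big)^m$ and $F_{X,r}^m(\pmb t)=0$ elsewhere. Suppose $p(z)=\sum_{m=1}^{N}\alpha_m z^m\in z\CC[z]$ satisfies $p(F_{X,r})=0$. Then
\[ p\bigg(\int_{\II_A(\pmb t)}X\bigg)=0 \quad\text{in } \End_{\CC}(X) \text{ for every } \pmb t. \]
By Lemma~\ref{lemm:param-barycenter} and Theorem~\ref{thm:param-int-module},
\[ \int_{\II_A(\pmb t)}X=\tfrac{\mu_{\II_A(\pmb t)}(\II_A(\pmb t))}{2}\,\rho_X\Big(\sum_i t_ia_i\Big). \]

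\emph{Step 2 (choice of parameters).} I would take the diagonal $\pmb t=(s,\ldots,s)$ with $s>0$. This gives
\[ \int_{\II_A(\pmb t)}X=\tfrac{s^{\dimA+1}}{2}\,\rho_X(u), \qquad u:=\sum_{i=1}^{\dimA}a_i. \]
Because $\basis{A}$ consists of the residue classes of paths in $(\Q,\I)$, the trivial paths contribute $\sum_v\e_v=1_A$. The paths of positive length lie in $\rad A$, assuming $\I$ is admissible as in Gabriel's theorem. Hence $u\in 1_A+\rad A$ is a unit of $A$. Therefore $\rho_X(u)$ is invertible on $X\neq0$, and it has an eigenvalue $\lambda\neq0$.

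\emph{Step 3 (infinitely many roots).} At the parameter $(s,\ldots,s)$ the matrix $\int_{\II_A(\pmb t)}X$ has eigenvalue $\tfrac{s^{\dimA+1}}{2}\lambda$. A polynomial annihilating a matrix vanishes at each of its eigenvalues, so $p\big(\tfrac{s^{\dimA+1}}{2}\lambda\big)=0$ for all $s>0$. These are infinitely many distinct complex numbers, so $p=0$.

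\emph{Step 4 (the isomorphism).} The non-unital subalgebra generated by $F_{X,r}$ is spanned by the powers $F_{X,r}^m$ with $m\geq1$. By Step 3 these powers are linearly independent, so the sum $\sum_{m\ge1}\CC F^m_{X,r}$ is direct. The assignment $x^m\mapsto F_{X,r}^m$ is multiplicative, so it gives an isomorphism $x\CC[x]\cong\langle F_{X,r}\rangle$.

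The main obstacle is Step 2: I need a parameter at which the integral profile is non-nilpotent. This relies on $\basis{A}$ containing the trivial paths and all remaining basis elements being radical. If the basis convention were looser, I would instead pick $\pmb t$ concentrated near a coordinate $a_j=\e_v$ with $\e_vX\neq0$, and use continuity of eigenvalues to get a nonzero eigenvalue that scales with $s$.
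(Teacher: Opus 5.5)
Your proof is correct under the paper's standing convention that $\basis{A}$ consists of residue classes of paths for an admissible ideal. Its key step differs from the paper's.

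\textbf{The paper's route.} Suppose $g(F_{X,r})=0$ with $g\neq 0$. Every eigenvalue of $\int_{\II_A(\pmb t)}X$ is then a root of $g$. So $\pmb t\mapsto\operatorname{tr}\int_{\II_A(\pmb t)}X$ takes only finitely many values, and being continuous on the connected set $(\RR^{\geq 0})^{\times\dimA}$ it is constant. But this trace equals $\frac12\sum_i t_i^2\prod_{l\neq i}t_l\operatorname{tr}\rho_X(a_i)$. That is a nonconstant polynomial, because writing $1_A=\sum_i\lambda_ia_i$ gives $\sum_i\lambda_i\operatorname{tr}\rho_X(a_i)=\dim_{\CC}X\neq0$.

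\textbf{Your route.} You exhibit one explicit non-nilpotent direction, the diagonal, where $\sum_ia_i\in 1_A+\rad A$ is a unit. A nonzero eigenvalue then scales like $s^{\dimA+1}$ along that ray. This avoids the continuity and connectedness step and is more direct.

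\textbf{What each buys.} Your Step 2 depends on the path-basis convention. The paper's argument only uses $\operatorname{tr}\rho_X(1_A)=\dim_{\CC}X$, so it works for an arbitrary basis. This matters because the lemma is also applied to the Morita-equivalent algebra in Theorem~\ref{thm:Morita inv icring}, which need not be basic.

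\textbf{The fallback.} Your proposed fallback, a coordinate with $a_j=\e_v$, does not cover the general case, since a general basis need not contain any idempotent. You can fix this by combining the two arguments:
\begin{itemize}
\item The linear form $\pmb w\mapsto\operatorname{tr}\rho_X\big(\sum_iw_ia_i\big)$ is nonzero by the trace identity above.
\item A nonzero linear form cannot vanish on the open orthant, so it is nonzero at some $\pmb w\in(\RR^{>0})^{\times\dimA}$. Then $\rho_X\big(\sum_iw_ia_i\big)$ has nonzero trace, so it is not nilpotent and has an eigenvalue $\lambda\neq0$.
\item Run your Step 3 along the ray $s\pmb w$. The matrix $\int_{\II_A(s\pmb w)}X$ has eigenvalues $\frac{s^{\dimA+1}}{2}\big(\prod_lw_l\big)\lambda$ for all $s>0$, which are infinitely many distinct roots of $p$, so $p=0$.
\end{itemize}
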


\begin{proof}
Suppose that $g(F_{X,r})=0$ for some $g(x)=\sum\limits_{m=1}^{d}c_mx^m\in x\CC[x]$.
Restricting this equality to $X^{(r)}$, we obtain $g(\intProf_{\II_A}X(\pmb{t}))=0$
for every $\pmb{t}\in(\RR^{\>= 0})^{\times\dimA}$.
It follows that every eigenvalue of $\intProf_{\II_A}X(\pmb{t})$ is a root of $g$.
Then the trace $\operatorname{tr}(\intProf_{\II_A}X(\pmb{t}))$ takes values in a finite set.
Since it is continuous on $(\RR^{\>= 0})^{\times\dimA}$, it must be constant by \cite[Theorem 23.5]{Munkres2000}.

On the other hand, By Theorem \ref{thm:param-int-module} and Lemma \ref{lemm:param-barycenter}, we have
\begin{align*}
  \operatorname{tr}\bigg(\IntProf_{\II_A}X(\pmb{t})\bigg)
= \frac{1}{2}\sum_{i=1}^{\dimA}
  \bigg(t_i^2\prod_{\substack{1 \=< l \=< \dimA \\ l \ne i}} t_l\bigg)
  \operatorname{tr}(\rho_X(a_i)).
\end{align*}
By using the basis $\basis{A} = \{a_1, \ldots, a_{\dimA}\}$ of $A$, we assume $1_A=\sum\limits_{i=1}^{\dimA}\lambda_i a_i$.
Since
\begin{align*}
  0 < \dim_{\CC}X
    = \operatorname{tr}(\rho_X(1_A))
    = \sum_{i=1}^{\dimA}\lambda_i
      \operatorname{tr}(\rho_X(a_i)),
\end{align*}
we have $\operatorname{tr}(\rho_X(a_i))\ne0$ for at least one $i$.
The monomials $t_i^2\prod\limits_{l\ne i}t_l$ are pairwise distinct.
Therefore $\operatorname{tr}(\intProf_{\II_A}X(\pmb{t}))$ is a nonconstant polynomial,
which is a contradiction. Thus, $g=0$, and this lemma follows.
\end{proof}

\begin{proposition}\label{prop:orth decomposition icring}
Assume that $\pmb{\sigma}_M$ is a compatible orthogonal fixed embedding
system.  Then
\begin{equation}\label{eq:orth decomposition icring}
  \pandiring(A)=
  \bigoplus_{\substack{X \in \ind({_A\pandi}) \\ r \>= 1}} \langle F_{X,r}\rangle
\cong
  \bigoplus_{\substack{X \in \ind({_A\pandi}) \\ r \>= 1}} x\CC[x]
\end{equation}
as non-unital complex algebras.
\end{proposition}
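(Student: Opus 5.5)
The plan is to compute every generator $\mathfrak{Emb}_M(\intProf_{\II_A}M)$ of $\pandiring(A)$ explicitly in terms of the elements $F_{X,r}$. From this I would show two inclusions: the subalgebra generated by the generators equals the subalgebra generated by all the $F_{X,r}$. The latter then splits orthogonally by \eqref{eq:orth int coeff product}, and Lemma~\ref{lemm:cyclic int coeff component} identifies each summand with $x\CC[x]$.

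\textbf{Step 1 (generators are finite sums of $F_{X,r}$).} Let $M\in{_A\pandi}$ with Krull--Schmidt decomposition $M\cong\bigoplus_X X^{\oplus m_X}$. By Definition~\ref{def:compatible orth emb sys}, $\sigma_M$ identifies the embedded copy of $M$ with $\bigoplus_X\bigoplus_{1\le r\le m_X}X^{(r)}$. Since $\sigma_M$ is a left $A$-module isomorphism, Theorem~\ref{thm:param-int-module iso}, applied for every $\pmb t$, shows that conjugation by $\sigma_M$ carries $\intProf_{\II_A}M$ to the integral profile of $\bigoplus_X\bigoplus_{r\le m_X}X^{(r)}$. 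Corollary~\ref{coro:param-int-dir sum} then makes this block diagonal, with block $\intProf_{\II_A}X$ on each copy $X^{(r)}$ (each copy carries a fixed $A$-isomorphism with the representative $X$, so the same argument applies) and zero elsewhere. Hence
\[
\mathfrak{Emb}_M\bigg(\IntProf_{\II_A}M\bigg)=\sum_X\sum_{r=1}^{m_X}F_{X,r},
\]
a finite sum. Therefore $\pandiring(A)\subseteq\langle F_{X,r}:X,r\rangle$.

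\textbf{Step 2 (each $F_{X,r}$ lies in $\pandiring(A)$).} For an indecomposable $X$ and $r\ge1$, take $M=X^{\oplus r}\in{_A\pandi}$. Step 1 gives $G_r:=\mathfrak{Emb}_{X^{\oplus r}}(\intProf_{\II_A}X^{\oplus r})=\sum_{s=1}^rF_{X,s}$. Setting $G_0:=0$, we get $F_{X,r}=G_r-G_{r-1}\in\pandiring(A)$. Hence the two subalgebras coincide.

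\textbf{Step 3 (the decomposition).} By \eqref{eq:orth int coeff product}, any product of monomials involving two distinct indices $(X,r)\ne(Y,l)$ vanishes. So the subalgebra generated by all $F_{X,r}$ is the sum $\sum_{X,r}\langle F_{X,r}\rangle$, and it consists of finite sums only. To see the sum is direct, restrict to the fixed copy $Y^{(l)}$. Every element of $\langle F_{X,r}\rangle$ with $(X,r)\neq(Y,l)$ restricts to zero there, while elements of $\langle F_{Y,l}\rangle$ are supported only on $Y^{(l)}$. Thus a vanishing finite sum forces each component to vanish. Finally, Lemma~\ref{lemm:cyclic int coeff component} gives $\langle F_{X,r}\rangle\cong x\CC[x]$, which yields \eqref{eq:orth decomposition icring}.

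\textbf{Main obstacle.} I expect the delicate point to be Step 1: checking that the embedded profile has exactly the block form, with no off-diagonal contributions and with each diagonal block literally equal to $\intProf_{\II_A}X$ rather than merely a conjugate of it. I would handle this by fixing, once and for all, $A$-isomorphisms from $X$ to each copy $X^{(r)}$. The matrix form of $F_{X,r}$ is then computed in those coordinates, and Theorem~\ref{thm:param-int-module iso} (A-linearity implies commuting with $\rho(c_{\II_A(\pmb t)})$) transports the integral without change.
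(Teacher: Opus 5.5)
Your proposal is correct and follows the paper's proof essentially step for step. The paper also obtains $F_{X,r}$ as the difference of the embedded profiles of $X^{\oplus r}$ and $X^{\oplus(r-1)}$, writes each generator as $\sum_X\sum_{r\le m_X}F_{X,r}$ via Definition~\ref{def:compatible orth emb sys} and Corollary~\ref{coro:param-int-dir sum}, and concludes with \eqref{eq:orth int coeff product} and Lemma~\ref{lemm:cyclic int coeff component}. Your restriction-to-$Y^{(l)}$ argument for directness usefully spells out a step the paper only asserts. Note also that both your ``zero elsewhere'' claim in Step~1 and the paper's computation tacitly use that $\sigma_M^{-1}$ carries $0\oplus M'$ onto the span of the remaining fixed copies; that is the natural reading of ``orthogonal'' in that definition.
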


\begin{proof}
For every $r\>= 1$, we have
\begin{align*}
  \mathfrak{Emb}_{X^{\oplus r}} \bigg(\IntProf_{\II_A}X^{\oplus r}\bigg) = \sum_{l=1}^{r}F_{X,l}.
\end{align*}
Then $F_{X,1}$ is the embedded integral profile of $X$ and for each $r\>= 2$, we have
\begin{align*}
  F_{X,r}
= \mathfrak{Emb}_{X^{\oplus r}} \bigg(\IntProf_{\II_A}X^{\oplus r}\bigg)
- \mathfrak{Emb}_{X^{\oplus(r-1)}} \bigg(\IntProf_{\II_A}X^{\oplus(r-1)}\bigg).
\end{align*}
Thus, every $F_{X,r}$ belongs to $\pandiring(A)$.

Conversely, by Definition \ref{def:compatible orth emb sys} and Corollary \ref{coro:param-int-dir sum},
if $M \cong \bigoplus\limits_X X^{\oplus m_X}$, then we have
\begin{align*}
   \mathfrak{Emb}_M\bigg(\IntProf_{\II_A}M\bigg)
&= \mathfrak{Emb}_M\bigg(\IntProf_{\II_A}\bigoplus_X X^{\oplus m_X}\bigg) \\
&= \mathfrak{Emb}_M\bigg(\bigoplus_X\IntProf_{\II_A} X^{\oplus m_X}\bigg) \\
&= \sum_X \mathfrak{Emb}_{X^{\oplus m_X}}\bigg(\IntProf_{\II_A} X^{\oplus m_X}\bigg)\\
&= \sum_X \sum_{r=1}^{m_X} \bigg(\mathfrak{Emb}_{X^{\oplus r}}\bigg(\IntProf_{\II_A} X^{\oplus r}\bigg) \\
& \hspace{14pt} - \mathfrak{Emb}_{X^{\oplus (r-1)}} \bigg(\IntProf_{\II_A} X^{\oplus (r-1)}\bigg)\bigg)\\
&= \sum_X\sum_{r=1}^{m_X}F_{X,r}.
\end{align*}
Thus, all generators of $\pandiring(A)$ belong to the right-hand side of \eqref{eq:orth decomposition icring}.
Equation \eqref{eq:orth int coeff product} shows that the sum is an algebraic direct sum,
and, by Lemma \ref{lemm:cyclic int coeff component}, this proposition holds.
\end{proof}

The next observation shows that the isomorphism type in
Proposition~\ref{prop:orth decomposition icring} does not depend on a change
of compatible orthogonal coordinates.

\begin{proposition}\label{prop:global conjugate icring}
Let $\pmb{\sigma}_M$ and $\pmb{\tau}_M$ be two compatible orthogonal fixed embedding systems for $A$.
Then there is an automorphism $u\in\Aut_A(\pandimod^{\oplus\NN})$ such that conjugation by $u$ induces an isomorphism
\begin{align*}
  \pandiring(A,\pmb{\sigma}_M) \To{\cong} \pandiring(A,\pmb{\tau}_M), \quad f\longmapsto u f u^{-1}.
\end{align*}
\end{proposition}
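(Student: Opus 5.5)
The plan is to build $u$ copy by copy from the two systems of fixed copies. Let $\{X^{(r)}_{\sigma}\}$ and $\{X^{(r)}_{\tau}\}$ be the pairwise orthogonal fixed copies for $\pmb{\sigma}_M$ and $\pmb{\tau}_M$, indexed by the same representatives $X$ of indecomposables in ${_A\pandi}$ and the same $r\geqslant 1$. By Definition \ref{def:compatible orth emb sys}, each family decomposes $\pandimod^{\oplus\NN}$ as an internal direct sum,
\[ \pandimod^{\oplus\NN}=\bigoplus_X\bigoplus_{r\geqslant1}X^{(r)}_{\sigma}=\bigoplus_X\bigoplus_{r\geqslant1}X^{(r)}_{\tau}, \]
and both $X^{(r)}_{\sigma}$ and $X^{(r)}_{\tau}$ are identified with the representative $X$ by fixed isomorphisms $\alpha_{X,r}:X\to X^{(r)}_{\sigma}$ and $\beta_{X,r}:X\to X^{(r)}_{\tau}$. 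I would define $u$ blockwise by
\[ u|_{X^{(r)}_{\sigma}}:=\beta_{X,r}\circ\alpha_{X,r}^{-1}:X^{(r)}_{\sigma}\to X^{(r)}_{\tau}. \]
Because both decompositions are direct sums indexed by the same set, $u$ is a well-defined $A$-linear bijection, so $u\in\Aut_A(\pandimod^{\oplus\NN})$.

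The main step is to check that $uF^{\sigma}_{X,r}u^{-1}=F^{\tau}_{X,r}$ for every $(X,r)$, using the defining formula \eqref{def formula: F-int}. Take $\pmb t$ and evaluate on a $\tau$-copy $Y^{(l)}_{\tau}$. The map $u^{-1}$ sends $Y^{(l)}_{\tau}$ onto $Y^{(l)}_{\sigma}$. Then $F^{\sigma}_{X,r}(\pmb t)$ vanishes there unless $(Y,l)=(X,r)$, and on $X^{(r)}_\sigma$ it acts as $\alpha_{X,r}\circ\intProf_{\II_A}X(\pmb t)\circ\alpha_{X,r}^{-1}$. Applying $u$ then gives $\beta_{X,r}\circ\intProf_{\II_A}X(\pmb t)\circ\beta_{X,r}^{-1}$ on $X^{(r)}_\tau$, which is exactly $F^\tau_{X,r}(\pmb t)$.

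The one subtlety is whether the restriction of $\intProf_{\II_A}X$ to a fixed copy depends on the chosen identification with $X$. It does not, up to the intended meaning: Theorem \ref{thm:int-module iso} (or Theorem \ref{thm:param-int-module iso}) shows that any $A$-isomorphism intertwines the integral profiles. So $F_{X,r}$ restricted to a copy is $\rho_{X^{(r)}}(c_{\II_A(\pmb t)})$, which is intrinsic to the submodule.

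Conjugation $f\mapsto ufu^{-1}$ is a (non-unital) algebra automorphism of $\widetilde{\pandiring}(A)$, since multiplication there is pointwise composition. By Proposition \ref{prop:orth decomposition icring}, each $\pandiring(A,\pmb\sigma_M)$ and $\pandiring(A,\pmb\tau_M)$ is generated by its elements $F_{X,r}$. Conjugation by $u$ maps the generators $F^\sigma_{X,r}$ bijectively onto the generators $F^\tau_{X,r}$, so it restricts to an isomorphism $\pandiring(A,\pmb\sigma_M)\to\pandiring(A,\pmb\tau_M)$, with inverse given by conjugation by $u^{-1}$.

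I expect the main obstacle to be rigor about the ambient object $\pandimod^{\oplus\NN}$: it is infinite-dimensional, and one must make sure the two internal decompositions really exhaust it, so that $u$ is surjective. I would first take this directly from the identification $\pandimod^{\oplus\NN}\cong\bigoplus_X\bigoplus_r X^{(r)}$ in Definition \ref{def:compatible orth emb sys}, reading it as an internal equality for each system. If that reading is too strong, the fallback is to define $u$ only on the span of the copies and to note that all elements of $\pandiring$ vanish off this span, so conjugation is still well-defined on the relevant subalgebra.
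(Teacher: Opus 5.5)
Your proposal is correct and follows essentially the same route as the paper. The paper also matches the $\pmb\sigma$- and $\pmb\tau$-copies, takes the direct sum of $A$-isomorphisms $X^{(r)}_\sigma\to X^{(r)}_\tau$ as $u$, observes that conjugation carries each $F^\sigma_{X,r}$ to $F^\tau_{X,r}$, and concludes by Proposition~\ref{prop:orth decomposition icring}; your remark that $F_{X,r}(\pmb t)$ restricted to a copy is the intrinsic operator $\rho_{X^{(r)}}(c_{\II_A(\pmb t)})$ makes explicit a point the paper leaves implicit, and you read Definition~\ref{def:compatible orth emb sys} as an internal decomposition exactly as the paper does (your fallback is unnecessary, and it would not in any case produce an element of $\Aut_A(\pandimod^{\oplus\NN})$).
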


\begin{proof}
Both systems decompose $\pandimod^{\oplus\NN}$ into countably many fixed copies of every isomorphism class of indecomposable modules occurring in ${_A\pandi}$.
Match the corresponding copies and choose an $A$-module isomorphism on every matched pair.
Their direct sum is an $A$-module automorphism $u$ of $\pandimod^{\oplus\NN}$.
Conjugation by $u$ sends the function supported on a fixed copy in the first system to the corresponding
function supported on the matched copy in the second system.
It therefore sends every generator $F_{X,r}$ of the first ring to the correspondinggenerator of the second ring.
The assertion now follows from Proposition \ref{prop:orth decomposition icring}.
\end{proof}

The following theorem shows that the isomorphism class, as complex algebras without identities, of the integral coefficient ring of a finite-dimensional complex algebra equipped with a compatible orthogonal fixed embedding system is a Morita invariant.

\begin{theorem}\label{thm:Morita inv icring}
Let $A$ and $B$ be finite-dimensional complex algebras equipped with compatible orthogonal fixed embedding systems.
If $A$ and $B$ are Morita equivalent, then
\begin{align*}
  \pandiring(A)\cong\pandiring(B)
\end{align*}
as non-unital complex algebras.
\end{theorem}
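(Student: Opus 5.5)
By Proposition \ref{prop:orth decomposition icring}, I will reduce the statement to a counting problem. For any finite-dimensional complex algebra $A$ with a compatible orthogonal fixed embedding system, there is an isomorphism of non-unital complex algebras
\[
\pandiring(A)\cong\bigoplus_{(X,r)} x\CC[x],
\]
where $X$ runs over a set of representatives of $\ind({_A\pandi})$ and $r\>= 1$. The isomorphism type of the right-hand side depends only on the cardinality of the index set $\ind({_A\pandi})\times\NN^{+}$. By Proposition \ref{prop:global conjugate icring}, it also does not matter which compatible orthogonal system was chosen. So it is enough to show that Morita equivalent $A$ and $B$ have the same number of isomorphism classes of indecomposable modules in ${_A\pandi}$ and ${_B\pandi}$. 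Each index set is then a finite set times $\NN^{+}$, so each is countably infinite once the algebra is nonzero. The zero algebra is trivial, since both rings are then zero.

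I will prove this count in three steps.
\begin{enumerate}
\item Let $F:{_A\modcat}\to{_B\modcat}$ be an equivalence. It is additive and exact, and it preserves indecomposability, projectivity and injectivity, because these are categorical notions: projectivity and injectivity are lifting/extension properties, and indecomposability means the endomorphism ring is local.
\item Hence $F$ restricts to an equivalence ${_A\pandi}\simeq{_B\pandi}$, as $\add$ of the projectives together with the injectives is intrinsic.
\item Consequently $F$ induces a bijection $\ind({_A\pandi})\to\ind({_B\pandi})$.
\end{enumerate}
I could also state this concretely: both sets have cardinality $|\{P(v)\}\cup\{I(v)\}|$, counted up to isomorphism (projective-injectives counted once), and $F$ matches the $P(v)$ and $I(v)$ of $A$ with those of $B$.

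Next I will construct the algebra isomorphism explicitly rather than only by cardinality, so that the generators correspond. Fix the bijection $X\mapsto F(X)$ and send the generator $F_{X,r}$ of $\pandiring(A)$ to the generator $F_{F(X),r}$ of $\pandiring(B)$. By Lemma \ref{lemm:cyclic int coeff component}, each $\langle F_{X,r}\rangle$ is a free non-unital polynomial algebra on $F_{X,r}$. By \eqref{eq:orth int coeff product}, the components are mutually orthogonal. Therefore the assignment extends uniquely to an isomorphism of non-unital $\CC$-algebras. Note that we cannot compare $\intProf_{\II_A}X$ with $\intProf_{\II_B}F(X)$ directly: they live over different parameter spaces, since $\dimA$ may differ from $\dimB$. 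This is exactly why the intrinsic description of Proposition \ref{prop:orth decomposition icring} is needed.

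The main obstacle is to make sure Proposition \ref{prop:orth decomposition icring} really applies on both sides. That requires Lemma \ref{lemm:cyclic int coeff component} to hold for $B$, which needs $0\ne X$ and a nonconstant trace polynomial; that argument is uniform in the algebra. It also requires the direct sum to be indexed by exactly $\ind({_A\pandi})\times\NN^{+}$, with no further identifications among the $F_{X,r}$, for instance between $X=P(v)$ and an isomorphic $I(w)$. I will handle this by indexing by isomorphism classes, as Definition \ref{def:compatible orth emb sys} does, so that a projective-injective module contributes a single family $\{F_{X,r}\}_r$ on both sides and the bijection from $F$ respects this.
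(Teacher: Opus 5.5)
Your proposal is correct, and its core is exactly the paper's proof. The equivalence (the paper's $\mathcal{E}$, your $F$) induces a bijection on isomorphism classes of indecomposables in ${_A\pandi}$ and ${_B\pandi}$; the assignment $F_{X,r}^m\mapsto F_{\mathcal{E}(X),r}^m$ is an isomorphism on each cyclic component by Lemma~\ref{lemm:cyclic int coeff component}; and orthogonality together with Proposition~\ref{prop:orth decomposition icring} assembles these into $\pandiring(A)\cong\pandiring(B)$.

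Note that your opening cardinality remark proves more than you use. Since $\ind({_A\pandi})\times\NN^{+}$ is countably infinite for every nonzero $A$, Proposition~\ref{prop:orth decomposition icring} alone gives $\pandiring(A)\cong\bigoplus_{\NN}x\CC[x]$ for every nonzero finite-dimensional $A$. So neither the Morita hypothesis nor your three-step count is actually needed.
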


\begin{proof}
Let $\mathcal E:{_A\modcat} \to {_B\modcat}$ be an equivalence.
Since $\mathcal E$ is an additive exact equivalence,
it preserves and reflects indecomposability, projectivity and injectivity.
It therefore gives a bijection between the isomorphism classes of indecomposable modules occurring in ${_A\pandi}$ and those occurring in ${_B\pandi}$.
For every indecomposable $X\in{_A\pandi}$ and every $r,m\>= 1$, define
\begin{align} \label{eq:Fint-corresp}
  F_{X,r}^{m}\mapsto F_{\mathcal E(X),r}^{m}.
\end{align}
By Lemma \ref{lemm:cyclic int coeff component}, \eqref{eq:Fint-corresp} defines the isomorphism from $\langle F_{X,r}\rangle$ to
$\langle F_{\mathcal E(X),r}\rangle$.
Products belonging to distinct pairs $(X,r)$ vanish on both sides by \eqref{eq:orth int coeff product}.
The above componentwise isomorphisms therefore provide, by Proposition \ref{prop:orth decomposition icring},
to an isomorphism $\pandiring(A)\To{\cong}\pandiring(B)$ as required.
\end{proof}

%

\section{Resolutions of modules} \label{sect:resol}

\textsl{This section is devoted to linking integral profiles with homological dimensions. We characterise projective and injective modules via integral profiles. We introduce total profiles attached to minimal (co)resolutions, and construct an integral coefficient ring that enables comparisons among profiles of different modules.}

\subsection{Projective resolutions of modules} \label{subsect:proj resol}

The same module will have different projective resolutions. Choosing the shortest projective resolution
\[ \cdots \To{} P_M^n \To{p_M^n} \cdots \To{p_M^2} P_M^1 \To{p_M^1} P_M^0 \To{p_M^0}  M \to 0, \]
say \defines{minimal projective resolution}, of $M$, we have that its projection dimension $\pdim_A M$ is $d$ if $P_M^n\ne 0$ holds for all $n\=< d$ and $P_M^n=0$ holds for all $n>d$, see \cite[Section 3.1]{R1979},
and in this case, we call $\Ker(p_M^i)$ is the \defines{$(i+1)$-th syzygy} of $M$ and write it is as $\Omega^{i+1}(M)$ (with the convention that $\Omega^0(M)=M$),
and each epimorphism $\tilde{p}_{i+1}: P_M^{i+1}\to \Omega^{i+1}(M)$ induced by $p_M^{i+1}$ is called a \defines{projective cover} of $\Omega^{i+1}(M)$.
Obviously, $\Omega^{d+1}(M)=0$ if and only if $\pdim_A M\=< d$.
Thus, it is particularly important to determine whether a syzygy is a projective module.

\begin{definition} \label{def:tot proj int prof} \rm
For a left $A$-module $M\in{_A\modcat}$, we define its \defines{total projective integral profile} by
\[\IntProf_{\II_A}(\pmb{P}_M^\bullet;x):= \sum_{n\>=0}\bigg(\IntProf_{\II_A}P_M^n\bigg)x^n\]
and define its \defines{embedded total projective integral profile} by
\[\IntProfEmb_{\II_A}(\pmb{P}_M^\bullet;x) :=
  \sum_{n\>= 0} \mathfrak{Emb}_{P_M^n} \left( \IntProf_{\II_A}P_M^n \right)x^n\]
\end{definition}

Sometimes, for simplicity, the embedded total projective integral profile of the minimal projective resolution $\pmb{P}_M^{\bullet}$ of $M$ is also referred to simply as the embedded total projective integral profile of $M$.

\begin{lemma}\label{lemm:import subalg 1}
Let $M\in{_A\modcat}$, then $\displaystyle \IntProf_{\II_A}(\pmb{P}_M^\bullet;x) \widetilde{\in} \pandiring(A)[[x]]$.
\end{lemma}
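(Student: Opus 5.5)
The statement says that the embedded total projective integral profile of the minimal projective resolution lies in $\pandiring(A)[[x]]$. By the convention fixed just before Subsection~\ref{subsect:Morita inv icring}, this means
\[\IntProfEmb_{\II_A}(\pmb{P}_M^\bullet;x)=\sum_{n\>=0}\mathfrak{Emb}_{P_M^n}\bigg(\IntProf_{\II_A}P_M^n\bigg)x^n\in\pandiring(A)[[x]].\]
Since $\pandiring(A)[[x]]$ consists of formal power series whose coefficients are arbitrary elements of $\pandiring(A)$, the plan is to check the statement coefficientwise. No convergence or finiteness issue arises, because formal power series impose no condition on how many coefficients are nonzero. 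So it suffices to show that for every $n\>=0$ the coefficient $\mathfrak{Emb}_{P_M^n}(\intProf_{\II_A}P_M^n)$ belongs to $\pandiring(A)$.

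\textbf{Step 1.} We check that each coefficient is well defined. Each term $P_M^n$ of the minimal projective resolution of $M$ is a finite-dimensional projective left $A$-module, so $P_M^n\in{_A\proj}\subseteq{_A\pandi}$. By Assumption~\ref{assu:fix emb sys}, the fixed embedding system therefore supplies an isomorphism $\sigma_{P_M^n}$. Consequently the map $\mathfrak{Emb}_{P_M^n}$ is defined, and by Lemma~\ref{lemm:import emb} it is an injective algebra homomorphism into $\widetilde{\pandiring}(A)$.

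\textbf{Step 2.} We place each coefficient in $\pandiring(A)$. By Definition~\ref{def:int prof}, $\intProf_{\II_A}P_M^n$ is a map $(\RR^{\>=0})^{\times\dimA}\to\End_{\CC}(P_M^n)$. By Definition~\ref{def:icring}, $\pandiring(A)$ is generated by all $\mathfrak{Emb}_N(\intProf_{\II_A}N)$ with $N\in{_A\pandi}$. Taking $N=P_M^n$, each coefficient is one of these generators, hence lies in $\pandiring(A)$.

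\textbf{Step 3.} We handle the vanishing terms. When $P_M^n=0$, for instance for $n>\pdim_AM$, the coefficient is the zero map, and $0\in\pandiring(A)$ since $\pandiring(A)$ is a subalgebra.

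Assembling the coefficients gives the claimed membership in $\pandiring(A)[[x]]$.

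The only point needing care, though it is not a real obstacle, is the zero-module case: we need $\sigma_0$ to exist, or we simply interpret the coefficient as $0$. Either interpretation gives $0\in\pandiring(A)$, so the argument is essentially the ``trivial fact'' already recorded after the definition of total integral profiles, specialised to the minimal projective resolution.
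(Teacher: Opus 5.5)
Your proposal is correct and is exactly the paper's argument: each $P_M^n$ lies in ${_A\proj}\subseteq{_A\pandi}$, so every coefficient $\mathfrak{Emb}_{P_M^n}(\intProf_{\II_A}P_M^n)$ is one of the generators of $\pandiring(A)$, and the series therefore lies in $\pandiring(A)[[x]]$ coefficientwise. Your extra remark on the vanishing terms $P_M^n=0$ is harmless care that the paper leaves implicit.
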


\begin{proof}
For every $n\>=0$, we have $P_M^n\in{_A\proj}\subseteq{_A\pandi}$.
By the definition of $\pandiring(A)$, we have $\mathfrak{Emb}_{P_M^n}(\IntProf_{\II_A}P_M^n) \in \pandiring(A)$,
Hence \[\sum_{n\>=0}\mathfrak{Emb}_{P_M^n}\left( \IntProf_{\II_A}P_M^n \right)x^n\in\pandiring(A)[[x]],\]
which completes the proof.
\end{proof}

For a finite-dimensional complex algebra $A=\CC\Q/\I$,
each vertex $v\in\Q_0$ corresponds to a simple module $S_v$
(see \cite[Section III.2, Lemma 2.1]{ASS2006}).
It has a unique projective cover (up to isomorphism)
\[ P(v) \to S(v) \to 0, \]
where $P(v)$ is the indecomposable projective module $Ae_v$
corresponding to the path $e_v$ of length zero,
and $S(v)\cong P(v)/\rad P(v)$ \cite[Section III.2, Lemma 2.4]{ASS2006}.
All indecomposable projective left $A$-modules are isomorphic to the form $Ae_v$, and
$\bigoplus\limits_{v\in\Q_0} Ae_v
= A\cdot\sum\limits_{v\in\Q_0} e_v
= A\cdot 1_A = A$. By the definition of $\FF\Q/\I$,
it is well-known that $Ae_v$ is generated by all paths in $(\Q,\I)$ ending with $v$,
i.e., $\basis{Ae_v} = \{\wp+\I \in (\Q,\I) : \mathfrak{t}(\wp)=v\}$.
Without loss of generality, assume $\basis{Ae_v} = \{p_{v,j} : 1\=< j\=< d_v=\dim Ae_v\}$,
then each $a_ip_{v,j}$ has a linear decomposition $a_ip_{v,j} = \sum\limits_{k=1}^{d_v} \lambda^{(v)}_{i,j,k}p_{v,k}$,
and obtain
\[ (a_ip_{v,j})_{1\=< j\=< d_v} =
\left(
\begin{matrix}
  p_{v,1} & p_{v,2} & \cdots & p_{v,d_v}
\end{matrix}
\right)
\left(
\begin{matrix}
  \lambda^{(v)}_{i,1,1}
& \lambda^{(v)}_{i,2,1}
& \cdots
& \lambda^{(v)}_{i,d_v,1} \\
  \lambda^{(v)}_{i,1,2}
& \lambda^{(v)}_{i,2,2}
& \cdots
& \lambda^{(v)}_{i,d_v,2} \\
  \vdots
& \vdots
&
& \vdots \\
  \lambda^{(v)}_{i,1,d_v}
& \lambda^{(v)}_{i,2,d_v}
& \cdots
& \lambda^{(v)}_{i,d_v,d_v}
\end{matrix}
\right).\]
Denote by $L_i^{(v)}$ the above matrix $(\lambda^{(v)}_{i,j,k})_{1\=<k,j\=<d_v}$.

We use $X\sim Y$ to represent two matrixes $X$ and $Y$ are similar.
In our paper, the integral profile of a left $A$-module $M\in{_A\modcat}$ is a matrix with a parameter vector $\pmb{t}$.
Therefore, the matrix similarity considered in this article mostly refers to the similarity of matrices with parameter vectors.

\begin{definition}\rm \label{def:similar}
We call two matrixes $X(\pmb{t})$ and $Y(\pmb{t})$ with parameter vector $\pmb{t}$ are similar if and only if there is an invertible matrix $H$ such that $X(\pmb{t})H = HY(\pmb{t})$.
Furthermore, if $X(\pmb{t}) \sim X(\pmb{t})_1\oplus X(\pmb{t})_2$, we use $X(\pmb{t})_1 \lesssim_{\oplus} X(\pmb{t})$ to represent $X(\pmb{t})_1$ is similar to a direct summand of $X(\pmb{t})$.
\end{definition}

Then matrix similarity in the classical sense is a special case of similarity given in Definition \ref{def:similar}.
Now we provide the following result which describes all projective modules.

\begin{theorem} \label{thm:mat char-proj mod}
Assume that $A=\CC\Q/\I$ is a finite-dimensional complex algebra.
Fix a basis of a given left $A$-module $M\in{_A\modcat}$. Then $M$ is projective if and only if
\begin{align}\label{eq:mat char-proj mod}
 \IntProf_{\II_A} M
\sim \frac{\mu_{\II_A(\pmb{t})}(\II_A(\pmb{t}))}{2} \cdot
    \bigoplus_{v\in\Q_0}
    \bigg(
      \sum_{i=1}^{\dimA}t_iL_i^{(v)}
    \bigg)^{\oplus m_v},
\end{align}
where all $m_v$ are non-negative integers such that $\sum\limits_{v\in\Q_0}p_vm_v=\dim_{\CC}(M)$ {\rm(}$p_v=\dim_{\CC}P(v)${\rm)}.
\end{theorem}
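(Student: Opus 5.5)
The plan is to reduce the matrix statement to Theorem~\ref{thm:param-int-module iso} by recognising the right-hand side of \eqref{eq:mat char-proj mod} as the integral profile of the projective module $P_{\pmb m}:=\bigoplus_{v\in\Q_0}P(v)^{\oplus m_v}$, written in the path bases $\basis{Ae_v}$.

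\emph{Step 1 (identification).} For each $v$, apply Proposition~\ref{prop:param-int-proj mod} to $e=e_v$ with basis $\{p_{v,1},\ldots,p_{v,d_v}\}$. In that proposition the $(k,j)$ entry is $\sum_i t_i\lambda^{(v)}_{i,j,k}$, so
\[
\IntProf_{\II_A}P(v)=\frac{\mu_{\II_A(\pmb t)}(\II_A(\pmb t))}{2}\sum_{i=1}^{\dimA}t_iL_i^{(v)}.
\]
Corollary~\ref{coro:param-int-dir sum} then shows that the right-hand side of \eqref{eq:mat char-proj mod} is exactly $\IntProf_{\II_A}P_{\pmb m}$ in the concatenated basis. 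Here $\dim_\CC P_{\pmb m}=\sum_v p_vm_v$ with $p_v=d_v$.

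\emph{Step 2 (necessity).} Suppose $M$ is projective. By Krull--Schmidt and the fact that every indecomposable projective is isomorphic to some $Ae_v$, there is an $A$-isomorphism $h\colon P_{\pmb m}\to M$ for suitable $m_v\ge 0$. Comparing dimensions gives $\sum_v p_vm_v=\dim_\CC M$. By Theorem~\ref{thm:param-int-module iso}, or directly by Theorem~\ref{thm:int-module iso} applied for each fixed $\pmb t$, we get $(\IntProf M)\compos h=h\compos(\IntProf P_{\pmb m})$. The matrix of $h$ in the fixed bases is a single invertible $H$, independent of $\pmb t$, and this is precisely similarity in the sense of Definition~\ref{def:similar}.

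\emph{Step 3 (sufficiency).} Suppose \eqref{eq:mat char-proj mod} holds via some constant invertible $H$. Interpret $H$ as a $\CC$-linear isomorphism $h$ between $P_{\pmb m}$ and $M$, taking the direction of $H$ in Definition~\ref{def:similar} into account. The similarity then states that $h$ intertwines the two profiles for every $\pmb t\in(\RR^{>0})^{\times\dimA}$. Theorem~\ref{thm:param-int-module iso}, via the derivative recovery of Corollary~\ref{cor:param-int-module}, yields $\rho_M(a_i)\compos h=h\compos\rho_{P_{\pmb m}}(a_i)$ for all $i$, so $h$ is an $A$-module isomorphism. Hence $M\cong P_{\pmb m}$ is projective.

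\emph{Main obstacle.} The delicate point is bookkeeping of conventions rather than any deep mathematics. First, the matrix $H$ in Definition~\ref{def:similar} must be independent of $\pmb t$; this is what makes Theorem~\ref{thm:param-int-module iso} applicable, whereas pointwise similarity with $\pmb t$-dependent conjugators would not suffice. Second, the transpose convention in Proposition~\ref{prop:param-int-proj mod} (entry $(k,j)$) must match the definition of $L_i^{(v)}$, so that the displayed block is literally the matrix of $\rho_{P(v)}(c_{\II_A(\pmb t)})$ and not its transpose. I would verify both points explicitly before invoking the cited theorems.
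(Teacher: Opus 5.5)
Your proposal is correct and follows essentially the same route as the paper's proof. For necessity you use Krull--Schmidt to write $M\cong\bigoplus_v P(v)^{\oplus m_v}$ with a single constant conjugating matrix $H$. For sufficiency, the $\pmb{t}$-independence of $H$ together with coefficient recovery (Theorem~\ref{thm:param-int-module iso}) upgrades $H$ to an $A$-module isomorphism. Your Step~1, which identifies the right-hand side as the profile of $\bigoplus_v P(v)^{\oplus m_v}$ and checks the $(k,j)$-index convention of $L_i^{(v)}$, only makes explicit what the paper uses implicitly.
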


\begin{proof}
Suppose first that $M$ is projective. Since $A$ is a finite-dimensional basic algebra, the modules $P(v)=Ae_v$, $v\in\mathcal Q_0$, form a complete set of pairwise non-isomorphic indecomposable projective left $A$-modules.
Hence, by the Krull--Schmidt theorem (see for example \cite[Chapter II]{ARS1995}, \cite[Section 12]{AF1992}, and \cite{Krause2015}),
$M \cong \bigoplus\limits_{v\in\mathcal Q_0}P(v)^{\oplus m_v}$ for some $m_v\in\NN$.
Let $H$ be the matrix of such a left $A$-module isomorphism with respect to the chosen bases. Then, for every $a_i\in\basis A$, we have
\[ (\rho_M(a_i))_{m\times m}
= H\bigg(
    \bigoplus_{v\in\Q_0} (L_i^{(v)})^{\oplus m_v}
  \bigg) H^{-1}, \]
where $m=\dim_{\CC}M=\sum\limits_{v\in\Q_0}p_vm_v$.
By Proposition \ref{prop:param-int-proj mod}, we obtain
\begin{align*}
\IntProf_{\II_A}M(\pmb{t})
& = \frac{\mu_{\II_A(\pmb{t})}(\II_A(\pmb{t}))}{2}
    \sum_{i=1}^{\dimA}t_i\cdot (\rho_M(a_i))_{m\times m} \\
& = \frac{\mu_{\II_A(\pmb{t})}(\II_A(\pmb{t}))}{2}
    \sum_{i=1}^{\dimA}t_i \cdot
    H \bigg(
        \bigoplus_{v\in\Q_0} (L_i^{(v)})^{\oplus m_v}
      \bigg) H^{-1} \\
&\sim \frac{\mu_{\II_A(\pmb{t})}(\II_A(\pmb{t}))}{2}
    \sum_{i=1}^{\dimA}t_i
        \bigoplus_{v\in\Q_0} (L_i^{(v)})^{\oplus m_v} \\
&= \frac{\mu_{\II_A(\pmb{t})}(\II_A(\pmb{t}))}{2}
    \bigoplus_{v\in\Q_0}\bigg(\sum_{i=1}^{\dimA}t_iL_i^{(v)}\bigg)^{\oplus m_v}.
\end{align*}

Conversely, suppose that \eqref{eq:mat char-proj mod} holds for some fixed invertible matrix $H$ independent of $\pmb{t}$. Comparison of the recovered coefficients gives
\[ (\rho_M(a_i))_{m\times m}
= H \bigg(
      \bigoplus_{v\in\Q_0} (L_i^{(v)})^{\oplus m_v}
    \bigg) H^{-1} \]
for every $i$. Therefore $H$ defines an $A$-module isomorphism $\bigoplus\limits_{v\in\Q_0}P(v)^{\oplus m_v} \To{H} M$ by Theorem \ref{thm:param-int-module iso}.
\end{proof}

Given a nonzero left $A$-module $M\in{_A\modcat}$, let $\pmb{P}_M^\bullet = (P_M^n,p_M^n)_{n\>=0}$ be its minimal projective resolution. Put $\mathscr{P}_M := \prod\limits_{n\>=0} \operatorname{Map} ((\RR^{\>=0})^{\times\dimA},\End_{\CC}(P_M^n))$.
The image of
\[\mathfrak{Emb}_{P_M^n} : \operatorname{Map}( (\RR^{\>=0})^{\times\dimA}, \End_{\CC}(P_M^n)) \to \widetilde{\pandiring}(A),\]
can be canonically identified with the $n$-th component of $\mathscr{P}_M$.
It follows that $\mathfrak{Emb}_{P_M^n}$ admits an injection from
$\operatorname{Map} ( (\RR^{\>=0})^{\times\dimA},\End_{\CC}(P_M^n) )$ to $\mathscr{P}_M$
which is a canonical embedding into the $n$-th component.
To make the notation appear simple, we still write this embedding as
\[\mathfrak{Emb}_{P_M^n}: \operatorname{Map} ( (\RR^{\>= 0})^{\times\dimA},\End_{\CC}(P_M^n) ) \to \mathscr{P}_M. \]
This will not cause confusion for us.


The following result provides a description of projective dimensions of modules.

\begin{lemma}\label{lemm:pdim-int 0812}
For any $0\ne M\in{_A\modcat}$, $\pdim_A M<\infty$ if and only if $\IntProf_{\II_A}(\pmb{P}_M^\bullet;x)$ is a polynomial in the formal power series ring $\mathscr{P}_M[[x]]$.
In this case,
\[ \pdim_A M = \deg_x \IntProf_{\II_A}(\pmb{P}_M^\bullet;x).\]
\end{lemma}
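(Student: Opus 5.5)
The proof reduces the statement to a zero-detection property of integral profiles, combined with the standard description of projective dimension via minimal resolutions. The key observation is that for a finite-dimensional module $X$, the integral profile $\IntProf_{\II_A}X$ is the zero map if and only if $X=0$.

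First I establish this observation. If $X=0$, then $\End_{\CC}(X)=0$ and the profile is trivially zero. Conversely, suppose $\IntProf_{\II_A}X=0$ for all $\pmb{t}$. Then by Theorem \ref{thm:param-int-module} and Lemma \ref{lemm:param-barycenter} we have $\rho_X(c_{\II_A(\pmb{t})})=0$ for every $\pmb{t}$. Applying Corollary \ref{cor:param-int-module}, this gives $\rho_X(a_i)=0$ for every basis element $a_i$, hence $\rho_X(1_A)=\ident_X=0$, and so $X=0$. Alternatively one can use the trace argument from the proof of Lemma \ref{lemm:cyclic int coeff component}: if $X\ne0$, the trace of the profile is a nonconstant polynomial in $\pmb{t}$, so the profile is nonzero.

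Second, I translate the coefficients. By Definition \ref{def:tot proj int prof} and the canonical identification of the $n$-th component of $\mathscr{P}_M$, the coefficient of $x^n$ in $\IntProf_{\II_A}(\pmb{P}_M^\bullet;x)$ is $\IntProf_{\II_A}P_M^n$. By the first step, this coefficient vanishes exactly when $P_M^n=0$. Hence the series is a polynomial, i.e.\ has only finitely many nonzero coefficients, if and only if $P_M^n=0$ for all sufficiently large $n$. When it is a polynomial, its degree is $\max\{n: P_M^n\ne0\}$, which is well defined because $M\ne0$ forces $P_M^0\ne0$.

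Third, I relate this to projective dimension. For a minimal projective resolution, once $P_M^{n}=0$ we get $P_M^{m}=0$ for all $m\ge n$, since $\Omega^n(M)=0$ and its projective cover is zero. By the recalled fact from \cite[Section 3.1]{R1979}, $\pdim_A M=d$ exactly when $P_M^n\ne0$ for $n\le d$ and $P_M^n=0$ for $n>d$. Combining this with the second step gives both the equivalence and the equality $\pdim_A M=\deg_x\IntProf_{\II_A}(\pmb{P}_M^\bullet;x)$.

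The main obstacle is the first step, specifically making the nonvanishing rigorous for all $\pmb{t}$ in the domain. I would rely on the explicit formula $\frac{1}{2}\sum_i t_i^2\prod_{l\ne i}t_l\,\rho_X(a_i)$. Its monomials are distinct, so it vanishes identically only if every $\rho_X(a_i)=0$, which forces $X=0$ since $\rho_X$ is unital.
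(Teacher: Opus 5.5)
Your proposal is correct and follows essentially the same route as the paper: in both, $\IntProf_{\II_A}X=0$ forces $\rho_X(1_A)=\ident_X=0$ via coefficient recovery, so the coefficient of $x^n$ vanishes exactly when $P_M^n=0$, and the minimal-resolution description of $\pdim_A M$ then gives both the equivalence and the degree formula. You also spell out that zeros propagate along a minimal resolution ($P_M^n=0$ implies $P_M^m=0$ for all $m\ge n$), which the paper uses only implicitly.
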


\begin{proof}
Suppose first that $\pdim_A M=d<\infty$.
Since $\pmb{P}_M^\bullet$ is a minimal projective resolution,
we have $P_M^n=0$ for all $n>d$ and $P_M^d\ne0$. Therefore,
\[\IntProf_{\II_A}(\pmb{P}_M^\bullet;x) =
\sum_{n=0}^{d} \mathfrak{Emb}_{P_M^n}(\IntProf_{\II_A}P_M^n)x^n\]
is a polynomial of degree at most $d$.
We claim that $\IntProf_{\II_A}P_M^d\ne0$.
Indeed, if its integral profile were zero, then we obtain
$\rho_{P_M^d}(a)=0$ for every $a\in A$. In particular, $\rho_{P_M^d}(1_A)=0$, contradicting
$\rho_{P_M^d}(1_A) = \ident_{P_M^d} \ne 0$. Thus the coefficient of $x^d$ is nonzero, and hence
$\deg_x \IntProf_{\II_A}(\pmb{P}_M^\bullet;x) = d$.

Conversely, suppose that $\IntProf_{\II_A}(\pmb{P}_M^\bullet;x)$ is a polynomial, and put
$d := \deg_x \IntProf_{\II_A}(\pmb{P}_M^\bullet;x)$ Then $\IntProf_{\II_A}P_M^n=0$ for every $n>d$.
By the coefficient-recovery property, this implies $P_M^n=0$ for every $n>d$.
Moreover, since the coefficient of $x^d$ is nonzero, we have $P_M^d\ne0$.
Therefore the minimal projective resolution terminates precisely in
degree $d$, and consequently $\pdim_A M=d<\infty$.
\end{proof}

\subsection{Injective coresolutions of modules} \label{subsect:inj coresol}

The same module will have different injective coresolutions.
Choosing the shortest injective coresolution
\[ 0 \To{} M \To{i^M_0} I^M_0 \To{i^M_1} I^M_1 \To{i^M_2} I^M_2 \To{} \cdots, \]
say \defines{minimal injective coresolution}, of $M$, we have that its injective dimension $\idim_A M$ is $d$ if $I^M_n\ne 0$ holds for all $n\=< d$ and $I^M_n=0$ holds for all $n>d$, see \cite[Section 3.2]{R1979},
and in this case, we call $\Coker(i^M_n)$ is the \defines{$(n+1)$-th cosyzygy} of $M$ and write it as $\mho_{n+1}(M)$ (with the convention that $\mho_0(M)=M$),
and each monomorphism $\tilde{i}^M_{n+1}: \mho_{n+1}(M) \to I^M_{n+1}$ induced by $i^M_{n+1}$ is called an \defines{injective envelope} of $\mho_{n+1}(M)$.
Obviously, $\mho_{d+1}(M)=0$ if and only if $\idim_A M\=< d$.
Thus, it is particularly important to determine whether a cosyzygy is an injective module.

\begin{definition} \rm
For a left $A$-module $M\in{_A\modcat}$, we define its \defines{total injective integral profile} by
\[\IntProf_{\II_A}(\pmb{I}_\bullet^M;x) :=
  \sum_{n\>= 0} \left( \IntProf_{\II_A}I^M_n \right)x^n. \]
and its \defines{embedded total injective integral profile} by
\[\IntProfEmb_{\II_A}(\pmb{I}_\bullet^M;x) :=
  \sum_{n\>= 0} \mathfrak{Emb}_{I_n^M} \left( \IntProf_{\II_A}I^M_n \right)x^n. \]
\end{definition}

Sometimes, for simplicity, the embedded total projective integral profile of the minimal injective resolution $\pmb{I}_{\bullet}^M$ of $M$ is also referred to simply as the embedded total injective integral profile of $M$.

\begin{lemma}\label{lemm:import subalg 2}
Let $M\in{_A\modcat}$, then $\displaystyle \IntProf_{\II_A}(\pmb{I}_\bullet^M;x) \widetilde{\in} \pandiring(A)[[x]]$.
\end{lemma}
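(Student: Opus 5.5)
The argument is the injective analogue of Lemma \ref{lemm:import subalg 1}, and I would run it term by term. By definition, $\IntProf_{\II_A}(\pmb{I}_\bullet^M;x)\,\widetilde{\in}\,\pandiring(A)[[x]]$ means
\[\IntProfEmb_{\II_A}(\pmb{I}_\bullet^M;x)=\sum_{n\>= 0}\mathfrak{Emb}_{I_n^M}\bigg(\IntProf_{\II_A}I_n^M\bigg)x^n\in\pandiring(A)[[x]].\]
A formal power series lies in $\pandiring(A)[[x]]$ exactly when each of its coefficients lies in $\pandiring(A)$, and there is no convergence condition to check. So it suffices to show that every coefficient $\mathfrak{Emb}_{I_n^M}(\IntProf_{\II_A}I_n^M)$ belongs to $\pandiring(A)$.

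First, each term $I_n^M$ of the minimal injective coresolution $\pmb{I}_\bullet^M$ is an injective left $A$-module, so $I_n^M\in{_A\inj}\subseteq{_A\pandi}$. For such modules, Assumption \ref{assu:fix emb sys} has already fixed an isomorphism $\sigma_{I_n^M}$. This makes $\mathfrak{Emb}_{I_n^M}$ well defined, and by Lemma \ref{lemm:import emb} it is an injective homomorphism into $\widetilde{\pandiring}(A)$.

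Second, Definition \ref{def:icring} lists $\mathfrak{Emb}_{I_n^M}(\IntProf_{\II_A}I_n^M)$ among the generators of $\pandiring(A)$, so it lies in $\pandiring(A)$. Two degenerate cases need a word. If $I_n^M=0$ for some $n$, the coefficient is $0$, which lies in every subalgebra. If $M=0$, the whole series is $0$. Collecting the coefficients then yields the claim.

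I do not expect a genuine obstacle here. The only point needing care is that the embedding $\mathfrak{Emb}$ used in the total profile is the same fixed one used to generate $\pandiring(A)$, and Assumption \ref{assu:fix emb sys} together with the definition of the embedded total injective integral profile guarantee this.
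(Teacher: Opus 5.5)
Your proposal is correct and follows the same route as the paper: each $I_n^M$ lies in ${_A\inj}\subseteq{_A\pandi}$, so the $n$-th coefficient $\mathfrak{Emb}_{I_n^M}(\IntProf_{\II_A}I_n^M)$ is one of the generators of $\pandiring(A)$, and the series therefore lies in $\pandiring(A)[[x]]$. Your remarks on the degenerate cases and on using the same fixed embedding system are harmless additions to the paper's two-line argument.
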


\begin{proof}
For every $n\>=0$, we have $I^M_n\in{_A\inj}\subseteq{_A\pandi}$,
and therefore $\mathfrak{Emb}_{I^M_n}(\IntProf_{\II_A}I^M_n) \in \pandiring(A)$, Thus,
\[ \sum_{n\>=0} \mathfrak{Emb}_{I^M_n} \left( \IntProf_{\II_A}I^M_n \right)x^n \in \pandiring(A)[[x]],\]
which completes the proof.
\end{proof}

Each simple module $S_v$ over $A=\FF\Q/\I$ has a unique injective envelope (up to isomorphism)
\[ 0 \to S(v) \to I(v), \]
where $I(v)$ is the indecomposable injective module $D(e_v A)$
corresponding to the path $e_v$ of length zero,
and $S(v)\cong \soc I(v)$ \cite[Section III.2, Lemma 2.6]{ASS2006}.
All indecomposable injective left $A$-modules are isomorphic to of the form $D(e_vA)$, and
$\bigoplus\limits_{v\in\Q_0} D(e_vA)$ is the injective cogenerator of the module category.
By the definition of $\FF\Q/\I$,
it is well-known that $D(e_vA)$ is the $\CC$-dual of the right $A$-module $e_vA$, which is generated by all paths in $(\Q,\I)$ starting with $v$,
i.e., $\basis{e_vA} = \{\wp+\I \in (\Q,\I) : \mathfrak{s}(\wp)=v\}$.
Let $\{q_{v,1},\ldots,q_{v,{_v\!d}}\}$ be a basis of $e_vA$ with ${_v\!d}=\dim e_vA=\dim D(e_vA)$, and let $\{q_{v,1}^*,\ldots,q_{v,{_v\!d}}^*\}$ be the corresponding dual basis of $D(e_vA)$.
Then each $q_{v,j} a_i$ has a linear decomposition $q_{v,j} a_i = \sum\limits_{k=1}^{{_v\!d}} \mu^{(v)}_{i,j,k} q_{v,k}$,
and the left $A$-action on $D(e_vA)$ is given by $(a_i\cdot q_{v,k}^*)(q_{v,j}) = q_{v,k}^*(q_{v,j}a_i) = \mu^{(v)}_{i,j,k}$.
Thus $a_i\cdot q_{v,k}^* = \sum\limits_{j=1}^{{_v\!d}} \mu^{(v)}_{i,j,k} q_{v,j}^*$, and we obtain
\[ (a_i\cdot q_{v,k}^*)_{1\=< k\=< {_v\!d}} =
\left(
\begin{matrix}
  q_{v,1}^* & q_{v,2}^* & \cdots & q_{v,{_v\!d}}^*
\end{matrix}
\right)
\left(
\begin{matrix}
  \mu^{(v)}_{i,1,1}
& \mu^{(v)}_{i,1,2}
& \cdots
& \mu^{(v)}_{i,1,{_v\!d}} \\
  \mu^{(v)}_{i,2,1}
& \mu^{(v)}_{i,2,2}
& \cdots
& \mu^{(v)}_{i,2,{_v\!d}} \\
  \vdots
& \vdots
&
& \vdots \\
  \mu^{(v)}_{i,{_v\!d},1}
& \mu^{(v)}_{i,{_v\!d},2}
& \cdots
& \mu^{(v)}_{i,{_v\!d},{_v\!d}}
\end{matrix}
\right).\]
Denote by $R_i^{(v)}$ the matrix $(\mu^{(v)}_{i,j,k})_{1\=< j,k \=< {_v\!d}}$.
Then $R_i^{(v)} = (\rho_{D(e_vA)}(a_i))_{{_v\!d}\times {_v\!d}}$ with respect to the dual basis $(q_{v,1}^*,\ldots,q_{v,{_v\!d}}^*)$.

Now we provide the following result which describes all injective modules.

\begin{theorem}\label{thm:mat char-inj mod}
Assume that $A=\CC\Q/\I$ is a finite-dimensional complex algebra.
Fix a basis of a given left $A$-module $M\in{_A\modcat}$. Then $M$ is injective if and only if
\[
 \IntProf_{\II_A} M
\sim \frac{\mu_{\II_A(\pmb{t})}(\II_A(\pmb{t}))}{2} \cdot
    \bigoplus_{v\in\Q_0}
    \bigg(
      \sum_{i=1}^{\dimA}t_iR_i^{(v)}
    \bigg)^{\oplus n_v},
\]
where all $n_v$ are non-negative integers such that $\sum\limits_{v\in\Q_0} c_vn_v = \dim_{\CC}(M)$ {\rm(}$c_v=\dim_{\CC}I(v)${\rm)}.
\end{theorem}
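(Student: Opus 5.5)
The plan is to mirror the proof of Theorem \ref{thm:mat char-proj mod} exactly, with indecomposable projectives replaced by the indecomposable injectives $I(v)=D(e_vA)$. We use Proposition \ref{prop:param-int-inj mod} for the matrices and Theorem \ref{thm:param-int-module iso} to pass from similarity of profiles back to module isomorphism. Throughout, similarity is understood as in Definition \ref{def:similar}, with a transition matrix $H$ independent of $\pmb{t}$.

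For the forward direction, suppose $M$ is injective. The modules $I(v)$, $v\in\Q_0$, form a complete set of pairwise non-isomorphic indecomposable injective left $A$-modules (recalled before the statement). By the Krull--Schmidt theorem we get $M\cong\bigoplus_{v}I(v)^{\oplus n_v}$ with $n_v\in\NN$, and comparing dimensions gives $\sum_v c_vn_v=\dim_{\CC}M$. Let $H$ be the matrix of this isomorphism with respect to the fixed basis of $M$ and the dual bases $\{q_{v,j}^*\}$. Then for each $i$ we have $(\rho_M(a_i))=H\big(\bigoplus_v (R_i^{(v)})^{\oplus n_v}\big)H^{-1}$, because $R_i^{(v)}$ is precisely the matrix of $\rho_{D(e_vA)}(a_i)$ in the dual basis. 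Theorem \ref{thm:param-int-module} together with Lemma \ref{lemm:param-barycenter} gives
\[
\int_{\II_A(\pmb{t})}M=\frac{\mu_{\II_A(\pmb{t})}(\II_A(\pmb{t}))}{2}\sum_{i=1}^{\dimA}t_i\rho_M(a_i).
\]
Substituting the previous identity and pulling the $\pmb{t}$-independent $H$ out of the sum yields the stated similarity. Here Corollary \ref{coro:param-int-dir sum} and Proposition \ref{prop:param-int-inj mod} identify each block with $\frac{\mu}{2}\sum_i t_iR_i^{(v)}$.

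For the converse, assume the similarity holds with some invertible $H$ independent of $\pmb{t}$. Both sides are of the form $\frac12(\prod_l t_l)\sum_i t_i X_i$, where the $X_i$ are constant matrices. Applying the differential operator of Corollary \ref{cor:param-int-module} separates the coefficients and gives
\[
\rho_M(a_i)=H\Big(\bigoplus_v (R_i^{(v)})^{\oplus n_v}\Big)H^{-1}
\]
for every $i$. The right-hand side is $\rho_N(a_i)$ for $N=\bigoplus_v I(v)^{\oplus n_v}$, written in the dual bases. By linearity, $H$ intertwines $\rho_N(a)$ and $\rho_M(a)$ for all $a\in A$, so Theorem \ref{thm:param-int-module iso} shows that $H:N\to M$ is an $A$-module isomorphism. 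Hence $M$ is injective, being a finite direct sum of injectives.

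The main point needing care is the bookkeeping of conventions, and there is no real obstacle beyond that. The matrix $R_i^{(v)}$ is indexed as $(\mu_{i,j,k})_{j,k}$, which is the transpose-type convention coming from the dual basis. One must check that this indexing matches the matrix of $\rho_{D(e_vA)}(a_i)$ acting on coordinate columns, exactly as computed in Proposition \ref{prop:param-int-inj mod}. Otherwise the similarity would involve transposes, and the intertwining would not directly give a module map. The other necessary input is that $H$ does not depend on $\pmb{t}$, since otherwise the coefficient recovery step fails.
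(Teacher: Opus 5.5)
Your proposal is correct and follows the paper's proof essentially step for step. For the forward direction you use the Krull--Schmidt decomposition $M\cong\bigoplus_v I(v)^{\oplus n_v}$, a $\pmb{t}$-independent transition matrix $H$, and Proposition~\ref{prop:param-int-inj mod}; for the converse you use coefficient recovery followed by Theorem~\ref{thm:param-int-module iso}. The convention you flag also checks out: since $a_i\cdot q_{v,k}^*=\sum_j\mu^{(v)}_{i,j,k}q_{v,j}^*$, the matrix $R_i^{(v)}=(\mu^{(v)}_{i,j,k})_{j,k}$ is exactly the matrix of $\rho_{D(e_vA)}(a_i)$ acting on coordinate columns, so no transposes arise.
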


\begin{proof}
Suppose first that $M$ is injective. Since $A$ is a finite-dimensional basic algebra, the modules $I(v)=D(e_vA)$, $v\in\mathcal Q_0$, form a complete set of pairwise non-isomorphic indecomposable injective left $A$-modules.
Hence, by the Krull--Schmidt theorem,
$M \cong \bigoplus\limits_{v\in\mathcal Q_0} I(v)^{\oplus n_v}$ for some $n_v\in\NN$.
Let $H$ be the matrix of such a left $A$-module isomorphism with respect to the chosen bases. Then, for every $a_i\in\basis A$, we have
\[ (\rho_M(a_i))_{m\times m}
= H
  \bigg(
    \bigoplus_{v\in\Q_0} (R_i^{(v)})^{\oplus n_v}
  \bigg) H^{-1}, \]
where $m=\dim_{\CC}M=\sum\limits_{v\in\Q_0} c_vn_v$.
By Proposition \ref{prop:param-int-inj mod}, we obtain
\begin{align*}
\IntProf_{\II_A}M(\pmb{t})
& = \frac{\mu_{\II_A(\pmb{t})}(\II_A(\pmb{t}))}{2}
    \sum_{i=1}^{\dimA}t_i\cdot (\rho_M(a_i))_{m\times m} \\
& = \frac{\mu_{\II_A(\pmb{t})}(\II_A(\pmb{t}))}{2}
    \sum_{i=1}^{\dimA}t_i \cdot
    H \bigg(
        \bigoplus_{v\in\Q_0} (R_i^{(v)})^{\oplus n_v}
      \bigg) H^{-1} \\
&\sim \frac{\mu_{\II_A(\pmb{t})}(\II_A(\pmb{t}))}{2}
    \sum_{i=1}^{\dimA}t_i
        \bigoplus_{v\in\Q_0} (R_i^{(v)})^{\oplus n_v} \\
&= \frac{\mu_{\II_A(\pmb{t})}(\II_A(\pmb{t}))}{2}
    \bigoplus_{v\in\Q_0}\bigg(\sum_{i=1}^{\dimA}t_iR_i^{(v)}\bigg)^{\oplus n_v}.
\end{align*}

Conversely, suppose that the above matrix similarity holds for some fixed invertible matrix $H$ independent of $\pmb{t}$. Comparison of the recovered coefficients gives
\[ (\rho_M(a_i))_{m\times m}
= H \bigg(
      \bigoplus_{v\in\Q_0} (R_i^{(v)})^{\oplus n_v}
    \bigg) H^{-1} \]
for every $i$. Therefore $H$ defines an $A$-module isomorphism $\bigoplus\limits_{v\in\Q_0}I(v)^{\oplus n_v} \To{H} M$ by Theorem \ref{thm:param-int-module iso}.
\end{proof}

%


Given a nonzero finite-dimensional left $A$-module $M\in{_A\modcat}$,
let $\pmb{I}_{\bullet}^M = ( I^M_n,d_M^n )_{n\ge 0}$ be its minimal injective coresolution.
Put $\mathscr{I}_M := \prod\limits_{n\ge 0} \operatorname{Map} ((\RR^{\ge 0})^{\times d_A},\End_{\CC}(I^M_n))$.
The image of
\[\mathfrak{Emb}_{I^M_n} : \operatorname{Map}( (\RR^{\ge 0})^{\times d_A}, \End_{\CC}(I^M_n)) \to \widetilde{\pandiring}(A),\]
can be canonically identified with the $n$-th component of $\mathscr{I}_M$.
It follows that $\mathfrak{Emb}_{I^M_n}$ admits an injection from
$\operatorname{Map} ( (\RR^{\ge 0})^{\times d_A},\End_{\CC}(I^M_n) )$ to $\mathscr{I}_M$
which is a canonical embedding into the $n$-th component.
To make the notation appear simple, we still write this embedding as
\[\mathfrak{Emb}_{I^M_n}: \operatorname{Map} ( (\RR^{\ge 0})^{\times d_A},\End_{\CC}(I^M_n) ) \to \mathscr{I}_M. \]
This will not cause confusion for us.

A parallel result to Lemma \ref{lemm:pdim-int 0812} is as follows.

\begin{lemma}\label{lemm:idim-int 0812}
For any $0\ne M\in{_A\modcat}$, $\idim_A M<\infty$ if and only if $\displaystyle \IntProf_{\II_A} (\pmb{I}^M_{\bullet}; x)$ is a polynomial in the formal power series ring $\mathscr{I}_M[[x]]$.
In this case,\[ \idim_A M = \deg_x \IntProf_{\II_A} (\pmb{I}^M_{\bullet}; x).\]
\end{lemma}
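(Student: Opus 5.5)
The argument follows the proof of Lemma~\ref{lemm:pdim-int 0812}, with projective resolutions replaced by minimal injective coresolutions. We use two facts established earlier. First, by the definition of minimal injective coresolution, $\idim_A M=d$ exactly when $I^M_n\neq 0$ for $n\le d$ and $I^M_n=0$ for $n>d$. Second, for a finite-dimensional module $X$, its integral profile $\IntProf_{\II_A}X$ vanishes identically if and only if $X=0$.

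For the forward direction, suppose $\idim_A M=d<\infty$. Then $I^M_n=0$ for every $n>d$, so all coefficients of $x^n$ with $n>d$ vanish. Hence $\IntProf_{\II_A}(\pmb{I}^M_\bullet;x)=\sum_{n=0}^d (\IntProf_{\II_A}I^M_n)x^n$ is a polynomial in $\mathscr{I}_M[[x]]$ of degree at most $d$. It remains to show that the leading coefficient is nonzero. Since $I^M_d\neq 0$, Theorem~\ref{thm:param-int-module} and Lemma~\ref{lemm:param-barycenter} give
\[
\IntProf_{\II_A}I^M_d(\pmb t)=\tfrac12\Big(\prod_l t_l\Big)\sum_i t_i\rho_{I^M_d}(a_i).
\]
If this vanished for all $\pmb t$, Corollary~\ref{cor:param-int-module} would force $\rho_{I^M_d}(a_i)=0$ for every $i$. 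By linearity this would give $\rho_{I^M_d}(1_A)=0$, contradicting $\rho_{I^M_d}(1_A)=\ident_{I^M_d}\neq 0$. Therefore the degree is exactly $d$.

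For the converse, suppose the series is a polynomial of degree $d$. Then $\IntProf_{\II_A}I^M_n=0$ for every $n>d$. The coefficient-recovery formula in Corollary~\ref{cor:param-int-module} then gives $\rho_{I^M_n}(a_i)=0$ for all $i$, hence $\ident_{I^M_n}=\rho_{I^M_n}(1_A)=0$, so $I^M_n=0$ for $n>d$. Because the coefficient of $x^d$ is nonzero, $I^M_d\neq 0$. Therefore the minimal coresolution stops exactly at degree $d$, and $\idim_A M=d$.

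The only delicate point is the one already handled in the proof of Lemma~\ref{lemm:pdim-int 0812}. A nonzero coefficient in $\mathscr{I}_M$ must correspond to a nonzero module. This is settled by the recovery of $\rho(a_i)$, together with the identity $1_A=\sum_i\lambda_i a_i$. The identification of $\mathfrak{Emb}_{I^M_n}$ with the $n$-th component of $\mathscr{I}_M$ is injective by Lemma~\ref{lemm:import emb}, so vanishing is unaffected by whether we work with the embedded or the unembedded profile.
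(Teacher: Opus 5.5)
Your proposal is correct and follows the paper's proof essentially step by step. Both directions use the minimality of the coresolution ($I^M_n=0$ for $n>d$ and $I^M_d\neq 0$) together with the fact that an integral profile vanishes only for the zero module, which the paper likewise derives from the coefficient-recovery property and $\rho_{I^M_d}(1_A)=\ident_{I^M_d}\neq 0$; your explicit appeal to Corollary~\ref{cor:param-int-module} and to $1_A=\sum_i\lambda_i a_i$ just spells out what the paper leaves implicit.
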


\begin{proof}
Suppose first that $\idim_A M=d<\infty$.
Since $\pmb{I}_{\bullet}^M$ is a minimal injective coresolution, we have $I^M_n=0$ for all $n>d$ and $I^M_d\ne 0$.
Therefore
\[ \IntProf_{\II_A}(\pmb{I}_{\bullet}^M;x)
= \sum_{n=0}^{d} \mathfrak{Emb}_{I^M_n} ( \IntProf_{\II_A}I^M_n )x^n\]
is a polynomial of degree at most $d$.
We claim that $\IntProf_{\II_A}I^M_d\ne 0$. Indeed, if its integral profile were zero,
then we obtain $\rho_{I^M_d}(a)=0$ for every $a\in A$. In particular,
$\rho_{I^M_d}(1_A)=0$ contradicting $\rho_{I^M_d}(1_A)=\ident_{I^M_d}\ne 0$.
Thus the coefficient of $x^d$ is nonzero, and hence $\deg_x \IntProf_{\II_A} (\pmb{I}_{\bullet}^M;x)=d$.

Conversely, suppose that $\IntProf_{\II_A}(\pmb{I}_{\bullet}^M;x)$ is a polynomial, and put
$d:= \deg_x\IntProf_{\II_A}(\pmb{I}_{\bullet}^M;x)$. Then $\IntProf_{\II_A}I^M_n=0$ for every $n>d$.
By the coefficient-recovery property, this implies $I^M_n=0$ for every $n>d$.
Moreover, since the coefficient of $x^d$ is nonzero, we have $I^M_d\ne 0$.
Therefore the minimal injective coresolution terminates precisely in degree $d$, and consequently $\idim_A M=d<\infty$.
\end{proof}

\section{Homological dimensions vs. integral profiles} \label{sect:homodim intdescrib}

\textsl{This section uses the integral coefficient ring to characterise homological dimensions.
}

\subsection{Homological dimensions vs. total integral profiles} \label{subsect:gldim intdescrib}

\subsubsection{Global dimensions vs. total integral profiles} \label{subsubsect:gldim intdescrib}

For any finite-dimensional basic complex algebra $A=\CC\Q/\I$, the following result shows its global dimension.

\begin{proposition}\label{prop:gldim-int}
The following statements are equivalent:
\begin{enumerate}[label={\rm(\arabic*)}]
  \item $\gldim A < \infty$;
    \label{prop:gldim-int 1}
  \item $\displaystyle \sup_{v\in\Q_0} \deg_x\IntProf_{\II_A}(\pmb{P}_{S_v}^\bullet;x) < \infty$;
    \label{prop:gldim-int 2}
  \item $\IntProf_{\II_A}(\pmb{P}_{\top(A)}^\bullet;x)$ is a polynomial in the formal power series ring $\mathscr{P}_{\top(A)}[[x]]$, i.e.,
    \[\IntProf_{\II_A}(\pmb{P}_{\top(A)}^\bullet;x)\in \mathscr{P}_{\top(A)}[x];\]
    \label{prop:gldim-int 3}
  \item $\displaystyle \sup_{v\in\Q_0} \deg_x\IntProf_{\II_A}(\pmb{I}_{\bullet}^{S_v};x) < \infty$;
    \label{prop:gldim-int 4}
  \item $\IntProf_{\II_A}(\pmb{I}_{\bullet}^{\top(A)};x)$ is a polynomial in the formal power series ring $\mathscr{I}_{\top(A)}[[x]]$, i.e.,
    \[\IntProf_{\II_A}(\pmb{I}_{\bullet}^{\top(A)};x)\in \mathscr{I}_{\top(A)}[x];\]
    \label{prop:gldim-int 5}
\end{enumerate}
\end{proposition}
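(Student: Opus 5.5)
We reduce everything to classical facts about projective and injective dimensions of simple modules, using Lemmas~\ref{lemm:pdim-int 0812} and~\ref{lemm:idim-int 0812} as the dictionary between degrees of total integral profiles and homological dimensions, together with Auslander's formula \eqref{eq:gldim-simp mod}.

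\textbf{Step 1: \ref{prop:gldim-int 1}$\Leftrightarrow$\ref{prop:gldim-int 2}.} By \eqref{eq:gldim-simp mod}, $\gldim A=\max_{v\in\Q_0}\pdim_A S_v$, and $\Q_0$ is finite. If $\gldim A<\infty$, then every $\pdim_A S_v$ is finite. Lemma~\ref{lemm:pdim-int 0812} then gives $\deg_x\IntProf_{\II_A}(\pmb{P}_{S_v}^\bullet;x)=\pdim_A S_v$, and the supremum is $\gldim A$. Conversely, we read a finite supremum as saying that each total profile is a polynomial of bounded degree. Lemma~\ref{lemm:pdim-int 0812} then gives $\pdim_A S_v<\infty$ for all $v$, so $\gldim A<\infty$.

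\textbf{Step 2: \ref{prop:gldim-int 2}$\Leftrightarrow$\ref{prop:gldim-int 3}.} Since $\top(A)=A/\rad A\cong\bigoplus_{v}S_v$ and minimal projective resolutions are additive, we have $\pmb{P}^\bullet_{\top(A)}\cong\bigoplus_v\pmb{P}^\bullet_{S_v}$ termwise, i.e.\ $P^n_{\top(A)}\cong\bigoplus_v P^n_{S_v}$. Corollary~\ref{coro:param-int-dir sum} then yields, coefficientwise and up to conjugation by a fixed isomorphism,
\[\IntProf_{\II_A}P^n_{\top(A)}\sim\bigoplus_{v\in\Q_0}\IntProf_{\II_A}P^n_{S_v}.\]
A coefficient vanishes exactly when each summand vanishes, since a profile is zero iff the module is zero, by the argument in Lemma~\ref{lemm:pdim-int 0812} using $\rho(1_A)=\ident$. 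Hence the profile of $\top(A)$ is a polynomial iff all the profiles of the $S_v$ are, and then its degree is the maximum of theirs. Alternatively, one can simply note $\pdim_A\top(A)=\max_v\pdim_A S_v$ and apply Lemma~\ref{lemm:pdim-int 0812} to $M=\top(A)$.

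\textbf{Step 3: \ref{prop:gldim-int 1}$\Leftrightarrow$\ref{prop:gldim-int 4}$\Leftrightarrow$\ref{prop:gldim-int 5}.} We repeat Steps 1 and 2 with Lemma~\ref{lemm:idim-int 0812} and the injective analogue of Auslander's formula: $\gldim A=\max_v\idim_A S_v$. This holds because $\gldim A=\sup\{n:\Ext^n_A(X,Y)\neq0\}$, and by dévissage on composition series one may take both $X$ and $Y$ simple. Additivity of minimal injective coresolutions gives $\pmb{I}_\bullet^{\top(A)}\cong\bigoplus_v\pmb{I}_\bullet^{S_v}$, and the argument of Step 2 carries over.

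The main obstacle is the injective form of \eqref{eq:gldim-simp mod}, which the paper has not stated. We would justify it through the $\Ext$/composition-series argument just sketched. A second, minor obstacle is the loose reading of ``$\sup\deg<\infty$'' in \ref{prop:gldim-int 2} and \ref{prop:gldim-int 4} when some profile is a genuine power series. We would fix the convention that a non-polynomial series has degree $\infty$.
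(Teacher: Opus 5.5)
Your proof is correct and follows the paper's route. The paper uses Auslander's formula \eqref{eq:gldim-simp mod} with Lemma~\ref{lemm:pdim-int 0812} for \ref{prop:gldim-int 1}$\Leftrightarrow$\ref{prop:gldim-int 2}, additivity of minimal projective resolutions over $\top(A)\cong\bigoplus_{v}S_v$ for \ref{prop:gldim-int 3}, and the injective counterpart for \ref{prop:gldim-int 4}--\ref{prop:gldim-int 5}. The one difference is that you justify $\gldim A=\max_{v}\idim_A S_v$ explicitly via $\Ext$-d\'evissage on composition series, whereas the paper just says the injective half is proved ``in a dual way''.
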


\begin{proof}
We prove the equivalence in a cycle.

\ref{prop:gldim-int 1} $\Rightarrow$ \ref{prop:gldim-int 2}:
Assume $\gldim A < \infty$. By the standard characterisation of global dimension,
$\gldim A = \max\limits_{v\in\Q_0} \pdim_A S_v.$
Thus $\pdim_A S_v < \infty$ for every $v\in\Q_0$. By Lemma \ref{lemm:pdim-int 0812}, each
$\IntProf_{\II_A}(\pmb{P}_{S_v}^\bullet;x)$ is a polynomial. Therefore the supremum of their degrees is finite, proving (2).

\ref{prop:gldim-int 2} $\Rightarrow$ \ref{prop:gldim-int 3}:
Assume (2). Since $\top(A) \cong \bigoplus\limits_{v\in\Q_0} S_v$, and projective resolutions commute with finite direct sums,
$\pmb{P}_{\top(A)}^\bullet \cong \bigoplus\limits_{v\in\Q_0} \pmb{P}_{S_v}^{\bullet}$.
Taking total projective integral profiles gives
\[ \IntProf_{\II_A}(\pmb{P}_{\top(A)}^\bullet;x) = \bigoplus_{v\in\Q_0} \IntProf_{\II_A}(\pmb{P}_{S_v}^\bullet;x).\]
By (2), each summand is a polynomial of finite degree, so the finite direct sum is also a polynomial. Hence (3) holds.

\ref{prop:gldim-int 3} $\Rightarrow$ \ref{prop:gldim-int 1}:
Suppose that $\IntProf_{\II_A}(\pmb{P}_{\top(A)}^\bullet;x)$ is a polynomial and let $d$ be its degree. Then for each $v\in\Q_0$, the summand $\IntProf_{\II_A}(\pmb{P}_{S_v}^\bullet;x)$ is a direct summand of a polynomial, hence itself a polynomial of degree at most $d$. By Lemma \ref{lemm:pdim-int 0812}, this implies $\pdim_A S_v \=< d$ for every $v$.
Therefore $\gldim A = \max\limits_{v\in\Q_0} \pdim_A S_v \=< d < \infty$. Thus (1) holds.

We can prove \ref{prop:gldim-int 1} $\Rightarrow$ \ref{prop:gldim-int 4} $\Rightarrow$ \ref{prop:gldim-int 5} $\Rightarrow$ \ref{prop:gldim-int 1} in a dual way.
\end{proof}

For a formal power series $F(x)\in\pandiring(A)[[x]]$, we put $\deg_xF:=\infty$ whenever $F(x)\notin\pandiring(A)[x]$.
Then by Lemma \ref{lemm:import subalg 1} and Proposition \ref{prop:gldim-int}, we obtain the following theorem, immediately.

\begin{theorem}\label{thm:gldim-int}
Let $A=\CC\Q/\I$ be a finite-dimensional complex algebra, and $\pandiring(A)$ be its integral coefficient ring.
The following statements are equivalent:
\begin{enumerate}[label={\rm(\arabic*)}]
  \item $\gldim A < \infty$;
    \label{thm:gldim-int 1}
  \item $\displaystyle \sup_{v\in\Q_0} \deg_x\IntProf_{\II_A}(\pmb{P}_{S_v}^\bullet;x) < \infty$;
    \label{thm:gldim-int 2}
  \item $\displaystyle \IntProf_{\II_A}(\pmb{P}_{\top(A)}^\bullet;x) \,\widetilde{\in}\, \pandiring(A)[x]$;
    \label{thm:gldim-int 3}
  \item $\displaystyle \sup_{v\in\Q_0} \deg_x\IntProf_{\II_A}(\pmb{I}_{\bullet}^{S_v};x) < \infty$;
    \label{thm:gldim-int 4}
  \item $\displaystyle \IntProf_{\II_A}(\pmb{I}_{\bullet}^{\top(A)};x) \,\widetilde{\in}\, \pandiring(A)[x]$.
    \label{thm:gldim-int 5}
\end{enumerate}
\end{theorem}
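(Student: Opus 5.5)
The plan is to deduce Theorem~\ref{thm:gldim-int} from Proposition~\ref{prop:gldim-int}. The task is to translate each condition about the unembedded total profiles in $\mathscr{P}_{\top(A)}[[x]]$ (resp.\ $\mathscr{I}_{\top(A)}[[x]]$) into the corresponding condition about embedded profiles in $\pandiring(A)[[x]]$.

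Conditions \ref{thm:gldim-int 1}, \ref{thm:gldim-int 2} and \ref{thm:gldim-int 4} are literally the same as \ref{prop:gldim-int 1}, \ref{prop:gldim-int 2} and \ref{prop:gldim-int 4} of Proposition~\ref{prop:gldim-int}. So it suffices to prove two equivalences:
\begin{itemize}
  \item \ref{prop:gldim-int 3} holds if and only if \ref{thm:gldim-int 3} holds;
  \item \ref{prop:gldim-int 5} holds if and only if \ref{thm:gldim-int 5} holds.
\end{itemize}

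First I would apply Lemma~\ref{lemm:import subalg 1} to $M=\top(A)$. It gives
\[\IntProfEmb_{\II_A}(\pmb{P}_{\top(A)}^\bullet;x)=\sum_{n\>=0}\mathfrak{Emb}_{P^n_{\top(A)}}\Big(\IntProf_{\II_A}P^n_{\top(A)}\Big)x^n\in\pandiring(A)[[x]].\]
Since each $P^n_{\top(A)}$ lies in ${_A\proj}\subseteq{_A\pandi}$, each coefficient lies in $\pandiring(A)$. Hence this series lies in $\pandiring(A)[x]$ exactly when all but finitely many coefficients vanish.

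Next I would use Lemma~\ref{lemm:import emb}: each $\mathfrak{Emb}_{P^n_{\top(A)}}$ is injective, so the $n$-th embedded coefficient vanishes if and only if $\IntProf_{\II_A}P^n_{\top(A)}=0$. The latter is exactly the vanishing of the $n$-th coefficient of $\IntProf_{\II_A}(\pmb{P}^\bullet_{\top(A)};x)$ in $\mathscr{P}_{\top(A)}[[x]]$. Consequently the two series have the same support in $x$, so one is a polynomial if and only if the other is, and their $x$-degrees coincide. This shows that \ref{prop:gldim-int 3} and \ref{thm:gldim-int 3} are equivalent, given the convention $\deg_x F=\infty$ for non-polynomials. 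The injective case, \ref{prop:gldim-int 5} versus \ref{thm:gldim-int 5}, is handled identically using Lemma~\ref{lemm:import subalg 2} and the injectivity of $\mathfrak{Emb}_{I^{\top(A)}_n}$.

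The only point needing care is that $\widetilde{\in}$ is defined in terms of the embedded series, and that degrees might in principle differ under the embedding. The coefficientwise injectivity argument above removes that concern, so I expect no genuine obstacle. Combining the two equivalences with the cycle already proved in Proposition~\ref{prop:gldim-int} yields the theorem.
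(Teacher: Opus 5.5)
Your proposal is correct and follows the paper's own route: the paper deduces the theorem immediately from Proposition~\ref{prop:gldim-int} together with Lemma~\ref{lemm:import subalg 1}, using the convention $\deg_x F=\infty$ for non-polynomial series. You additionally make explicit the coefficientwise comparison that the paper leaves implicit, namely that the injectivity of $\mathfrak{Emb}$ from Lemma~\ref{lemm:import emb} shows the embedded and unembedded total profiles have the same support in $x$.
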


\subsubsection{Finitistic dimensions vs. total integral profiles} \label{subsubsect:findim intdescrib}

The finitistic dimension of a finite-dimension complex algebra $A$ we considered in this subsection is the little finitistic dimension
\[ \findim A = \sup\left\{ \pdim_A M: M\in{_A\modcat} \text{ and } \pdim_A M<\infty \right\}.\]

\begin{definition}\rm
Define the \defines{finitistic projective integral profile set} of $A$ by
\[ \fpipset_A :=
\left\{ \IntProfEmb_{\II_A} (\pmb{P}_M^\bullet;x):
    0 \ne M\in{_A\modcat} \text{ and }
    \IntProf_{\II_A} (\pmb{P}_M^\bullet;x) \, \widetilde{\in}\, \pandiring(A)[x]
\right\}.
\]
\end{definition}

Integration provides a simple characterization of finitistic dimension, see the following statement.

\begin{lemma}\label{lemm:findim-int}
$\displaystyle \findim A = \sup \{\deg_x F(x): F(x)\in\mathfrak F_A^{\mathrm{proj}}\}$.
\end{lemma}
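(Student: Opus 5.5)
The plan is to prove two inequalities between $\findim A$ and the supremum $\sup\{\deg_x F(x): F(x)\in\fpipset_A\}$. Both reduce to Lemma~\ref{lemm:pdim-int 0812}, once we know that passing to the embedded profile neither changes polynomiality nor changes the degree. We use the conventions $\sup\emptyset=0$ and $\deg_x F=\infty$ for non-polynomial $F$, so that both sides may be $\infty$.

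The first step is to compare $\IntProf_{\II_A}(\pmb P_M^\bullet;x)$ with $\IntProfEmb_{\II_A}(\pmb P_M^\bullet;x)$ coefficientwise. By Lemma~\ref{lemm:import emb}, each $\mathfrak{Emb}_{P_M^n}$ is injective, so $\mathfrak{Emb}_{P_M^n}(\IntProf_{\II_A}P_M^n)=0$ if and only if $\IntProf_{\II_A}P_M^n=0$. As in the proof of Lemma~\ref{lemm:pdim-int 0812}, the latter holds if and only if $P_M^n=0$: if $P_M^n\neq 0$, then $\rho_{P_M^n}(1_A)=\ident\neq 0$, so by Theorem~\ref{thm:param-int-module} and Corollary~\ref{cor:param-int-module} the profile is nonzero. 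Hence the embedded series is a polynomial exactly when the minimal resolution $\pmb P_M^\bullet$ is bounded, and then its degree equals the largest $n$ with $P_M^n\neq 0$. By Lemma~\ref{lemm:pdim-int 0812}, this degree is $\pdim_A M$. In short: $\IntProf_{\II_A}(\pmb P_M^\bullet;x)\,\widetilde{\in}\,\pandiring(A)[x]$ holds if and only if $\pdim_A M<\infty$, and in that case $\deg_x\IntProfEmb_{\II_A}(\pmb P_M^\bullet;x)=\pdim_A M$.

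With this, the set $\fpipset_A$ is exactly the set of embedded total profiles of nonzero modules of finite projective dimension, and $F\mapsto\deg_xF$ sends it onto $\{\pdim_A M: 0\neq M,\ \pdim_A M<\infty\}$. Taking suprema gives the equality. The zero module contributes $\pdim_A 0=0$, or $-\infty$ under the usual convention; it does not affect the supremum because $A\neq0$ is projective with $\pdim_A A=0$. Lemma~\ref{lemm:import subalg 1} ensures that every such series does lie in $\pandiring(A)[[x]]$, so the symbol $\widetilde{\in}$ in the definition of $\fpipset_A$ is meaningful.

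The main obstacle is the degree bookkeeping for the embedded series. The embeddings $\mathfrak{Emb}_{P_M^n}$ land in a common ring $\widetilde{\pandiring}(A)$ and depend on the fixed embedding system of Assumption~\ref{assu:fix emb sys}, so one must check that a nonzero coefficient stays nonzero. Injectivity in Lemma~\ref{lemm:import emb} is exactly what settles this.
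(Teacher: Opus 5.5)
Your proposal is correct and follows the same route as the paper. Lemma~\ref{lemm:pdim-int 0812} identifies polynomiality of the total projective profile with $\pdim_A M<\infty$ and its degree with $\pdim_A M$, and one then takes the supremum over nonzero modules of finite projective dimension. Your extra bookkeeping makes explicit what the paper's two-line argument leaves implicit: the injectivity of $\mathfrak{Emb}_{P_M^n}$ (Lemma~\ref{lemm:import emb}) lets you pass between the embedded and unembedded series, and $A$ itself makes $\fpipset_A$ nonempty.
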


\begin{proof}
For every nonzero $M\in{_A\modcat}$, we have
\[ \IntProf_{\II_A}(\pmb{P}_M^\bullet;x) \,\widetilde{\in}\, \pandiring(A)[x]
\text{ if and only if } \pdim_A M = \deg_x \IntProf_{\II_A}(\pmb{P}_M^\bullet;x) < \infty\]
By Lemma \ref{lemm:pdim-int 0812}, taking the supremum over all modules of finite projective dimension
gives the assertion.
\end{proof}

\begin{theorem} \label{thm:findim-int}
The following statements are equivalent:
\begin{enumerate}[label={\rm(\arabic*)}]
  \item\label{thm:findim-int 1}
  $\findim A<\infty$;

  \item\label{thm:findim-int 2}
  there is $d\in\NN$ such that, for every $0\ne M\in{_A\modcat}$,
  \[ \IntProf_{\II_A} (\pmb{P}_M^\bullet;x) \,\widetilde{\in}\, \pandiring(A)[x]
   \text{ implies } \deg_x \IntProf_{\II_A} (\pmb{P}_M^\bullet;x) \=< d; \]

  \item\label{thm:findim-int 3}
  $ \fpipset_A \subseteq \pandiring(A)[x]_{\=< d}$ for some $d\in\NN$;

  \item\label{thm:findim-int 4}
  there is $d\in\NN$ such that, for every $0\ne M\in{_A\modcat}$ whose embedded total projective integral profile
  is a polynomial, one has
  \[\mathfrak{Emb}_{P_M^n}(\IntProf_{\II_A}P_M^n) =0 \qquad \text{for every } n>d.\]
\end{enumerate}
\end{theorem}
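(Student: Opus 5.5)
The plan is to derive all four equivalences from Lemma~\ref{lemm:findim-int}, Lemma~\ref{lemm:pdim-int 0812} and the injectivity of the embeddings in Lemma~\ref{lemm:import emb}. The single translation behind everything is the following. For $0\ne M\in{_A\modcat}$, the embedded total projective integral profile $\IntProfEmb_{\II_A}(\pmb P_M^\bullet;x)$ is a polynomial if and only if $\mathfrak{Emb}_{P_M^n}(\IntProf_{\II_A}P_M^n)=0$ for all large $n$. By Lemma~\ref{lemm:import emb} this happens if and only if $\IntProf_{\II_A}P_M^n=0$ for all large $n$. That in turn holds if and only if $P_M^n=0$ for all large $n$, using the argument in Lemma~\ref{lemm:pdim-int 0812}: $\rho_{P}(1_A)=\ident_P$ forces a nonzero profile whenever $P\ne0$. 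Finally, this is equivalent to $\pdim_AM<\infty$. In that case the degree, computed in $\pandiring(A)[x]$ or in $\mathscr P_M[x]$, equals $\pdim_AM$. So the $x$-degree is the same whichever ambient ring is used, and the condition ``$\widetilde{\in}\,\pandiring(A)[x]$'' is exactly ``$\pdim_AM<\infty$''.

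With this in hand, I would prove \ref{thm:findim-int 1}$\Leftrightarrow$\ref{thm:findim-int 2} directly. If $\findim A=d<\infty$, then any $M$ with polynomial profile has $\pdim_AM<\infty$, hence degree $=\pdim_AM\le d$. Conversely, given $d$ as in \ref{thm:findim-int 2}, every $M$ of finite projective dimension has $\pdim_AM=\deg_x\le d$, so $\findim A\le d$. Alternatively, both directions follow at once from Lemma~\ref{lemm:findim-int}, since \ref{thm:findim-int 2} says the supremum there is at most $d$.

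Next, \ref{thm:findim-int 2}$\Leftrightarrow$\ref{thm:findim-int 3} is just unwinding the definition of $\fpipset_A$. Its elements are exactly the embedded total profiles of nonzero modules whose profile lies $\widetilde{\in}\,\pandiring(A)[x]$, and $\pandiring(A)[x]_{\le d}$ is the set of polynomials of degree $\le d$. Using the equality of degrees from the first paragraph, ``every element of $\fpipset_A$ has degree $\le d$'' is the same statement as \ref{thm:findim-int 2}.

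For \ref{thm:findim-int 3}$\Leftrightarrow$\ref{thm:findim-int 4}, a polynomial $\sum_n c_nx^n$ has degree $\le d$ precisely when $c_n=0$ for $n>d$. Here $c_n=\mathfrak{Emb}_{P_M^n}(\IntProf_{\II_A}P_M^n)$, so this is literally \ref{thm:findim-int 4}. The hypothesis in \ref{thm:findim-int 4} (embedded profile is a polynomial) matches membership in $\fpipset_A$ by the translation above.

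The only delicate point is the bookkeeping between $\IntProf$ in $\mathscr P_M[[x]]$ and $\IntProfEmb$ in $\pandiring(A)[[x]]$. Specifically, a coefficient is zero in one if and only if it is zero in the other, so degrees agree. I would isolate this in a one-line remark citing Lemma~\ref{lemm:import emb} before running the cycle of equivalences.
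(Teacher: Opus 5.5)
Your proposal is correct and follows essentially the same route as the paper. The paper likewise first records that, by Lemma~\ref{lemm:pdim-int 0812} and the injectivity of the coefficient embeddings, $\pdim_AM<\infty$ if and only if $\IntProf_{\II_A}(\pmb P_M^\bullet;x)\,\widetilde{\in}\,\pandiring(A)[x]$ with equal degrees, then proves (1)$\Leftrightarrow$(2) directly, (2)$\Leftrightarrow$(3) by unwinding $\fpipset_A$, and the link to (4) by coefficient vanishing; the only cosmetic difference is that it ties (4) to (2) rather than to (3).
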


\begin{proof}
For every $0\ne M\in{_A\modcat}$, Lemma \ref{lemm:pdim-int 0812}, together with the injectivity of the coefficient
embeddings, gives $\pdim_A M<\infty$ if and only if $\IntProf_{\II_A} (\pmb{P}_M^\bullet;x) \ \widetilde{\in}\ \pandiring(A)[x]$.
In this case,
\begin{align} \label{eq:pdim-embedded-degree}
\pdim_A M = \deg_x \IntProf_{\II_A} (\pmb{P}_M^\bullet;x) = \deg_x \IntProfEmb_{\II_A} (\pmb{P}_M^\bullet;x).
\end{align}

\ref{thm:findim-int 1} $\Rightarrow$ \ref{thm:findim-int 2}:
Assume $\findim A = d$. If $M$ satisfies $\IntProf_{\II_A} (\pmb{P}_M^\bullet;x) \ \widetilde{\in}\ \pandiring(A)[x]$,
then $\pdim_A M<\infty$, and, by Lemma \ref{lemm:pdim-int 0812}, we obtain
\[ \deg_x \IntProf_{\II_A} (\pmb{P}_M^\bullet;x) = \pdim_A M \=<  \findim A = d.\]
Thus \ref{thm:findim-int 2} holds.

\ref{thm:findim-int 2} $\Rightarrow$ \ref{thm:findim-int 1}:
Suppose that \ref{thm:findim-int 2} holds for some $d\in\NN$.
For an arbitrary module $0\ne M\in{_A\modcat}$ with a finite projective dimension,
we have known that its embedded total projective integral profile is a polynomial by Lemmas \ref{lemm:import subalg 1} and \ref{lemm:pdim-int 0812}.
Therefore, \eqref{eq:pdim-embedded-degree} implies
\[ \pdim_A M = \deg_x \IntProf_{\II_A} (\pmb{P}_M^\bullet;x) \=< d. \]
By the arbitrariness of $M$, we have $\findim A\=< d<\infty$.
Hence \ref{thm:findim-int 1} holds.

\ref{thm:findim-int 2} $\Leftrightarrow$ \ref{thm:findim-int 3}:
Condition \ref{thm:findim-int 2} says precisely that there exists $d\in\NN$ such that every polynomial in $\fpipset_A$ has
degree at most $d$. This is equivalent to $\fpipset_A \subseteq \pandiring(A)[x]_{\=<d}$.
Thus \ref{thm:findim-int 2} and
\ref{thm:findim-int 3} are equivalent.

\ref{thm:findim-int 2} $\Leftrightarrow$ \ref{thm:findim-int 4}:
Notice that
\[ \IntProfEmb_{\II_A} (\pmb{P}_M^\bullet;x) =
   \sum_{n\>=0} \mathfrak{Emb}_{P_M^n} \left( \IntProf_{\II_A}P_M^n \right)x^n. \]
If this formal power series is a polynomial, then
\begin{center}
$\displaystyle \deg_x \IntProfEmb_{\II_A} (\pmb{P}_M^\bullet;x) \=< d$
if and only if $\displaystyle \mathfrak{Emb}_{P_M^n} \left(\IntProf_{\II_A}P_M^n \right) = 0$
\end{center}
for every $n>d$.
Hence \ref{thm:findim-int 2} and \ref{thm:findim-int 4} are equivalent.
\end{proof}

\begin{remark} \rm
If these equivalent conditions given in Theorem \ref{thm:findim-int} hold,
then $\findim A$ is the least integer $d$ satisfying any of
Theorem \ref{thm:findim-int} \ref{thm:findim-int 2}--\ref{thm:findim-int 4}.
Indeed, by Lemma \ref{lemm:findim-int}, we have $\findim A = \sup \{ \deg_xF(x): F(x)\in\fpipset_A\}$.
If this supremum is finite, then it is an integer and is the least upper bound for the degrees of all elements of
$\fpipset_A$. Therefore, $\findim A$ is precisely the least integer satisfying any of \ref{thm:findim-int 2}--\ref{thm:findim-int 4}.
\end{remark}

\subsection{Homological dimensions vs. regular integral profiles}

For every $1\=< i \=< \dimA$, let $\pmb{t}_i = (1,\ldots,1,\underset{i\text{-th}}2,1,\ldots,1)$,
and use \[\IntProfEmb_{\II_A}A(\pmb{t}_i): =
\mathfrak{Emb}_A\bigg(\IntProf_{\II_A} \id_{\II_A(\pmb{t}_i)} A\bigg)x^0 \in\pandiring(A)[x]
\]
to represent the embedded integral of the left regular module over the restricted integration domain
$\II_A(\pmb{t}_i)=[0,1]a_1+\cdots+[0,1]a_{i-1}+[0,2]a_i+[0,1]a_{i+1}+\cdots+[0,1]a_{\dimA}$,
and call it the \defines{embedded regular integral profile} of $A$. Define
\begin{align}\label{eqdef:recring}
  \recring(A,\pmb{t}_i;1\=< i\=< \dimA)
= \operatorname{span}_{\CC} \bigg\{ \IntProfEmb_{\II_A}A(\pmb{t}_i) :
  1\=< i\=< \dimA \bigg\}
\end{align}
and call it the \defines{reconstruction ring} of $A$.
Then $\recring(A,\pmb{t}_i;1\=< i\=< \dimA)$ is a $\CC$-algebra,
and each element in the basis $\basis{\recring(A,\pmb{t}_i;1\=< i\=< \dimA)}$
$= \{\intProfEmb_{\II_A}A(\pmb{t}_i): 1\=< i\=< \dimA\}$ can be seen as a matrix in $\End_{\CC}(\pandimod^{\oplus\NN})$.


In this subsection, we write $\mathscr R =\recring(A,\pmb{t}_i;1\=<i\=<\dimA)$ and $\mathcal{J}=\rad(\mathscr R)$ for simplicity. Choose a semisimple matrix subalgebra $\mathcal{S}$ of $\mathscr R$ such that
\[\mathscr R=\mathcal{S}\oplus\mathcal{J}\]
as complex vector spaces. Thus $\mathcal{S}\cong\mathscr R/\mathcal{J}$.
All the spaces and maps introduced below are obtained from the matrices $\intProfEmb_{\II_A}A(\pmb{t}_i)$ by matrix multiplication and linear algebra inside $\End_{\CC}(\pandimod^{\oplus\NN})$.

For every $m\in\NN$, let
\[ \mathcal{H}^m_{\II_A}(A) := \Hom_{\mathcal{S}}( \mathcal{J}^{\otimes_{\mathcal{S}}m}, \mathcal{S}),\]
where $\mathcal{J}^{\otimes_{\mathcal{S}}0}=\mathcal{S}$.
For $f\in\mathcal{H}^m_{\II_A}(A)$, define $\delta_m(f)\in\mathcal{H}^{m+1}_{\II_A}(A)$ by
\begin{align}  \label{eq:regular intprof coboundary}
   \delta_m(f)(r_1\otimes\cdots\otimes r_{m+1})
:=  \sum_{k=1}^{m}(-1)^k  f(
  & r_1\otimes\cdots\otimes r_{k-1} \otimes r_kr_{k+1}\nonumber\\
  & \otimes r_{k+2}\otimes\cdots\otimes r_{m+1}),
\end{align}
where $r_1,\ldots,r_{m+1}\in\mathcal{J}$. In particular,
$\delta_0=0$. Since $\delta_{m+1}\compos\delta_m=0$, we obtain a
cochain complex, say \defines{Bar coresolution} of $\mathscr R$, induced by $A$ as follows.
\[0 \longrightarrow \mathcal{H}^0_{\II_A}(A)
    \To{\delta_0} \mathcal{H}^1_{\II_A}(A)
    \To{\delta_1} \mathcal{H}^2_{\II_A}(A)
    \longrightarrow \cdots. \]

\begin{definition}\rm
For every $m\in\NN$, we define
\[ N_m\big(\IntProfEmb_{\II_A}A\big) =\dim\mathcal{H}^m_{\II_A}(A).\]
Choose a basis of $\mathcal{H}^m_{\II_A}(A)$, and let
\[  D_m\big(\IntProfEmb_{\II_A}A\big) \]
be the matrix of $\delta_m$ with respect to the chosen bases. We call it the $m$-th \defines{homological matrix of the embedded regular integral profile}.
\end{definition}

These matrices are directly computable from the values of the regular integral profile. Indeed, if $\{r_1,\ldots,r_q\}$ is a matrix basis of $\mathcal{J}$ and
\[ r_\alpha r_\beta =\sum_{\gamma=1}^{q}c_{\alpha,\beta}^{\gamma}r_\gamma, \]
then every entry of $D_m(\intProfEmb_{\II_A}A)$ is a sum of numbers of the form $0$ or $\pm c_{\alpha,\beta}^{\gamma}$. Consequently, its rank is determined by finite matrix calculations involving only $\intProfEmb_{\II_A}A(\pmb{t}_i)$.

\begin{example}\rm
Let $A=\CC\Q/\I$ given by $1 \To{\alpha_1} 2 \To{\alpha_2} 3 \To{\alpha_3} 4 \To{\alpha_4} 5$ and $\I=\langle \alpha_1\alpha_2\alpha_3$, $\alpha_4\alpha_5 \rangle$. Then $A$ has an ordered basis
\begin{align*}
  \basis{A} & =(e_1,e_2,e_3,e_4,e_5,
  \alpha_1,\alpha_2,\alpha_3,\alpha_4,
  \alpha_1\alpha_2,\alpha_2\alpha_3 ) \\
& = (b_i)_{1\=< i\=< 11}
\end{align*}
and thus $\dimA=11$. The integral profile of $A$ is
\[
\begin{aligned}
\IntProf_{\II_A}A ={}
& \frac{\prod\limits_{j=1}^{11}t_j}{2}
(
  t_1L_{a_1}+t_2L_{a_2}+t_3L_{a_3}
  +t_4L_{a_4}+t_5L_{a_5}\\
& +t_6L_{\alpha_1}
  +t_7L_{\alpha_2}
  +t_8L_{\alpha_3}
  +t_9L_{\alpha_4}\\
& +t_{10}L_{\alpha_1\alpha_2}
  +t_{11}L_{\alpha_2\alpha_3}
).
\end{aligned}
\]
Here, for every
\begin{align*}
a =\, & c_1a_1+c_2a_2+c_3a_3+c_4a_4+c_5a_5 \\
      & +c_{11}\alpha_1+c_{12}\alpha_2+c_{13}\alpha_3+c_{14}\alpha_4 \\
      & +c_{21}\alpha_1\alpha_2+c_{22}\alpha_2\alpha_3,
\end{align*}
the matrix of $L_a: A\to A$ is
\[
L_a=
\left(\begin{smallmatrix}
c_1 & 0 & 0 & 0 & 0 & 0 & 0 & 0 & 0 & 0 & 0 \\
  0 & c_2 & 0 & 0 & 0 & 0 & 0 & 0 & 0 & 0 & 0 \\
  0 & 0 & c_3 & 0 & 0 & 0 & 0 & 0 & 0 & 0 & 0 \\
  0 & 0 & 0 & c_4 & 0 & 0 & 0 & 0 & 0 & 0 & 0 \\
  0 & 0 & 0 & 0 & c_5 & 0 & 0 & 0 & 0 & 0 & 0 \\
  0 & c_{11} & 0 & 0 & 0 & c_1 & 0 & 0 & 0 & 0 & 0 \\
  0 & 0 & c_{12} & 0 & 0 & 0 & c_2 & 0 & 0 & 0 & 0 \\
  0 & 0 & 0 & c_{13} & 0 & 0 & 0 & c_3 & 0 & 0 & 0 \\
  0 & 0 & 0 & 0 & c_{14} & 0 & 0 & 0 & c_4 & 0 & 0 \\
  0 & 0 & c_{21} & 0 & 0 & 0 & c_{11} & 0 & 0 & c_1 & 0\\
  0 & 0 & 0 & c_{22} & 0 & 0 & 0 & c_{12} & 0 & 0 & c_2
\end{smallmatrix}\right),
\]
Moreover, we have
\[\mathcal{J}=
       \CC L_{\alpha_1}
\oplus \CC L_{\alpha_2}
\oplus \CC L_{\alpha_3}
\oplus \CC L_{\alpha_4}
\oplus \CC L_{\alpha_1\alpha_2}
\oplus \CC L_{\alpha_2\alpha_3}
\quad \text{and} \quad
\mathcal{S}=\bigoplus_{i=1}^{5}\CC L_{e_i}\]
such that
\[\mathscr{R}=\mathcal{S}\oplus\mathcal{J}.\]
Under the ordered basis $\basis{A}$, let $\pmb{t}_i=(1,\ldots,1,\underset{i\text{-th}}2,1,\ldots,1)$, then
\[\IntProf_{\II_A}A(\pmb{t}_i)
= L_{b_i+\sum\limits_{j=1}^{5}e_j+
   \sum\limits_{j=1}^{4}\alpha_j+
   \alpha_1\alpha_2+\alpha_2\alpha_3}
= L_{b_i+\sum\limits_{j=1}^{11}b_j}, \]
and further
\[\IntProfEmb_{\II_A}A(\pmb{t}_i)
= \mathfrak{Emb}_A\Big(L_{b_i+\sum\limits_{j=1}^{11}b_j}\Big). \]
Moreover, we have
\[\sum_{i=1}^{11}\IntProf_{\II_A}A(\pmb{t}_i) = 12L_{\sum\limits_{k=1}^{11}b_k}.\]
Thus, as an example, we obtain that
\begin{align*}
    \mathfrak{Emb}_A(L_{\alpha_1})
& = \IntProfEmb_{\II_A}A(\pmb{t}_6)-\frac{1}{12}\sum_{i=1}^{11}\IntProfEmb_{\II_A}A(\pmb{t}_i).
\end{align*}
Next we compute $N_m( \intProfEmb_{\II_A}A)$ and $D_m( \intProfEmb_{\II_A}A)$.
\begin{itemize}
  \item For $m=0$, we have $\mathcal{J}^{\otimes_{\mathcal{S}}0}=\mathcal{S}$ and $N_0( \intProfEmb_{\II_A}A)=5$.
  \item For $m=1$, we have $\mathcal{J}^{\otimes_{\mathcal{S}}1}=\mathcal{J}$ and $N_1( \intProfEmb_{\II_A}A)=6$.
  \item For $m=2$, $\mathcal{J}\otimes_{\mathcal{S}}\mathcal{J}$ has an ordered basis
    \[\{ L_{\alpha_1}\otimes L_{\alpha_2}, \,
         L_{\alpha_1}\otimes L_{\alpha_2\alpha_3}, \,
         L_{\alpha_2}\otimes L_{\alpha_3}, \,
         L_{\alpha_3}\otimes L_{\alpha_4}, \,
         L_{\alpha_1\alpha_2}\otimes L_{\alpha_3}, \,
         L_{\alpha_2\alpha_3}\otimes L_{\alpha_4} \}, \]
    then $N_2( \intProfEmb_{\II_A}A)=6$.
  \item For $m=3$, $\mathcal{J}^{\otimes_{\mathcal{S}}3}$ has an ordered basis
    \[\{ L_{\alpha_1}\otimes L_{\alpha_2}\otimes L_{\alpha_3},
         L_{\alpha_2}\otimes L_{\alpha_3}\otimes L_{\alpha_4},
         L_{\alpha_1\alpha_2}\otimes L_{\alpha_3}\otimes L_{\alpha_4},
         L_{\alpha_1}\otimes L_{\alpha_2\alpha_3}\otimes L_{\alpha_4}\}, \]
    then $N_3( \intProfEmb_{\II_A}A)=4$.
  \item For $m=4$, $\mathcal{J}^{\otimes_{\mathcal{S}}4}$ has an ordered basis
    \[\{L_{\alpha_1}\otimes L_{\alpha_2}\otimes L_{\alpha_3}\otimes L_{\alpha_4}\},\]
    then $N_4( \intProfEmb_{\II_A}A)=1$.
  \item For $m\>=5$, we have $\mathcal{J}^{\otimes_{\mathcal{S}}m}=0$, a trivial case.
\end{itemize}
Thus, $N_0=5$, $N_1=6$, $N_2=6$, $N_3=4$, $N_4=1$, $N_{\>=5}=0$. Under the ordered bases of $\mathcal{J}^{\otimes_{\mathcal{S}}m}$ as above (to be precise, we need to consider the dual bases of these bases), we have the following facts.
\begin{itemize}
  \item $D_0(\IntProfEmb_{\II_A}A) = \mathbf{0}_{6\times 5}$.
  \item $D_1(\IntProfEmb_{\II_A}A) =
    \left(\begin{smallmatrix}
      0 & 0 & 0 & 0 &-1 & 0 \\
      0 & 0 & 0 & 0 & 0 & 0 \\
      0 & 0 & 0 & 0 & 0 &-1 \\
      0 & 0 & 0 & 0 & 0 & 0 \\
      0 & 0 & 0 & 0 & 0 & 0 \\
      0 & 0 & 0 & 0 & 0 & 0
    \end{smallmatrix}
    \right)_{6\times 6}$.
  \item $D_2(\IntProfEmb_{\II_A}A) =
    \left(\begin{smallmatrix}
      0 & 1 & 0 & 0 &-1 & 0 \\
      0 & 0 & 0 & 0 & 0 &-1 \\
      0 & 0 & 0 & 0 & 0 & 0 \\
      0 & 0 & 0 & 0 & 0 & 0
    \end{smallmatrix}\right)_{4\times 6}$.
  \item $D_3(\IntProfEmb_{\II_A}A) = \left(\begin{smallmatrix}0 & 0 & -1 & 1\end{smallmatrix}\right)_{1\times 4}$.
  \item $D_4(\IntProfEmb_{\II_A}A) =  0 $.
\end{itemize}
\end{example}

\begin{lemma}\label{lemm:regular intprof Ext rank}
For every $m\in\NN$, we have
\begin{align}\label{eq:regular intprof Ext rank}
  \dim_{\CC}\Ext_A^m(\top(A),\top(A))
  ={} & N_m\big(\IntProfEmb_{\II_A}A\big)\nonumber\\
      &-\rank D_{m-1}\big(\IntProfEmb_{\II_A}A\big)
       -\rank D_m\big(\IntProfEmb_{\II_A}A\big),
\end{align}
where the term containing $D_{-1}$ is omitted when $m=0$.
\end{lemma}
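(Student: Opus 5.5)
The plan is to identify the reconstruction ring $\mathscr R$ with $A$ itself, so that the complex $(\mathcal H^\bullet_{\II_A}(A),\delta_\bullet)$ becomes the dual of the reduced (normalized) bar resolution of $\top(A)=A/\rad A$ relative to the separable subalgebra $A/\rad A$. Formula \eqref{eq:regular intprof Ext rank} is then rank--nullity applied to that complex.

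First, I would show $\mathscr R\cong A$ as algebras. By Theorem \ref{thm:param-int-module} and Lemma \ref{lemm:param-barycenter}, $\intProf_{\II_A}A(\pmb t_i)=\rho_A(c_{\II_A(\pmb t_i)})$, where $\rho_A(a)=L_a$ is left multiplication. A direct computation, exactly as in the example, gives $c_{\II_A(\pmb t_i)}=b_i+\sum_j b_j$ up to a common nonzero scalar. Summing over $i$ recovers $\sum_j b_j$, and subtracting then recovers each $b_i$. Hence the span of these elements equals $\mathfrak{Emb}_A(\rho_A(A))$. Since $\rho_A$ is injective (because $1_A\in A$) and $\mathfrak{Emb}_A$ is an injective algebra homomorphism by Lemma \ref{lemm:import emb}, we get $\mathscr R\cong A$. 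Under this isomorphism $\mathcal J\cong\rad A$, and $\mathcal S$ is a Wedderburn--Malcev complement isomorphic to $A/\rad A$. For the basic algebra $\CC\Q/\I$ one may take $\mathcal S=\bigoplus_v\CC L_{e_v}$.

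Second, I would interpret $\mathcal H^m_{\II_A}(A)=\Hom_{\mathcal S}(\mathcal J^{\otimes_{\mathcal S}m},\mathcal S)$, with $\Hom$ taken over $\mathcal S$-bimodules, as $\Hom_A(\bar B_m,\top(A))$. Here $\bar B_m=A\otimes_{\mathcal S}\mathcal J^{\otimes_{\mathcal S}m}\otimes_{\mathcal S}\mathcal S$ is the $m$-th term of the $\mathcal S$-relative normalized bar resolution of $\top(A)=A\otimes_A\mathcal S$. Each $\bar B_m$ is $A$-projective, because $\mathcal S$ is semisimple and hence $\mathcal J^{\otimes_{\mathcal S} m}$ is $\mathcal S$-projective. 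The bar differential has two outer terms, $a\,r_1\otimes\cdots$ and $\cdots\otimes r_m\cdot s$. After applying $\Hom_A(-,\top(A))$, both die: the first because $\mathcal J$ annihilates $\top(A)$, the second because $r_m s$ lies in $\mathcal J$ acting on $\top(A)$. What survives is exactly the alternating sum of inner multiplications in \eqref{eq:regular intprof coboundary}. I will also check $\delta_{m+1}\delta_m=0$ directly from associativity of the product $r_kr_{k+1}$ in $\mathcal J$.

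Third, I would invoke the standard fact that, since $\mathcal S$ is a separable $\CC$-algebra, the $\mathcal S$-relative bar complex is an honest projective resolution of $\top(A)$ over $A$. Hence $H^m(\mathcal H^\bullet,\delta)\cong\Ext^m_A(\top(A),\top(A))$. Then $\dim H^m=\dim\Ker\delta_m-\dim\Ima\delta_{m-1}=N_m-\rank D_m-\rank D_{m-1}$, with $\delta_{-1}$ omitted when $m=0$; this is \eqref{eq:regular intprof Ext rank}.

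The main obstacle is the third step. I must verify exactness of the relative bar complex, for instance via the contracting homotopy $a\otimes r_1\otimes\cdots\mapsto 1\otimes\bar a\otimes r_1\otimes\cdots$, which is only $\mathcal S$-linear, and show that the normalized version using $\mathcal J=A/\mathcal S$ in place of $A$ is quasi-isomorphic. I would first try the direct argument that $\bar B_\bullet$ is the normalized bar complex $B(A,\mathcal S,\top(A))$ with the identification $A/\mathcal S\cong\mathcal J$, and cite the classical result of Cibils/Hochschild on relative homological algebra over a separable subalgebra. A secondary point is whether the differential in \eqref{eq:regular intprof coboundary} uses the sign convention consistent with the bar complex; signs do not affect ranks, so this is harmless.
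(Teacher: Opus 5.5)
Your overall route is the paper's. You identify $\mathscr R$ with $A$ (Fact~\ref{fact:regular 0814}) and resolve $\top(A)$ by the $\mathcal S$-relative normalized bar resolution with terms $\mathscr R\otimes_{\mathcal S}\mathcal J^{\otimes_{\mathcal S}m}$. You then apply $\Hom(-,\top(A))$ so that only the inner faces survive, and finish by rank--nullity. Your contracting homotopy and the projectivity argument via semisimplicity of $\mathcal S$ supply what the paper only asserts, and that part is fine. Exactness needs only the $\CC$-linear homotopy; semisimplicity is what makes the terms projective.

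There is, however, one concrete error that makes your second step false as written. $\mathcal H^m_{\II_A}(A)=\Hom_{\mathcal S}(\mathcal J^{\otimes_{\mathcal S}m},\mathcal S)$ must consist of homomorphisms of \emph{left} $\mathcal S$-modules, not of $\mathcal S$-bimodules.
\begin{itemize}
\item The adjunction $\Hom_A(A\otimes_{\mathcal S}X,\top(A))\cong\Hom_{\mathcal S}(X,\top(A))$ uses only the left $\mathcal S$-structure of $X$. Imposing right $\mathcal S$-linearity as well cuts out a proper subspace.
\item With bimodule maps, your cochain complex is the $\mathcal S$-relative normalized Hochschild complex of $A$ with coefficients in $A/\rad A$; its outer terms vanish for the same reason. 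Its cohomology is therefore $\mathrm{HH}^m(A,A/\rad A)\cong\bigoplus_{v\in\Q_0}\Ext^m_A(S_v,S_v)$, which misses every $\Ext^m_A(S_u,S_v)$ with $u\neq v$.
\item Concretely, in the paper's example on $1\to2\to3\to4\to5$ one has $e_v\mathcal J e_v=0$, and $e_u\mathcal S e_v=0$ for $u\neq v$. This forces $\Hom_{\mathcal S\text{-}\mathcal S}(\mathcal J,\mathcal S)=0$, whereas the paper has $N_1=6$ and $\dim_{\CC}\Ext^1_A(\top(A),\top(A))=4$.
\end{itemize}
Replace ``bimodules'' by ``left modules'' and the rest of your argument goes through.

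A smaller point of the same kind concerns the last face $\cdots\otimes r_m\cdot s$. It vanishes already in the resolution because $r_m\in\mathcal J$ acts by zero on the \emph{module} $\top(A)$. The product $r_ms$ formed inside $\mathscr R$ lies in $\mathcal J$ but need not be zero, so do not compute that face in $\mathscr R$.
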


\begin{proof}
Consider the relative normalized Bar resolution of the left $\mathscr R$-module $\mathcal{S}$ whose $m$-th term is
$\mathscr R\otimes_{\mathcal{S}} \mathcal{J}^{\otimes_{\mathcal{S}}m}$.
Since $\mathcal{S}$ is semisimple, every term is a projective left $\mathscr R$-module.
Applying $\Hom_{\mathscr R}(-,\mathcal{S})$ gives the cochain complex $(\mathcal{H}^m_{\II_A}(A),\delta_m)_{m\in\NN}$.
Indeed, the first term in the Bar differential vanishes after applying
$\Hom_{\mathscr R}(-,\mathcal{S})$ because $\mathcal{S}$ is regarded as an
$\mathscr R$-module through the quotient
$\mathscr R\to\mathscr R/\mathcal{J}\cong\mathcal{S}$, and hence
$\mathcal{J}$ acts trivially on $\mathcal{S}$. The remaining terms give precisely
\eqref{eq:regular intprof coboundary}. Hence, by Fact \ref{fact:regular 0814}, we have
\[\mathrm{H}^m(\mathcal{H}^\bullet_{\II_A}(A))
  \cong \Ext_{\mathscr R}^m(\mathcal{S},\mathcal{S})
  \cong \Ext_A^m(\top(A),\top(A)). \]
Finally, by $\dim_{\CC}H^m =\dim_{\CC}\mathcal{H}^m_{\II_A}(A) - \rank\delta_{m-1}-\rank\delta_m$, we obtain this lemma.
\end{proof}

The following theorem reads the global dimension of $A$ directly from
the ranks of the homological matrices of its regular integral profile.

\begin{theorem}\label{thm:regular intprof rank gldim}
For every $m\in\NN^{>0}$, the following statements are equivalent:
\begin{enumerate}[label={\rm(\arabic*)}]
  \item $\gldim A\=<m-1$;
    \label{thm:regular intprof rank gldim 1}
  \item $\rank D_{m-1}\big(\IntProfEmb_{\II_A}A\big)
    +\rank D_m\big(\IntProfEmb_{\II_A}A\big)
    =N_m\big(\IntProfEmb_{\II_A}A\big).$
    \label{thm:regular intprof rank gldim 2}
\end{enumerate}
Consequently,
\begin{align} \label{eq:regular intprof read gldim}
  \gldim A = \min\bigg\{m\in\NN^{>0}: ~
& \rank D_{m-1}\big(\IntProfEmb_{\II_A}A\big)+\rank D_m\big(\IntProfEmb_{\II_A}A\big) \nonumber \\
& \qquad =N_m\big(\IntProfEmb_{\II_A}A\big) \bigg\}-1,
\end{align}
where the right-hand side is understood to be $\infty$ if the set is
empty.
\end{theorem}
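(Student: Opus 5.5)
The plan is to reduce everything to Lemma \ref{lemm:regular intprof Ext rank} and to the standard fact that, for a finite-dimensional basic algebra $A$, the global dimension is detected by self-extensions of the top. More precisely, $\gldim A=\max_{v}\pdim_A S_v$ by \eqref{eq:gldim-simp mod}. Take a minimal projective resolution of $S_v$. The multiplicity of $P(w)$ in its $n$-th term equals $\dim\Ext^n_A(S_v,S_w)$, because $\Hom_A(P^n_{S_v},S_w)\cong\Ext^n_A(S_v,S_w)$ when the resolution is minimal. Hence $\pdim_A S_v\le k$ if and only if $\Ext^{n}_A(S_v,\top(A))=0$ for all $n>k$.

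The first step is to show that $\Ext^{m}_A(\top(A),\top(A))=0$ implies $\Ext^{n}_A(\top(A),\top(A))=0$ for every $n\ge m$. I would argue this via the minimal resolution. The vanishing at $m$ means $P^m_{\top(A)}=0$. Minimality then forces all later terms to vanish, so the resolution stops. Consequently
\[\gldim A\le m-1 \iff \Ext^m_A(\top(A),\top(A))=0.\]
Both directions are immediate from the description above: if $\gldim A\le m-1$ then $\Ext^m$ vanishes, and conversely vanishing of $\Ext^m$ kills $P^m$ and every later term.

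The second step is to substitute Lemma \ref{lemm:regular intprof Ext rank}. It identifies $\dim\Ext^m_A(\top(A),\top(A))$ with $N_m-\rank D_{m-1}-\rank D_m$. So the vanishing of $\Ext^m$ is exactly condition \ref{thm:regular intprof rank gldim 2}, which proves the equivalence of \ref{thm:regular intprof rank gldim 1} and \ref{thm:regular intprof rank gldim 2} for every $m\ge1$.

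The formula \eqref{eq:regular intprof read gldim} then follows. The set of $m\ge1$ satisfying \ref{thm:regular intprof rank gldim 2} is the set $\{m:\ m\ge \gldim A+1\}$. Its minimum is therefore $\gldim A+1$. The set is empty exactly when $\gldim A=\infty$.

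The main obstacle is the validity of the identification used inside Lemma \ref{lemm:regular intprof Ext rank}, namely $\Ext_{\mathscr R}(\mathcal S,\mathcal S)\cong\Ext_A(\top A,\top A)$, which rests on the cited Fact. I would simply invoke that lemma as stated. I would also note that the case $m=0$ is excluded, so the term involving $D_{m-1}$ is always present.
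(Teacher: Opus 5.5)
Your proposal is correct and follows essentially the same route as the paper. Condition (2) is reduced via Lemma~\ref{lemm:regular intprof Ext rank} to $\Ext^m_A(\top(A),\top(A))=0$. Minimality of the projective resolution of $\top(A)$ makes this vanishing equivalent to its $m$-th term being zero, i.e.\ to $\pdim_A\top(A)=\gldim A\le m-1$.

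Your explicit remark that $P^m=0$ forces all later terms of the minimal resolution to vanish is exactly why the set in \eqref{eq:regular intprof read gldim} is upward closed with minimum $\gldim A+1$. The paper leaves this point implicit.
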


\begin{proof}
By Lemma~\ref{lemm:regular intprof Ext rank}, \ref{thm:regular intprof rank gldim 2} holds if and only if $\Ext_A^m(\top(A),\top(A))=0$.
Let $ \cdots \to Q^2\to Q^1 \to Q^0 \to \top(A)\to 0$ be the minimal projective resolution of $\top(A)$.
Since the resolution is minimal, applying $\Hom_A(-,\top(A))$ gives zero differentials. Thus
\[
  \Ext_A^m(\top(A),\top(A)) \cong\Hom_A(Q^m,\top(A)).
\]
If $Q^m\ne0$, then $Q^m$ has a nonzero simple top, and hence $\Hom_A(Q^m,\top(A))\ne0$.
Therefore $\Ext_A^m(\top(A),\top(A))=0$ if and only if $Q^m=0$.
The latter is equivalent to $\pdim_A\top(A)\=<m-1$.
Since $S(v) \isods \top(A)$ holds for every simple left $A$-module $S(v)$ ($v\in\Q_0$),
we have $\gldim A=\pdim_A\top(A)$ by \eqref{eq:gldim-simp mod}.
This proves the equivalence and formula \eqref{eq:regular intprof read gldim}.
\end{proof}

\subsection{Two sufficient conditions for finitistic dimension to be finite}
\label{subsect:Suffi condi for fdim}

{\addtext
For a finite-dimensional algebra $A$,
keep the fixed embedding system from Assumption \ref{assu:fix emb sys}.
Define a formal differential operator \[\theta_x:=x\frac{\mathrm d}{\mathrm dx},\]
and for each element $\sum\limits_{n\>= 0}C_nx^n \in \pandiring(A)[[x]]$ ($C_n \in \pandiring(A)$),
the formal differential operator $\theta_x$ admits a correspondence
\[ \theta_x: \sum_{n\>= 0}C_nx^n \mapsto \sum_{n\>= 0}C_n\theta_x(x^n) = \sum_{n\>= 0}nC_nx^n. \]
Here, the parameter vector $\pmb t$ in each coefficient is unchanged.
Then for every $p(z)\in\CC[z]$ and $j\in\ZZ_{\>=0}$, we have
\[  x^jp(\theta_x)(C_nx^n) = x^j\cdot p(n)C_nx^n = p(n)C_nx^{n+j}. \]
We use the convention $\deg_x 0 = -\infty$ in this subsection.

Furthermore, assume that the fixed embedding system is compatible orthogonal
in the sense of Definition \ref{def:compatible orth emb sys},
and let $Q_1,Q_2,\ldots,Q_r$ be representatives of the isomorphism classes of indecomposable projective left $A$-modules.
Recall that the element
\[ \big(F_{Q_i,1}: (\RR^{\>=0})^{\times\dimA}\to \End_{\CC}(\pandimod^{\oplus\NN})\big)
   \in \pandiring(A)\subseteq\widetilde{\pandiring}(A)\]
is given in Subsection \ref{subsect:Morita inv icring}, where $\dimA$ is the dimension of $A$.
The element $F_{Q_i,1}$ is supported on the first fixed copy of $Q_i$, i.e.,
for each $1\=< i\=< r$, $F_{Q_i,1}$ is defined by
\[ F_{Q_i,1}(\pmb t) |_{X^{(\ell)}}=
 \begin{cases}
  \IntProf_{\II_A}Q_i,
     & (X,\ell)=(Q_i,1),\\
  0, & (X,\ell)\ne(Q_i,1),
 \end{cases}
\]
see \eqref{def formula: F-int}.

\begin{definition}\rm\label{def:linearized embedded profile}
Let $P$ be a finitely generated projective left $A$-module. Write
\[ P \cong \bigoplus\limits_{i=1}^r Q_i^{\oplus m_i(P)}. \]
The \defines{linearized embedded integral profile} of $P$ is
\[ (\mathcal L) \IntProf_{\II_A}P := \sum_{i=1}^r m_i(P)F_{Q_i,1}\in\pandiring(A). \]
For a left $A$-module $M$ with minimal projective resolution $\pmb P_M^\bullet$, define its
\defines{total linearized embedded projective integral profile} by
\[ (\mathcal L) \IntProfEmb_{\II_A} (\pmb P_M^\bullet;x)
 := \sum_{n\>=0} \bigg((\mathcal L) \IntProf_{\II_A}P_M^n\bigg)x^n \in \pandiring(A)[[x]]. \]
\end{definition}

For all projective $A$-modules $P$ and $P'$ and all $a,b\in\NN$, one has
\begin{equation*}
  (\mathcal L) \IntProf_{\II_A} (P^{\oplus a}\oplus {P'}^{\oplus b})
= a \cdot (\mathcal L) \IntProf_{\II_A} P + b \cdot (\mathcal L) \IntProf_{\II_A} P'\checks{.}
\end{equation*}

\begin{lemma}\label{lemm:linear prof add}
Let $P$ and $Q$ be finitely generated projective left $A$-modules. Then:
\begin{enumerate}[label={\rm(\arabic*)}]
 \item $(\mathcal L) \IntProf_{\II_A}P=0$ if and only if $P=0$;
   \label{lemm:linear prof add 1}
 \item $(\mathcal L) \IntProf_{\II_A}(P\oplus Q)
   = (\mathcal L) \IntProf_{\II_A}P
   + (\mathcal L) \IntProf_{\II_A}Q$.
   \label{lemm:linear prof add 2}
\end{enumerate}
Here, $[P]$ denotes the isomorphism class of $P$ in the split Grothendieck group $K_0^{\mathrm{split}}({_A\proj})$.
Then the map
\[ [P]\longmapsto (\mathcal L) \IntProf_{\II_A} P \]
defines an injective homomorphism from $K_0^{\mathrm{split}}({_A\proj})$ of finitely generated projective left $A$-modules to the additive group of $\pandiring(A)$.
\end{lemma}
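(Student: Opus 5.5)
The argument rests on the structure of $\pandiring(A)$ from Proposition~\ref{prop:orth decomposition icring} together with Krull--Schmidt. Throughout, the fixed embedding system is compatible orthogonal, as assumed in Definition~\ref{def:linearized embedded profile}.

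Part \ref{lemm:linear prof add 2} comes first, since it is essentially bookkeeping. By Krull--Schmidt the multiplicities satisfy $m_i(P\oplus Q)=m_i(P)+m_i(Q)$ for every $1\=<i\=<r$, because the decomposition of $P\oplus Q$ into indecomposables is the union of the decompositions of $P$ and of $Q$. Linearity of the defining sum $\sum_i m_i(\cdot)F_{Q_i,1}$ in the multiplicities then gives additivity at once. Well-definedness on isomorphism classes also holds, because the $m_i(P)$ are isomorphism invariants. Hence $[P]\mapsto(\mathcal L)\IntProf_{\II_A}P$ descends to a monoid homomorphism on isomorphism classes. Since $K_0^{\mathrm{split}}({_A\proj})$ is the group completion and is free abelian on $[Q_1],\ldots,[Q_r]$, this homomorphism extends uniquely to a group homomorphism, namely $\sum_i n_i[Q_i]\mapsto\sum_i n_iF_{Q_i,1}$.

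For part \ref{lemm:linear prof add 1} and injectivity, the key claim is that $F_{Q_1,1},\ldots,F_{Q_r,1}$ are $\CC$-linearly independent in $\widetilde{\pandiring}(A)$. Proposition~\ref{prop:orth decomposition icring} already shows that the sum $\bigoplus_{X,r}\langle F_{X,r}\rangle$ is direct. A more hands-on route is also available. Suppose $\sum_i n_iF_{Q_i,1}=0$. Restricting to the fixed copy $Q_j^{(1)}$ kills every term except $n_j\IntProf_{\II_A}Q_j$, because the copies are pairwise orthogonal and the $Q_i$ are pairwise non-isomorphic, so the first copies are distinct. So it remains to show $\IntProf_{\II_A}Q_j\neq0$. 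This follows as in the proof of Lemma~\ref{lemm:pdim-int 0812}: if the profile vanished identically, Corollary~\ref{cor:param-int-module} would give $\rho_{Q_j}(a_i)=0$ for all $i$, hence $\ident_{Q_j}=\rho_{Q_j}(1_A)=0$, contradicting $Q_j\neq0$. Therefore all $n_j=0$, which proves injectivity on $K_0^{\mathrm{split}}$. Part \ref{lemm:linear prof add 1} follows immediately: if $(\mathcal L)\IntProf_{\II_A}P=0$ then every $m_i(P)=0$, so $P=0$, and the converse is trivial.

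The main point to handle carefully is this linear-independence step. One must make sure the restriction to $Q_j^{(1)}$ is legitimate, i.e.\ that $F_{Q_i,1}$ is zero on all copies other than $Q_i^{(1)}$, as \eqref{def formula: F-int} states, and that distinct indecomposables give distinct copies. If one prefers to stay within stated results, it can instead be cited directly from the directness of the sum in Proposition~\ref{prop:orth decomposition icring} together with Lemma~\ref{lemm:cyclic int coeff component}, which gives $F_{X,r}\neq0$.
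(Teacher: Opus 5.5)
Your proposal is correct and follows essentially the same route as the paper: additivity comes from the Krull--Schmidt multiplicities, and part (1) and injectivity come from the directness of $\bigoplus_{X,r}\langle F_{X,r}\rangle$ in Proposition~\ref{prop:orth decomposition icring} together with $F_{Q_i,1}\neq 0$ from Lemma~\ref{lemm:cyclic int coeff component}. Your explicit restriction to the copy $Q_j^{(1)}$, and your kernel check on formal differences in $K_0^{\mathrm{split}}$, simply spell out details that the paper's shorter argument leaves implicit.
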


\begin{proof}
By the Krull--Schmidt decomposition, we have $m_i(P\oplus Q)=m_i(P)+m_i(Q)$ for every $i$.
Thus \ref{lemm:linear prof add 2} holds.
Proposition \ref{prop:orth decomposition icring} shows that
$F_{Q_i,1} \in \langle F_{Q_i,1} \rangle \le_{\oplus} \pandiring(A)$ for all $1\=< i\=< r$,
and by Lemma \ref{lemm:cyclic int coeff component}, each of them is nonzero.
Therefore, $\sum\limits_{i=1}^r m_i(P)F_{Q_i,1}=0$ if and only if $m_i(P)=0$ for every $i$.
Thus, \ref{lemm:linear prof add 1} holds.
Moreover, if $(\mathcal L)\intProf_{\II_A}P=(\mathcal L)\intProf_{\II_A}Q$, then the direct-sum decomposition in Proposition \ref{prop:orth decomposition icring} gives $m_i(P)=m_i(Q)$ for every $i$. Hence $[P]=[Q]$, which proves the last assertion.
\end{proof}

For $p_0(z),\ldots, p_k(z)\in\CC[z]$, define the formal differential operator
\[ L_{p_0,p_1,\ldots, p_k} := \sum_{j=0}^{k}x^jp_j(\theta_x) = \sum_{j=0}^{k}x^jp_j(x\frac{\dd}{\dd x}). \]
We have the following result.

\begin{theorem}\label{thm:diff-findim}
Let $A$ be a nonzero finite-dimensional complex algebra.
Suppose that there are integers $k,s \>= 0$ and polynomials $p_0(z),\ldots, p_k(z)\in\CC[z]$ $(p_k\ne0)$,
such that one of the following conditions holds for every $0\ne M\in{_A\modcat}$:
\begin{enumerate}[label={\rm(\arabic*)}]
  \item $\displaystyle R_M(x) := L_{p_0,p_1,\ldots, p_k} \IntProfEmb_{\II_A}(\pmb P_M^\bullet;x) \in\pandiring(A)[x],
  \quad\text{and}\quad \deg_xR_M(x) \=< s,$
\label{eq:diff findim 1}

  \item $\displaystyle R_M^{\mathrm{lin}}(x)
 := L_{p_0,p_1,\ldots, p_k} \bigg(
 (\mathcal L) \IntProfEmb_{\II_A} (\pmb P_M^\bullet;x) \bigg)
 \in\pandiring(A)[x] \text{ and }
 \deg_xR_M^{\mathrm{lin}}(x)\=<s.$
\label{eq:diff findim 2}
\end{enumerate}
Here, $k$, $s$ and $p_0,\ldots,p_k$ are independent of $M$.
Then \[\findim A\=< \max (\{0,s-k\}\cup \{n\in\ZZ_{\>= 0}:p_k(n)=0\} ) <\infty. \]
\end{theorem}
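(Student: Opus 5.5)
Fix $M\neq 0$ with $\pdim_A M=d<\infty$. The plan is to show $d\le N:=\max(\{0,s-k\}\cup\{n\in\ZZ_{\ge0}:p_k(n)=0\})$, which proves the bound on $\findim A$. Modules of infinite projective dimension play no role in $\findim A$, so the hypothesis will only be used for modules of finite projective dimension. Write $C_n:=\mathfrak{Emb}_{P_M^n}(\IntProf_{\II_A}P_M^n)$ in case (1) and $C_n:=(\mathcal L)\IntProf_{\II_A}P_M^n$ in case (2). Then the relevant series is $F(x)=\sum_{n=0}^{d}C_nx^n$.

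By Lemma \ref{lemm:pdim-int 0812}, $P_M^d\neq0$ and $P_M^n=0$ for $n>d$. This gives $C_n=0$ for $n>d$. It also gives $C_d\neq0$: in case (1) the profile $\IntProf_{\II_A}P_M^d$ is nonzero (as shown in that lemma, since $\rho(1_A)=\ident\ne 0$) and $\mathfrak{Emb}$ is injective by Lemma \ref{lemm:import emb}; in case (2) we use Lemma \ref{lemm:linear prof add}\ref{lemm:linear prof add 1}.

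Next, compute $L_{p_0,\ldots,p_k}F$ using $x^jp_j(\theta_x)(C_nx^n)=p_j(n)C_nx^{n+j}$:
\[ L_{p_0,\ldots,p_k}F(x)=\sum_{m}\Big(\sum_{j=0}^{k}p_j(m-j)C_{m-j}\Big)x^m. \]
The coefficient of $x^{d+k}$ receives a contribution only from $j=k$, $n=d$, because $C_n=0$ for $n>d$. Hence this coefficient is exactly $p_k(d)C_d$.

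Now argue by contradiction. Suppose $d>N$. Then $p_k(d)\neq0$ since $d$ is not a root of $p_k$. Since $p_k(d)\in\CC^\times$ and $C_d\neq0$ in the $\CC$-algebra $\widetilde{\pandiring}(A)$, the coefficient $p_k(d)C_d$ is nonzero. This gives $\deg_x R_M\ge d+k$. On the other hand, $d>s-k$ gives $d+k>s$, contradicting $\deg_xR_M\le s$. Therefore $d\le N$. Taking the supremum over all $M$ of finite projective dimension gives $\findim A\le N$, and $N<\infty$ because $p_k\neq0$ has finitely many roots.

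The main obstacle is to make sure the top coefficient is computed correctly and does not cancel. The argument uses two facts: the $C_n$ vanish beyond $d$, so no other pair $(j,n)$ contributes to $x^{d+k}$; and scalar multiplication by a nonzero complex number does not kill $C_d$. The remaining detail is confirming $C_d\neq0$ in both the embedded and linearized settings, via the injectivity statements cited above.
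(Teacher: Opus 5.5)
Your proposal is correct and is essentially the paper's proof. Both isolate the coefficient $p_k(d)C_d$ of $x^{d+k}$, with $C_d\neq 0$ coming from Lemma~\ref{lemm:pdim-int 0812} plus injectivity of $\mathfrak{Emb}$ in case (1) and from Lemma~\ref{lemm:linear prof add} in case (2). You argue by contradiction from $d>N$, whereas the paper splits into the cases $d+k>s$ (forcing $p_k(d)=0$) and $d\le s-k$; this difference is only cosmetic.
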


\begin{proof}
We write $b=\max (\{0,s-k\}\cup \{n\in\ZZ_{\>= 0}:p_k(n)=0\} )$ in this proof.

\ref{eq:diff findim 1}:
Let $0\ne M\in{_A\modcat}$ satisfy $\pdim_A M=d<\infty$.
By Lemma \ref{lemm:pdim-int 0812} and the injectivity of $\mathfrak{Emb}_{P_M^d}$,
its embedded total projective integral profile is a polynomial of degree $d$.
Its leading coefficient is $\mathfrak{Emb}_{P_M^d} (\intProf_{\II_A}P_M^d)\ne 0$.
The coefficient of $x^{d+k}$ on the left-hand side of \ref{eq:diff findim 1} is
$p_k(d) \mathfrak{Emb}_{P_M^d} (\intProf_{\II_A}P_M^d)$.
Indeed, the terms with $j<k$ have degree at most $d+k-1$.
If $d+k>s$, then $\deg_xR_M(x)\=< s$ implies that the coefficient of $x^{d+k}$ in $R_M(x)$ is zero. Hence $p_k(d)=0$.
Otherwise, $d\=< s-k$. Thus, $d\=< b$ in both cases.
Taking the supremum over all modules of finite projective dimension gives $\findim A\=< b$.
Since $p_k$ is a nonzero polynomial, it has only finitely many non-negative integer roots.
Thus, $b<\infty$.

\ref{eq:diff findim 2}:
Let $0\ne M\in{_A\modcat}$ and suppose that $\pdim_AM=d<\infty$. Then
\[ (\mathcal L) \IntProfEmb_{\II_A} (\pmb P_M^\bullet;x)
= \sum_{n=0}^d \bigg((\mathcal L) \IntProf_{\II_A}P_M^n \bigg)x^n \]
is a polynomial of degree $d$ ($<\infty$). Here, $P_M^d \ne 0$,
and then, by Lemma \ref{lemm:linear prof add}, $(\mathcal L) \intProf_{\II_A} P_M^d \ne 0$.
The coefficient of $x^{d+k}$ on the left-hand side of \ref{eq:diff findim 2} is
$p_k(d) \cdot (\mathcal L) \intProf_{\II_A} P_M^d$.
If $d+k>s$, then $\deg_xR_M^{\mathrm{lin}}(x)\=<s$ implies that
the coefficient of $x^{d+k}$ in $R_M^{\mathrm{lin}}(x)$ is zero.
Thus, $p_k(d)\cdot(\mathcal L)\intProf_{\II_A}P_M^d=0$.
Since $(\mathcal L)\intProf_{\II_A}P_M^d\ne0$, we have $p_k(d)=0$.
Then we have $p_k(d)=0$ immediately.
If $d+k\=<s$, then $d\=<s-k$. Thus, $d \=< b$.
Taking the supremum over all modules of finite projective dimension gives the required bound. The set of nonnegative integral roots of $p_k \in \CC[z]$ is finite because $p_k\ne 0$.
\end{proof}

It is open that $\findim A$ is finite for every finite-dimensional algebra $A$ satisfying $\rad^4 A=0$.
Next, we give an example showing that Theorem \ref{thm:diff-findim} applies to an algebra of this kind.

\begin{example}\rm\label{ex:two sided radical four}
Let $A$ be the finite-dimensional algebra given by \eqref{eq:A 2609100913}.
Then it satisfies the following conditions:
\begin{itemize}
  \item $\rad^4(A)=0\ne\rad^3(A)$;
  \item $A$ is not self-injective;
  \item $A$ is syzygy-infinite;
  \item $A$ is connected;
  \item $A$ is non-monomial;
  \item and $A$ has an infinite global dimension,
\end{itemize}
see Section \ref{appendix 3: an alg}. Define
\[ q(x)=1+c_8x+\cdots+c_1x^8+c_0x^9 =\sum_{j=0}^9q_jx^j,\]
where $c_0$, $c_1, \ldots, c_8$, and $c_9=1$ are coefficients given in \eqref{eq:proj recur 260911}.
Choose a compatible orthogonal fixed embedding system.
By Lemma \ref{lemm:linear prof add} and \eqref{eq:proj recur 260911}, we have
\[ Q_M(x) = q(x)\cdot(\mathcal L)\IntProfEmb_{\II_A}(\pmb P_M^\bullet;x) \in\pandiring(A)[x], \quad \deg_xQ_M \=< 9. \]
Applying $\displaystyle \theta_x=x\frac{\dd}{\dd x}$ to the above equation, we obtain
\[ R_M(x) = \bigg({\sum_{j=0}^9q_jx^j(\theta_x+j)}\bigg)
 \cdot(\mathcal L)  \IntProfEmb_{\II_A} (\pmb P_M^\bullet;x) = \theta_xQ_M(x),
 \quad \deg_xR_M \=< 9. \]
Thus, we have $k=s=9$ and $p_j(z)=q_j(z+j)$ $(0 \=< j \=< 9)$.
Since $q_9=c_0=\mu_3 > 0$, the leading polynomial is $p_9(z)=\mu_3(z+9)$.
It has no nonnegative integral roots. By Theorem \ref{thm:diff-findim}, we have $\findim A=0$.
\end{example}

}

\section*{Declarations}

\paragraph{Authors' Contributions}


\paragraph{Competing Interests}

The author declare that they have no conflicts of interest as defined by the journal, nor any other interests that could be perceived as influencing the results presented in this paper.

\paragraph{Data availability}
Data sharing not applicable to this article as no datasets were generated or analysed during the current study

\paragraph{Ethical Approval}

This article does not require ethical approval.

\paragraph{Fundings}

Yu-Zhe Liu is supported by
the National Natural Science Foundation of China (Grant Nos. 12401042 and 12561008),
the Science and Technology Foundation of the Guizhou S\&T Department (Grant Nos. VZD[2026]001, ZD[2025]085 and ZK[2024]YiBan066),
and Scientific Research Foundation of Guizhou University (Grant No. [2023]16).

\newpage

\appendix

\section*{\centering Appendix}
\addcontentsline{toc}{section}{Appendix}

\section{Differential inverses of the integral profiles}\label{subsect:diff}

\textsl{For a module $M$, we now have its embedded total projective and injective integral profiles
$\intProfEmb_{\II_A} (\pmb{P}^{\bullet}_M;x)$ and $\intProfEmb_{\II_A}(\pmb{I}^M_{\bullet};x)$.
In Section~\ref{sect:hdim and indef int}, we point out that these two integrals are essentially generalisations of indefinite integrals. A natural question is:}

\begin{question}\label{quest:0814:0952}
Can one recover $M$ from $\intProfEmb_{\II_A} (\pmb{P}^{\bullet}_M;x)$ or $\intProfEmb_{\II_A}(\pmb{I}^M_{\bullet};x)$ by taking some kind of derivative?
\end{question}

\noindent
\textsl{This appendix is based on Question \ref{quest:0814:0952}.
This appendix will take the regular module $A$ as an example to answer Question \ref{quest:0814:0952}.}

\subsection{Reconstructing algebras by \texorpdfstring{$\reconstr$}{ Rec}}
\label{subsect:diff-reg mod}

Left regular module ${_AA}$ is projective whose minimal projective resolution is $\pmb{A}^{\bullet} = (A^n,d^n)_{n\>= 0}$,
where $A^0 = A$, $d^n=0$ for all $n\>= 0$, and $A^n=0$ for all $n>0$.
Then, by Definition \ref{def:tot proj int prof}, we have that the embedded projective integral profile of $A$ is
\[ \IntProfEmb_{\II_A}(\pmb A^\bullet;x)=
   \mathfrak{Emb}_A\bigg(\IntProf_{\II_A}A\bigg)x^0 \in\pandiring(A)[x].\]
We write it as
\[\IntProfEmb_{\II_A} A\]
and call it the \defines{embedded regular integral profile} of $A$ for simplicity.

For every $v\in\Q_0$, let $L_i^{(v)}$ be the matrix of the
restriction
\[L_{a_i}|_{Ae_v}:Ae_v \to Ae_v, \quad x \mapsto a_ix,\]
with respect to the previously chosen basis of $Ae_v$.
Consider the canonical left $A$-module isomorphism
\[\psi: \bigoplus_{v\in\Q_0}Ae_v \to A,
\quad (x_v)_{v\in\Q_0} \mapsto \sum_{v\in\Q_0}x_v. \]
For every $1\=< i\=< n$, we have
\[ L_{a_i} \compos \psi = \psi \compos \bigg(\bigoplus_{v\in\Q_0}L_{a_i}|_{Ae_v}\bigg). \]
Hence, if $H$ is the matrix of $\psi$ with respect to the chosen
bases, then, applying Theorem \ref{thm:mat char-proj mod} to the left regular module ${_AA}$, we obtain
\begin{align}
  H^{-1}\bigg(\IntProf_{\II_A}A(\pmb{t})\bigg)H
& = \frac{\mu_{\II_A(\pmb{t})} (\II_A(\pmb{t}))}{2} \bigoplus_{v\in\Q_0}
    \bigg(\sum_{i=1}^{\dimA}t_iL_i^{(v)}\bigg) \nonumber \\
& = \frac{\mu_{\II_A(\pmb{t})}(\II_A(\pmb{t}))}{2}
    \sum_{i=1}^{\dimA} t_i \bigg(\bigoplus_{v\in\Q_0}L_i^{(v)}\bigg) \label{eq:tot proj int prof}
\end{align}
and
\[ (L_{a_i})_{\dimA \times \dimA} = H \bigg( \bigoplus_{v\in\Q_0}L_i^{(v)} \bigg) H^{-1}.\]
It follows that
\begin{align}
   \IntProf_{\II_A}A
&\mathop{=}\limits^{\eqref{eq:tot proj int prof}}
   \frac{\mu_{\II_A(\pmb{t})}(\II_A(\pmb{t}))}{2}
     \sum_{i=1}^{\dimA} t_i\cdot H \bigg(\bigoplus_{v\in\Q_0}L_i^{(v)}\bigg) H^{-1} \nonumber \\
&= \frac{\mu_{\II_A(\pmb{t})}(\II_A(\pmb{t}))}{2}
     \sum_{i=1}^{\dimA} t_i L_{a_i} \nonumber \\
&= \frac{1}{2} \prod_{j=1}^{\dimA}t_j \cdot \sum_{i=1}^{\dimA}t_iL_{a_i} \nonumber \\
&= \frac{1}{2}\sum_{i=1}^{\dimA}t_i^2\prod_{j\ne i}t_j\,L_{a_i}. \label{eq:regular intprof}
\end{align}
For every $1\=< i\=< \dimA$, define 
\[ \reconstr_i :=
     \frac{\partial^{\dimA+1}}{\partial t_i^2\prod\limits_{j\ne i}\partial t_j} \bigg|_{\pmb{t}=0}. \]
Here the derivative at $\pmb{t}=0$ may equivalently be understood as \textbf{$i$-th algebraic coefficient extraction from the polynomial in \eqref{eq:regular intprof}}.
Immediately,
\begin{equation} \label{eq:regular coeff recovery}
  \reconstr_i \IntProf_{\II_A}A = L_{a_i} \quad (1\=< i\=< n),
\end{equation}
and therefore
\[ \reconstr_i\IntProfEmb_{\II_A} A
 = \reconstr_i\IntProfEmb_{\II_A} (\pmb{A}^{\bullet};x)
 = \mathfrak{Emb}_A\bigg(\reconstr_i\IntProf_{\II_A}A\bigg)x^0
 = \mathfrak{Emb}_A(L_{a_i})
 \in \End_{\CC}(\pandimod^{\oplus\NN}).\]
Let
\[ \recring_{\partial}(A)
:= \mathrm{span}_{\CC}
   \left\{
     \reconstr_i\IntProfEmb_{\II_A} A : 1\=< i\=< \dimA
   \right\}
   \subseteq
     \End_{\CC}(\pandimod^{\oplus\NN}). \]
The following lemma shows that $\recring_{\partial}(A)$ is a ring.

\begin{fact}\label{fact:ricring iso to A}
The set $\recring_{\partial}(A)$ is a ring, and its identity element is the idempotent $1_{\recring_{\partial}(A)}: = \mathfrak{Emb}_A(\ident_A)$.
Moreover, the map $A\to\recring_{\partial}(A)$, $a\mapsto\mathfrak{Emb}_A(L_a)$ shows that $A\cong \recring_{\partial}(A)$.
\end{fact}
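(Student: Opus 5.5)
The plan is to show that the map
\[
\Phi: A\to\recring_{\partial}(A),\qquad a\mapsto\mathfrak{Emb}_A(L_a),
\]
is a bijective homomorphism of rings (and of $\CC$-algebras). The ring structure on $\recring_{\partial}(A)$ and its identity will then be transported from $A$, and we will check that they agree with those inherited from $\End_{\CC}(\pandimod^{\oplus\NN})$.

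First I will identify $\recring_{\partial}(A)$ with the image of $\Phi$. By \eqref{eq:regular coeff recovery} we have $\reconstr_i\intProfEmb_{\II_A}A=\mathfrak{Emb}_A(L_{a_i})$. Since $a\mapsto L_a$ is $\CC$-linear and $\mathfrak{Emb}_A$ is $\CC$-linear (Lemma \ref{lemm:import emb}, applied to constant maps), the span of these elements is $\{\mathfrak{Emb}_A(L_a):a\in A\}=\Phi(A)$, using that $\{a_i\}$ is a basis of $A$.

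Next I will check that $\Phi$ is multiplicative. The left regular representation satisfies $L_{ab}=L_aL_b$. The map $\mathfrak{Emb}_A$ is multiplicative because
\[
(\ident_M\ 0)\compos\sigma_M\compos\sigma_M^{-1}\compos\left(\begin{smallmatrix}\ident_M\\0\end{smallmatrix}\right)=\ident_M,
\]
which is exactly the computation behind Lemma \ref{lemm:import emb}. Hence $\Phi(ab)=\Phi(a)\Phi(b)$. It follows that $\Phi(A)$ is closed under the composition of $\End_{\CC}(\pandimod^{\oplus\NN})$, so $\recring_{\partial}(A)$ is a (non-unital) subring. The element $e:=\Phi(1_A)=\mathfrak{Emb}_A(\ident_A)$ is idempotent, and for every $a$ we have $e\Phi(a)=\Phi(a)=\Phi(a)e$. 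Thus $e$ is the identity of $\recring_{\partial}(A)$, even though it is not the identity of the ambient endomorphism ring.

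Finally, injectivity of $\Phi$ follows from two facts. First, $\mathfrak{Emb}_A$ is injective by Lemma \ref{lemm:import emb}. Second, $a\mapsto L_a$ is injective, since $L_a(1_A)=a$. Together with surjectivity onto $\recring_{\partial}(A)$, this gives $A\cong\recring_{\partial}(A)$ as $\CC$-algebras.

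The only delicate point is the unit. One must stress that $\recring_{\partial}(A)$ is a ring with its own identity $e$, not a unital subring of the ambient ring. Otherwise the proof is a routine bookkeeping of linearity and multiplicativity.
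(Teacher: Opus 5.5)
Your proposal is correct and follows essentially the same route as the paper. Both arguments identify $\recring_{\partial}(A)$ with $\{\mathfrak{Emb}_A(L_a):a\in A\}$ via coefficient recovery and linearity, get closure from $L_{ab}=L_aL_b$ together with multiplicativity of $\mathfrak{Emb}_A$, check that $\mathfrak{Emb}_A(\ident_A)$ is a two-sided identity, and obtain bijectivity from the injectivity of $\mathfrak{Emb}_A$. You additionally make explicit the steps the paper leaves implicit: the identity $(\ident_M\ 0)\compos\sigma_M\compos\sigma_M^{-1}\compos\left(\begin{smallmatrix}\ident_M\\0\end{smallmatrix}\right)=\ident_M$ behind multiplicativity, and $L_a(1_A)=a$ for injectivity of $a\mapsto L_a$.
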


\begin{proof}
By \eqref{eq:regular coeff recovery}, we have
\[ \reconstr_i \IntProfEmb_{\II_A}A = \mathfrak{Emb}_A(L_{a_i})\]
for every $1\=< i\=< \dimA$. Hence
\[ \recring_{\partial}(A) = \mathrm{span}_{\CC}\{ \mathfrak{Emb}_A(L_{a_i}): 1\=< i\=< \dimA\}.\]
By $\basis{A}=\{a_1,\ldots,a_{\dimA}\}$, every $a\in A$ can be written uniquely as
$a=\sum\limits_{i=1}^{\dimA}\alpha_i a_i$ for some $\alpha_i\in\CC$. Consequently,
$L_a = \sum\limits_{i=1}^{\dimA}\alpha_iL_{a_i}$, and therefore
\[ \recring_{\partial}(A)
= \{ \mathfrak{Emb}_A(L_a):a\in A \}
= \bigg\{ \mathfrak{Emb}_A\bigg(\sum_{i=1}^{\dimA}\alpha_iL_{a_i}\bigg): a\in A \bigg\}.\]

Let $a,b\in A$. Since $L_a\compos L_b=L_{ab}$ and $\mathfrak{Emb}_A$ preserves compositions, we obtain
\[ \mathfrak{Emb}_A(L_a)\compos\mathfrak{Emb}_A(L_b)
 = \mathfrak{Emb}_A(L_a\compos L_b)
 = \mathfrak{Emb}_A(L_{ab}) \in\recring_{\partial}(A). \]
Thus $\recring_{\partial}(A)$ is closed under multiplication.
It is closed under addition and scalar multiplication by its definition as a complex linear span.
Therefore $\recring_{\partial}(A)$ is a finite-dimensional complex algebra.

Moreover, for every $a\in A$, we have
\begin{align*}
    \mathfrak{Emb}_A(\ident_A) \compos \mathfrak{Emb}_A(L_a)
& = \mathfrak{Emb}_A(\ident_A\compos L_a)
  = \mathfrak{Emb}_A(L_a);\\
    \mathfrak{Emb}_A(L_a) \compos \mathfrak{Emb}_A(\ident_A)
& = \mathfrak{Emb}_A(L_a\compos\ident_A)
  = \mathfrak{Emb}_A(L_a).
\end{align*}
Then $1_{\recring_{\partial}(A)}=\mathfrak{Emb}_A(\ident_A)$ is the identity element of $\recring_{\partial}(A)$.

Finally, the map $A\to\recring_{\partial}(A)$, $a\longmapsto\mathfrak{Emb}_A(L_a)$ is surjective
by the description of $\recring_{\partial}(A)$ above.
And if $\mathfrak{Emb}_A(L_a)=0$, then the injectivity of $\mathfrak{Emb}_A$ implies $L_a=0$.
It follows that this map is also injective, and so it is an isomorphism.
\end{proof}

Fact \ref{fact:ricring iso to A} means that the following isomorphism
\[ A \cong \bigoplus_{i=1}^{\dimA} \CC\cdot\reconstr_i\IntProfEmb_{\II_A} A \]
of $\CC$-linear spaces holds.

Although $\intProfEmb_{\II_A} A$ belongs to $\pandiring(A)$,
its parameter derivatives $\reconstr_i\intProfEmb_{\II_A}A$ need
not belong to $\pandiring(A)$ merely by the definition of the latter as an
algebra generated by integral profiles.
Thus $\recring_{\partial}(A)$ can be understood as an algebra obtained from the $(\dimA+2)$-tuple
\[(\pandiring(A),\IntProfEmb_{\II_A} A, \reconstr_1, \ldots, \reconstr_{\dimA}). \]
This distinction is important. In this section we use the integral coefficient ring together $\pandiring(A)$
with its ``distinguished regular profile'' $\intProfEmb_{\II_A} A$ and
``coefficient recovery operations'' $(\reconstr_i)_{1\=< i \=< \dimA}$,
rather than the unmarked abstract ring $\pandiring(A)$ alone.

\subsection[Reconstructing algebras by $\bigoplus_i\int $]
  {Reconstructing algebras by
   \texorpdfstring{$\displaystyle\bigoplus\nolimits_i\protect\IntProfEmb $}{¨'¡Ò}
  }
\label{subsect:diff-reg mod 2}

From Subsection \ref{subsect:diff-reg mod}, we see that the regular module $A$, after being integrated, can be recovered back to $A$ itself by differentiation. This appendix will show that, as long as a suitable integration domain is chosen, the integral of $A$ can be directly recovered to $A$ without passing through differentiation.

Let $\pmb{t}_i:=(1,\ldots,1,\underset{i\text{-th}}{2},1,\ldots,1) \in (\RR^{\>=0})^{\times\dimA}$,
and use \[\IntProfEmb_{\II_A}A(\pmb{t}_i)\]
to represent the embedded integral of the left regular module over the restricted integration domain
\[ \II_A(\pmb{t}_i)=[0,1]a_1+\cdots+[0,1]a_{i-1}+[0,2]a_i+[0,1]a_{i+1}+\cdots+[0,1]a_{\dimA} \]

For every $v\in\Q_0$,
let $\basis{Ae_v}=\{p_{v,1},\ldots,p_{v,d_v}\}$ and $d_v=\dim_{\CC}Ae_v$.
For every $1\=< j\=< \dimA$ and $1\=< \ell\=< d_v$,
let $a_jp_{v,\ell} = \sum\limits_{k=1}^{d_v} \lambda_{j,\ell,k}^{(v)}p_{v,k}$.
Then, recall Subsection \ref{subsect:proj resol},
the matrix of the restriction $L_{a_j}|_{Ae_v}:Ae_v\to Ae_v$ with respect to the basis $\basis{Ae_v}$ is
\[
L_j^{(v)}
=
\left(
\begin{matrix}
  \lambda_{j,1,1}^{(v)}
& \lambda_{j,2,1}^{(v)}
& \cdots
& \lambda_{j,d_v,1}^{(v)} \\
  \lambda_{j,1,2}^{(v)}
& \lambda_{j,2,2}^{(v)}
& \cdots
& \lambda_{j,d_v,2}^{(v)} \\
  \vdots
& \vdots
&
& \vdots \\
  \lambda_{j,1,d_v}^{(v)}
& \lambda_{j,2,d_v}^{(v)}
& \cdots
& \lambda_{j,d_v,d_v}^{(v)}
\end{matrix}
\right).
\]
Consider the canonical left $A$-module isomorphism
\[ \psi: \bigoplus_{v\in\Q_0}Ae_v \to A,
\qquad (x_v)_{v\in\Q_0} \mapsto \sum_{v\in\Q_0}x_v, \]
and let $H$ be the matrix of $\psi$ with respect to the basis obtained by combining the bases $\basis{Ae_v}$ and the fixed basis $\basis{A}=\{a_1,a_2,\ldots,a_{\dimA}\}$.

\begin{lemma}\label{lemm:0814:0903}
$\displaystyle
  \IntProfEmb_{\II_A}A(\pmb{t}_i)
= \mathfrak{Emb}_A \bigg(\IntProf_{\II_A}A(\pmb{t}_i)\bigg)
= \mathfrak{Emb}_A ( L_{\tilde{a}_j} )$.
\end{lemma}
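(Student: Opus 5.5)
The first equality holds by definition. $\IntProfEmb_{\II_A}A(\pmb{t}_i)$ was introduced as $\mathfrak{Emb}_A$ applied to the integral profile of ${_AA}$ evaluated at $\pmb t_i$, i.e.\ to the single endomorphism $\int_{\II_A(\pmb{t}_i)}A\in\End_{\CC}(A)$. So the content is the second equality. I read $\tilde a_j$ as the index-$i$ element suggested by the earlier Example, namely $\tilde a_i:=a_i+\sum_{j=1}^{\dimA}a_j$; the statement writes the subscript $j$, which I take to be the same index $i$. I will prove
\[
\int_{\II_A(\pmb{t}_i)}A \;=\; L_{a_i+\sum_{j=1}^{\dimA}a_j},
\]
after which applying $\mathfrak{Emb}_A$ to both sides finishes.

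The plan has three steps. First, Theorem~\ref{thm:param-int-module} gives $\int_{\II_A(\pmb t)}A=\rho_A(c_{\II_A(\pmb t)})$. For the left regular module, $\rho_A(a)=L_a$ is left multiplication, so this equals $L_{c_{\II_A(\pmb t)}}$. Second, Lemma~\ref{lemm:param-barycenter} gives
\[
c_{\II_A(\pmb t)}=\frac{1}{2}\Big(\prod_{l=1}^{\dimA}t_l\Big)\sum_{j=1}^{\dimA}t_ja_j .
\]
Third, I specialize to $\pmb t=\pmb t_i$. Here $\prod_l t_l=2$, so the prefactor $\frac12\cdot 2$ equals $1$. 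Also $t_j=1$ for $j\neq i$ and $t_i=2$, so $\sum_j t_ja_j=a_i+\sum_{j}a_j$. Since $a\mapsto L_a$ is linear, this yields $\int_{\II_A(\pmb{t}_i)}A=L_{a_i+\sum_j a_j}$, as claimed.

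Equivalently, one can work with the explicit form \eqref{eq:regular intprof}. Evaluating each monomial $\frac12 t_k^2\prod_{l\ne k}t_l$ at $\pmb t_i$ gives $4/2=2$ when $k=i$ and $2/2=1$ when $k\ne i$. This reproduces the same coefficients and serves as a cross-check.

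I expect the only real obstacle to be bookkeeping rather than mathematics. The symbol $\tilde a_j$ must be pinned down as $a_i+\sum_j a_j$, which is consistent with the identity $\IntProf_{\II_A}A(\pmb t_i)=L_{b_i+\sum_j b_j}$ in the Example. One must also check that $\II_A(\pmb t_i)$ is used with the transported measure of total mass $\prod_l t_l=2$, so that no extra normalization enters; this follows from \eqref{eq:param-measure}. Finally, the matrix of $L_a$ is taken with respect to the fixed basis $\basis{A}$, and no conjugation by $H$ is needed at this stage.
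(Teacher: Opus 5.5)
Your proof is correct, and it is more direct than the paper's. The paper does not evaluate $\rho_A(c_{\II_A(\pmb t_i)})$ directly. It proceeds in four steps:
\begin{enumerate}
  \item It invokes Theorem~\ref{thm:mat char-proj mod} for ${_AA}$ to write $\IntProf_{\II_A}A(\pmb t)=\frac{1}{2}\mu_{\II_A(\pmb t)}(\II_A(\pmb t))\,H\bigl(\bigoplus_{v}\sum_j t_jL_j^{(v)}\bigr)H^{-1}$, where $H$ is the matrix of $\psi:\bigoplus_v Ae_v\to A$.
  \item It substitutes $\pmb t_i$, which gives diagonal blocks $\sum_j L_j^{(v)}+L_i^{(v)}$.
  \item It checks blockwise that each such block is the matrix of $L_{\tilde a_i}|_{Ae_v}$.
  \item It conjugates back by $H$ to conclude $\IntProf_{\II_A}A(\pmb t_i)=L_{\tilde a_i}$.
\end{enumerate}
Theorem~\ref{thm:mat char-proj mod}, via Proposition~\ref{prop:param-int-proj mod}, is itself derived from Theorem~\ref{thm:param-int-module} and Lemma~\ref{lemm:param-barycenter}. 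So the paper's argument is your argument plus a decomposition into indecomposable projectives that is undone at the end.

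Your version uses only the barycenter formula and the linearity of $a\mapsto L_a$. It needs no choice of bases of the $Ae_v$ and no matrix $H$. The paper's detour only buys the explicit vertex-by-vertex block form in terms of the matrices $L_j^{(v)}$. That ties the lemma to the notation of Theorem~\ref{thm:mat char-proj mod}, but it plays no role in the conclusion.

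Your reading of the subscript, with $\tilde a_j$ meaning $\tilde a_i=a_i+\sum_j a_j$, matches the definition the paper's proof actually uses.
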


\begin{proof}
By Theorem~\ref{thm:mat char-proj mod} and the proof thereof, the simultaneous similarity for the left regular module is realized by the fixed matrix $H$. Hence
\begin{align} \label{eq:0814:0834}
    \IntProf_{\II_A}A(\pmb{t})
& =\frac{\mu_{\II_A(\pmb{t})}(\II_A(\pmb{t}))}{2}
  H \bigg( \bigoplus_{v\in\Q_0}
      \bigg( \sum_{j=1}^{\dimA}t_jL_j^{(v)} \bigg)
    \bigg) H^{-1}.
\end{align}
Substituting $\pmb{t}_i$ into
\eqref{eq:0814:0834}, we obtain $\mu_{\II_A(\pmb{t}_i)}(\II_A(\pmb{t}_i))=2$, and further
\begin{align}
\IntProf_{\II_A}A(\pmb{t}_i) & =
  H \bigg(
     \bigoplus_{v\in\Q_0}\bigg(
     L_1^{(v)}+\cdots+L_{i-1}^{(v)}+2L_i^{(v)}+L_{i+1}^{(v)}+\cdots+L_{\dimA}^{(v)}
   \bigg)\bigg)
  H^{-1}
\nonumber \\
& = H \bigg(
     \bigoplus_{v\in\Q_0}
       \bigg(
         \sum_{j=1}^{\dimA}L_j^{(v)}+L_i^{(v)}
     \bigg)\bigg)
  H^{-1} \label{eq:0814:0837} \\
& = 
H
\left(
\begin{matrix}
  \sum\limits_{j=1}^{\dimA}L_j^{(v_1)}+L_i^{(v_1)}
& 0
& \cdots
& 0 \\
  0
& \sum\limits_{j=1}^{\dimA}L_j^{(v_2)}+L_i^{(v_2)}
& \cdots
& 0 \\
  \vdots
& \vdots
& \ddots
& \vdots \\
  0
& 0
& \cdots
& \sum\limits_{j=1}^{\dimA}L_j^{(v_s)}+L_i^{(v_s)}
\end{matrix}
\right)
H^{-1}. \label{eq:0814:0840}
\end{align}
Here, for every diagonal block in \eqref{eq:0814:0840}, we have
\begin{align*}
   \bigg( \sum_{j=1}^{\dimA}a_j+a_i \bigg)p_{v,\ell}
&= \sum_{j=1}^{\dimA}a_jp_{v,\ell} +a_ip_{v,\ell} \\
&= \sum_{j=1}^{\dimA} \sum_{k=1}^{d_v} \lambda_{j,\ell,k}^{(v)}p_{v,k} + \sum_{k=1}^{d_v} \lambda_{i,\ell,k}^{(v)}p_{v,k} \\
&= \sum_{k=1}^{d_v}\bigg(\sum_{j=1}^{\dimA}\lambda_{j,\ell,k}^{(v)}+\lambda_{i,\ell,k}^{(v)}\bigg)p_{v,k}.
\end{align*}
Therefore, the matrix of $L_{\tilde{a}_i}\big|_{Ae_v}$, where ${\tilde{a}_i}=\sum\limits_{j=1}^{\dimA}a_j+a_i$,
with respect to the basis $\basis{Ae_v}$ is
\begin{align*}
\sum\limits_{j=1}^{\dimA}L_j^{(v)}+L_i^{(v)}=
\left( \sum\limits_{j=1}^{\dimA}\lambda_{j,\ell,k}^{(v)} +\lambda_{i,\ell,k}^{(v)} \right)_{k,\ell}
\end{align*}
It follows that $\bigoplus\limits_{v\in\Q_0}\bigg(\sum\limits_{j=1}^{\dimA}L_j^{(v)}+L_i^{(v)}\bigg)$ is the matrix of $\bigoplus\limits_{v\in\Q_0} L_{\tilde{a}_i}\big|_{Ae_v}$ with respect to the ordered basis $\bigcup\limits_{v\in\Q_0}\basis{Ae_v}$ of $A$ where a fixed ordering of $\Q_0$ is understood.
Thus,
\[ L_{\tilde{a}_i} \compos\psi
= \psi\compos \bigg( \bigoplus_{v\in\Q_0} L_{\tilde{a}_i}\big|_{Ae_v}\bigg).\]
Consequently,
\[L_{\tilde{a}_i}= H \bigg( \bigoplus_{v\in\Q_0} \bigg(\sum_{j=1}^{\dimA}L_j^{(v)}+L_i^{(v)}\bigg)\bigg)H^{-1}.\]
Comparing this equality with
\eqref{eq:0814:0837}, we obtain $\IntProf_{\II_A}A(\pmb{t}_i) = L_{\tilde{a}_i}$, and then this lemma holds.
\end{proof}

Define the reconstruction ring of $A$ is
\begin{align*} 
    \recring(A, \pmb{t}_i; 1\=< i \=<\dimA)
  = \mathrm{span}_{\CC}\bigg\{ \IntProfEmb_{\II_A}A(\pmb{t}_i): 1\=< i\=< \dimA \bigg\},
\end{align*}
see \eqref{eqdef:recring} The following result holds.

\begin{fact}\label{fact:regular 0814}
We have
\begin{align*} 
    \recring(A, \pmb{t}_i; 1\=< i \=<\dimA)=\{\mathfrak{Emb}_A(L_a):a\in A\}.
\end{align*}
In particular, the left-hand side is closed under matrix
multiplication and is isomorphic to $A$ as a complex algebra.
\end{fact}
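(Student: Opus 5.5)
By Lemma \ref{lemm:0814:0903}, $\intProfEmb_{\II_A}A(\pmb{t}_i)=\mathfrak{Emb}_A(L_{\tilde a_i})$ with $\tilde a_i=s+a_i$, where $s:=\sum_{j=1}^{\dimA}a_j$. Since $\mathfrak{Emb}_A$ is $\CC$-linear (Lemma \ref{lemm:import emb}) and $a\mapsto L_a$ is linear, it follows that
\[\recring(A,\pmb{t}_i;1\=< i\=<\dimA)=\mathfrak{Emb}_A\big(\{L_a: a\in V\}\big),\qquad V:=\spa_{\CC}\{\tilde a_1,\ldots,\tilde a_{\dimA}\}\subseteq A.\]
The plan is therefore to reduce the statement to the claim $V=A$, and then to deduce the algebra statement from Fact \ref{fact:ricring iso to A}.

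To prove $V=A$, I would study the linear map $T:A\to A$ defined on the basis by $a_i\mapsto \tilde a_i=a_i+s$. In the coordinates of $\basis{A}$, its matrix is $I+J$, where $J$ is the all-ones $\dimA\times\dimA$ matrix. Since $J=\mathbf{1}\mathbf{1}^{\top}$ has rank one and $\mathbf{1}^{\top}\mathbf{1}=\dimA$, the matrix determinant lemma gives $\det(I+J)=1+\dimA\ne 0$. Hence $T$ is invertible and $V=\Ima T=A$.

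The inverse can also be written down explicitly, in the spirit of the example in the paper. Summing the relations gives $\sum_i\tilde a_i=(\dimA+1)s$, so
\[a_i=\tilde a_i-\frac{1}{\dimA+1}\sum_{j=1}^{\dimA}\tilde a_j .\]
Consequently each $\mathfrak{Emb}_A(L_{a_i})$ lies in $\recring$, and therefore $\recring=\{\mathfrak{Emb}_A(L_a):a\in A\}$.

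The remaining assertions then follow from Fact \ref{fact:ricring iso to A}. That fact shows the right-hand side equals $\recring_{\partial}(A)$, which is closed under composition because $\mathfrak{Emb}_A(L_a)\compos\mathfrak{Emb}_A(L_b)=\mathfrak{Emb}_A(L_{ab})$. The same fact shows the map $a\mapsto\mathfrak{Emb}_A(L_a)$ is an algebra isomorphism, using the injectivity of $\mathfrak{Emb}_A$ and the fact that the left regular representation is faithful ($L_a(1_A)=a$).

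The only step with real content is the invertibility of $I+J$, which is elementary. The main care point lies upstream: Lemma \ref{lemm:0814:0903} must be read with the correct index, namely $L_{\tilde a_i}$ rather than $L_{\tilde a_j}$. Its use also depends on the normalization $\mu_{\II_A(\pmb{t}_i)}(\II_A(\pmb{t}_i))=2$, which cancels the factor $\tfrac12$ in the profile formula.
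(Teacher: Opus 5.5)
Your proof is correct and follows essentially the same route as the paper's. You identify each generator with $\mathfrak{Emb}_A(L_{\tilde a_i})$ via Lemma~\ref{lemm:0814:0903}, show that $\{\tilde a_i\}$ is a basis of $A$ because the transition matrix $I+J$ has determinant $\dimA+1\neq 0$, and then invoke Fact~\ref{fact:ricring iso to A} for closure under multiplication and the isomorphism with $A$; your explicit inversion formula $a_i=\tilde a_i-\frac{1}{\dimA+1}\sum_j\tilde a_j$ and your remark that the lemma should read $L_{\tilde a_i}$ rather than $L_{\tilde a_j}$ are both correct minor additions.
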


\begin{proof}
By equation~\eqref{eq:regular intprof}, for every $1\=< i\=< \dimA$, we have
\begin{align*}
  \IntProfEmb_{\II_A} A (\pmb{t}_i) = \mathfrak{Emb}_A (L_{\tilde{a}_i})
\end{align*}
by Lemma \ref{lemm:0814:0903}. Notice that $\basis{A}':=\Big\{ \sum\limits_{j=1}^{n}a_j+a_i: 1\=< i\=< n \Big\}$ is a basis of $A$.
Indeed, its transition matrix with respect to the basis $\{a_1,\ldots,a_n\}$ is
\[
\left(
\begin{matrix}
2 & 1 & \cdots & 1\\
1 & 2 & \cdots & 1\\
\vdots & \vdots & \ddots & \vdots\\
1 & 1 & \cdots & 2
\end{matrix}
\right)_{\dimA\times \dimA},
\]
and the determinant of the above matrix is $\dimA+1\ne 0$.
By Fact \ref{fact:ricring iso to A}, we have an isomorphism $A\to \recring_{\partial}(A)$, $a\mapsto\mathfrak{Emb}_A(L_a)$. Thus,
\[\mathfrak{Emb}_A(\basis{A}') = \Big\{\mathfrak{Emb}_A(L_{\tilde{a}_i}):1\=< i\=< n\Big\}\]
is a basis of $\recring_{\partial}(A)$. It follows that
\begin{align*}
  \recring(A, \pmb{t}_i; 1\=< i \=<\dimA) = \recring_{\partial}(A) = \{\mathfrak{Emb}_A(L_a):a\in A\} ~ (\cong A),
\end{align*}
which proves this proposition.
\end{proof}

Fact \ref{fact:regular 0814} means that the following isomorphism
\[ A \cong \bigoplus_{i=1}^{\dimA} \CC\cdot\IntProfEmb_{\II_A} A(\pmb{t}_i)  \]
of $\CC$-linear spaces holds.

\section{Applications}\label{sect:examp}

\subsection{Integral profiles over hereditary Nakayama algebras}
\label{subsect:example-heredi Nakayama}
\textsl{
Let $\Q=\overrightarrow{\mathbb{A}}_n := 1\xrightarrow{\alpha_1}2 \xrightarrow{\alpha_2} \cdots \xrightarrow{\alpha_{n-1}}n$
and let $\overrightarrow{A}=\CC\overrightarrow{\mathbb{A}}_n$ in this section. Then $\overrightarrow{A}$ is a \defines{hereditary Nakayama algebra}.
For every $1\=< i\=< j\=< n$, let
\[ p_{ij} := \begin{cases}
    e_i, & i=j,\\
    \alpha_i\alpha_{i+1}\cdots\alpha_{j-1}, & i<j.
  \end{cases}\]
Thus $p_{ij}$ is the unique path from $i$ to $j$. Under our assumption for the multiplication of paths, we have
$p_{ij}p_{k\ell}= p_{i\ell}$ if $j=k$ and $p_{ij}p_{k\ell} =0 $ for otherwise.
The set $\basis{\overrightarrow{A}} =\{p_{ij}:1\=< i\=< j\=< n\}$ is a basis of $\overrightarrow{A}$, then $\dimA:=\dim \overrightarrow{A}=\frac{n(n+1)}{2}$.
Let $\pmb{t}=(t_{ij})_{1\=< i\=< j\=< n} \in(\RR^{\>= 0})^{\times\dimA}$.
Then $\II_{\overrightarrow{A}}(\pmb{t}) = \prod\limits_{1\=< i\=< j\=< n}[0,t_{ij}]p_{ij}$, and
$\mu_{\II_{\overrightarrow{A}}(\pmb{t})}(\II_{\overrightarrow{A}}(\pmb{t})) = \prod\limits_{1\=< i\=< j\=< n}t_{ij}$. }

\subsubsection{Integral profiles of projective modules}

By Theorem \ref{thm:param-int-module}, for every left $\overrightarrow{A}$-module $M$,
\begin{equation}\label{eq:Nakayama-general-profile}
\IntProf_{\II_{\overrightarrow{A}}}M(\pmb{t}) =
\frac{1}{2}\prod\limits_{1\=< i\=< j\=< n}t_{ij}
\sum_{1\=< i\=< j\=< n} t_{ij}\rho_M(p_{ij}).
\end{equation}
For every $1\=< v\=< n$, the indecomposable projective left $\overrightarrow{A}$-module corresponding to $v$ is $P(v):=\overrightarrow{A}e_v$.
It has the path basis $\basis{P(v)} = \{p_{1v},p_{2v},\ldots,p_{vv}\}$.
Consequently, if $j\=< v$, then $\rho_{P(v)}(p_{ij})$ sends $p_{jv}$ to $p_{iv}$ and sends all the other basis elements to zero. If $j>v$, then $\rho_{P(v)}(p_{ij})=0$.
Therefore, with respect to the ordered basis $(p_{1v},p_{2v},\ldots,p_{vv})$, the integral profile of $P(v)$ is
\begin{equation}\label{eq:Nakayama proj prof}
\IntProf_{\II_{\overrightarrow{A}}}P(v)(\pmb{t})
=
\frac{1}{2} \prod\limits_{1\=< i\=< j\=< n}t_{ij}
\left(
\begin{matrix}
t_{11} & t_{12} & t_{13} & \cdots & t_{1v}\\
   & t_{22} & t_{23} & \cdots & t_{2v}\\
   &   & t_{33} & \cdots & t_{3v}\\
   &   &   & \ddots & \vdots\\
   &   &   &   & t_{vv}
\end{matrix}
\right)_{v\times v}.
\end{equation}
In particular, for the probability integration domain determined by
$t_{ij}=1$ for all $1\=< i\=< j\=< n$, we obtain
\[ \int_{\II_{\overrightarrow{A}}}P(v)
= \frac{1}{2}
\left(\begin{matrix}
 1 &\cdots& 1 \\
   &\ddots&\vdots\\
   &   & 1  \\
\end{matrix}\right)_{v\times v}.\]

\subsubsection{Integral profiles of injective modules}

For every $1\=< v\=< n$, the corresponding indecomposable injective left $\overrightarrow{A}$-module is
\[I(v):=D(e_v\overrightarrow{A})=\Hom_{\CC}(e_v\overrightarrow{A},\CC). \]
The right $\overrightarrow{A}$-module $e_v\overrightarrow{A}$ has the path basis $\basis{e_v\overrightarrow{A}} = \{p_{vv},p_{v,v+1},\ldots,p_{vn}\}$.
Let $\basis{e_v\overrightarrow{A}}^{*}=\{p_{vv}^{*},p_{v,v+1}^{*},\ldots,p_{vn}^{*}\}$ be the corresponding dual basis of $I(v)$.
Recall that the left $\overrightarrow{A}$-action on $I(v)$ is given by $(a\cdot f)(x):=f(xa)$, $a\in \overrightarrow{A},\ f\in I(v),\ x\in e_v\overrightarrow{A}$.
For $v\=< k,\ell\=< n$ and $1\=< i\=< j\=< n$, we have
\begin{align*}
   \bigl(p_{ij}\cdot p_{v\ell}^{*}\bigr)(p_{vk})
 = p_{v\ell}^{*}(p_{vk}p_{ij})
 = \begin{cases}
     1, & k=i\text{ and } \ell = j; \\
     0, & \text{otherwise}.
   \end{cases}
\end{align*}
It follows that
\[ p_{ij}\cdot p_{v\ell}^{*}
 = \begin{cases}
     p_{vi}^{*}, & v\=< i\=< j\=< n \text{ and } \ell=j,\\
     0, & \text{otherwise}.
   \end{cases}\]
Therefore, with respect to the ordered dual basis $(p_{vv}^{*},p_{v,v+1}^{*},\ldots,p_{vn}^{*})$,
the integral profile of $I(v)$ is
\begin{equation}\label{eq:Nakayama-injective-profile}
   \IntProf_{\II_{\overrightarrow{A}}}I(v)(\pmb{t})
 = \frac{1}{2} \prod\limits_{1\=< i\=< j\=< n}t_{ij}
   \left(\begin{matrix}
     t_{vv} & t_{v,v+1} & t_{v,v+2} & \cdots & t_{vn}   \\
       & t_{v+1,v+1} & t_{v+1,v+2} & \cdots & t_{v+1,n}\\
       &  & t_{v+2,v+2} & \cdots & t_{v+2,n} \\
       &  &  &\ddots&\vdots \\
       &  &  & &t_{nn}
   \end{matrix}\right)_{(n-v+1)\times(n-v+1)}.
\end{equation}
In particular, if $t_{ij}=1$ for every $1\=< i\=< j\=< n$, then
\[ \int_{\II_{\overrightarrow{A}}}I(v)
= \frac{1}{2}
\left(\begin{matrix}
 1 &\cdots& 1 \\
   &\ddots&\vdots\\
   &   & 1  \\
\end{matrix}\right)_{(n-v+1)\times(n-v+1)}.\]

\begin{remark}\rm
Every finite-dimensional projective left $\overrightarrow{A}$-module is of the form $P\cong \bigoplus\limits_{v=1}^{n} P(v)^{\oplus m_v}$ for some $m_v\in\NN$. Hence
\[ \IntProf_{\II_{\overrightarrow{A}}}P \sim \bigoplus_{v=1}^{n}
   \bigg(\IntProf_{\II_{\overrightarrow{A}}}P(v)\bigg)^{\oplus m_v}, \]
where each block $\IntProf_{\II_{\overrightarrow{A}}}P(v)$ is given by \eqref{eq:Nakayama proj prof}.
Similarly, every finite-dimensional injective left $\overrightarrow{A}$-module is of the form $I\cong \bigoplus\limits_{v=1}^{n} I(v)^{\oplus r_v}$ for some $r_v\in\NN$, and then
\[ \IntProf_{\II_{\overrightarrow{A}}}I \sim \bigoplus_{v=1}^{n}
   \bigg(\IntProf_{\II_{\overrightarrow{A}}}I(v)\bigg)^{\oplus r_v}, \]
where each block $\IntProf_{\II_{\overrightarrow{A}}}I(v)$ is given by \eqref{eq:Nakayama-injective-profile}.
\end{remark}

\subsubsection{Integral coefficient rings}
In this example, we take the fixed embedding system from Assumption \ref{assu:fix emb sys} to be the following compatible orthogonal embedding system.
The pairwise non-isomorphic indecomposable modules occurring in ${_{\overrightarrow{A}}\pandi}$ are
$P(1)$, $\ldots$, $P(n)$, $I(2)$, $\ldots$, $I(n)$, where $P(n)\cong I(1)$. For every such indecomposable module $X$,
we fix mutually orthogonal copies $X^{(1)}$, $X^{(2)}$, $X^{(3)}$, $\ldots$.
If \[ M\cong \bigoplus_{v=1}^{n}P(v)^{\oplus m_v} \oplus \bigoplus_{v=2}^{n}I(v)^{\oplus r_v},\]
we choose $\sigma_M$ so that the summands of $M$ are embedded into the first $m_v$ copies of $P(v)$ and the first $r_v$ copies of
$I(v)$, respectively.

For every indecomposable module
\[ X \in \{P(1),\ldots,P(n),I(2),\ldots,I(n)\} \]
and every $r \>= 1$, let $F_{X,r} \in \ol{\pandiring}(\overrightarrow{A})$ be the function whose restriction to $X^{(r)}$ is
$\IntProf_{\II_{\overrightarrow{A}}}X$ and whose restriction to all the other fixed summands is zero.
Since the fixed summands are mutually orthogonal, we have
\[ F_{X,r}F_{Y,s}=0 \quad \text{whenever }(X,r)\ne(Y,s).\]
For each $X$ and $r$, the element $F_{X,r}$ satisfies no nonzero polynomial relation over $\CC$. Indeed, one of its diagonal entries is a nonconstant polynomial of the form $\frac{1}{2}\big( \prod\limits_{1 \=< i \=< j \=< n}t_{ij} \big)t_{kk}$.
Consequently,
\[ \langle F_{X,r}\rangle = \bigoplus_{m \>= 1}\CC F_{X,r}^{m} \cong x\CC[x] \]
as non-unital complex algebras.

For every $r \>= 1$, the difference between the embedded integral
profiles of $X^{\oplus r}$ and $X^{\oplus(r-1)}$ is precisely
$F_{X,r}$. Hence every $F_{X,r}$ belongs to $\pandiring(\overrightarrow{A})$.
Conversely, by the compatibility of the fixed embedding system,
the embedded integral profile of every module in ${_{\overrightarrow{A}}\pandi}$ is
a finite sum of elements of the form $F_{X,r}$. We therefore obtain
\[ \pandiring(\overrightarrow{A})
 = \bigoplus_{\substack{ X\in\{P(1),\ldots,P(n),I(2),\ldots,I(n)\} \\ r \>= 1}}
   \langle F_{X,r}\rangle
 \cong \bigoplus_{r \>= 1} \left(x\CC[x]\right)^{\oplus(2n-1)} \]
as non-unital complex algebras.
If the subalgebra generated by the integral profiles is required to contain the identity of $\ol{\pandiring}(\overrightarrow{A})$, then $\pandiring(\overrightarrow{A})$ is the unitization of the above algebra. Namely,
\[ \pandiring(\overrightarrow{A}) \cong \CC~\ident_{\pandimod^{\oplus\NN}} \oplus \bigoplus_{r \>= 1} \left(x\CC[x]\right)^{\oplus(2n-1)}.\]

\subsection{Application: homological dimensions of gentle algebras} \label{subsect:gentle hdim}

\textsl{Gentle algebras were introduced by Assem and Skowro\'nski and form an important class of finite-dimensional special biserial algebras; see \cite{AS1987}. The combinatorial conditions in their definition make it possible to describe their projective modules, injective modules and homological dimensions explicitly in terms of paths in their bound quivers.}

\subsubsection{Gentle algebras} \label{subsubsect:gentle}

Throughout this section, we follow the convention of \cite{ASS2006} for the multiplication of paths.
Thus, if $\alpha:i\to j$ is an arrow, then $e_i\alpha=\alpha=\alpha e_j$.
If $\alpha:i\to j$ and $\beta:j\to k$ are arrows, then their composition is written as $\alpha\beta:i\to k$.

Let $A=\CC\Q/\I$ be a finite-dimensional bound quiver algebra.
Recall that $A$ is called a \defines{gentle algebra} if the
following conditions are satisfied:
\begin{enumerate}[label={\rm(G\arabic*)}]
  \item For every $v\in\Q_0$, there are at most two arrows
    starting at $v$ and at most two arrows ending at $v$.
    \label{G1}
  \item For every $\alpha\in\Q_1$,
    there is at most one arrow $\beta\in\Q_1$ such that
    $\fctt(\alpha)=\fcts(\beta)$ and $\alpha\beta\notin\I$,
    and there is at most one arrow $\beta'\in\Q_1$ such that
    $\fctt(\alpha)=\fcts(\beta')$ and $\alpha\beta'\in\I$.\label{G2}
  \item For every $\alpha\in\Q_1$,
    there is at most one arrow $\gamma\in\Q_1$ such that
    $\fctt(\gamma)=\fcts(\alpha)$ and $\gamma\alpha\notin\I$,
    and there is at most one arrow $\gamma'\in\Q_1$ such that
    $\fctt(\gamma')=\fcts(\alpha)$ and $\gamma'\alpha\in\I$.\label{G3}
  \item The ideal $\I$ is generated by paths of length two.
    \label{G4}
\end{enumerate}

Since $\I$ is generated by paths, the residue classes of all paths which do not belong to $\I$ form a complex basis of $A$.
For every $v\in\Q_0$, the indecomposable projective left $A$-module $Ae_v$ has a basis consisting of all nonzero paths ending at $v$.
Similarly, the right ideal $e_vA$ has a basis consisting of all nonzero paths starting at $v$, and the corresponding indecomposable injective left $A$-module is $D(e_vA)=\Hom_{\CC}(e_vA,\CC)$.

In particular, the projective and injective integral profiles over a gentle algebra can be calculated explicitly by using these path bases.
Moreover, the gentle conditions \ref{G1}--\ref{G4} determine the successive terms of the minimal projective resolutions of the simple modules, which will allow us to relate their total projective integral profiles to the global dimension of $A$, see Theorem \ref{thm:gentle gldim} in Subsection \ref{subsubsect:gldim gentle}.

\begin{remark}\rm
In \cite{BD2017}, the authors show that every gentle algebra is a quotient of a direct product of hereditary Nakayama algebras.
Therefore, gentle algebras can be studied by hereditary Nakayama algebras. The construction of the graded geometric models of (graded) gentle algebras in \cite[etc]{OPS2018,QZZ2022} relies on this fact.
\end{remark}

\subsubsection{Global dimensions of gentle algebras} \label{subsubsect:gldim gentle}

For every $v\in \Q_0$ and $m \>= 1$, let
\[ \mathcal{F}_A(v,m) :=
\left\{
  \alpha_1\cdots\alpha_m:
    \begin{array}{l}
      \alpha_1,\ldots,\alpha_m\in \Q_1, ~ \fctt(\alpha_m)=v,\\
      \alpha_i\alpha_{i+1}\in \I \text{ for every }1\=< i < m
    \end{array}
\right\}.\]
The elements of $\mathcal{F}_A(v,m)$ are not required to be maximal. For convenience, put
\[ \mathcal{F}_A(v,0):=\{e_v\}. \]
Moreover, for each polynomial $f(x)=\sum_ta_tx^t$, we use $[x^m]f(x)$ to represent the coefficient $a_m$ of the term $a_mx^m$ of a polynomial to the power of $m$.

\begin{lemma}\label{lemm:gentle corresp}
Let $A=\CC Q/I$ be a gentle algebra and let $v\in Q_0$ and $\pmb{P}_{S_v}^{\bullet} = (P_{S_v}^m,d_{S_v}^m)_{m \>= 0}$
be the minimal projective resolution of the simple left $A$-module $S_v$. Then the following statements hold.
\begin{enumerate}[label={\rm(\arabic*)}]
  \item For every $m \>= 0$, we have
    \[ P_{S_v}^m \cong \bigoplus_{\alpha_1\cdots\alpha_m\in\mathcal F_A(v,m)} Ae_{s(\alpha_1)}; \]
    furthermore, we have
    \[ \IntProf_{\II_A}P_{S_v}^m \sim
       \bigoplus_{\alpha_1\cdots\alpha_m\in\mathcal F_A(v,m)}\IntProf_{\II_A}Ae_{s(\alpha_1)}. \]
    Here, $P_{S_v}^0 = Ae_v$.
  \item The diagonal blocks of the coefficient
    \[[x^m]\IntProfEmb_{\II_A} (\pmb{P}_{S_v}^{\bullet};x) =
        \mathfrak{Emb}_{P_{S_v}^m}
        \left(
        \begin{matrix}
          \displaystyle\IntProf_{\II_A}Ae_{s(\alpha_{1,1})}
        & 0
        & \cdots
        & 0\\
          0
        &  \displaystyle\IntProf_{\II_A}Ae_{s(\alpha_{2,1})}
        & \cdots
        & 0\\
          \vdots
        & \vdots
        & 
        & \vdots\\
          0
        & 0
        & \cdots
        & \displaystyle\IntProf_{\II_A}Ae_{s(\alpha_{r,1})}
        \end{matrix}\right)\]
    of $\intProfEmb_{\II_A} (\pmb{P}_{S_v}^{\bullet};x)$ are in one-to-one correspondence with the forbidden paths in $\mathcal F_A(v,m)$.
\end{enumerate}
\end{lemma}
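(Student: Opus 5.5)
The plan is to reduce the statement to the classical description of minimal projective resolutions of simple modules over a gentle algebra, and then to transport it to integral profiles using Corollary \ref{coro:param-int-dir sum} and Theorem \ref{thm:int-module iso}.

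The first step is to describe the syzygies explicitly. I claim by induction on $m$ that
\[\Omega^m(S_v)\cong\bigoplus_{\alpha_1\cdots\alpha_m\in\mathcal F_A(v,m)} A\alpha_1\]
for $m\ge1$, and that each $A\alpha_1$ is the cyclic module spanned by the nonzero paths ending in $\alpha_1$. For $m=1$ this is the radical $\rad Ae_v=\bigoplus_{\fctt(\alpha)=v}A\alpha$. The sum is direct because of \ref{G1} and \ref{G3}: at most two arrows end at $v$, and a nonzero path ends in exactly one of them.

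For the inductive step, consider the projective cover $Ae_{\fcts(\alpha_1)}\to A\alpha_1$, $x\mapsto x\alpha_1$. Its kernel is spanned by the paths $p$ ending at $\fcts(\alpha_1)$ with $p\alpha_1=0$. By \ref{G3} and \ref{G4}, these are exactly the paths whose last arrow $\gamma$ satisfies $\gamma\alpha_1\in\I$, and there is at most one such $\gamma$. So the kernel is $A\gamma$ or $0$, and $\gamma\alpha_1\cdots\alpha_m$ runs over the extensions lying in $\mathcal F_A(v,m+1)$. Here I must be careful with orientation: in $\mathcal F_A(v,m)$ the path ends at $v$, so the new arrow is prepended, which matches the definition.

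Since each map $Ae_{\fcts(\alpha_1)}\to A\alpha_1$ is a projective cover, the resulting resolution is minimal. This gives the module isomorphism in (1), and $P_{S_v}^0=Ae_v$ is the projective cover of $S_v$.

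For the profile statement in (1), I apply Corollary \ref{coro:param-int-dir sum} to the direct sum. Theorem \ref{thm:int-module iso} (equivalently Theorem \ref{thm:param-int-module iso}) turns the module isomorphism into a similarity whose transition matrix $H$ does not depend on $\pmb t$. For (2), I use the definition of the embedded total profile: the coefficient of $x^m$ is $\mathfrak{Emb}_{P^m_{S_v}}$ applied to $\IntProf_{\II_A}P^m_{S_v}$, which in the decomposed basis is block diagonal. Its blocks are indexed by the summands, and hence by the elements of $\mathcal F_A(v,m)$. This indexing is a bijection because distinct forbidden paths give distinct summands in the syzygy decomposition, even when their starting vertices coincide.

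The main obstacle is the inductive syzygy computation, specifically checking that the kernel of $Ae_{\fcts(\alpha_1)}\to A\alpha_1$ is exactly $A\gamma$. Two points need care. First, no path through the other arrow ending at $\fcts(\alpha_1)$ is killed; this is what \ref{G3} guarantees. Second, \ref{G4} is needed so that $p\alpha_1=0$ is decided by the last arrow of $p$ alone. I would first verify this using the fact that $\I$ is generated by length-two monomials, so that the nonzero paths form a basis.
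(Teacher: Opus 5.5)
Your proposal is correct and follows essentially the same route as the paper's proof. Both arguments decompose $\rad(Ae_v)=\bigoplus_{\fctt(\alpha)=v}A\alpha$, use the projective covers $Ae_{\fcts(\alpha)}\to A\alpha$, $x\mapsto x\alpha$, whose kernel is $A\gamma$ or $0$ by the gentle conditions together with the length-two monomial relations, iterate along forbidden threads, and then turn the module isomorphism into a $\pmb t$-independent similarity of profiles via Corollary~\ref{coro:param-int-dir sum} and Theorem~\ref{thm:param-int-module iso}. Your explicit syzygy formula $\Omega^m(S_v)\cong\bigoplus A\alpha_1$ spells out the step the paper summarizes as ``inductively'', and so does your observation that $p\alpha_1=0$ is decided by the last arrow of $p$ alone.
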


\begin{proof}
First, we have $P_{S_v}^0=Ae_v$. Since the nontrivial paths ending at $v$ form a path basis of $\rad(Ae_v)$, we have
\[\rad(Ae_v) = \bigoplus_{\substack{\alpha\in \Q_1\\ \fctt(\alpha)=v}}A\alpha,\]
and then
\[P_{S_v}^1 \cong \bigoplus_{\substack{\alpha\in \Q_1\\ \fctt(\alpha)=v}} Ae_{s(\alpha)}
= \bigoplus_{\alpha\in\mathcal F_A(v,1)} Ae_{s(\alpha)}. \]
More precisely, for every arrow $\alpha:u\to v$, right multiplication by $\alpha$ defines a projective cover $Ae_u \to A\alpha$, $x\mapsto x\alpha.$
By \ref{G4}, a nonzero path $p$ ending at $u$ satisfies $p\alpha=0$ if and only if the final
arrow $\beta$ of $p$ satisfies $\beta\alpha\in\I$.
The conditions \ref{G1}--\ref{G3} imply that there is at most one such arrow $\beta$. Hence
\[ \ker(-\compos\alpha) =
\begin{cases}
  A\beta, & \text{if there is an arrow $\beta$ such that } \beta\alpha = 0_A;\\
  0, & \text{otherwise}.
\end{cases}\]
Inductively, every sequence $\alpha_1\cdots\alpha_m\in\mathcal F_A(v,m)$
provides precisely one projective summand $Ae_{s(\alpha_1)}$ of $P_{S_v}^m$. Then
\[ P_{S_v}^m \cong \bigoplus_{\alpha_1\cdots\alpha_m\in\mathcal F_A(v,m)} Ae_{s(\alpha_1)}.\]
By Theorem \ref{thm:param-int-module iso} and Corollary \ref{coro:param-int-dir sum}, we have
\[ \IntProf_{\II_A}P_{S_v}^m \sim \bigoplus_{\alpha_1\cdots\alpha_m\in\mathcal F_A(v,m)}
\IntProf_{\II_A}Ae_{s(\alpha_1)}. \]
Thus the diagonal blocks of $[x^m]\intProfEmb_{\II_A} (\pmb P_{S_v}^{\bullet};x)$ are in one-to-one correspondence with the forbidden paths of length $m$ ending at $v\in\Q_0$.
\end{proof}

For any gentle algebra $A=\CC\Q/\I$ and arbitrary two arrows $\alpha$ and $\beta$ in $\Q$, let
\[ g_{\alpha,\beta} =
\begin{cases}
1, & t(\alpha)=s(\beta) \text{ and } \alpha\beta=0_A \\
0, & \text{otherwise}.
\end{cases} \]
Then the matrix $G_A :=(g_{\alpha,\beta})_{(\alpha,\beta)\in\Q_1\times\Q_1}$ lies in $\pandiring(A)$.

\begin{lemma}\label{lemm:gentle polynomial}
Keep the notations from Lemma \ref{lemm:gentle corresp}. Then
\[ \IntProf (\pmb{P}^{\bullet}_{S_v};x) \widetilde{\in} \pandiring(A)[x]
\text{ if and only if the lengths all paths } \wp \in \bigcup_{m\in\NN}\mathcal{F}_A(v,m) \text{ are bounded. }  \]
If these equivalent conditions hold, then
\[ \deg_x \IntProfEmb_{\II_A} (\pmb P_{S_v}^{\bullet};x) = \max \{ m\>= 0: \mathcal F_A(v,m)\ne\varnothing \}.\]
\end{lemma}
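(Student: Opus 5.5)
The plan is to reduce the statement to Lemma~\ref{lemm:pdim-int 0812}, using the explicit shape of the minimal resolution from Lemma~\ref{lemm:gentle corresp}.

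By Lemma~\ref{lemm:import subalg 1}, the embedded total projective integral profile of $S_v$ always lies in $\pandiring(A)[[x]]$. By Lemma~\ref{lemm:import emb}, each $\mathfrak{Emb}_{P^m_{S_v}}$ is injective. Hence $\IntProf_{\II_A}(\pmb{P}^\bullet_{S_v};x)\,\widetilde{\in}\,\pandiring(A)[x]$ exactly when only finitely many coefficients $\IntProf_{\II_A}P^m_{S_v}$ are nonzero. By the argument in Lemma~\ref{lemm:pdim-int 0812}, a coefficient vanishes iff the module vanishes, because $\rho_P(1_A)=\ident_P$. Polynomiality is therefore equivalent to $P^m_{S_v}=0$ for all $m\gg 0$, and the degree is the largest $m$ with $P^m_{S_v}\neq 0$.

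Next I translate this into paths. Lemma~\ref{lemm:gentle corresp}(1) gives $P^m_{S_v}\cong\bigoplus_{\alpha_1\cdots\alpha_m\in\mathcal F_A(v,m)}Ae_{\fcts(\alpha_1)}$. Each $Ae_{\fcts(\alpha_1)}$ is nonzero since it contains $e_{\fcts(\alpha_1)}$. So $P^m_{S_v}\neq0$ iff $\mathcal F_A(v,m)\neq\varnothing$. Equivalently, by part (2), the nonzero diagonal blocks of $[x^m]\IntProfEmb_{\II_A}(\pmb P^\bullet_{S_v};x)$ are indexed by $\mathcal F_A(v,m)$.

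It remains to check that ``$\mathcal F_A(v,m)\ne\varnothing$ for only finitely many $m$'' is the same as ``the lengths of paths in $\bigcup_m\mathcal F_A(v,m)$ are bounded''. Since an element of $\mathcal F_A(v,m)$ has length exactly $m$, this is immediate. I would also note that each $\mathcal F_A(v,m)$ is finite because $\Q_1$ is finite, so each coefficient is a finite direct sum. The degree formula $\deg_x=\max\{m\ge0:\mathcal F_A(v,m)\neq\varnothing\}$ then follows, with $m=0$ always present via $e_v$.

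The main obstacle is an inherited one: Lemma~\ref{lemm:gentle corresp}(1) is only sketched by induction. One must make sure the kernel computation $\ker(-\cdot\alpha)=A\beta$ propagates so that each forbidden thread contributes exactly one summand at each step, including in the presence of oriented cycles. I will take that lemma as given. The only care needed here is the nonvanishing of the leading coefficient, which is handled by the identity-trace argument above.
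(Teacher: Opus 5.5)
Your proposal is correct and takes essentially the same route as the paper. Both arguments read off from Lemma~\ref{lemm:gentle corresp} that the coefficient of $x^m$ in the embedded total profile is nonzero exactly when $\mathcal F_A(v,m)\neq\varnothing$ (using injectivity of $\mathfrak{Emb}$ and that a nonzero module has a nonzero integral profile), and then identify ``finitely many nonzero coefficients'' with bounded lengths of forbidden paths. Your extra remarks, that each $\mathcal F_A(v,m)$ is finite and that the $m=0$ term comes from $e_v$, are harmless refinements of that same argument.
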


\begin{proof}
By Lemma \ref{lemm:gentle corresp}, we obtain that:
\begin{itemize}
  \item the coefficient $[x^m]\intProfEmb_{\II_A} (\pmb P_{S_v}^{\bullet};x)$
    is a block-diagonal matrix whose diagonal blocks are indexed by $\mathcal{F}_A(v,m)$;
  \item every diagonal block is the embedded integral profile of a nonzero projective module $Ae_{\fcts(\alpha_1)}$,
    and its integral profile is nonzero.
\end{itemize}
Thus, we have
\[[x^m]\IntProfEmb_{\II_A} (\pmb P_{S_v}^{\bullet};x)\ne0
\text{ if and only if } \mathcal{F}_A(v,m)\ne\varnothing.\]
It follows that the total projective integral profile is a polynomial if and only if
$\mathcal{F}_A(v,m)$ is empty for all sufficiently large $m$.
Equivalently, the lengths of the forbidden paths ending at $v$ are bounded.
In this case, its degree is the maximal integer $m$ satisfying $\mathcal{F}_A(v,m)\ne\varnothing$.
\end{proof}

\begin{proposition}\label{prop:gldim gent}
Let $A=\CC\Q/\I$ be a gentle algebra. Then the following statements hold.
\begin{enumerate}[label={\rm(\arabic*)}]
  \item If $\Q_1=\varnothing$, then $\gldim A=0$.
    \label{prop:gldim gent 1}
  \item If $\Q_1\ne\varnothing$. Then
    \[ \gldim A<\infty \text{  if and only if }
     \IntProf_{\II_A} (\pmb P_{S_v}^{\bullet};x) \widetilde{\in} \pandiring(A)[x] ~ (\forall v\in Q_0) \]
    \label{prop:gldim gent 2}
  \item If $\gldim A<\infty$, then
    \begin{align*}
       \gldim A
     & = \max_{v\in Q_0} \deg_x \IntProfEmb_{\II_A} (\pmb P_{S_v}^{\bullet};x) \\
     & = \max \{ m\>= 1: \mathcal{F}_A(v,m)\ne\varnothing \text{ for some } v\in Q_0 \}
    \end{align*}
    \label{prop:gldim gent 3}
\end{enumerate}
\end{proposition}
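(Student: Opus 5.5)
The plan is to reduce all three statements to Auslander's formula \eqref{eq:gldim-simp mod}, i.e.\ $\gldim A=\max_{v\in\Q_0}\pdim_A S_v$, and then to combine it with Lemma~\ref{lemm:pdim-int 0812}, Lemma~\ref{lemm:gentle corresp} and Lemma~\ref{lemm:gentle polynomial}. In each case the module-theoretic invariant is translated into a statement about the total integral profile.

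For \ref{prop:gldim gent 1}: if $\Q_1=\varnothing$, then $A\cong\CC^{\times|\Q_0|}$ is semisimple. Hence every $S_v=Ae_v$ is projective and $\gldim A=0$. In integral terms, $\mathcal F_A(v,m)=\varnothing$ for every $m\ge 1$, so by Lemma~\ref{lemm:gentle corresp} each $\IntProfEmb_{\II_A}(\pmb P^\bullet_{S_v};x)$ is the constant $\mathfrak{Emb}_{Ae_v}(\IntProf_{\II_A}Ae_v)$. This constant is nonzero because $\rho_{Ae_v}(1_A)=\ident$, so its degree is $0$.

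For \ref{prop:gldim gent 2}, we go through a chain of equivalences. By \eqref{eq:gldim-simp mod}, $\gldim A<\infty$ holds if and only if $\pdim_AS_v<\infty$ for every $v$; here $\Q_0$ is finite, so the maximum of finitely many finite numbers is finite. By Lemma~\ref{lemm:pdim-int 0812}, $\pdim_AS_v<\infty$ holds if and only if $\IntProf_{\II_A}(\pmb P^\bullet_{S_v};x)$ is a polynomial. The injectivity of each $\mathfrak{Emb}_{P^n_{S_v}}$ (Lemma~\ref{lemm:import emb}) then gives $\IntProf_{\II_A}(\pmb P^\bullet_{S_v};x)\,\widetilde{\in}\,\pandiring(A)[x]$, using also that the coefficients lie in $\pandiring(A)$ by Lemma~\ref{lemm:import subalg 1}. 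Alternatively, one can read off the same equivalence directly from Lemma~\ref{lemm:gentle polynomial} as a cross-check.

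For \ref{prop:gldim gent 3}: assume $\gldim A<\infty$. The first equality follows from \eqref{eq:gldim-simp mod} together with the degree formula $\pdim_AS_v=\deg_x$ of Lemma~\ref{lemm:pdim-int 0812}, which agrees with the embedded degree since $\mathfrak{Emb}$ is injective. The second equality follows from the degree formula in Lemma~\ref{lemm:gentle polynomial}, after taking the maximum over $v$.

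The main obstacle is the indexing in the second equality. Lemma~\ref{lemm:gentle polynomial} takes the maximum over $m\ge0$, while the statement uses $m\ge1$. Since $\Q_1\ne\varnothing$, some arrow $\alpha$ has $\mathcal F_A(\fctt(\alpha),1)\ni\alpha$, so the maximum over all $v$ is at least $1$. Hence dropping $m=0$ does not change the global maximum. In the case $\Q_1=\varnothing$ the set is empty and part \ref{prop:gldim gent 1} applies instead, so I would note that part \ref{prop:gldim gent 3} is meant for $\Q_1\ne\varnothing$, or interpret $\max\varnothing=0$.
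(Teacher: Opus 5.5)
Your proposal is correct and follows essentially the same route as the paper: semisimplicity for (1), and Auslander's formula \eqref{eq:gldim-simp mod} combined with Lemma~\ref{lemm:pdim-int 0812} and the injectivity of $\mathfrak{Emb}$ (which is what Theorem~\ref{thm:gldim-int} packages), together with the path-counting Lemmas~\ref{lemm:gentle corresp} and~\ref{lemm:gentle polynomial}, for (2) and (3). You also check explicitly that replacing $m\ge 0$ by $m\ge 1$ does not change the maximum when $\Q_1\ne\varnothing$, and you note that (3) needs $\Q_1\ne\varnothing$ or a convention for $\max\varnothing$; the paper's proof leaves both points implicit.
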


\begin{proof}
If $\Q_1=\varnothing$, then $A$ is semisimple, and hence $\gldim A=0$, thus \ref{prop:gldim gent 1} holds.
Now assume $\Q_1\ne\varnothing$. By Lemma \ref{lemm:gentle corresp}, for every $v\in \Q_0$,
$\IntProf(\pmb{P}^{\bullet}_{S_v};x) \widetilde{\in} \pandiring(A)[x]$
if and only if the lengths of all paths $\wp \in \bigcup_{m\in\NN}\mathcal{F}_A(v,m)$ are bounded.
By Theorem \ref{thm:gldim-int}, we obtain \ref{prop:gldim gent 3} immediately.

Moreover, Lemma \ref{lemm:pdim-int 0812} provides the minimal projective resolution of $S_v$, and hence
\[ \pdim_A S_v = \deg_x \IntProfEmb_{\II_A} (\pmb P_{S_v}^{\bullet};x). \]
Then
\[\gldim A = \max_{v\in \Q_0} \deg_x\IntProfEmb_{\II_A}(\pmb P_{S_v}^{\bullet};x)\]
holds by using \eqref{eq:gldim-simp mod}.
\end{proof}

\begin{theorem}\label{thm:gentle gldim}
Let $A$ be a gentle algebra. Then the following statements hold.
\begin{enumerate}[label={\rm(\arabic*)}]
  \item If $\Q_1=\varnothing$, then $\gldim A=0$.
    \label{thm:gentle gldim 1}
  \item If $\Q_1\ne\varnothing$ and $G_A$ is nilpotent, then $\gldim(A) = \min\{m\>= 1: G_A^m=0\}$.
    \label{thm:gentle gldim 2}
  \item If $G_A$ is not nilpotent, then $\gldim A=\infty$.
    \label{thm:gentle gldim 3}
\end{enumerate}
\end{theorem}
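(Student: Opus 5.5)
Part (1) is Proposition \ref{prop:gldim gent}\ref{prop:gldim gent 1}. For (2) and (3), the whole argument reduces to Proposition \ref{prop:gldim gent}\ref{prop:gldim gent 3} together with one combinatorial identity: the entries of powers of $G_A$ count forbidden paths. Specifically, I would show by induction on $m\geq 1$ that
\[
(G_A^{m})_{\alpha,\beta}=\#\{\alpha=\alpha_1,\alpha_2,\ldots,\alpha_{m+1}=\beta:\ \alpha_i\alpha_{i+1}\in\I\ \text{for } 1\leq i\leq m\}.
\]
Since $g_{\alpha,\beta}\in\{0,1\}$, the matrix entry $(G_A^m)_{\alpha,\beta}=\sum_{\gamma_1,\ldots,\gamma_{m-1}}g_{\alpha,\gamma_1}\cdots g_{\gamma_{m-1},\beta}$ counts exactly such chains. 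By \ref{G2}, there is at most one forbidden continuation of each arrow, so every entry is $0$ or $1$. This is not needed, however; only non-negativity matters. Because all entries are non-negative, $G_A^m\neq 0$ if and only if there is a forbidden path of length $m+1$, i.e. $\mathcal F_A(v,m+1)\neq\varnothing$ for some $v\in\Q_0$, namely $v=\fctt(\beta)$.

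\textbf{Part (3).} Suppose $G_A$ is not nilpotent. Then $G_A^m\neq 0$ for every $m$, so forbidden paths of every length exist. Since $\Q_0$ is finite, some fixed vertex $v$ has $\mathcal F_A(v,m)\neq\varnothing$ for infinitely many $m$. For this conclusion I must also check that $\mathcal F_A(v,m)\ne\varnothing$ implies $\mathcal F_A(v,m')\ne\varnothing$ for $m'<m$ at the same $v$. That is immediate by dropping arrows from the front $\alpha_1$, which keeps the ending vertex. Hence the lengths in $\bigcup_m\mathcal F_A(v,m)$ are unbounded. Lemma \ref{lemm:gentle polynomial} then gives that the total profile of $S_v$ is not a polynomial, and Lemma \ref{lemm:pdim-int 0812} gives $\pdim S_v=\infty$, so $\gldim A=\infty$. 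A more direct route is Lemma \ref{lemm:gentle corresp}, which says $P^m_{S_v}\neq 0$ for all $m$.

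\textbf{Part (2).} Suppose $\Q_1\ne\varnothing$ and $G_A$ is nilpotent, and let $N=\min\{m\geq 1: G_A^m=0\}$. Then no forbidden path of length $N+1$ exists, and hence none of any greater length, since forbidden paths can be truncated. So all $\mathcal F_A(v,m)$ are bounded, and $\gldim A<\infty$ by Proposition \ref{prop:gldim gent}\ref{prop:gldim gent 2}. By \ref{prop:gldim gent 3}, $\gldim A=\max\{m\geq 1:\mathcal F_A(v,m)\neq\varnothing\ \text{for some } v\}$. The counting identity identifies this maximum as follows:
\begin{itemize}
\item $G_A^{N-1}\neq 0$ for $N\geq 2$ gives a forbidden path of length $N$;
\item $G_A^N=0$ forbids length $N+1$;
\item for $N=1$, a single arrow exists since $\Q_1\neq\varnothing$, while $G_A=0$ forbids length $2$.
\end{itemize}
So the maximum equals $N$ in every case.

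The main obstacle is the off-by-one bookkeeping between matrix powers (which count composable pairs) and path length (which counts arrows). I will treat the case $N=1$ separately, as above. This is also exactly where the hypothesis $\Q_1\ne\varnothing$ is used, to guarantee that $\mathcal F_A(v,1)$ is nonempty for some $v$. Everything else is a direct application of Proposition \ref{prop:gldim gent}.
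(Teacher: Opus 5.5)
Your proposal is correct and takes the same approach as the paper. Both arguments identify nonvanishing of powers of $G_A$ with the existence of forbidden paths (the paper's equivalence $G_A^{m-1}\neq 0 \iff \mathcal F_A(v,m)\neq\varnothing$ for some $v$, with $m=1$ handled via $G_A^0$ and $\Q_1\neq\varnothing$) and then read off the global dimension from Proposition \ref{prop:gldim gent}; your pigeonhole step over the finite vertex set in part (3), which pins down a single simple module $S_v$ of infinite projective dimension, spells out a point the paper passes over quickly.
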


\begin{proof}
It is clearly that \ref{thm:gentle gldim 1} holds by Proposition \ref{prop:gldim gent}.

Now assume that $\Q_1\ne\varnothing$. For every $m \>= 2$ and
$\alpha_1,\alpha_m\in\Q_1$, the
$(\alpha_1,\alpha_m)$-entry of $G_A^{m-1}$ is
\[\sum_{\alpha_2,\ldots,\alpha_{m-1}\in\Q_1} g_{\alpha_1,\alpha_2} g_{\alpha_2,\alpha_3} \cdots g_{\alpha_{m-1},\alpha_m}.\]
Since every $g_{\alpha,\beta}$ is either zero or one,
a summand $g_{\alpha_1,\alpha_2}g_{\alpha_2,\alpha_3} \cdots g_{\alpha_{m-1},\alpha_m}$
is nonzero if and only if $\alpha_i\alpha_{i+1}=0_A$ for every $1\=< i<m$.
In this case, $\alpha_1\cdots\alpha_m \in \mathcal{F}_A(\fctt(\alpha_m),m)$.
Conversely, every element $\alpha_1\cdots\alpha_m \in \mathcal{F}_A(v,m)$ provides a nonzero summand
in the $(\alpha_1,\alpha_m)$-entry of $G_A^{m-1}$.
Therefore,
\begin{equation}\label{eq:gentle matrix path}
G_A^{m-1}\ne 0 \text{~if and only if~} \mathcal{F}_A(v,m) \ne \varnothing \text{ for some } v\in\Q_0.
\end{equation}
For $m=1$, this equivalence still holds because $G_A^0$ is the identity matrix and $\Q_1\ne\varnothing$.

Suppose that $G_A$ is nilpotent, and let $d = \min\{m \>= 1:G_A^m=0\}$.
By \eqref{eq:gentle matrix path}, there exists $v\in\Q_0$ such that $\mathcal{F}_A(v,d)\ne\varnothing$,
whereas $\mathcal{F}_A(v,m)=\varnothing$ for every $v\in\Q_0$ and every $m>d$.
By Lemma~\ref{lemm:gentle corresp}, $[x^m]\intProfEmb_{\II_A}(\pmb P_{S_v}^{\bullet};x)\ne 0$ if and only if $\mathcal{F}_A(v,m)\ne\varnothing$.
Them all $\intProfEmb_{\II_A}(\pmb P_{S_v}^{\bullet};x)$ belong to $\pandiring(A)[x]$, and
\[\max_{v\in\Q_0}\deg_x \IntProfEmb_{\II_A}(\pmb P_{S_v}^{\bullet};x)=d.\]
It follows from Proposition \ref{prop:gldim gent} that $\gldim A=d = \min\{m \>= 1:G_A^m=0\}$. Thus, \ref{thm:gentle gldim 2} holds.

Finally, suppose that $G_A$ is not nilpotent. Then $G_A^m\ne 0$ for all $m\in\NN^+$.
By \eqref{eq:gentle matrix path}, for every $m \>= 1$, there exists
$v\in\Q_0$ such that $\mathcal{F}_A(v,m)\ne\varnothing$.
Consequently, the coefficients of arbitrarily large powers of $x$ occur among
the total projective integral profiles $\intProfEmb_{\II_A} (\pmb P_{S_v}^{\bullet};x)$.
i.e., at least one of these total projective integral profiles lies in
$\pandiring(A)[[x]]\backslash \pandiring(A)[x]$.
It follows from Proposition \ref{prop:gldim gent} that $\gldim A=\infty$. Thus, {\rm(3)} holds.
\end{proof}

\subsubsection{Self-injective dimensions of gentle algebras} \label{subsubsect:findim gentle}

For every arrow $\alpha\in\Q_1$, we say that $\alpha$ belongs to the \defines{cyclic part} of $G_A$ if
$(G_A^m)_{\alpha,\alpha} \ne 0$ for some $m \>= 1$. Let
\[\Q_1^{\mathrm{fin}} = \{ \alpha\in\Q_1: (G_A^m)_{\alpha,\alpha}=0 \text{ for every }m\>= 1 \},\]
and let
\[G_A^{\mathrm{fin}} = (g_{\alpha,\beta})_ {\alpha,\beta\in\Q_1^{\mathrm{fin}}} \in \pandiring(A)\]
be the \defines{principal submatrix} of $G_A$ indexed by $\Q_1^{\mathrm{fin}}$.

\begin{theorem}\label{thm:gentle findim}
Let $A=\CC\Q/\I$ be a gentle algebra. Then $G_A^{\mathrm{fin}}$ is nilpotent and
\[ \idim A =
\begin{cases}
0, & \Q_1^{\mathrm{fin}}=\varnothing, \\
\min\{ m\>= 1: (G_A^{\mathrm{fin}})^m=0 \},
& \Q_1^{\mathrm{fin}}\ne\varnothing.
\end{cases}\]
\end{theorem}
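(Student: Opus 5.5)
The plan has two parts. First show that $G_A^{\mathrm{fin}}$ is nilpotent, which is purely combinatorial. Then compute $\idim A$ by reading off minimal injective coresolutions of the indecomposable projectives $Ae_v$, arguing dually to Lemma~\ref{lemm:gentle corresp}.

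For nilpotency, view $G_A$ as the adjacency matrix of a digraph $\Gamma_A$ on the vertex set $\Q_1$, with an edge $\alpha\to\beta$ exactly when $g_{\alpha,\beta}=1$. As in the proof of Theorem~\ref{thm:gentle gldim}, the $(\alpha,\alpha)$-entry of $G_A^m$ counts closed walks of length $m$ at $\alpha$. So $\alpha\in\Q_1^{\mathrm{fin}}$ exactly when $\alpha$ lies on no directed cycle of $\Gamma_A$. Then $G_A^{\mathrm{fin}}$ is the adjacency matrix of the full subgraph $\Gamma_A^{\mathrm{fin}}$ on $\Q_1^{\mathrm{fin}}$. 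Any cycle of $\Gamma_A^{\mathrm{fin}}$ would be a cycle of $\Gamma_A$ through some $\alpha\in\Q_1^{\mathrm{fin}}$, which is impossible. Hence $\Gamma_A^{\mathrm{fin}}$ is acyclic and $G_A^{\mathrm{fin}}$ is nilpotent. Moreover, $\min\{m\ge1:(G_A^{\mathrm{fin}})^m=0\}$ equals $1$ plus the maximal number of edges in a path of $\Gamma_A^{\mathrm{fin}}$. That is the maximal length $\ell$ of a forbidden path $\alpha_1\cdots\alpha_\ell$ all of whose arrows lie in $\Q_1^{\mathrm{fin}}$.

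For the dimension, use that $\idim A=\max_v\idim_A Ae_v$, and compute the minimal injective coresolution of each $Ae_v$ in the spirit of Lemma~\ref{lemm:gentle corresp}, now with paths starting at $v$. The claim I would establish is that, for a gentle algebra, the nonzero cosyzygies $\mho_n(Ae_v)$ with $n\ge1$ are direct sums of string modules determined by forbidden threads. Each forbidden path $\alpha_1\cdots\alpha_n$ that continues a maximal path out of $v$ contributes one indecomposable injective summand $I(\cdot)$ to $I^{Ae_v}_n$. By Lemma~\ref{lemm:idim-int 0812}, $\idim_A Ae_v$ is then the degree of the total injective profile, i.e. the length of the longest such forbidden chain.

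The key point is to show that chains entering the cyclic part never contribute. Suppose $\alpha$ lies on a cycle of $\Gamma_A$. Then $\alpha$ lies on a forbidden cycle (a cycle of arrows with all consecutive compositions zero), and the module $A\alpha$ is Gorenstein projective but not projective, since its syzygies are periodic along the forbidden cycle. A nonprojective Gorenstein projective cannot occur as a cosyzygy in the finite-length situation, because gentle algebras are Gorenstein (Geiss–Reiten). Concretely, the combinatorics should show that the forbidden chains arising from coresolutions of projectives stop before any arrow of the cyclic part. Given that, $\idim A$ equals the longest forbidden chain inside $\Q_1^{\mathrm{fin}}$ after the index shift, which matches $\min\{m:(G_A^{\mathrm{fin}})^m=0\}$ by the first paragraph. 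If $\Q_1^{\mathrm{fin}}=\varnothing$, no chain occurs, every $Ae_v$ is injective, and $\idim A=0$.

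I expect the main obstacle to be this exclusion of the cyclic part, together with the exact bookkeeping of the index. The offset between the number of arrows in a chain and the homological degree must produce $\min\{m:(G_A^{\mathrm{fin}})^m=0\}$ rather than that number minus one. The boundary case is where $\Q_1^{\mathrm{fin}}\neq\varnothing$ but no relation involves it, which should give $\idim A=1$. I would first test the formula on $\overrightarrow{A}=\CC\overrightarrow{\mathbb A}_2$, where one expects $\idim A=1$ and $G_A^{\mathrm{fin}}=(0)$ gives $1$. I would also test a gentle algebra with a single forbidden cycle, where $\idim A=0$ or small. Then I would adjust the dual of Lemma~\ref{lemm:gentle corresp} so the offset is right, and invoke the Geiss–Reiten description of cosyzygies of projectives for the exclusion step.
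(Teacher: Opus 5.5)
\paragraph{Verdict.} Your combinatorial half is correct and is the same as the paper's argument. $\Q_1^{\mathrm{fin}}$ is the set of arrows lying on no directed cycle of $\Gamma_A$. The full subgraph on it is acyclic, so $G_A^{\mathrm{fin}}$ is nilpotent, and $\min\{m\ge 1:(G_A^{\mathrm{fin}})^m=0\}$ is the maximal number of arrows in a forbidden path inside $\Q_1^{\mathrm{fin}}$.

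For the homological half, the paper computes no coresolutions. It quotes \cite{GR2005} and \cite[Theorem 1.1(2)]{LGH2024} for the equality of $\idim A$ with the maximal length of a non-cyclic forbidden thread. Your self-contained computation is only announced. The principle it rests on is false: namely, that forbidden chains entering the cyclic part never contribute, so that for $\Q_1^{\mathrm{fin}}=\varnothing$ every $Ae_v$ is injective. The Gorenstein-projective remark does not help, because it says nothing about which arrows appear in the injective coresolutions of the $Ae_v$.

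\paragraph{Counterexample.} The following algebra refutes that step, and in fact the statement itself when $\Q_1^{\mathrm{fin}}=\varnothing\neq\Q_1$. Take vertices $1,2,3$, arrows $a\colon 1\to 2$, $b\colon 2\to 1$, $c\colon 2\to 3$, $d\colon 3\to 2$, and $\I=\langle ab,ba,cd,dc\rangle$.
\begin{itemize}
\item This algebra is gentle, with basis $e_1,e_2,e_3,a,b,c,d,ac,db$.
\item $G_A$ is the permutation matrix exchanging $a\leftrightarrow b$ and $c\leftrightarrow d$. Hence $G_A^2$ is the identity and $\Q_1^{\mathrm{fin}}=\varnothing$.
\item Yet $Ae_2=\spa\{e_2,a,d\}$ has $\soc(Ae_2)=\CC a\oplus\CC d\cong S_1\oplus S_3$, so this indecomposable projective is not injective.
\item In fact $x\mapsto(xc,xb)$ gives an exact sequence $0\to Ae_2\to Ae_3\oplus Ae_1\to I(2)\to 0$ with $Ae_3\cong I(1)$ and $Ae_1\cong I(3)$. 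So $\idim A=1$, not $0$.
\end{itemize}
The obstruction comes exactly from the cyclic arrows $a$ and $d$.

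\paragraph{What survives.} The true statement is weaker: the cyclic part on its own only forces $\idim A\le 1$. This is Geiss--Reiten's bound when there is no non-cyclic forbidden thread, and it can also be checked directly, since then every indecomposable injective has projective first syzygy.
\begin{itemize}
\item For $\Q_1^{\mathrm{fin}}\neq\varnothing$ the formula still holds, because $d\ge 1$ absorbs this contribution. There you should drop your exclusion claim and cite Geiss--Reiten and \cite{LGH2024}, as the paper does.
\item For $\Q_1^{\mathrm{fin}}=\varnothing\neq\Q_1$ one can only conclude $\idim A\le 1$, and both values occur: $\CC[x]/(x^2)$ gives $0$, the algebra above gives $1$.
\end{itemize}
The paper's own proof makes the same mistaken assertion in this case (``$A$ is self-injective on these components''). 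So this case cannot be proved as stated and needs to be corrected to $\idim A\le 1$.
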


\begin{proof}
By the definition of $G_A$, for every $m\>=2$ and $\alpha_1,\alpha_m\in\Q_1$,
the $(\alpha_1,\alpha_m)$-entry of $G_A^{m-1}$ is
\[\sum_{\alpha_2,\ldots,\alpha_{m-1}\in\Q_1} g_{\alpha_1,\alpha_2} g_{\alpha_2,\alpha_3} \cdots g_{\alpha_{m-1},\alpha_m}.\]
A summand in the above expression is nonzero if and only if $\alpha_i\alpha_{i+1}=0_A$ for every $1\=< i<m$.
The definition of $\Q_1^{\mathrm{fin}}$ removes precisely those
arrows $\alpha$ for which a sequence $\alpha=\alpha_1$, $\alpha_2$, $\ldots$, $\alpha_m=\alpha$
satisfying $\alpha_i\alpha_{i+1}=0_A$ $(1\=< i<m)$ exists.
Therefore, no such sequence can return to its initial arrow when all its arrows belong to $\Q_1^{\mathrm{fin}}$.
It follows that $G_A^{\mathrm{fin}}$ is nilpotent.

Suppose first that $\Q_1^{\mathrm{fin}}\ne\varnothing$, and put $d:=\min\{m\>= 1: (G_A^{\mathrm{fin}})^m=0\}$.
There exist arrows $\alpha_1$, $\ldots$, $\alpha_d\in\Q_1^{\mathrm{fin}}$ such that $\alpha_i\alpha_{i+1}=0_A$ $(1\=< i<d)$,
whereas no such sequence consisting of $d+1$ arrows in $\Q_1^{\mathrm{fin}}$ exists.
Since $A$ is gentle, every row and every column of $G_A$ contains at most one nonzero entry.
Hence every sequence of arrows satisfying the above zero relations either eventually returns to a previously occurring arrow or terminates after finitely many steps.
The former arrows are exactly those removed in the definition of $\Q_1^{\mathrm{fin}}$. Thus $d$ is the maximal length of all finite maximal sequences $\alpha_1$, $\ldots$, $\alpha_d$ satisfying $\alpha_i\alpha_{i+1}=0_A$ $(\forall 1\=< i<d)$.
Every gentle algebra is Gorenstein \cite{GR2005}. Moreover, its left and right self-injective dimensions are equal to this maximal length; see, for example, \cite[Theorem 1.1 (2)]{LGH2024}. Therefore, $\idim {_A A}=\idim A_A=d$.

Finally, suppose that $\Q_1^{\mathrm{fin}}=\varnothing$. Then every arrow belongs to the cyclic part of $G_A$.
If $\Q_1=\varnothing$, then $A$ is semisimple, while otherwise each relevant component consists entirely of arrows returning cyclically under the zero relations. In this case $A$ is self-injective on these components, and $\idim {_A A} = \idim A_A = 0$.
Since a self-injective finite-dimensional algebra has no non-projective modules of finite projective dimension, we obtain $\idim A=0$.
\end{proof}


\section{An algebra to be syzygy infinite, non self-injective, non monomial and Loewy length of 4}
\label{appendix 3: an alg}
{\addtext

We construct a connected non-monomial algebra $A$ which is finite-dimensional, and we show that:
\begin{itemize}
  \item $\rad^4(A)=0\ne\rad^3(A)$;
  \item $A$ is not self-injective;
  \item $A$ is syzygy-infinite;
  \item $A$ is connected;
  \item $A$ is non-monomial;
  \item and $A$ has an infinite global dimension.
\end{itemize}

Let $C=\CC\Q/\I$, where $\Q$ is the quiver
\[
\xymatrix@C=2cm@R=0.5cm{
 & 1 \ar[rdd]^{b_1} & \\
 & 2 \ar[rd]^{b_2} & \\
0 \ar[ruu]^{a_1}
  \ar[ru]^{a_2}
  \ar[rd]_{a_3}
  \ar[rdd]_{a_4}
 & & 5 \\
 & 3 \ar[ru]_{b_3} & \\
 & 4 \ar[ruu]_{b_4} &
}
\]
and
\begin{equation}\label{eq:canonical relations two sided}
 I=\langle
 a_3b_3-a_1b_1-a_2b_2,\,
 a_4b_4-a_1b_1-2a_2b_2
 \rangle.
\end{equation}
We compose paths from left to right.
Thus $C$ is a canonical algebra of weight type $(2,2,2,2)$, and it has dimension $16$.
Write $R=\rad(C)$, then $R^3=0\ne R^2$. Define
\[ B=T(C):=C\ltimes D(C), \]
where $D=\Hom_{\CC}(-,\CC)$, and $D(C)$ has its natural $(C,C)$-bimodule structure $(cfd)(x)=f(dxc)$.
Thus, we have $(c,f)(d,g)=(cd,cg+fd)$ in $B$. The algebra $B$ is symmetric and has dimension $32$.
Moreover,
\begin{equation}\label{eq:B period}
 \Omega^4_{B\otimes_{\CC}B^{\mathrm{op}}}(B)\cong B.
\end{equation}
Indeed, \cite[Example 8.12 (a)]{CDIM2025periodic} shows that $C$ has a fractional Calabi--Yau relation with $(m,\ell)=(2,2)$.
By \cite[Theorem 6.1(a)]{CDIM2025periodic}, we have
$\Omega^{m+\ell}_{B\otimes_{\CC}B^{\mathrm{op}}}(B) \cong {}_1B_\varphi$,
$\varphi(c,f)=(c,(-1)^{m+\ell}f)$.
Since $m+\ell=4$, we have $\varphi=\mathrm{id}_B$, and \eqref{eq:B period} holds.
Next, let $S$ be the simple left $B$-module at the vertex $0$,
and define $U=S^{\oplus 2}$. For $0\=<i\=<3$, write $U_i=\Omega_B^i(U)$,
and extend the subscripts periodically modulo $4$.
This notation is justified by \eqref{eq:B period}.
Choose a primitive idempotent $e_j$ of $B$ such that $Be_j \le_{\oplus} P_{U_3}^0$,
i.e., $Be_j$ occurs in the projective cover of $U_3$.
Let $W$ be the simple right $B$-module corresponding to $e_j$.
Let $E=\CC^{1\times2}$ be the natural right $M_2(\CC)$-module and
let $E'=\CC^{2\times1}$ be the natural left $M_2(\CC)$-module. Set
\[ V=U\otimes_{\CC}E, \quad\text{and}\quad V'=E'\otimes_{\CC}W.\]
Then $V$ is a $(B,M_2(\CC))$-bimodule and $V'$ is an $(M_2(\CC),B)$-bimodule.
Now, define
\begin{equation}\label{eq:A 2609100913}
 A=
 \begin{pmatrix}
  B & V\\
  V'& M_2(\CC)
 \end{pmatrix},
\end{equation}
where the two bimodule homomorphisms $V\otimes_{M_2(\CC)}V'\to B$ and $V'\otimes_BV\to M_2(\CC)$,
called the Morita-context pairings (see, for example, \cite{TLZ2014Morita} for the definition), are zero.
Thus, the multiplication in $A$ is given by
\[ \begin{pmatrix}b&v\\v'&d\end{pmatrix}
   \begin{pmatrix}c&w\\w'&e\end{pmatrix}
 = \begin{pmatrix}
     bc&bw+ve\\
     v'c+dw'&de
   \end{pmatrix}.\]
Here, we have $\dim A=32+4+2+4=42$.

\begin{itemize}
  \item We first show that $A$ has \textbf{Loewy length $4$, that is, $\rad^4(A)=0\ne\rad^3(A)$}. Let $J_B=\rad(B)$.
  Since $J_BU=0$ and $WJ_B=0$, the definition of the multiplication provides
\[
 \rad(A)=
 \begin{pmatrix}J_B&V\\V'&0\end{pmatrix},
 \text{ and }
 \rad^n(A)=
 \begin{pmatrix}J_B^n&0\\0&0\end{pmatrix}
 (n\>= 2).
\]
Furthermore,
\[ J_B^n=R^n+ \sum_{p+q=n-1}R^pD(C)R^q.\]
Since $R^3=0$, we have $J_B^4=0$. If $0\ne c\in R^2$ and $f\in D(C)$ satisfies $f(c)\ne0$, then $cf \ne 0$.
Hence $J_B^3 \ne 0$. Therefore, $\rad^4(A)=0\ne\rad^3(A)$.

\item The algebra $A$ is \textbf{connected}: Indeed, $B$ is connected, and either one of the nonzero bimodules $V$ and $V'$ connects the simple $M_2(\CC)$-block to the simple blocks of $B$.

\item We next show that $A$ is \textbf{non-monomial} in the Morita-invariant sense:
Let $E_{11}$ be a matrix unit and define $e=\left(\begin{smallmatrix}1_B&0\\0&E_{11}\end{smallmatrix}\right)$.
This is a full idempotent. In the ordinary quiver of $eAe$, consider only the three branches $0\xrightarrow{a_i}i\xrightarrow{b_i}5$ $(i \in \{1,2,3\})$ of quiver $Q$.
The six arrow spaces are one-dimensional.
In the second radical layer,
the three distinct nonzero paths $q_i:=a_ib_i$ satisfy that $q_3=q_1+q_2$ and $q_1,q_2$ are linearly independent.
This relation cannot be removed by changing arrow representatives.
Indeed, such a change does not change their products modulo the third radical power,
except for nonzero scalar factors. Distinct surviving paths are linearly independent in a monomial algebra.
Therefore, $eAe$ admits no monomial presentation.
It follows that $A$ is not Morita equivalent to a monomial algebra.

\item The algebra $A$ is \textbf{not self-injective}:
Let $\varepsilon= \left(\begin{smallmatrix}0&0\\0&E_{11}\end{smallmatrix}\right)$.
Then the module $A\varepsilon$ is indecomposable and projective,
and we have $\mathrm{soc}(A\varepsilon)\cong S^{\oplus2}$,
which is not simple, and so $A\varepsilon$ is not injective.
Since $A\varepsilon$ is a direct summand of the left regular module ${}_AA$, it would be injective if $A$ were self-injective. Therefore, $A$ is not self-injective.

\item We now \textbf{compute the syzygies}. If an $A$-module has zero $M_2(\CC)$-component,
we write it as $(X,0)$, where $X$ is its $B$-component.
Let $L$ be the simple left $A$-module with zero $B$-component and with the natural simple $M_2(\CC)$-module as its $M_2(\CC)$-component.
Let $e_i$ be a primitive idempotent of $B$ and let $\widetilde e_i= \left( \begin{smallmatrix}e_i&0 \\ 0&0\end{smallmatrix}\right)$.
The indecomposable projective modules coming from $B$ are $A\widetilde e_i$.
The relations $V'J_B=0$ and the zero Morita-context pairing $V\otimes_{M_2(\CC)}V'\to B$ give
\begin{equation}\label{eq:rad old proj}
 \rad(A\widetilde e_i)
 \cong (J_Be_i,0)\oplus(0,V'e_i).
\end{equation}
Since $V'=E'\otimes_{\CC}W$ and $W$ is simple, the second summand in \eqref{eq:rad old proj} is either zero or a copy of $L$.
For the projective module $A\varepsilon$ coming from the $M_2(\CC)$-block,
we have \begin{equation}\label{eq:rad new proj}
 \rad(A\varepsilon)\cong(U,0).
\end{equation}
It follows from \eqref{eq:rad old proj} and \eqref{eq:rad new proj} that the radical of every projective $A$-module has the form
\begin{equation}\label{eq:rad arbit proj}
 (Z,0)\oplus L^{\oplus q}, \quad  Z\in\add (\{J_Be_i:i\}\cup\{U\}).
\end{equation}
Here, although $V$ and $V'$ are nonzero bimodules, their induced actions on the radical of every projective $A$-module are zero.
Indeed, the action of $V$ on each summand $V'e_i$ is zero because $V\otimes_{M_2(\CC)}V' \to B$ is zero;
and moreover, $V'J_B=0$, and the action of $V'$ on $U$ is zero because $V'\otimes_BV\to M_2(\CC)$ is zero.
Thus, the $B$-component and the $M_2(\CC)$-component of the radical are $A$-submodules.
Therefore, we have $\rad(P)\cong (Z,0)\oplus L^{\oplus q}$ as an $A$-module.
Therefore, every submodule of it is the direct sum of its $B$-component and its $M_2(\CC)$-component.
The latter component is a direct sum of copies of $L$, because $M_2(\CC)$ is semisimple.

Let $P_M\to M$ be a projective cover. Its kernel is contained in $\rad(P_M)$, and
\begin{equation}\label{eq:first syzygy form two sided}
 \Omega_A^1(M) \cong (Y,0)\oplus L^{\oplus q}
\end{equation}
for some $q\>= 0$ and some $B$-module $Y$.
Moreover, $Y$ is contained in a direct sum of modules $J_Be_i$ and copies of $U$.
In this sense, $Y$ has no projective direct summand.
Indeed, suppose that $Y$ has a nonzero projective direct summand $P$,
then, by $B$ to be self-injective, $P$ is injective,
and then the inclusion $P\hookrightarrow Z$ splits. It follows that $P$ is a direct summand of $Z$.
This is impossible because neither $J_Be_i$ nor $U$ has a projective direct summand.
For a left $B$-module $X$, let $\nu(X)$ be the multiplicity of $Be_j$ in its projective cover.
Lifting a projective cover of $Y$ over $B$ to a projective cover of $(Y,0)$ over $A$,
and using \eqref{eq:rad old proj}, provides
\begin{equation}\label{eq:syzygy B part two sided}
 \Omega_A^1(Y,0) \cong(\Omega_B^1(Y),0)\oplus L^{\oplus\nu(Y)}.
\end{equation}
Similarly, \eqref{eq:rad new proj} provides
\begin{equation}\label{eq:syzygy new simple two sided}
 \Omega_A^1(L)\cong(U,0).
\end{equation}
Let $\mu_i=\nu(U_i)$, $0\=< i\=< 3$.  Our choice of $e_j$ gives $\mu_3>0$.
On the fixed modules $U_0,U_1,U_2,U_3,L$, the syzygy operation is described by the matrix
\begin{equation}\label{eq:fixed transition matrix two sided}
 T=
 \begin{pmatrix}
 0&0&0&1&1\\
 1&0&0&0&0\\
 0&1&0&0&0\\
 0&0&1&0&0\\
 \mu_0&\mu_1&\mu_2&\mu_3&0
 \end{pmatrix}.
\end{equation}
For the module $Y$ in \eqref{eq:first syzygy form two sided}, the
four modules $Y,\ \Omega_B^1(Y),\ \Omega_B^2(Y)$ and $\Omega_B^3(Y)$ are cyclically permuted.
The multiplicities with which they produce $L$ may depend on $Y$,
but they occur only in an off-diagonal block.
Thus the transition matrix for all the summands of the successive
syzygies has block triangular form
\[
 \mathcal T_Y=
 \begin{pmatrix}C_4&0\\
 H_Y&T\end{pmatrix},
\]
where $C_4=
\left(\begin{smallmatrix}
0&0&0&1\\
1&0&0&0\\
0&1&0&0\\
0&0&1&0
\end{smallmatrix}\right)$.
In particular, its characteristic polynomial is independent of $Y$ and equals
\begin{equation}\label{eq:uniform characteristic polynomial two sided}
 \chi(z)=(z^4-1)
 \bigl(z^5-\mu_0z^3-\mu_1z^2-(\mu_2+1)z-\mu_3\bigr).
\end{equation}
Write $\chi(z)=z^9+c_8z^8+\cdots+c_1z+c_0$. Since $\mu_3>0$, we have $c_0=\mu_3\ne0$.
By the Cayley--Hamilton theorem and the additivity of projective covers,
\begin{equation}\label{eq:proj recur 260911}
  [P_M^{n+9}]+c_8[P_M^{n+8}]+\cdots+c_1[P_M^{n+1}]+c_0[P_M^n]=0
\end{equation}
in the split Grothendieck group of projective $A$-modules for every $n\>= 1$.
Notice that the coefficients in this recurrence are independent of $M$.

  \item The algebra $A$ is \textbf{syzygy-infinite}. To prove this, for each $t\in\CC$, define a right $C$-module $L_t$ as follows. The vector space at each vertex of $Q$ is $\CC$. Right multiplication by every $a_i$ is the identity map, while right multiplication by $b_1,b_2,b_3,b_4$ is multiplication by $1$, $t$, $1+t$, $1+2t$, respectively.
These maps satisfy the relations given in \eqref{eq:canonical relations two sided}.
Thus $L_t$ is a well-defined right $C$-module.
Since the maps corresponding to all $a_i$ and to $b_1$ are nonzero, every
endomorphism of $L_t$ is given by the same scalar at all vertices.
Then $\End_C(L_t)\cong\CC$, i.e., $L_t$ is indecomposable.
The same argument shows that $L_t\cong L_s$ implies $t=s$.

Let $\pi:B=T(C)=C\ltimes D(C)\to C$, $\pi(c,f)=c$ be the canonical algebra epimorphism,
and let $X_t=D(L_t)=\Hom_{\CC}(L_t,\CC)$. Then $X_t$ is a left $C$-module.
We can regard it as a left $B$-module by $b x=\pi(b)x$ $(b\in B,\ x\in X_t)$.
Equivalently, the ideal $D(C)=\ker\pi$ acts trivially on $X_t$.
The modules $X_t$ are pairwise non-isomorphic and indecomposable.
Since $B$ has only finitely many isomorphism classes of indecomposable projective modules,
at most finitely many of the $X_t$ are projective.
We remove these modules and keep the notation $X_t$ for the remaining infinite family.

We claim that, for every $n\>= 0$,
\[ (\Omega_B^n(X_t),0) \le_{\oplus} \Omega_A^n(X_t,0). \]
This is clear for $n=0$. If it holds for some $n$, then \eqref{eq:syzygy B part two sided} shows that the syzygy of the displayed summand contains $(\Omega_B^{n+1}(X_t),0)$ as a direct summand. The claim therefore follows by induction.
Since $B$ is self-injective and $X_t$ is nonprojective, $\Omega_B^n(X_t)$ is nonprojective for every $n$.
Moreover, by \eqref{eq:B period}, we have $\Omega_B^4(X_t) \cong X_t$ in the stable module category.
Since both sides have no projective direct summands, they are isomorphic as $B$-modules.
Consequently, for each fixed $n$, the modules $\Omega_B^n(X_t)$ form an infinite family of pairwise non-isomorphic indecomposable modules.
Thus, for every $n$, infinitely many pairwise non-isomorphic indecomposable modules occur as direct summands of $n$-th syzygies over $A$.
Hence $A$ is syzygy-infinite.

\item Finally, for every $n$, we have $(\Omega_B^n(X_t),0)\le_{\oplus}\Omega_A^n(X_t,0)$.
Therefore $\pdim (X_t,0) = \infty$, and then we obtain $\gldim A=\infty.$
\end{itemize}

}

\newpage


\addcontentsline{toc}{section}{References}



\begin{thebibliography}{10}

\bibitem{Alex1932}
P.~Alexandroff.
\newblock Dimensionstheorie: Ein beitrag zur geometrie der abgeschlossenen
  mengen.
\newblock {\em Math. Ann.}, 106(1):161--238, 1932.
\newblock \href{https://doi.org/10.1007/BF01455884}{DOI:10.1007/BF01455884}.

\bibitem{Alex1968}
P.~Alexandroff.
\newblock On the fundamental theorem of homological dimension theory.
\newblock {\em Dokl. Akad. Nauk SSSR}, 180(3):519--522, 1968.
\newblock \url{http://mi.mathnet.ru/dan33859},
  \href{https://mathscinet.ams.org/mathscinet-getitem?mr=0242150}{MR:0242150},
  \href{https://zbmath.org/?q=an:0185.26503}{zbMATH:0185.26503}.

\bibitem{APL2021diff}
M.~Alvarez-Picallo and J.-S.~P. Lemay.
\newblock Cartesian difference categories.
\newblock {\em Log. Methods Comput. Sci.}, 17(3):paper no. 23, 48, 2021.
\newblock
  \href{https://doi.org/10.46298/lmcs-17(3:23)2021}{DOI:10.46298/lmcs-17(3:23)2021}.

\bibitem{AF1992}
F.~W. Anderson and K.~R. Fuller.
\newblock {\em Rings and Categories of Modules}, volume~13 of {\em Graduate
  Texts in Mathematics}.
\newblock Springer-Verlag, New York, 2 edition, 1992.
\newblock
  \href{https://doi.org/10.1007/978-1-4612-4418-9}{DOI:10.1007/978-1-4612-4418-9}.

\bibitem{ASS2006}
I.~Assem, D.~Simson, and A.~Skowro\'{n}ski.
\newblock {\em Elements of the Representation Theory of Associative Algebras,
  Volume 1 Techniques of Representation Theory}.
\newblock Cambridge University Press, Cambridge, 2006.
\newblock
  \href{https://doi.org/10.1017/CBO9780511614309}{DOI:10.1017/CBO9780511614309}.

\bibitem{AS1987}
I.~Assem and A.~Skowro\'{n}ski.
\newblock Iterated tilted algebras of type {$\widetilde{\mathbb{A}}_n$}.
\newblock {\em Math. Z.}, 195(2):269--290, 1987.
\newblock
  \href{https://link.springer.com/article/10.1007/BF01166463}{DOI:10.1007/bf01166463}.

\bibitem{Aus1955}
M.~Auslander.
\newblock On the dimension of modules and algebras {III}: Global dimension.
\newblock {\em Nagoya Math. J.}, 9:67--77, 1955.
\newblock
  \href{https://doi.org/10.1017/S0027763000023291}{DOI:10.1017/S0027763000023291}.

\bibitem{ARS1995}
M.~Auslander, I.~Reiten, and S.~O. Smal{\o}.
\newblock {\em Representation Theory of Artin Algebras}, volume~36 of {\em
  Cambridge Studies in Advanced Mathematics}.
\newblock Cambridge University Press, Cambridge, 1995.
\newblock
  \href{https://doi.org/10.1017/CBO9780511623608}{DOI:10.1017/CBO9780511623608}.

\bibitem{Bass1960}
H.~Bass.
\newblock Finitistic dimension and a homological generalization of semi-primary
  rings.
\newblock {\em T. Am. Math. Soc.}, 95(3):466--488, 1960.
\newblock
  \href{http://doi.org/10.1090/S0002-9947-1960-0157984-8}{DOI:10.1090/S0002-9947-1960-0157984-8}.

\bibitem{Bel2011}
A.~Beligiannis.
\newblock On algebras of finite {C}ohen-{M}acaulay type.
\newblock {\em Adv. Math.}, 226(1), 2011.
\newblock
  \href{https://dx.doi.org/10.1016/j.aim.2010.09.006}{DOI:10.1016/j.aim.2010.09.006}.

\bibitem{BCS2006diff}
R.~F. Blute, J.~R.~B. Cockett, and R.~A.~G. Seely.
\newblock Differential categories.
\newblock {\em Math. Struct. Comput. Sci.}, 16(06):1049--1083, 2006.
\newblock \href{https://doi.org/10.1017/S0960129506005676}
  {DOI:10.1017/S0960129506005676}.

\bibitem{BCS2015diff}
R.~F. Blute, J.~R.~B. Cockett, and R.~A.~G. Seely.
\newblock \href{http://emis.icm.edu.pl/journals/TAC/volumes/22/23/22-23.pdf}
  {Cartesian differential storage categories}.
\newblock {\em Theor. Appl. Categ.}, 30(18):620--687, 2015.
\newblock
  \href{https://mathscinet.ams.org/mathscinet/article?mr=3346979}{AMS:MR3346979}.

\bibitem{BP2020findim}
D.~Bravo and C.~Paquette.
\newblock Idempotent reduction for the finitistic dimension conjecture.
\newblock {\em P. Am. Math. Soc.}, 148(5):1891--1900, 2020.
\newblock \href{https://doi.org/10.1090/proc/14945}{DOI:10.1090/proc/14945},
  \href{https://arxiv.org/abs/1902.00317}{arXiv:1902.00317}.

{\addtext
\bibitem{BBK2002almostKoszul}
S.~Brenner, M.~C.~R. Butler, and A.~D. King.
\newblock Periodic algebras which are almost Koszul.
\newblock {\em Algebr. Represent. Theory}, 5(4):331--368, 2002.
\newblock \href{https://doi.org/10.1023/A:1020146502185}{DOI:10.1023/A:1020146502185}.
}

\bibitem{BD2017}
I.~Burban and Y.~Drozd.
\newblock On the derived categories of gentle and skew-gentle algebras:
  homological algebra and matrix problems.
\newblock \href{http://arxiv.org/abs/1706.08358}{arXiv:1706.08358}, 2017.

\bibitem{CE1956}
H.~Cartan and S.~Eilenberg.
\newblock {\em Homological algebra}.
\newblock Princeton University Press, 1956.

{\addtext\bibitem{CDIM2025periodic}
A.~Chan, E.~Darp\"o, O.~Iyama and R.~Marczinzik,
\newblock Periodic trivial extension algebras and fractionally Calabi--Yau algebras,
\newblock \emph{Ann. Sci. \'{E}c. Norm. Sup\'{e}r.}, 58(2), 463--510, 2025.
}

\bibitem{ChX2017Finidim}
H.~Chen and C.~Xi.
\newblock Recollements of derived categories {III}: {Finitistic} dimensions.
\newblock {\em J. Lond. Math. Soc., II. Ser.}, 95(2):633--658, 2017.
\newblock \href{https://doi.org/10.1112/jlms.12026}{DOI:10.1112/jlms.12026}.



\bibitem{CL2018Cart-int}
J.~R.~B. Cockett and J.-S.~P. Lemay.
\newblock Cartesian integral categories and contextual integral categories.
\newblock In {\em Proceedings of the {T}hirty-{F}ourth {C}onference on the
  {M}athematical {F}oundations of {P}rogramming {S}emantics ({MFPS} {XXXIV})},
  volume 341 of {\em Electron. Notes Theor. Comput. Sci.}, pages 45--72.
  Elsevier Sci. B. V., Amsterdam, 2018.
\newblock
  \href{https://doi.org/10.1016/j.entcs.2018.11.004}{DOI:10.1016/j.entcs.2018.11.004}.

\bibitem{CL2018int}
J.~R.~B. Cockett and J.-S.~P. Lemay.
\newblock Integral categories and calculus categories.
\newblock {\em Math. Struct. Comp. Sci.}, 29(2):243--308, 2018.
\newblock
  \href{https://doi.org/10.1017/S0960129518000014}{DOI:10.1017/S0960129518000014}.

\bibitem{Dan1918}
P.~J. Daniell.
\newblock A general form of integral.
\newblock {\em Ann. Math.}, 19(4):279--294, 1918.
\newblock \href{https://doi.org/10.2307/1967495}{DOI:10.2307/1967495}.

\bibitem{Dan1919-II}
P.~J. Daniell.
\newblock Functions of limited variation in an infinite number of dimension.
\newblock {\em Ann. Math.}, 21(1):30--38, 1919.
\newblock \href{https://doi.org/10.2307/2007133}{DOI:10.2307/2007133}.

\bibitem{Dan1919-I}
P.~J. Daniell.
\newblock Integrals in an infinite number of dimensions.
\newblock {\em Ann. Math.}, 20(4):281--288, 1919.
\newblock \href{https://doi.org/10.2307/1967122}{DOI:10.2307/1967122}.

\bibitem{DunfordSchwartz1958}
N.~Dunford and J.~T. Schwartz.
\newblock {\em Linear Operators. Part I: General Theory}, volume~7 of {\em Pure
  and Applied Mathematics}.
\newblock Interscience Publishers, New York, 1958.

{\addtext
\bibitem{Dugas2010periodic}
A.~S. Dugas.
\newblock Periodic resolutions and self-injective algebras of finite type.
\newblock {\em J. Pure Appl. Algebra}, 214(6):990--1000, 2010.
\newblock \href{https://doi.org/10.1016/j.jpaa.2009.09.012}{DOI:10.1016/j.jpaa.2009.09.012}.
}

\bibitem{Folland1999}
G.~B. Folland.
\newblock {\em Real Analysis: Modern Techniques and Their Applications}.
\newblock Pure and Applied Mathematics. John Wiley \& Sons, New York, 2
  edition, 1999.

\bibitem{FOZ2025}
S.~Ford, A.~Oswald, and J.~J. Zhang.
\newblock Homological conditions on locally gentle algebras.
\newblock {\em Journal of Pure and Applied Algebra}, page 108343, 2026.
\newblock
  \href{https://doi.org/10.1016/j.jpaa.2026.108343}{DOI:j.jpaa.2026.108343}
  \href{http://arxiv.org/abs/2409.08333}{arXiv:2409.08333}.

\bibitem{Gab1972}
P.~Gabriel.
\newblock Indecomposable representations {I}.
\newblock {\em Manuscripta Math.}, 6:71--109, 1972.

\bibitem{Gab1973}
P.~Gabriel.
\newblock \href{https://cir.nii.ac.jp/crid/1571417125042123520}{Indecomposable
  representations II}.
\newblock {\em Symposia Math.}, 11:81--104, 1973.

\bibitem{Gab2006Repr}
P.~Gabriel.
\newblock Repr{\'e}sentations ind{\'e}composables.
\newblock In {\em S{\'e}minaire Bourbaki vol. 1973/74 Expos{\'e}s 436--452},
  pages 143--169. Springer, 2006.
\newblock \href{https://doi.org/10.1007/BFb0066369} {DOI:10.1007/BFb0066369}.

\bibitem{GR2005}
C.~Gei{\ss} and I.~Reiten.
\newblock Gentle algebras are {G}orenstein, {\rm in {{\it {r}epresentations of
  {a}lgebras and {r}elated {t}opics}, {p}roceedings from the 10th
  {i}nternational {c}onference, {t}oronto, {c}anada, {j}uly 15 -- {a}ugust 10,
  2002}}.
\newblock {\em Amer. Math. Soc., Providence, RI, Fields Institute Comm.},
  1(45):129--133, 2005.
\newblock \href{http://dx.doi.org/10.1090/fic/045}{DOI:10.1090/fic/045}.

\bibitem{Gel2022depth}
V.~G{\'e}linas.
\newblock The depth, the delooping level and the finitistic dimension.
\newblock {\em Adv. Math}, 394:108052, 2022.
\newblock
  \href{https://doi.org/10.1016/j.aim.2021.108052}{DOI:10.1016/j.aim.2021.108052},
  \href{https://arxiv.org/abs/2004.04828}{arXiv:2004.04828}.

{\addtext
\bibitem{GLS2005}
C.~Gei\ss, B.~Leclerc, and J.~Schr\"{o}er.
\newblock Semicanonical bases and preprojective algebras.
\newblock {\em Ann. Sci. \'{E}cole Norm. Sup. (4)}, 38(2):193--253, 2005.
\newblock \href{https://doi.org/10.1016/j.ansens.2004.12.001}{DOI:10.1016/j.ansens.2004.12.001}.
}

\bibitem{GP2020}
N.~Gigli and E.~Pasqualetto.
\newblock The theory of normed modules.
\newblock In {\em Lectures on Nonsmooth Differential Geometry, SISSA Springer
  Series (SISSASS)}, pages 67--69, Cham, 2020. Springer.
\newblock
  \href{https://doi.org/10.1007/978-3-030-38613-9_3}{DOI:10.1007/978-3-030-38613-9\_3}.

\bibitem{GI2025findim}
R.~Guo and K.~Igusa.
\newblock Derived delooping levels and finitistic dimension.
\newblock {\em Adv. Math.}, 464:110152, 2025.
\newblock
  \href{https://doi.org/10.1016/j.aim.2025.110152}{DOI:10.1016/j.aim.2025.110152}
  \href{https://arxiv.org/abs/2311.00661}{arXiv:2311.00661}.

\bibitem{Guo2013}
T.~Guo.
\newblock On some basic theorems of continuous module homomorphisms between
  random normed modules.
\newblock {\em J. Funct. Space.}, 54(1):115--130, 2013.
\newblock \href{https://doi.org/10.1155/2013/989102} {DOI:
  10.1155/2013/989102}.

\bibitem{Huis1995}
B.~Huisgen-Zimmermann.
\newblock The finitistic dimension conjectures---a tale of 3.5 decades.
\newblock In A.~Facchini and C.~Menini, editors, {\em Abelian Groups and
  Modules (Padova, 1994)}, volume 343 of {\em Mathematics and Its
  Applications}, pages 501--517. Kluwer Academic Publishers, Dordrecht, 1995.

\bibitem{IT2005}
K.~Igusa and G.~Todorov.
\newblock On the finitistic global dimension conjecture for {Artin} algebras.
\newblock In {\em Representations of Algebras and Related Topics}, volume~45 of
  {\em Fields Institute Communications}, pages 201--204. American Mathematical
  Society, Providence, RI, 2005.

\bibitem{IL2023ana}
S.~Ikonicoff and J.-S.~P. Lemay.
\newblock
  \href{https://cahierstgdc.com/wp-content/uploads/2023/04/IKONICOFF-LEMAY-LXIV-2.pdf}
  {Cartesian differential comonads and new models of {C}artesian differential
  categories}.
\newblock {\em Cah. Topol. G\'{e}om. Diff\'{e}r. Cat\'{e}g.}, 64(2):198--239,
  2023.
\newblock \href{https://mathscinet.ams.org/mathscinet/article?mr=4605864}
  {AMS:MR4605864}.

\bibitem{Jen1966}
C.~U. Jensen.
\newblock On homological dimensions of rings with countably generated ideals.
\newblock {\em Math. Scand.}, 18(2):97--105, 1966.
\newblock \url{https://www.jstor.org/stable/24490078}.

\bibitem{Krause2015}
H.~Krause.
\newblock Krull--schmidt categories and projective covers.
\newblock {\em Expositiones Mathematicae}, 33(4):535--549, 2015.
\newblock
  \href{https://doi.org/10.1016/j.exmath.2015.10.001}{DOI:10.1016/j.exmath.2015.10.001}.

\bibitem{Lei2023FA}
T.~Leinster.
\newblock A categorical derivation of {L}ebesgue integration.
\newblock {\em J. Lond. Math. Soc.}, 107(6):1959--1982, 2023.
\newblock \href{https://doi.org/10.1112/jlms.12730}{DOI:10.1112/jlms.12730},
  \href{https://arxiv.org/abs/2011.00412}{arXiv:2011.00412}.

\bibitem{Lemay2019}
J.-S.~P. Lemay.
\newblock Differential algebras in codifferential categories.
\newblock {\em J. Pure. Appl. Algebra}, 223(10):4191--4225, 2019.
\newblock \href{https://doi.org/10.1016/j.jpaa.2019.01.005}
  {DOI:10.1016/j.jpaa.2019.01.005}.

\bibitem{Lemay2023}
J.-S.~P. Lemay.
\newblock {Cartesian Differential {K}leisli Categories}.
\newblock {\em ENTICS}, 3(23):1--13, 2023.
\newblock
  \href{https://doi.org/10.46298/entics.12278}{DOI:10.46298/entics.12278}.

\bibitem{LLL2025}
M.~Liu, S.~Liu, and Y.-Z. Liu.
\newblock Normed modules, integral sequences, and integrals with variable upper
  limits.
\newblock {\em B. Iran. Math. Soc.}, 51(79):1--49, 2025.
\newblock \href{https://doi.org/10.1007/s41980-025-00990-4}
  {DOI:10.1007/s41980-025-00990-4}.

\bibitem{Liu2025pre}
Y.-Z. Liu.
\newblock Normed representations of weighted quivers.
\newblock \href{https://arxiv.org/abs/2507.06962}{arXiv:2507.06962}, 2025.

\bibitem{LGH2024}
Y.-Z. Liu, H.~Gao, and Z.~Huang.
\newblock Homological dimensions of gentle algebras via geometric models.
\newblock {\em Sci. China Math.}, 67(4):733--766, 2023.
\newblock
  \href{https://www.sciengine.com/doi/10.1007/s11425-022-2120-8}{DOI:10.1007/s11425-022-2120-8}.

\bibitem{LLHZ2025}
Y.-Z. Liu, S.~Liu, Z.~Huang, and P.~Zhou.
\newblock Normed modules and the categorification of integrations, series
  expansions, and differentiations.
\newblock {\em Sci. China Math.}, 69(3):571--614, 2025.
\newblock \href{https://www.sciengine.com/SCM/doi/10.1007/s11425-024-2418-3}
  {DOI:10.1007/s11425-024-2418-3},
  \href{http://arxiv.org/abs/2405.02777}{arXiv:2405.02777}.

\bibitem{MN2024findim}
J.~MacQuarrie and F.~Naves.
\newblock Quotient bifinite extensions and the finitistic dimension conjecture.
\newblock {\em P. Am. Math. Soc.}, 152(02):585--590, 2024.
\newblock \href{https://doi.org/10.1090/proc/17754}{DOI:10.1090/proc/17754},
  \href{https://arxiv.org/abs/2306.02539}{arXiv:2306.02539}.

\bibitem{MT2015}
N.~Mahdou and M.~Tamekkante.
\newblock On {G}orenstein global dimension of tensor product of algebras over a
  field.
\newblock {\em Gulf Journal of Mathematics}, 3:30--37, 2015.
\newblock
  \href{https://doi.org/10.56947/gjom.v3i2.159}{DOI:10.56947/gjom.v3i2.159}.

\bibitem{MS2022findim}
P.~Moradifar and J.~{\v{S}}aroch.
\newblock Finitistic dimension conjectures via {G}orenstein projective
  dimension.
\newblock {\em J. Algebra}, 591:15--35, 2022.
\newblock
  \href{https://doi.org/10.1016/j.jalgebra.2021.10.026}{DOI:10.1016/j.jalgebra.2021.10.026},
  \href{https://arxiv.org/abs/2006.02182}{arXiv:2006.02182}.

\bibitem{Munkres2000}
J.~R. Munkres.
\newblock {\em Topology}.
\newblock Prentice Hall, Upper Saddle River, NJ, 2 edition, 2000.

\bibitem{OPS2018}
S.~Opper, P.-G. Plamondon, and S.~Schroll.
\newblock A geometric model for the derived category of gentle algebras.
\newblock \href{http://arxiv.org/abs/1801.09659}{arXiv:1801.09659}, 2018.

\bibitem{QZZ2022}
Y.~Qiu, Y.~Zhou, and C.~Zhang.
\newblock Two geometric models for graded skew-gentle algebras.
\newblock \href{http://arxiv.org/abs/2212.10369}{arXiv:2212.10369}, 2022.

\bibitem{Rogers1999}
C.~A. Rogers.
\newblock {\em Hausdorff Measures}.
\newblock Cambridge University Press, Cambridge, England, 2nd edition, 1999.

\bibitem{R1979}
J.~J. Rotman.
\newblock {\em An Introduction to Homological Algebra (Second Edition)}.
\newblock Academic Press, 1979.
\newblock \href{https://doi.org/10.1007/b98977}{DOI:10.1007/b98977}.

\bibitem{Ryan2002}
R.~A. Ryan.
\newblock The projective tensor product.
\newblock In {\em \href{https://doi.org/10.1007/978-1-4471-3903-4}
  {Introduction to Tensor Products of Banach Spaces}}, pages 15--43, London,
  2002. Springer London.
\newblock
  \href{https://doi.org/10.1007/978-1-4471-3903-4_2}{DOI:10.1007/978-1-4471-3903-4\_2}.

\bibitem{Sag1965ana}
I.~Segal.
\newblock Algebraic integration theory.
\newblock {\em Bull. Amer. Math. Soc.}, 71(3):419--489, 1965.
\newblock
  \href{https://community.ams.org/journals/bull/1965-71-03/S0002-9904-1965-11284-8}
  {DOI:S0002-9904-1965-11284-8}.

{\addtext
\bibitem{TLZ2014Morita}
G.~Tang, C.~Li and Y.~Zhou,
\newblock Study of Morita contexts,
\newblock \emph{Comm. Algebra}, 42(4): 1668--1681, 2014.
\newblock
\href{https://www.tandfonline.com/doi/abs/10.1080/00927872.2012.748327}{DOI:10.1080/00927872.2012.748327}~/~
\href{https://zbmath.org/1292.16020}{zbMath:1292.16020}.}

\bibitem{Vcech1933}
E.~\v{C}ech.
\newblock P{\v{r}}{\'\i}sp{\v{e}}vek k teorii dimense.
\newblock {\em {\v{C}}as. Mat. Fys.}, 62(7):277--291, 1933.
\newblock
  \href{https://doi.org/10.21136/CPMF.1933.121898}{DOI:10.21136/CPMF.1933.121898}.

\bibitem{Wei2009}
J.~Wei.
\newblock Finitistic dimension and {Igusa--Todorov} algebras.
\newblock {\em Adv. Math.}, 222(6):2215--2226, 2009.
\newblock
  \href{https://doi.org/10.1016/j.aim.2009.07.008}{10.1016/j.aim.2009.07.008}.

\bibitem{XinZhang2026}
Z.~Xin and L.~Zhang.
\newblock Global dimension of a string algebra.
\newblock {\em Arch. Math.}, 2026.
\newblock
  \href{https://doi.org/10.1007/s00013-026-02262-x}{DOI:10.1007/s00013-026-02262-x},
  \href{https://arxiv.org/abs/2607.11552}{arXiv:2607.11552}.

\bibitem{Z2017globalcohomo}
C.~Zhang.
\newblock On global cohomological width of {A}rtin algebras.
\newblock {\em Colloq. Math.}, 146:31--46, 2017.
\newblock
  \href{https://doi.org/10.4064/cm6714-10-2015}{DOI:10.4064/cm6714-10-2015}.

\bibitem{ZZ2020}
H.~Zhang and X.~Zhu. 
\newblock {G}orenstein global dimension of recollements of abelian categories.
\newblock {\em Commun. Algebra}, 48(2):467--483, 2020.
\newblock
  \href{https://doi.org/10.1080/00927872.2019.1648650}{DOI:10.1080/00927872.2019.1648650}.

\bibitem{ZH2020findim}
J.~Zheng and Z.~Huang.
\newblock The finitistic dimension and chain conditions on ideals.
\newblock {\em Glasgow Math. J.}, 64(1):37--44, 2022.
\newblock
  \href{https://doi.org/10.1017/S001708952000052X}{DOI:10.1017/S001708952000052X}.

\end{thebibliography}

\def\cprime{$'$}

\end{document}